\theoremstyle{plain}
\newtheorem{theorem}{Theorem}
\newtheorem{lemma}{Lemma}
\newtheorem{corollary}{Corollary}
\newtheorem{proposition}{Proposition}
\theoremstyle{definition}
\newtheorem{definition}{Definition}
\newtheorem*{problem*}{Problem}
\theoremstyle{remark}
\newtheorem{remark}{Remark}
\newtheorem{case}{Case}[theorem]
\newtheorem*{solution*}{Solution}
\DeclareMathOperator*{\argmin}{arg\,min}
\renewcommand{\Re}{\mathrm{Re}\,}
\renewcommand{\Im}{\mathrm{Im}\,}
\begin{document}

\title{Solvability of the Stokes Immersed Boundary Problem in\\ Two Dimensions}

\author{Fang-Hua Lin, Jiajun Tong\\[5pt]Courant Institute}
\date{}

\maketitle
\begin{abstract}
We study coupled motion of a 1-D closed elastic string immersed in a 2-D Stokes flow,
known as the Stokes immersed boundary problem in two dimensions. Using the fundamental
solution of the Stokes equation and the Lagrangian coordinate of the string, we write
the problem into a contour dynamic formulation, which is a nonlinear non-local equation
solely keeping track of evolution of the string configuration. We prove existence and uniqueness of local-in-time solution starting from an arbitrary
initial configuration that is an $H^{5/2}$-function in the Lagrangian coordinate
satisfying the so-called well-stretched assumption. We also prove that when the
initial string configuration is sufficiently close to an equilibrium, which is an
evenly parameterized circular configuration, then global-in-time solution
uniquely exists and it will converge to an equilibrium configuration exponentially as
$t\rightarrow +\infty$. The technique in this paper may also apply to the Stokes
immersed boundary problem in three dimensions.
\end{abstract}

\noindent\textbf{Keywords.}\;Immersed boundary problem, Stokes flow, fractional Laplacian, solvability, stability.
\noindent\textbf{AMS subject classifications.}\;35C15, 35Q35, 35R11, 76D07.

\section{Introduction}

The immersed boundary method was initially formulated by Peskin
\cite{peskin1972flowPhD,peskin1972flow} in early 1970s to study flow patterns around
heart valves, and later it develops into a generally effective method to solve
fluid-structure interaction problems \cite{peskin2002immersed}.
It gives birth to numerous studies of the numerical methods, along with applications in physics, biology and medical sciences.
See \cite{peskin2002immersed, mittal2005immersed} and the references therein.
Various mathematical analysis have also been performed based on the model formulation
itself, e.g.\;\cite{stockie1995stability,stockie1997analysis,mori2008convergence}.
From the analysis point of view, the immersed boundary problem is intriguing on its own right.
It is nonlinear by nature, featuring free moving boundary and singular forcing, which are not well-studied in the classic mathematical theory of hydrodynamics \cite{temam1984navier}.

In this paper, we shall consider Stokes immersed boundary problem in two dimensions.
It models the scenario where there is a 1-D closed elastic string (or fibre) immersed and moving in the 2-D Stokes flow: the string exerts force on the fluid and generates the flow, while the flow in turn moves the string and changes its configuration.
The mathematical formulation will be given below.
We will prove solvability of the string motion and its asymptotic behavior near equilibrium.
Much of the analysis in this paper also applies to immersed boundary problems in three dimensions.

A similar type of problems on one- \cite{solonnikov1977solvability,solonnikov1986solvability,shibata2007free} or two-phase \cite{denisova1991solvability,tanaka1993global,giga1994global,denisova1994problem,denisova1994solvability,shimizu2011local,kohne2013qualitative,solonnikov2014theory} incompressible fluid motion has been extensively studied.
In these settings, the space is occupied by one incompressible viscous fluid and the vacuum, or by two immiscible incompressible viscous fluids; the fluids move with or without surface tension on their interface.
Solvability results have been established in various function spaces.
The main difference between these problems and ours is that only the geometry (such as length, area and curvature) of the interface is involved
there in determining the force balance at the interface.
In particular, it does not depend on how the immersed string or membrane is parametrized.
Consequently, one can use either Eulerian or Lagrangian approach to study the evolution of interfaces.
However, in the immersed boundary problems, elastic strings or membranes have their internal structures and their dynamics also depends on constitutive law of elasticity, which varies from case to case.
In other words, intrinsic parametrization of the immersed boundary and its elastic deformation should play a role.
Indeed, immersed boundaries with identical overall shape can generate force differently.
One can easily construct a 1-D closed string with a circular shape, yet far more
stretched at some point than somewhere else.
In this case, we shall see that the force on the string is not everywhere pointing inward normal to the string.
This suggests that a pure Eulerian approach employed in many mathematical studies of free boundary problems in hydrodynamics (e.g.\;\cite{bertozzi1993global}) would not suffice.
One needs to keep track of the configuration of the immersed boundary, which is typical in the nonlinear elasticity problems, and different techniques need to be used.

\subsection{The Stokes immersed boundary problem in two dimensions}
Consider a 1-D neutrally buoyant massless elastic closed string immersed in 2-D Stokes flow.
The string is modeled as a Jordan curve $\Gamma_t$ parameterized by $X(s,t)$, where $s\in\mathbb{T}$ is the Lagrangian coordinate (or the material coordinate) and $t\geq 0$ is the time variable. Here, $\mathbb{T}\triangleq \mathbb{R}/2\pi\mathbb{Z}$ is the 1-D torus equipped with the induced metric.
We always assume that at least $X(\cdot,t)\in H^2(\mathbb{T})$ for all $t$.
The flow field in the immersed boundary problem is determined by
\begin{equation}
\begin{split}
&\;-\mu_0\Delta u +\nabla p = f(x,t),\quad x\in\mathbb{R}^2,\;t>0,\\
&\;\mathrm{div}\, u = 0,\\
&\;|u|,|p|\rightarrow 0\mbox{ as }|x|\rightarrow \infty.
\end{split}
\label{eqn: stokes equation}
\end{equation}
Here $u(x,t)$ is the velocity field in $\mathbb{R}^2$ and $p$ is the pressure; $\mu_0>0$ is the dynamic viscosity; $f(x,t)$ is the elastic force exerted on the fluid generated by the string, given by \cite{peskin2002immersed}
\begin{equation}
f(x,t) = \int_\mathbb{T} F(s,t) \delta (x-X(s,t))\,ds.
\label{eqn: force in the immersed boundary problem general form}
\end{equation}
Here $\delta$ is the 2-D delta measure, which means the force is only supported on the string.
$F(s,t)$ is the force in the Lagrangian formulation; it is given by
\begin{equation}
F(s,t) = \frac{\partial}{\partial s}\left(\mathcal{T}(|X_s|)\frac{X_s}{|X_s|}\right),\quad \mathcal{T}(|v|) = \mathcal{E}'(|v|).
\label{eqn: force in the immersed boundary problem general form Lagrangian}
\end{equation}
where $X_s = \partial X/\partial s$, $\mathcal{T}$ is the tension in the string and $\mathcal{E}$ is the elastic energy density. In the following discussion, we shall take
\begin{equation}
\mathcal{E}(|v|) = k_0|v|^2/2.
\label{eqn: elastic energy density}
\end{equation}
In this case, each infinitesimal segment of the string behaves like a Hookean spring with elasticity coefficient $k_0>0$, and thus
$F(s,t) = k_0X_{ss}(s,t)$.
It will be clear below that most of the discussion in this paper can also apply to more general elastic energy of other forms.
The model is closed by the kinematic equation of the string,
\begin{equation}
\frac{\partial X}{\partial t}(s,t) = u(X(s,t), t),
\label{eqn: kinematic equation of membrane}
\end{equation}
which means the string moves with the flow.

For simplicity, we shall take $\mu_0 = k_0 = 1$ in the rest of the paper.
Indeed, one can easily normalize both coefficients simultaneously by properly redefining $u$, $p$ and the time variable $t$.
We shall always omit the $t$-dependence whenever it is convenient; and we shall also write $X'(s')$ and $X''(s')$ in the places of $X_s(s',t)$ and $X_{ss}(s',t)$ respectively.

\subsection{Contour dynamic formulation}\label{section: contour dynamic formulation}
The starting point of the analysis in this paper is the following proposition.
It rewrites the original immersed boundary problem \eqref{eqn: stokes equation}-\eqref{eqn: kinematic equation of membrane} that is in mixed Eulerian and Lagrangian formulation into a pure Lagrangian formulation, which we will call \emph{contour dynamic formulation}.
\begin{proposition}\label{prop: tranform into contour dynamic formulation}
Under the assumptions that $X(\cdot,t)\in H^2(\mathbb{T})$ for all $t$, and that there $\exists\,\lambda>0$, s.t.\;$\forall\,s_1,s_2\in\mathbb{T}$,
\begin{equation}
|X(s_1,t)-X(s_2,t)|\geq \lambda|s_1-s_2|,
\label{eqn: well_stretched assumption}
\end{equation}
where $|s_1-s_2|$ is the distance between $s_1$ and $s_2$ on $\mathbb{T}$, the evolution of $X(s,t)$ in the 2-D Stokes immersed boundary problem \eqref{eqn: stokes equation}-\eqref{eqn: kinematic equation of membrane} is equivalently given by
\begin{equation}
X_t(s,t)=\mathcal{L}X(s,t)+g_X(s,t),\quad X(s,0) = X_0(s),
\label{eqn: contour dynamic formulation of the immersed boundary problem}
\end{equation}
where $\mathcal{L}\triangleq-\frac{1}{4}(-\Delta)^{1/2}$, and
\begin{align}
g_X(s,t) = &\;\int_{\mathbb{T}} \Gamma_0(s,s',t)\,ds' +\frac{1}{4}(-\Delta)^{1/2}X(s,t),\label{eqn: definition of g_X}\\
\Gamma_0(s,s',t) = &\;-\partial_{s'}[G(X(s,t)-X(s',t))](X'(s',t)-X'(s,t)).\label{eqn: introduce the notation Gamma_0}
\end{align}
Here $(-\Delta)^{1/2}$ on $\mathbb{T}$ is understood as a Fourier multiplier or equivalently the following singular integral
\begin{equation}
(-\Delta)^{1/2}Y(s) \triangleq -\frac{1}{\pi} \mathrm{p.v.}\int_\mathbb{T}\frac{Y(s')-Y(s)}{4\sin^2\left(\frac{s'-s}{2}\right)}\,ds',
\end{equation}
and
\begin{equation}
G(x) = \frac{1}{4\pi}\left(-\ln |x| Id +\frac{x \otimes x}{|x|^2}\right)\label{eqn: 2D stokeslet}
\end{equation}
is the fundamental solution of the 2-D Stokes equation for the velocity field \cite{pozrikidis1992boundary}.
\end{proposition}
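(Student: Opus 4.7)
The plan is to start from the explicit Stokeslet representation of $u$ solving \eqref{eqn: stokes equation}, insert it into the kinematic equation \eqref{eqn: kinematic equation of membrane}, and then perform a careful integration by parts to expose the singular structure of the resulting velocity on the string. Since \eqref{eqn: stokes equation} is a linear system with Green's function $G$, and since $f(\cdot,t)$ in \eqref{eqn: force in the immersed boundary problem general form} is a line-supported measure with density $X''(\cdot,t)\in L^2(\mathbb{T})$ (recall $F=X_{ss}$), the unique decaying solution is
\[
u(x,t)=\int_\mathbb{T} G(x-X(s',t))\,X''(s',t)\,ds'.
\]
Because $G$ is only logarithmically singular at the origin and \eqref{eqn: well_stretched assumption} rules out any other vanishing of $|X(s,t)-X(s',t)|$, this integral is absolutely convergent for every $x\in\mathbb{R}^2$, in particular at $x=X(s,t)$. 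Combining with \eqref{eqn: kinematic equation of membrane} then yields
\[
X_t(s,t)=\int_\mathbb{T} G(X(s,t)-X(s',t))\,X''(s',t)\,ds'.
\]

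Next I would integrate by parts in $s'$ to transfer $\partial_{s'}$ from $X'(s')$ onto $G$; no boundary term appears since $s'\in\mathbb{T}$. The resulting kernel is of Cauchy type, with a $|s-s'|^{-1}$ singularity on the diagonal, which I tame by writing $X'(s',t)=X'(s,t)+\bigl(X'(s',t)-X'(s,t)\bigr)$. The piece involving the difference produces exactly $\int_\mathbb{T}\Gamma_0(s,s',t)\,ds'$; by the embedding $H^2(\mathbb{T})\hookrightarrow C^{1,1/2}(\mathbb{T})$ one has $|X'(s')-X'(s)|\lesssim|s-s'|^{1/2}$, so $|\Gamma_0|\lesssim|s-s'|^{-1/2}$ and the integral converges absolutely.

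The heart of the argument is then to show that the remaining piece, with the constant factor $X'(s,t)$ pulled out, vanishes in the principal-value sense,
\[
\mathrm{p.v.}\int_\mathbb{T}\partial_{s'}\bigl[G(X(s,t)-X(s',t))\bigr]\,ds'=0.
\]
I verify this by excising an arc $(s-\varepsilon,s+\varepsilon)$ and applying the fundamental theorem of calculus on the complementary arc: the two endpoint contributions amount to $G(X(s)-X(s-\varepsilon))-G(X(s)-X(s+\varepsilon))$, and as $\varepsilon\downarrow 0$ the $-\ln|\cdot|\,Id$ entries contribute $\ln\bigl(|X(s)-X(s+\varepsilon)|/|X(s)-X(s-\varepsilon)|\bigr)Id\to 0$, while the $x\otimes x/|x|^2$ entries both converge to $X'(s)\otimes X'(s)/|X'(s)|^2$ and cancel. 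Combining these steps gives $X_t=\int_\mathbb{T}\Gamma_0\,ds'$; adding and subtracting $\tfrac{1}{4}(-\Delta)^{1/2}X$ then produces exactly \eqref{eqn: contour dynamic formulation of the immersed boundary problem} with $g_X$ as in \eqref{eqn: definition of g_X}.

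The converse implication — that any solution of \eqref{eqn: contour dynamic formulation of the immersed boundary problem} yields, via the Stokeslet convolution, a $(u,p)$ satisfying the original system with $X_t=u\circ X$ — follows by reading the same chain of equalities backwards. The main technical obstacle will be the principal-value computation together with the justification of the integration by parts and the resulting absolute convergence under the given regularity; the well-stretched condition \eqref{eqn: well_stretched assumption} is essential throughout, as it provides the uniform lower bound $|X(s)-X(s')|\gtrsim|s-s'|$ that makes $\Gamma_0$ integrable and that renders the endpoint contributions in the principal-value limit symmetric enough to cancel.
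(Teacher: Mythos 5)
Your proposal is correct and follows essentially the same route as the paper: Stokeslet representation, insertion into the kinematic equation, integration by parts to expose the Cauchy-type kernel, and extraction of the $-\tfrac14(-\Delta)^{1/2}$ part. The only organizational difference is that you transfer $\partial_{s'}$ onto $G$ with $X'(s')$ as antiderivative and then separately prove $\mathrm{p.v.}\int_\mathbb{T}\partial_{s'}[G(X(s)-X(s'))]\,ds'=0$ by the endpoint-cancellation computation, which is exactly \eqref{eqn: pv integral vanishes} from the paper's remark; the paper's proof-proper instead chooses the antiderivative $X'(s')-X'(s)$ from the outset, so that the excised-arc boundary terms $G(X(s)-X(s\pm\varepsilon))(X'(s\pm\varepsilon)-X'(s))$ vanish directly via the $C^{1/2}$ estimate without ever isolating the pure $\mathrm{p.v.}$ of $\partial_{s'}G$. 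One small imprecision worth fixing: the claim that ``no boundary term appears since $s'\in\mathbb{T}$'' glosses over the fact that $\partial_{s'}G$ has a nonintegrable $|s-s'|^{-1}$ singularity, so the integration by parts must be carried out on $\{|s'-s|\geq\varepsilon\}$ with the $\varepsilon\to 0^+$ boundary contributions tracked explicitly; your subsequent principal-value computation is precisely what shows those contributions vanish, so the argument is sound, but the bookkeeping should be stated in that order rather than asserting the absence of boundary terms up front.
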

We call \eqref{eqn: well_stretched assumption} \emph{well-stretched assumption};
\eqref{eqn: contour dynamic formulation of the immersed boundary problem} is called \emph{the contour dynamic formulation} of the immersed boundary problem.
The proof of Proposition \ref{prop: tranform into contour dynamic formulation} is left to Section \ref{section: justification of contour dynamic formulation}.
In the sequel, we shall focus on \eqref{eqn: contour dynamic formulation of the immersed boundary problem} and prove existence and uniqueness of its solutions and their properties.
Estimates of the velocity field $u_X(x,t)$ can be easily obtained based on that; see Lemma \ref{lemma: the velocity field is continuous} below.
Note that the subscript of $u_X$ stresses that it is determined by $X(s,t)$; see Section \ref{section: justification of contour dynamic formulation} for more details.

\subsection{Main results}
Let us introduce a notation before we state the main results of the paper.
With $T>0$, define
\begin{equation}
\Omega_{T} = \left\{Y(s,t)\in L^{\infty}_T H^{5/2}\cap L^2_T H^{3}(\mathbb{T}):\;Y_t(s,t)\in L^2_T H^2(\mathbb{T})\right\}.
\label{eqn: define the primary function space to prove the local existence}
\end{equation}
It is equipped with the norm
\begin{equation*}
\|Y(s,t)\|_{\Omega_{T}} \triangleq \|Y\|_{L^{\infty}_T {H}^{5/2}(\mathbb{T})}+\|Y\|_{L^2_T {H}^{3}(\mathbb{T})}+\|Y_t\|_{L^2_T {H}^{2}(\mathbb{T})}.
\end{equation*}
Here $L^{\infty}_T {H}^{5/2}(\mathbb{T}) = L^\infty([0,T];{H}^{5/2}(\mathbb{T}))$, and $L^2_T {H}^{3}(\mathbb{T})$ and $L^2_T {H}^{2}(\mathbb{T})$ have similar meanings.
Then we are able to prove the local well-posedness of the immersed boundary problem \eqref{eqn: contour dynamic formulation of the immersed boundary problem}.
\begin{theorem}[Existence of the local-in-time solution]\label{thm: local in time existence}
Suppose $X_0(s) \in H^{5/2}(\mathbb{T})$, s.t.\;there exists some $\lambda>0$,
\begin{equation}
|X_0(s_1)-X_0(s_2)|\geq \lambda|s_1-s_2|,\quad \forall\, s_1, s_2\in \mathbb{T}.
\label{eqn: bi Lipschitz assumption in main thm}
\end{equation}
Then there exists $T_0 = T_0(\lambda, \|X_0\|_{\dot{H}^{5/2}})\in(0,+\infty]$ and a solution $X(s,t)\in \Omega_{T_0}\cap C_{[0,T_0]}H^{5/2}(\mathbb{T})$ of the immersed boundary problem \eqref{eqn: contour dynamic formulation of the immersed boundary problem}, satisfying that
\begin{equation}
\|X\|_{L^\infty_{T_0} \dot{H}^{5/2}\cap L^2_{T_0} \dot{H}^{3}(\mathbb{T})}\leq 4\|X_0\|_{\dot{H}^{5/2}(\mathbb{T})},\quad \|X_t\|_{L^2_{T_0} \dot{H}^{2}(\mathbb{T})}\leq \|X_0\|_{\dot{H}^{5/2}(\mathbb{T})},
\label{eqn: a priori estimate for the local solution in the main theorem}
\end{equation}
and that for $\forall\,s_1,s_2\in\mathbb{T}$ and $t\in[0,T_0]$,
\begin{equation}
\left|X(s_1,t) - X(s_2,t)\right| \geq \frac{\lambda}{2}|s_1 - s_2|.
\label{eqn: uniform bi lipschitz constant of the local solution in the main theorem}
\end{equation}
\end{theorem}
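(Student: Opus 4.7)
The plan is to pass to the Duhamel formulation
$$X(t)=e^{t\mathcal{L}}X_{0}+\int_{0}^{t}e^{(t-\tau)\mathcal{L}}g_{X}(\tau)\,d\tau,$$
and to obtain $X$ as the unique Banach fixed point of the associated map on a closed ball of $\Omega_{T_{0}}$ cut out by the bi-Lipschitz constraint \eqref{eqn: uniform bi lipschitz constant of the local solution in the main theorem}. The dissipative operator $\mathcal{L}=-\frac{1}{4}(-\Delta)^{1/2}$ supplies one order of parabolic smoothing, while the definition of $g_{X}$ in \eqref{eqn: definition of g_X} is arranged so that the leading-order singular part of the Stokes boundary integral is cancelled by the explicit subtraction of $\frac{1}{4}(-\Delta)^{1/2}X$, so that under the well-stretched hypothesis $g_{X}$ acts as a strictly lower-order perturbation.

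I would first record the standard linear estimates for the fractional heat semigroup. Since $\mathcal{L}$ is the Fourier multiplier with symbol $-|k|/4$, for $Y_{0}\in \dot H^{5/2}(\mathbb{T})$,
$$\|e^{t\mathcal{L}}Y_{0}\|_{L^{\infty}_{T}\dot H^{5/2}} + \|e^{t\mathcal{L}}Y_{0}\|_{L^{2}_{T}\dot H^{3}} \leq C\|Y_{0}\|_{\dot H^{5/2}},$$
together with the maximal regularity bound
$$\Big\|\int_{0}^{t}e^{(t-\tau)\mathcal{L}}f(\tau)\,d\tau\Big\|_{L^{\infty}_{T}\dot H^{5/2}\cap L^{2}_{T}\dot H^{3}} + \Big\|\partial_{t}\int_{0}^{t}e^{(t-\tau)\mathcal{L}}f(\tau)\,d\tau\Big\|_{L^{2}_{T}\dot H^{2}} \leq C\|f\|_{L^{2}_{T}\dot H^{2}}.$$
These control the $\Omega_{T}$-norm of the Duhamel iterate by $\|X_{0}\|_{\dot H^{5/2}}+C\|g_{X}\|_{L^{2}_{T}\dot H^{2}}$; the constant $4$ in \eqref{eqn: a priori estimate for the local solution in the main theorem} is chosen with room to absorb $C$ and the small nonlinear contribution.

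The central technical step is to prove that, whenever $X\in \Omega_{T}$ satisfies a uniform bi-Lipschitz bound $\lambda/2$ on $[0,T]$,
$$\|g_{X}\|_{L^{2}_{T}H^{2}}\leq \eta(T)\,\Psi\bigl(\lambda^{-1},\|X\|_{\Omega_{T}}\bigr),\qquad \eta(T)\to 0\text{ as }T\to 0,$$
along with the analogous Lipschitz bound on $g_{X}-g_{Y}$ in terms of $X-Y$. The well-stretched hypothesis keeps $\partial_{s'}G(X(s)-X(s'))$ comparable to $|s-s'|^{-1}$, while $X'(s')-X'(s)$ supplies one order of vanishing; expanding the latter as $X''(s)(s'-s)+\text{remainder}$ produces a principal contribution whose $s'$-integral recovers exactly $\frac{1}{4}(-\Delta)^{1/2}X(s)$ up to smooth lower-order terms, which is precisely the correction subtracted in \eqref{eqn: definition of g_X}. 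What survives is a non-singular commutator-type integral whose second $s$-derivative I would bound in $L^{2}(\mathbb{T})$ by standard Calder\'on-commutator, product, and Moser-type estimates; the small factor $\eta(T)$ should arise from bounding $g_{X}$ in $L^{\infty}_{T}H^{2}$ first and paying the trivial $\sqrt{T}$-loss.

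With these in hand, for $T_{0}$ chosen small in terms of $\lambda$ and $\|X_{0}\|_{\dot H^{5/2}}$ the map $X\mapsto e^{t\mathcal{L}}X_{0}+\int_{0}^{t}e^{(t-\tau)\mathcal{L}}g_{X}(\tau)\,d\tau$ is a contraction on the subset of $\Omega_{T_{0}}$ satisfying \eqref{eqn: a priori estimate for the local solution in the main theorem} and \eqref{eqn: uniform bi lipschitz constant of the local solution in the main theorem}. Preservation of the bi-Lipschitz property along the iteration uses $X_{t}\in L^{2}_{T_{0}}H^{2}\hookrightarrow L^{1}_{T_{0}}C^{1}(\mathbb{T})$, which yields $\sup_{s,t}|X_{s}(s,t)-X_{0}'(s)|\leq C\sqrt{T_{0}}\|X_{0}\|_{\dot H^{5/2}}$ and hence degrades the bi-Lipschitz constant by less than $\lambda/2$ for $T_{0}$ small. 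The main obstacle I anticipate is the nonlinear $H^{2}$ estimate on $g_{X}$: after differentiating the integrand of \eqref{eqn: definition of g_X} twice in $s$, several formally divergent kernels appear, and orchestrating the splitting of $\Gamma_{0}$ so that these divergent pieces cancel against each other and against $\frac{1}{4}(-\Delta)^{1/2}X$ at the full $H^{2}$ level---rather than merely at the level of principal symbols---is where the bulk of the analysis will reside.
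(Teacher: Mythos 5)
Your overall architecture---Duhamel formulation, parabolic smoothing from $\mathcal{L}$, fixed point on a ball of $\Omega_{T_0}$ protected by the well-stretched constraint---matches the spirit of the paper's argument, and the paper does prove precisely the linear estimates you invoke (Lemma \ref{lemma: improved Hs estimate and Hs continuity of semigroup solution} and Lemma \ref{lemma: a priori estimate of nonlocal eqn}). You also correctly identify the heart of the difficulty: tracking the cancellation between $\Gamma_0$ and $\frac{1}{4}(-\Delta)^{1/2}X$ through two derivatives. Two points warrant scrutiny.

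First, a genuine gap: your proposed mechanism for the small factor $\eta(T)$, namely ``bounding $g_X$ in $L^\infty_T H^2$ first and paying the trivial $\sqrt{T}$-loss,'' cannot work. The nonlinearity $g_X$ is one full spatial derivative above what $X(\cdot,t)\in H^{5/2}$ supports pointwise in time: $g_X''$ contains terms of the strength of $X'''$, so $g_X(\cdot,t)$ is in $H^2$ only when $X(\cdot,t)\in H^3$, which for $X\in\Omega_T$ holds a.e.\ but not uniformly in $t$. Consequently $g_X\in L^2_T H^2$ but not $L^\infty_T H^2$, and after integrating in time the bound takes the schematic form
\begin{equation*}
\|g_X\|_{L^2_T \dot H^2} \lesssim \delta^{1/2}\lambda^{-2}\,\|X\|_{L^2_T \dot H^3}\,\|X\|_{L^\infty_T \dot H^{5/2}}^2 \;+\; T^{1/2}\,(|\ln\delta|+1)\,\lambda^{-3}\,\|X\|_{L^\infty_T \dot H^{5/2}}^4 ,
\end{equation*}
where the first term carries \emph{no} factor of $T$ at all---$\|X\|_{L^2_T\dot H^3}$ is comparable to $\|X_0\|_{\dot H^{5/2}}$ even as $T\to 0$. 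The smallness there must come from a separate parameter $\delta$, introduced by splitting the kernel into a near-diagonal piece (where Lemma \ref{lemma: estimates for L M N} with $|I|=\delta$ gains $\delta^{1/2}$) and a far-field piece (bounded entirely in lower norms at the price of $|\ln\delta|$). This is the content of Lemma \ref{lemma: H2 estimate of g_X}: one first shrinks $\delta$ to absorb the $\dot H^3$ contribution, then shrinks $T$ to absorb the rest. Without this two-parameter decomposition, the Banach contraction you propose does not close.

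Second, a genuinely different route, worth noting: you aim for a Banach fixed point directly, whereas the paper runs a Schauder fixed point (compactness in $C_T H^2$ via Aubin--Lions) for existence, and then proves uniqueness separately with the Lipschitz estimate Corollary \ref{coro: H2 estimate for g_X0-g_X1}. Your contraction approach is in principle viable, since the paper's Lipschitz estimate, after taking $\mu=2$ and using Sobolev $\dot W^{2,4}\hookrightarrow\dot H^{5/2}$, does close in $\Omega_T$. The trade-off is that the contraction demands the Lipschitz estimate up front (with all its $\dot W^{2,2+\mu}$ subtleties), while Schauder requires only the self-map estimate plus compactness. Either way, the near/far $\delta$-splitting is indispensable; once you substitute that mechanism for your $L^\infty_T H^2$ claim, the rest of your outline is sound. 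One further technicality the paper handles and you omit: $\mathcal{L}$ annihilates constants, so the semigroup provides no decay on the mean $\bar X$; the paper separates $X=\bar X+\tilde X$ and solves a scalar ODE for $\bar X$, which you would need to incorporate as well.
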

We write $C_{[0,T_0]}H^{5/2}(\mathbb{T})$ instead of $C_{T_0}H^{5/2}(\mathbb{T})$ to stress continuity up to the end points of the time interval.

\begin{theorem}[Uniqueness of the local-in-time solution]\label{thm: local in time uniqueness}
Suppose $X_0(s) \in H^{5/2}(\mathbb{T})$ satisfies \eqref{eqn: bi Lipschitz assumption in main thm} with some $\lambda>0$.
Given an arbitrary $c\in(0,1)$, the immersed boundary problem \eqref{eqn: contour dynamic formulation of the immersed boundary problem} has at most one solution $X\in\Omega_T$ satisfying that $\forall\,s_1,s_2\in\mathbb{T}$ and $\forall\,t\in[0,T]$,
\begin{equation}
|X(s_1,t)-X(s_2,t)|\geq c\lambda|s_1-s_2|.
\label{eqn: bi lipschitz assumption in uniqueness thm}
\end{equation}
In particular, the local-in-time solution obtained in Theorem \ref{thm: local in time existence} is unique in $\Omega_{T_0}$.
\end{theorem}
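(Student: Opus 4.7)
The natural approach is a difference-energy argument. Given two solutions $X_1, X_2 \in \Omega_T$ sharing the initial datum $X_0$ and both verifying \eqref{eqn: bi lipschitz assumption in uniqueness thm} with constant $c\lambda$, set $W \triangleq X_1 - X_2$; subtracting the two copies of \eqref{eqn: contour dynamic formulation of the immersed boundary problem} gives $W_t = \mathcal{L}W + (g_{X_1}-g_{X_2})$ with $W(\cdot,0)\equiv 0$. Pairing in $L^2(\mathbb{T})$ and using self-adjointness and negativity of $\mathcal{L}=-\tfrac14(-\Delta)^{1/2}$,
\begin{equation*}
\frac{1}{2}\frac{d}{dt}\|W\|_{L^2}^2 + \frac{1}{4}\|W\|_{\dot H^{1/2}}^2 = \langle W,\,g_{X_1}-g_{X_2}\rangle_{L^2(\mathbb{T})}.
\end{equation*}
The entire proof then reduces to the Lipschitz-type bound
\begin{equation*}
\bigl|\langle W,\,g_{X_1}-g_{X_2}\rangle_{L^2}\bigr| \le \tfrac{1}{8}\|W\|_{\dot H^{1/2}}^2 + \Phi(t)\,\|W\|_{L^2}^2,
\end{equation*}
with $\Phi\in L^1([0,T])$ depending only on $c\lambda$ and $\|X_i\|_{\Omega_T}$: absorbing the dissipation and applying Grönwall to $W(\cdot,0)\equiv 0$ forces $W\equiv 0$ on $[0,T]$. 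The last sentence of the theorem is then immediate from \eqref{eqn: uniform bi lipschitz constant of the local solution in the main theorem}, which supplies the constant $c=1/2$ for the solution produced by Theorem~\ref{thm: local in time existence}.

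To produce the Lipschitz bound I would decompose
\begin{equation*}
g_{X_1}-g_{X_2} = \int_{\mathbb{T}}(\Gamma_0^{X_1}(s,s')-\Gamma_0^{X_2}(s,s'))\,ds' + \tfrac{1}{4}(-\Delta)^{1/2}W,
\end{equation*}
and split $\Gamma_0^{X_1}-\Gamma_0^{X_2}$ telescopically according to the three $X$-dependent factors in \eqref{eqn: introduce the notation Gamma_0}, so that each piece is linear in $W$ in a single factor. The kernel factor is treated by the convex-combination identity
\begin{equation*}
G(X_1(s)-X_1(s'))-G(X_2(s)-X_2(s')) = \int_0^1 \nabla G(X_\tau(s)-X_\tau(s'))\cdot(W(s)-W(s'))\,d\tau,
\end{equation*}
with $X_\tau \triangleq \tau X_1+(1-\tau)X_2$. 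The key ingredient is the uniform-in-$\tau$ lower bound $|X_\tau(s)-X_\tau(s')| \ge \tfrac{1}{2}c\lambda|s-s'|$, which I would obtain by a short-time continuation argument: using $W(\cdot,0)\equiv 0$ and the embedding $\Omega_T \hookrightarrow C_T H^{5/2}(\mathbb{T}) \hookrightarrow C_T C^1(\mathbb{T})$, the quantity $\|W(\cdot,t)\|_{C^1}$ is arbitrarily small on some initial interval $[0,\delta]$, so that $X_\tau$ inherits well-stretchedness from $X_1$. After these cancellations the pairing reduces to bilinear singular integrals in $W$ controllable by standard Sobolev product and commutator estimates on $\mathbb{T}$ in terms of $\|W\|_{L^2}$, $\|W\|_{\dot H^{1/2}}$, and $\|X_i\|_{H^{5/2}}$, and Young's inequality delivers the bound. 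Once $W\equiv 0$ on $[0,\delta]$, I restart the argument at $t=\delta$ (where $X_1 = X_2$) and iterate, covering $[0,T]$ in finitely many steps since $\delta$ is uniformly controlled by $c\lambda$ and $\|X_i\|_{\Omega_T}$.

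The principal obstacle is the precise bookkeeping of the cancellation between the logarithmic principal part of $\nabla G(X_\tau(s)-X_\tau(s'))$ and the explicit $\tfrac{1}{4}(-\Delta)^{1/2}W$ counterterm built into $g_X$ in \eqref{eqn: definition of g_X}. Because the Stokeslet is only log-bounded, any crude estimate of the kernel difference loses a full derivative on $W$ that the $\dot H^{1/2}$-dissipation cannot afford; the decomposition has to isolate the piece that recombines with the half-Laplacian counterterm into a genuinely lower-order operator, leaving a remainder that can be estimated on its own. A secondary but essential ingredient is the cancellation factor $X_i'(s')-X_i'(s)$ in $\Gamma_0$, which supplies the extra vanishing moment required to make the residual singular integrals convergent near the diagonal; without it the decomposition above would not close.
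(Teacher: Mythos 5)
Your strategy — an $L^2$ energy estimate for $W=X_1-X_2$ closed by Grönwall — is a genuinely different route from the paper, which instead subtracts the means, estimates $\widetilde{g_{X_1}}-\widetilde{g_{X_2}}$ in $L^2_t\dot H^2$ via Corollary~\ref{coro: H2 estimate for g_X0-g_X1} (with its $\delta$-split), and feeds this into the linear estimate of Lemma~\ref{lemma: a priori estimate of nonlocal eqn} to obtain a self-improving bound $\|\tilde Q\|_{L^\infty_t\dot H^{5/2}\cap L^2_t\dot H^3}\le\theta\,\|\tilde Q\|_{L^\infty_t\dot H^{5/2}\cap L^2_t\dot H^3}$ with $\theta<1$ for $t$ small, then iterates. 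That is a smallness/contraction argument at the $\dot H^{5/2}/\dot H^3$ level, not a Grönwall argument at the $L^2$ level, and the distinction is what makes it close.

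The crux of your proposal is the posited bound
$\left|\langle W, g_{X_1}-g_{X_2}\rangle\right| \le \tfrac{1}{8}\|W\|_{\dot H^{1/2}}^2 + \Phi(t)\|W\|_{L^2}^2$ with $\Phi\in L^1$,
and this is where I see a genuine gap rather than a bookkeeping step. The only $L^2$-level Lipschitz estimate available (Corollary~\ref{coro: L2 estimate for g_X1-g_X2}) gives $\|g_{X_1}-g_{X_2}\|_{L^2}\lesssim \|W\|_{\dot H^2}$, so the naive pairing produces $\|W\|_{L^2}\|W\|_{\dot H^2}$, a two-derivative loss on $W$ that the half-derivative dissipation cannot absorb: interpolating $\|W\|_{\dot H^2}$ against $\|W\|_{\dot H^{5/2}}$ or $\|W\|_{\dot H^3}$ (both of which are available since $W\in\Omega_T$) yields either a sublinear ODE $y'\lesssim y^\theta$, $\theta<1$, which does not have uniqueness from $y(0)=0$, or a term $\varepsilon\|W\|_{\dot H^3}^2$ with no mechanism for absorption. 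To make your scheme work you would need a structurally different Lipschitz estimate — essentially an order-$\tfrac12$ (or lower) commutator bound on the linearization of $g_X$ — exploiting precisely the cancellation between $\partial_{s'}G(X_\tau(s)-X_\tau(s'))$ and the $\tfrac14(-\Delta)^{1/2}$ counterterm that you flag as "the principal obstacle." You correctly identify this as the hard part, but you do not produce the decomposition, and nothing in the paper supplies it; it is not a matter of "standard Sobolev product and commutator estimates" because the difference $g_{X_1}-g_{X_2}$ contains the term $M_W=(W'(s')-W'(s))/(s'-s)$, whose $L^2_{s,s'}$ control genuinely requires $W''$. A secondary issue: your $L^2$ pairing also has a mean-mode contribution $\bar W\cdot(\overline{g_{X_1}}-\overline{g_{X_2}})$ that is not touched by the $\dot H^{1/2}$ dissipation, so you should split off $\bar W$ and handle it by the ODE for the mean, as the paper does. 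Your convex-combination trick and short-time continuation argument for the well-stretchedness of $X_\tau$ are sound and reasonable, and your final sentence (using \eqref{eqn: uniform bi lipschitz constant of the local solution in the main theorem} with $c=1/2$) is the right observation; but absent the missing Lipschitz bound, the main step of the proof does not go through.
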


To state the results on the global existence of solutions near equilibrium configurations and its exponential convergence, we need the following definition.
\begin{definition}\label{def: closest equilbrium state}
Assume $Y(s) \in H^{5/2}(\mathbb{T})$ defines a Jordan curve in the plane, s.t.\;the area of domain enclosed by $Y$ is $\pi R_Y^2$ with $R_Y>0$, i.e.,
\begin{equation}
\frac{1}{2}\int_{\mathbb{T}} Y(s)\times Y'(s)\,ds = \pi R_Y^2.
\label{eqn: enclosed area is pi}
\end{equation}
We call $R_Y$ the \emph{effective radius} of $Y(s)$.
Define
\begin{equation}
Y_{\theta,x}(s) = (R_Y\cos (s+\theta), R_Y\sin(s+\theta))^T + x
\label{eqn: define a parameterization of the candidate equilibrium}
\end{equation}
with $\theta\in[0,2\pi)$ and $x\in \mathbb{R}^2$.
Let
\begin{equation}
(\theta_*,x_*) =\argmin_{\theta\in[0,2\pi), x\in\mathbb{R}^2}\int_{\mathbb{T}}|Y(s)-Y_{\theta,x}(s)|^2\,ds.
\label{eqn: define closest equilibrium and optimal parameters}
\end{equation}
Then $Y_*(s) \triangleq Y_{\theta_*,x_*}(s)$ is called \emph{the closest equilibrium configuration} to $Y(s)$.
\end{definition}
Properties of the closest equilibrium configuration will be discussed in Section \ref{section: global existence}.
Now we have
\begin{theorem}[Existence and uniqueness of global-in-time solution near equilibrium]\label{thm: global existence near equilibrium}
There exist universal $\varepsilon_*, \xi_*>0$, such that for $\forall\, X_0(s)\in H^{5/2}(\mathbb{T})$ satisfying
\begin{align}
\|X_0(s) - X_{0*}(s)\|_{\dot{H}^{5/2}(\mathbb{T})}\leq &\;\varepsilon_* R_{X_0},\label{eqn: closeness condition of H 2.5 norm}\\
\|X_0(s) - X_{0*}(s)\|_{\dot{H}^{1}(\mathbb{T})}\leq &\;\xi_* R_{X_0},\label{eqn: closeness condition of H 1 norm}
\end{align}
with $X_{0*}(s)$ being the closest equilibrium configuration to $X_0(s)$, there exists a unique solution $X(s,t)\in C_{[0,+\infty)}H^{5/2}\cap L^2_{[0,+\infty),loc}H^3(\mathbb{T})$ satisfying $X_t(s,t)\in L^2_{[0,+\infty),loc}H^2(\mathbb{T})$ for the immersed boundary problem \eqref{eqn: contour dynamic formulation of the immersed boundary problem}.
It satisfies the following estimates
\begin{align}
\|X-X_{*}\|_{L^{\infty}_{[0,+\infty)}\dot{H}^{5/2}(\mathbb{T})}\leq &\; \sqrt{2}\varepsilon_* R_{X_0},\label{eqn: estimates on the distance to the equilibrium for the global solution in all time intervals}\\
\left|X(s_1,t) - X(s_2,t)\right| \geq &\;\frac{1}{2\pi}|s_1 - s_2|,\quad \forall \,t\in[0,+\infty),\;s_1,s_2\in\mathbb{T}.\label{eqn: well-stretched constant estimates for the global solution in all time intervals}
\end{align}
In particular,
\begin{equation}
\|X\|_{L^{\infty}_{[0,+\infty)}\dot{H}^{5/2}(\mathbb{T})}\leq CR_{X_0}
\label{eqn: uniform bound of H 2.5 norm for the global solution}
\end{equation}
for some universal $C$.
\end{theorem}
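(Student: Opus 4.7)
The plan is to prove global existence through a continuation argument based on Theorem~\ref{thm: local in time existence}, using a uniform a priori bound for the deviation from the equilibrium manifold; uniqueness then follows from Theorem~\ref{thm: local in time uniqueness}. First I would verify that every evenly-parameterized circle $X_{\theta,x}$ is a steady state of the contour dynamic equation, i.e.\ $\mathcal{L} X_{\theta,x} + g_{X_{\theta,x}} \equiv 0$. This reduces to a direct Fourier computation on $\mathbb{T}$: $(-\Delta)^{1/2}$ acts as the identity on the first mode (the only non-trivial mode of $X_{\theta,x}$), while the rotational symmetry of the Stokeslet integral $\int_{\mathbb{T}}\Gamma_0\,ds'$ evaluated on a circle forces its output to be parallel to that same mode, producing the exact cancellation. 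Since the flow is divergence-free and the string moves with the flow, the enclosed area is conserved, so $R_{X(t)} \equiv R_{X_0}$ for all $t$.

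Next I would set up a modulation ansatz: for $X$ close to the equilibrium manifold, write $X(s,t) = X_{\theta(t), x(t)}(s) + Y(s,t)$ where $(\theta(t), x(t))$ are chosen so that $Y(\cdot,t)$ remains $L^2$-orthogonal to the three directions $\partial_\theta X_{\theta,x}$, $e_1$, $e_2$ spanning the tangent space of the equilibrium manifold. The closest-equilibrium conditions in Definition~\ref{def: closest equilbrium state} provide precisely these orthogonalities at $t=0$. An implicit function theorem argument gives $C^1$ evolution of $(\theta(t), x(t))$ and yields a system of ODEs for the modulation parameters coupled to a PDE for $Y$ of the schematic form
\[
Y_t = \mathcal{L} Y + \mathcal{M}_* Y + \mathcal{Q}(Y) + \text{(modulation corrections)},
\]
where $\mathcal{M}_*$ is the linearization of $Z \mapsto g_Z$ at $X_*$, and $\mathcal{Q}(Y)$ is at least quadratic in $Y$ with coefficients controlled by the well-stretched constant of $X$.

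The core a priori estimate is performed in $\dot{H}^{5/2}$. Thanks to the dissipativity of $\mathcal{L} = -\tfrac14(-\Delta)^{1/2}$ and the orthogonality of $Y$ to the three kernel modes (eliminating its zeroth and first Fourier components), one obtains a spectral gap and a coercive energy inequality
\[
\frac{1}{2}\frac{d}{dt}\|Y\|_{\dot{H}^{5/2}}^2 + c\|Y\|_{\dot{H}^{3}}^2 \leq C\bigl(\|Y\|_{\dot{H}^{5/2}}\bigr)\|Y\|_{\dot{H}^{3}}^2,
\]
where the right-hand side gathers the linearization error, the quadratic term $\mathcal{Q}(Y)$, and the modulation corrections, estimated with the commutator and product bounds already developed for Theorem~\ref{thm: local in time existence}. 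Provided $\|Y\|_{\dot{H}^{5/2}} \leq 2\varepsilon_* R_{X_0}$ with $\varepsilon_*$ small enough, the right-hand side is absorbed into the dissipation, leaving $\tfrac{d}{dt}\|Y\|_{\dot{H}^{5/2}}^2 \leq 0$ and hence \eqref{eqn: estimates on the distance to the equilibrium for the global solution in all time intervals}. The second smallness condition \eqref{eqn: closeness condition of H 1 norm} enters separately: by Gagliardo--Nirenberg interpolation, $\|X_s - (X_*)_s\|_{L^\infty}$ is controlled by a product of powers of $\|X - X_*\|_{\dot{H}^1}$ and $\|X - X_*\|_{\dot{H}^{5/2}}$, so the joint smallness of both norms, together with $|(X_*)_s| \equiv R_{X_0}$, preserves the well-stretched constant at $1/(2\pi)$ and yields \eqref{eqn: well-stretched constant estimates for the global solution in all time intervals}.

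A standard bootstrap then closes the argument: Theorem~\ref{thm: local in time existence} produces a local solution whose existence time depends only on $R_{X_0}$ through the $\dot{H}^{5/2}$ bound and the well-stretched constant, and the a priori estimate above prevents either of these from degenerating, so the local solution can be iterated in steps of uniform length to yield a global solution with the stated regularity and bounds. Uniqueness follows from Theorem~\ref{thm: local in time uniqueness}. The main obstacle will be executing the modulation step cleanly --- showing that $(\theta_*(t), x_*(t))$ are uniquely and smoothly defined for all $t$ and that the modulation corrections in the $Y$-equation are genuinely of higher order --- and then coordinating the $\dot{H}^{5/2}$ energy inequality with the $\dot{H}^1$ smallness needed to preserve the bi-Lipschitz constant.
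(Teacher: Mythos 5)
Your modulation plus spectral-gap strategy is not the paper's argument, and as written it has a gap that is unlikely to be closable without importing the paper's actual mechanism. The paper does not linearize around the equilibrium and does not introduce a modulation decomposition; instead it compares $X(t)$ directly with the \emph{fixed} steady state $X_{0*}$, uses the difference estimate of Corollary~\ref{coro: H2 estimate for g_X0-g_X1} to get an inequality with an explicit $\delta$-parameter and $(|\ln\delta|+1)$ growth, and closes the argument by coordinating that inequality with the \emph{global} monotone decrease of the elastic energy (Lemma~\ref{lemma: energy estimate}, translated into an $\dot{H}^1$ bound via Lemma~\ref{lemma: estimates concerning closest equilbrium}) — this is exactly the content of Lemma~\ref{lemma: bound and decay for H2.5 difference when energy difference is small}. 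The $\dot{H}^{5/2}$ bound does \emph{not} decay monotonically; it may grow by a factor $\sqrt2$ over a short window, and a contradiction argument shows it must shrink at \emph{some} later time once the $\dot{H}^1$ distance is sufficiently small.

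The concrete gap in your proposal is the claimed inequality
\[
\tfrac{1}{2}\tfrac{d}{dt}\|Y\|_{\dot H^{5/2}}^2 + c\|Y\|_{\dot H^3}^2 \le C\bigl(\|Y\|_{\dot H^{5/2}}\bigr)\|Y\|_{\dot H^3}^2
\]
with a coefficient $C(\cdot)$ that vanishes as $\|Y\|_{\dot H^{5/2}}\to 0$. The available estimate for the nonlinearity is Corollary~\ref{coro: H2 estimate for g_X0-g_X1}: the piece that multiplies $\|Y\|_{\dot H^3}$ carries a factor $\delta^{1/2}(\|X_1\|_{\dot H^{5/2}}+\|X_2\|_{\dot H^{5/2}})^2$, which is small only through $\delta$, not through $\|Y\|_{\dot H^{5/2}}$; making $\delta$ small forces up the $(|\ln\delta|+1)$-weighted term, and that term is \emph{not} absorbed by the $\dot H^3$ dissipation without borrowing against the $\dot H^1$ smallness (via interpolation $\|Y\|_{\dot W^{2,4}}\lesssim\|Y\|_{\dot H^1}^{1/6}\|Y\|_{\dot H^{5/2}}^{5/6}$). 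Even though the linearized operator is a Fourier multiplier with a true spectral gap off the kernel modes (this is essentially Lemma~\ref{lemma: final representation of the linearization of velocity near equilibrium}), the quadratic remainder of the singular-integral nonlinearity re-introduces the logarithmic defect, and the paper has no second-order Taylor estimate for $g_X$ that would control it; Corollary~\ref{coro: H2 estimate for g_X0-g_X1} only estimates first differences.

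This also explains why your interpretation of the hypothesis \eqref{eqn: closeness condition of H 1 norm} is off. You assign the $\dot H^1$ smallness the job of preserving the bi-Lipschitz constant, but that already follows from the $\dot H^{5/2}$ smallness together with $H^{5/2}\hookrightarrow C^1$ (exactly the computation in the proof: $|Z(s_1)-Z(s_2)|\ge(2/\pi - C_5\varepsilon_0)|s_1-s_2|$). The genuine role of the $\dot H^1$ smallness is that, combined with the energy dissipation law of Lemma~\ref{lemma: energy estimate} and the equivalence of Lemma~\ref{lemma: estimates concerning closest equilbrium}, it gives a $\dot H^1$ distance to the equilibrium manifold that is uniformly controlled for \emph{all} time, and this is precisely what the nonlinear $\dot H^{5/2}$ estimate needs. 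If the purely $\dot H^{5/2}$ argument closed as you describe, the theorem would not need the $\dot H^1$ hypothesis at all. To make your route rigorous you would need to prove a quadratic-remainder version of Corollary~\ref{coro: H2 estimate for g_X0-g_X1} in which the coefficient of $\|Y\|_{\dot H^3}$ is genuinely $O(\|Y\|_{\dot H^{5/2}})$ with no residual log loss, which is not available and is precisely what the paper's $\delta$-splitting is working around.
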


\begin{theorem}[Exponential convergence to the equilibriums]\label{thm: exponential convergence}
Let $X_0\in H^{5/2}(\mathbb{T})$ satisfy all the assumptions in Theorem \ref{thm: global existence near equilibrium} and let $X$ be the unique global solution of \eqref{eqn: contour dynamic formulation of the immersed boundary problem} starting from $X_0$ obtained in Theorem \ref{thm: global existence near equilibrium}.
There exist universal constants $\xi_{**}, \alpha_*>0$, such that if in addition
\begin{equation*}
\|X_0(s) - X_{0*}(s)\|_{\dot{H}^{1}(\mathbb{T})}\leq \xi_{**} R_{X_0},\label{eqn: closeness condition of H 1 norm for exp convergence}
\end{equation*}
then
\begin{enumerate}
\item With some universal constant $C>0$,
\begin{equation}
\begin{split}
&\;\|X-X_{*}\|_{\dot{H}^{5/2}(\mathbb{T})}(t) \\
&\;\quad \leq Ce^{-\alpha_* t}\max\{\|X_0-X_{0*}\|_{\dot{H}^{5/2}(\mathbb{T})},\|X_0-X_{0*}\|_{\dot{H}^{1}(\mathbb{T})}(|\ln \|X_0-X_{0*}\|_{\dot{H}^{1}(\mathbb{T})}|+1)^2\}\\
&\;\quad \triangleq Ce^{-\alpha_* t} B(X_0).
\end{split}
\label{eqn: exp convergence in H2.5 norm}
\end{equation}
\item 
There exists an equilibrium configuration $X_\infty\triangleq x_\infty+(R_{X_0}\cos(s+\theta_\infty), R_{X_0}\sin(s+\theta_\infty))^T$, such that 
\begin{equation}
\|X(t)-X_{\infty}\|_{\dot{H}^{5/2}(\mathbb{T})}\leq CB(X_0)e^{-\alpha_* t},
\label{eqn: exp convergence to a fixed configuration}
\end{equation}
where $C$ is a universal constant and $B(X_0)$ is defined in \eqref{eqn: exp convergence in H2.5 norm}.
\end{enumerate}
\end{theorem}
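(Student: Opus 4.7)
The plan is to linearize the contour-dynamic equation around the (slowly moving) closest equilibrium $X_*(\cdot,t)$ and to exploit the dissipation of $\mathcal{L}=-\tfrac14(-\Delta)^{1/2}$ modulo the three-dimensional kernel of rigid motions. Starting from the global solution $X$ produced by Theorem \ref{thm: global existence near equilibrium}, I would set $W(s,t)=X(s,t)-X_*(s,t)$. The variational characterization in Definition \ref{def: closest equilbrium state} forces $W(\cdot,t)$ to be $L^2(\mathbb{T})$-orthogonal to the tangent space of the rigid-motion manifold at $X_*$, which is precisely the span of $(1,0)$, $(0,1)$ and $\partial_\theta X_{\theta_*,x_*}$, i.e., the modes $|k|\le 1$ in the Fourier basis adapted to $X_*$. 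Using $\mathcal{L}X_*+g_{X_*}=0$, the function $W$ satisfies
\[
W_t-\partial_t X_* = \mathcal{L}W + DG[X_*]W + N_{X_*}(W),
\]
where $DG[X_*]$ is the Fréchet derivative of $X\mapsto g_X$ at $X_*$ and $N_{X_*}(W)$ is the (at least) quadratic remainder. Since $\partial_t X_*$ is tangent to the equilibrium manifold while $W$ is orthogonal to it, $\partial_t X_*$ only feeds into the low-frequency part, where $\mathcal{L}$ has no dissipation, and so is harmless in the $\dot H^{5/2}$ energy identity.

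The heart of the argument is a Fourier-based energy estimate in $\dot H^{5/2}(\mathbb{T})$. The eigenvalues of $\mathcal{L}$ are $-|k|/4$, and a direct expansion of $g_X$ about the circle $X_*$ identifies $DG[X_*]$ as a bounded symmetric Fourier multiplier that vanishes on modes $|k|\le 1$ (these are exactly the infinitesimal rigid motions). Combined with the orthogonality of $W$, this should yield a universal spectral gap $\alpha_*\in(0,1/4)$,
\[
\langle \mathcal{L}W + DG[X_*]W,\,W\rangle_{\dot H^{5/2}} \le -2\alpha_*\,\|W\|_{\dot H^{5/2}}^2.
\]
The nonlinear remainder $N_{X_*}(W)$ can be estimated in $\dot H^{5/2}$ by the very same singular-integral machinery used in Theorems \ref{thm: local in time existence}--\ref{thm: global existence near equilibrium}, giving schematically $\|N_{X_*}(W)\|_{\dot H^{5/2}}\lesssim \|W\|_{\dot H^{5/2}}\|W\|_{\dot H^{3}}$; its contribution is absorbed against the $\dot H^{3}$ part of the dissipation provided $\|W\|_{\dot H^{5/2}}$ stays bounded by $\varepsilon_* R_{X_0}$ as guaranteed by \eqref{eqn: estimates on the distance to the equilibrium for the global solution in all time intervals}. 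Grönwall then produces the bare decay $\|W\|_{\dot H^{5/2}}(t)\le e^{-\alpha_* t}\|W(0)\|_{\dot H^{5/2}}$.

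To sharpen the prefactor to the quantity $B(X_0)$ appearing in \eqref{eqn: exp convergence in H2.5 norm}, I would carry the extra smallness \eqref{eqn: closeness condition of H 1 norm} through time by an analogous spectral-gap estimate in $\dot H^{1}$, and then interpolate $\dot H^{5/2}$ between $\dot H^{1}$ and $\dot H^{3}$; the factor $(|\ln\|X_0-X_{0*}\|_{\dot H^{1}}|+1)^2$ then arises as the logarithmic transient needed for the higher-regularity energy to equilibrate to the much smaller scale set by the $\dot H^1$ perturbation.

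Finally, once $\|W\|_{\dot H^{5/2}}(t)$ decays exponentially, so does $X_t$ in a norm strong enough to embed into $L^\infty$, so $\int_0^\infty\|X_t\|_{L^\infty}\,dt<\infty$ and $X(t)$ is Cauchy in $\dot H^{5/2}$, converging to a limit $X_\infty$. The area enclosed by $X(t)$ is conserved under the divergence-free flow, so $X_\infty$ has effective radius $R_{X_0}$; since $\|X(t)-X_*(t)\|_{\dot H^{5/2}}\to 0$, the limit must itself be a round parametrization of a circle, hence of the stated form $x_\infty+(R_{X_0}\cos(s+\theta_\infty),R_{X_0}\sin(s+\theta_\infty))^T$, and \eqref{eqn: exp convergence to a fixed configuration} follows by integrating the exponential decay of $X_t$ from $t$ to $\infty$. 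The main obstacle will be establishing the spectral gap for $\mathcal{L}+DG[X_*]$ uniformly in $t$---this requires an explicit Fourier-basis computation of $DG[X_*]$ on the circle, and verifying that its kernel is saturated exactly by the variational orthogonality built into Definition \ref{def: closest equilbrium state}---together with the closely related bookkeeping needed to extract the $(|\ln\cdot|+1)^2$ factor from the two-scale interpolation between $\dot H^1$ and $\dot H^{5/2}$.
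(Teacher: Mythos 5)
Your overall strategy---linearize around $X_*$, identify a spectral gap for the linearized operator on the orthogonal complement of the rigid-motion kernel, and close an $\dot H^{5/2}$ energy-Gr\"onwall estimate---is plausible as a strategy and is \emph{not} the route taken in the paper. The paper proves a lower bound on the energy-dissipation rate (Lemma~\ref{lemma: lower bound for the energy dissipation rate in terms of excess energy}, via the linearization in Lemma~\ref{lemma: final representation of the linearization of velocity near equilibrium}), which together with the energy law \eqref{eqn: energy estimate of Stokes immersed boundary problem} yields exponential decay of $\|X-X_*\|_{\dot H^1}$ (Corollary~\ref{coro: exponential decay of H1 distance from equilibrium when it is in a small H2.5 neighborhood}); it then \emph{bootstraps} that $\dot H^1$ decay up to $\dot H^{5/2}$ through a discrete-time iteration based on Lemma~\ref{lemma: bound and decay for H2.5 difference when energy difference is small} and Corollary~\ref{coro: refined decay estimate of global solution}. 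The Fourier/spectral computation you describe does appear in the paper, but only at the $L^2$-velocity level inside Lemma~\ref{lemma: lower bound for the energy dissipation rate in terms of excess energy}, not as a direct $\dot H^{5/2}$-coercivity statement.

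There are two concrete gaps. First, a small correction: $Dg_{X_*}$ does \emph{not} vanish on modes $|k|\le 1$. From Lemma~\ref{lemma: final representation of the linearization of velocity near equilibrium}, $Dg_{X_*}W=-\tfrac14\bigl(\begin{smallmatrix}0&1\\-1&0\end{smallmatrix}\bigr)\mathcal{H}W$, a zeroth-order multiplier that acts nontrivially on every mode; what is true is that the \emph{combined} operator $\mathcal{L}+Dg_{X_*}$ has eigenvalues $-\tfrac{|k|\mp1}{4}$, so its kernel sits at $|k|=1$ (and $k=0$), and it is that kernel---not the vanishing of $Dg_{X_*}$---that the variational constraints of Definition~\ref{def: closest equilbrium state} remove. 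This is fixable, but the statement as written is wrong.

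Second, and more seriously, your plan does not actually produce the prefactor $B(X_0)$ with its $(|\ln\|X_0-X_{0*}\|_{\dot H^1}|+1)^2$ factor. You attribute the logarithm to a ``two-scale interpolation'' between $\dot H^1$ and $\dot H^{5/2}$, but interpolation of seminorms is log-free. The logarithm in the paper comes from the singular-integral estimate on $g_X''$: Lemma~\ref{lemma: H2 estimate of g_X} (and Corollary~\ref{coro: H2 estimate for g_X0-g_X1}) necessarily carries a $(|\ln\delta|+1)$ factor against a $\delta^{1/2}$-small coefficient of $\|X\|_{\dot H^3}$, and the two-parameter optimization over $\delta$ and the length of the time interval is what manufactures $(|\ln\cdot|+1)^2$ (see Lemma~\ref{lemma: bound and decay for H2.5 difference when energy difference is small} and the constant $c_*$ in \eqref{eqn: defintion of c_*}). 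Relatedly, your proposed absorption of the nonlinear remainder against the $\dot H^3$ dissipation is not visibly closeable: an estimate of the form $\|N_{X_*}(W)\|_{\dot H^2}\lesssim\|W\|_{\dot H^{5/2}}\|W\|_{\dot H^3}$ pairs with $\|W\|_{\dot H^3}$ to give $\|W\|_{\dot H^{5/2}}\|W\|_{\dot H^3}^2$, which is \emph{not} subordinate to the dissipation $\|W\|_{\dot H^3}^2$ merely because $\|W\|_{\dot H^{5/2}}$ is small of size $\varepsilon_*R_{X_0}$ independently of the decay scale; the paper sidesteps precisely this difficulty by running the coercivity argument in $\dot H^1$, where the energy identity is exact, and then importing the $\dot H^1$ smallness into the $\dot H^{5/2}$ estimate through the $\delta$-parametrized splitting. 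Without those two ingredients your Gr\"onwall argument will not close with the claimed prefactor; at best it would give decay with a prefactor depending on $\|X_0-X_{0*}\|_{\dot H^{5/2}}$ alone.

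Your sketch of part 2 (exponential integrability of $X_t$ $\Rightarrow$ Cauchy in time $\Rightarrow$ limit is a circle of radius $R_{X_0}$ by area conservation) is essentially correct and matches the paper's argument, modulo the weak-compactness step the paper uses to promote the $H^1$-limit to $H^{5/2}$ before identifying it with an equilibrium.
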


The rest of the paper is organized as follows.
In Section \ref{section: justification of contour dynamic formulation}, the reformulation in Proposition \ref{prop: tranform into contour dynamic formulation} is justified.
We also discuss properties of the flow field and law of energy dissipation in the system; their proofs are left to the Appendix \ref{appendix section: study of the flow field}.
In Section \ref{section: a priori estimates}, we will prove a priori estimates necessary for proving the local well-posedness of the contour dynamic formulation \eqref{eqn: contour dynamic formulation of the immersed boundary problem}.
In particular, in Section \ref{section: preliminary a priori estimates}, we prove some preliminary estimates as building blocks of more complicated bounds in Section \ref{section: a priori estimates of the immersed boundary problem}, which is devoted to finding out derivatives of $g_X$ and proving its $H^2$-estimate.
In Section \ref{section: local existence and uniqueness}, we will establish the local well-posedness of \eqref{eqn: contour dynamic formulation of the immersed boundary problem}.
In Section \ref{section: global existence}, we will show global-in-time existence of solutions of \eqref{eqn: contour dynamic formulation of the immersed boundary problem} provided that the initial configuration is sufficiently close to an equilibrium configuration.
In Section \ref{section: exp convergence}, we will first prove a lower bound of the rate of energy dissipation in Section \ref{section: lower bound for energy dissipation rate} when the solution is close to an equilibrium.
Based on that, we will show exponential convergence of the solution to an equilibrium configuration in Section \ref{section: proof of exponential convergence to equilibrium configurations}.
Some other auxiliary results will be stated and proved in the Appendix \ref{appendix section: estimates involving L} and \ref{appendix section: auxiliary calculations}.

\section{Problem Reformulation and the Flow Field}\label{section: justification of contour dynamic formulation}

\subsection{Proof of Proposition \ref{prop: tranform into contour dynamic formulation}}\label{section: proof of contour dynamic formulation}
We first justify Proposition \ref{prop: tranform into contour dynamic formulation}, which reformulates the original immersed boundary problem \eqref{eqn: stokes equation}-\eqref{eqn: kinematic equation of membrane} into the contour dynamic formulation \eqref{eqn: contour dynamic formulation of the immersed boundary problem}.
Some of the arguments are redundant for proving the proposition itself, but we still derive them here as they will be useful in proving Lemma \ref{lemma: the velocity field is continuous} and Lemma \ref{lemma: energy estimate} below.

\begin{proof}[Proof of Proposition \ref{prop: tranform into contour dynamic formulation}]
In 2-D stationary Stokes flow, the velocity field $u$ and the pressure $p$ are instantaneously determined by the forcing $f$ through fundamental solutions
\begin{equation}
G(x) = \frac{1}{4\pi}\left(-\ln |x| Id +\frac{x \otimes x}{|x|^2}\right),\quad Q(x) = \frac{x}{2\pi|x|^2},\label{eqn: fundamental solution for pressure for 2D Stokes equation}
\end{equation}
respectively \cite{pozrikidis1992boundary}, where $Id$ is the $2\times 2$-identity matrix. Hence,
\begin{equation}
\begin{split}
u_X(x,t) =&\;\int_{\mathbb{R}^2} G(x-y)f(y,t) \,dy=\int_{\mathbb{R}^2}\int_\mathbb{T} G(x-y)\delta(x-X(s',t))F(s',t) \,ds'dy\\
=&\;\int_{\mathbb{T}} G(x-X(s',t))X_{ss}(s',t) \,ds',
\end{split}
\label{eqn: expression for velocity field}
\end{equation}
This is well-defined for $x\not\in\Gamma_t$ and $X(\cdot,t)\in H^2(\mathbb{T})$. The subscript of $u_X$ stresses that it is determined by the configuration $X$.
For $x = X(s,t)\in\Gamma_t$, by \eqref{eqn: well_stretched assumption},
\begin{equation*}
|G(X(s)-X(s'))|\leq C(\lambda)(1+|\ln |s-s'||).
\end{equation*}
Hence, $G(X(s)-X(\cdot))\in L^2(\mathbb{T})$ and \eqref{eqn: expression for velocity field} is well-defined.


For $x\not \in \Gamma_t$, we do integration by parts in \eqref{eqn: expression for velocity field} and find that
\begin{equation}
u^i_X(x) = \int_{\mathbb{T}} -\partial_{s'} [G^{ij}(x-X(s'))][X'(s')-C_x]^j \,ds',
\label{eqn: expression of velocity field after integration by parts}
\end{equation}
where the superscripts stand for the indices of entries, and $C_x$ is any arbitrary constant vector independent of $s'$.
We may take $C_x = X'(s_x)$, where $s_x$ is defined by
\begin{equation}
|x-X(s_x)| = \inf_{s\in\mathbb{T}}|x-X(s)| = \mathrm{dist}(x,X(\mathbb{T})).
\label{eqn: definition of s_x}
\end{equation}
Note that $s_x$ may not be unique; pick an arbitrary one if it is the case.
Hence,
\begin{equation}
u_X(x) = 
\int_{\mathbb{T}} -\partial_{s'} [G(x-X(s'))](X'(s')-X'(s_x))\,ds'
\label{eqn: 2D velocity field}
\end{equation}
Similarly, by integration by parts and taking the indetermined constant to be $0$, we find for $x\not \in \Gamma_t$,
\begin{equation}
p_X(x,t) =\frac{1}{2\pi}\int_{\mathbb{T}}  \frac{|X'(s')|^2}{|X(s')-x|^2} - \frac{2[(X(s')-x)\cdot X'(s')]^2}{|X(s')-x|^4}\,ds'.
\label{eqn: 2D pressure field}
\end{equation}

For $x = X(s,t)\in \Gamma_t$, by \eqref{eqn: expression for velocity field},
\begin{equation*}
\begin{split}
u_X(X(s)) =&\;\lim_{\varepsilon \rightarrow 0^+}\int_{|s'-s|\geq \varepsilon} G(X(s)-X(s'))X''(s') \,ds'\\
=&\;\lim_{\varepsilon \rightarrow 0^+}\int_{|s'-s|\geq \varepsilon} -\partial_{s'}[G(X(s)-X(s'))](X'(s')-X'(s)) \,ds'\\
 &\;+\lim_{\varepsilon \rightarrow 0^+}G(X(s)-X(s-\varepsilon))(X'(s-\varepsilon)-X'(s)) \\
 &\;-\lim_{\varepsilon \rightarrow 0^+}G(X(s)-X(s+\varepsilon))(X'(s+\varepsilon)-X'(s)).
\end{split}
\end{equation*}
Using \eqref{eqn: well_stretched assumption} and the assumption that $X(\cdot,t)\in H^2(\mathbb{T})$, we find
\begin{equation*}
\begin{split}
|G(X(s)-X(s-\varepsilon))(X'(s-\varepsilon)-X'(s))|\leq &\;C(\lambda)(1+|\ln \varepsilon|)\varepsilon^{1/2}\|X'\|_{\dot{C}^{1/2}(\mathbb{T})}\\
\leq &\;C(\lambda)(1+|\ln \varepsilon|)\varepsilon^{1/2}\|X\|_{\dot{H}^2(\mathbb{T})}.
\end{split}
\end{equation*}
It goes to $0$ as $\varepsilon\rightarrow 0^+$. A similar bound holds for $|G(X(s)-X(s+\varepsilon))(X'(s+\varepsilon)-X'(s))|$. Hence,
\begin{equation}
\begin{split}
u_X(X(s))=&\;\mathrm{p.v.}\int_{\mathbb{T}} -\partial_{s'}[G(X(s)-X(s'))](X'(s')-X'(s)) \,ds'\\
=&\;\frac{1}{4\pi}\mathrm{p.v.}\int_{\mathbb{T}} \left[\frac{(X(s')-X(s))\cdot X'(s')}{|X(s')-X(s)|^2}Id \right.\\
&\;\quad- \frac{X'(s')\otimes (X(s')-X(s))+(X(s')-X(s))\otimes X'(s')}{|X(s')-X(s)|^2}\\
&\;\left.\quad+\frac{2(X(s')-X(s))\cdot X'(s') (X(s')-X(s))\otimes (X(s')-X(s))}{|X(s')-X(s)|^4}\right](X'(s')-X'(s)) \,ds'.
\end{split}
\label{eqn: velocity of membrane}
\end{equation}
In \eqref{eqn: introduce the notation Gamma_0}, we denoted the integrand in \eqref{eqn: velocity of membrane} by $\Gamma_0(s,s')$.
It is trivial to show that
\begin{equation*}
|\Gamma_0(s,s')|\leq C\lambda^{-1} |s'-s|^{-1/2}\|X\|_{\dot{C}^1(\mathbb{T})} \|X'\|_{\dot{C}^{1/2}(\mathbb{T})}\leq C\lambda^{-1}|s'-s|^{-1/2}\|X\|_{\dot{H}^2(\mathbb{T})}^2.
\end{equation*}
Hence, $\Gamma_0(s,s')$ is integrable, and the principal value integral in \eqref{eqn: velocity of membrane} can be replaced by the usual integral.
As a byproduct, we also find a bound for $u_X(X(s))$,
\begin{equation}
|u_X(X(s))|\leq C\lambda^{-1}\|X\|_{\dot{H}^2(\mathbb{T})}^2.
\label{eqn: a trivial L^infty bound for velocity}
\end{equation}
\eqref{eqn: velocity of membrane} together with \eqref{eqn: kinematic equation of membrane} gives \eqref{eqn: contour dynamic formulation of the immersed boundary problem}.
Once \eqref{eqn: contour dynamic formulation of the immersed boundary problem} is solved, we can recover $u$ and $p$ by \eqref{eqn: expression for velocity field} and \eqref{eqn: 2D pressure field}. The original immersed boundary problem is then solved.
This completes the proof of Proposition \ref{prop: tranform into contour dynamic formulation}.
\end{proof}

\begin{remark}
\eqref{eqn: velocity of membrane} can be equivalently written as
\begin{equation}
\begin{split}
u_X(X(s))=&\;\mathrm{p.v.}\int_{\mathbb{T}} -\partial_{s'}[G(X(s)-X(s'))]X'(s') \,ds'\\
=&\;\frac{1}{4\pi}\mathrm{p.v.}\int_{\mathbb{T}} \left[- \frac{|X'(s')|^2}{|X(s')-X(s)|^2}+\frac{2[(X(s')-X(s))\cdot X'(s')]^2}{|X(s')-X(s)|^4} \right](X(s')-X(s)) \,ds'.
\end{split}
\label{eqn: equivalent formualtion of membrane velocity}
\end{equation}
Indeed, under the assumptions $X(\cdot,t)\in H^2(\mathbb{T})$ and \eqref{eqn: well_stretched assumption}, 
\begin{equation}
\mathrm{p.v.}\int_{\mathbb{T}} -\partial_{s'}[G(X(s)-X(s'))]\,ds' =  \lim_{\varepsilon\rightarrow 0^+} G(X(s)-X(s+\varepsilon)) - G(X(s)-X(s-\varepsilon)) = 0.
\label{eqn: pv integral vanishes}
\end{equation}
To obtain the last convergence, we derive that, since $|X'(s)|\geq \lambda$,
\begin{equation*}
\ln \frac{|X(s)-X(s+\varepsilon)|}{|X(s)-X(s-\varepsilon)|} = \ln \frac{|X(s)-X(s+\varepsilon)|/\varepsilon}{|X(s)-X(s-\varepsilon)|/\varepsilon}\rightarrow \ln \frac{|X'(s)|}{|X'(s)|} = 0,
\end{equation*}
and similarly,
\begin{equation*}
\frac{(X(s)-X(s\pm\varepsilon))\otimes (X(s)-X(s\pm\varepsilon))}{|X(s)-X(s\pm \varepsilon)|^2}\rightarrow \frac{X'(s)\otimes X'(s)}{|X'(s)|^2}.
\end{equation*}
\eqref{eqn: equivalent formualtion of membrane velocity} can be viewed as taking $C_x = 0$ in \eqref{eqn: expression of velocity field after integration by parts}.
\qed
\end{remark}

\begin{remark}
The reason why we single out the term $\mathcal{L}X$ in Proposition \ref{prop: tranform into contour dynamic formulation} comes from the following suggestive calculation starting from \eqref{eqn: equivalent formualtion of membrane velocity}.
Note that the integrals in \eqref{eqn: velocity of membrane} and \eqref{eqn: equivalent formualtion of membrane velocity} give the same value, so we use them interchangeably.

Suppose $X(\cdot,t)$ is sufficiently smooth.
There is a singularity in the integrand of \eqref{eqn: equivalent formualtion of membrane velocity} as $s'\rightarrow s$.
Consider $s'$ very close to $s$ and we formally use $(s'-s)X'(s')$ to approximate $X(s')-X(s)$ in \eqref{eqn: equivalent formualtion of membrane velocity}.
In this way, when $|s'-s|$ is sufficiently small, we formally find
\begin{equation*}
-\partial_{s'}[G(X(s)-X(s'))]X'(s')\sim \frac{1}{4\pi} \frac{X'(s')}{s'-s}\sim -\frac{1}{4}\cdot \frac{X'(s')}{2\pi\tan\left(\frac{s-s'}{2}\right)},
\end{equation*}
which presumably accounts for the principal part of the singular integral in \eqref{eqn: equivalent formualtion of membrane velocity}.
Recall that the Hilbert transform $\mathcal{H}$ on $\mathbb{T}$ is defined as \cite{grafakos2008classical} 
\begin{equation*}
\mathcal{H}Y(s) = \frac{1}{2\pi}\mathrm{p.v.}\int_{\mathbb{T}}\cot\left(\frac{s-s'}{2}\right)Y(s').
\end{equation*}
Hence, if we take out $-\frac{1}{4}\mathcal{H}X' = \mathcal{L}X$ in \eqref{eqn: equivalent formualtion of membrane velocity}, what remains is \emph{expected} to be regular.
We shall see that $\mathcal{L}X$ provides nice dissipation property that helps prove well-posedness of \eqref{eqn: contour dynamic formulation of the immersed boundary problem}.
See Lemma \ref{lemma: improved Hs estimate and Hs continuity of semigroup solution} and Lemma \ref{lemma: a priori estimate of nonlocal eqn} for some relevant estimates.

It should be noted that the very idea has been adopted in early numerical literature to, for example, remove stiffness in computing the evolution of elastic immersed boundary in 2-D Stokes flow or the motion of interface with surface tension in 2-D incompressible irrotational flow.
See e.g.\;\cite{hou2008removing, hou1994removing} and references therein.
\qed
\end{remark}

\subsection{Regularity of the flow field and energy dissipation}\label{section: energy estimate}
As is mentioned above, once \eqref{eqn: contour dynamic formulation of the immersed boundary problem} is solved, we can obtain the flow field $u_X$ by \eqref{eqn: expression for velocity field}.
The following lemma characterizes its regularity.
\begin{lemma}\label{lemma: the velocity field is continuous}
Let $X(\cdot,t)\in H^2(\mathbb{T})$ and satisfy the well-stretched condition \eqref{eqn: well_stretched assumption}.
Then $u_X(\cdot,t)$ defined by \eqref{eqn: expression for velocity field} (or equivalently \eqref{eqn: 2D velocity field}, \eqref{eqn: velocity of membrane} and \eqref{eqn: equivalent formualtion of membrane velocity}) is continuous in $\mathbb{R}^2$.
Moreover, $\nabla u_X(\cdot ,t)\in L^2(\mathbb{R}^2)$.
\end{lemma}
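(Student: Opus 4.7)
The statement splits naturally into two parts: continuity of $u_X$ throughout $\mathbb{R}^2$ (the only nontrivial issue being continuity across $\Gamma_t$), and the global $L^2$-bound on $\nabla u_X$. I would handle them separately.

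For continuity, $u_X$ is $C^\infty$ on $\mathbb{R}^2\setminus \Gamma_t$ since \eqref{eqn: expression for velocity field} may be differentiated under the integral sign there, the kernel $G(x-X(s'))$ being smooth in $x$ uniformly in $s'$ on any compact set avoiding $\Gamma_t$. At a point $x_0=X(s_0)\in \Gamma_t$ I would work from the subtracted representation \eqref{eqn: 2D velocity field} and pass to the limit $x\to x_0$ by dominated convergence. One first checks that any choice of closest-point parameter $s_x$ satisfies $s_x\to s_0$ (otherwise the well-stretched condition forces $|x-X(s_x)|$ to stay bounded below, contradicting $|x-X(s_x)|\leq |x-X(s_0)|\to 0$), so that the pointwise limit of the integrand agrees a.e.\ with $\Gamma_0(s_0,s',t)$ from \eqref{eqn: velocity of membrane}. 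For the dominating function I would use
\[
\bigl|\partial_{s'}G(x-X(s'))(X'(s')-X'(s_x))\bigr|\leq \frac{C|X'(s')|\,|X'(s')-X'(s_x)|}{|x-X(s')|}\leq \frac{C_\lambda \|X\|_{\dot H^2(\mathbb{T})}^2\,|s'-s_x|^{1/2}}{|x-X(s')|}.
\]

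The key auxiliary estimate I expect to spend most effort on is the uniform lower bound $|x-X(s')|\geq c\,|s'-s_x|$ for all $x$ in a small neighborhood of $\Gamma_t$ and all $s'\in \mathbb{T}$, with $c=c(\lambda,\|X\|_{\dot H^2})>0$. I would establish it by Taylor expanding $X(s')=X(s_x)+(s'-s_x)X'(s_x)+R$ with $|R|\leq \tfrac{2}{3}\|X''\|_{L^2}|s'-s_x|^{3/2}$ (this uses only $X\in H^2$), then invoking the first-order optimality $(x-X(s_x))\cdot X'(s_x)=0$ and $|X'(s_x)|\geq \lambda$ to extract
\[
|x-X(s')|^2\geq d^2+\tfrac{1}{2}\lambda^2(s'-s_x)^2\quad\text{for $|s'-s_x|$ small},
\]
where $d=\mathrm{dist}(x,\Gamma_t)$; for $|s'-s_x|$ not small the well-stretched bound \eqref{eqn: well_stretched assumption} gives the estimate directly. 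The dominating integrand is then $O(|s'-s_x|^{-1/2})$, which is integrable on $\mathbb{T}$, so dominated convergence yields $u_X(x)\to u_X(x_0)$.

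For $\nabla u_X\in L^2(\mathbb{R}^2)$, I would test the Stokes equation against $u_X$, integrate by parts on $B_R$, and use $\mathrm{div}\,u_X=0$ to obtain
\[
\int_{B_R}|\nabla u_X|^2\,dx=\int_{B_R}u_X\cdot f\,dx-\int_{\partial B_R} p_X u_X\cdot \nu\,d\sigma+\int_{\partial B_R}(\nabla u_X\cdot \nu)\cdot u_X\,d\sigma.
\]
Since $\int_{\mathbb{T}}X_{ss}\,ds=0$, expanding $G$ and $Q$ at infinity (the leading $-\ln|x|\,Id$ and $x/(2\pi|x|^2)$ terms being killed by this cancellation) produces $u_X(x)=O(|x|^{-1})$, $p_X(x)=O(|x|^{-2})$, and $\nabla u_X(x)=O(|x|^{-2})$, so that both boundary integrals are $O(R^{-2})$ and vanish as $R\to \infty$. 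The bulk term collapses via the definition of $f$ to $\int_{\mathbb{T}} u_X(X(s))\cdot X_{ss}(s)\,ds$, and using continuity of $u_X$ on $\Gamma_t$ together with the already established pointwise bound \eqref{eqn: a trivial L^infty bound for velocity} one gets
\[
\int_{\mathbb{R}^2}|\nabla u_X|^2\,dx\leq \|u_X\|_{L^\infty(\Gamma_t)}\|X_{ss}\|_{L^1(\mathbb{T})}\leq C\lambda^{-1}\|X\|_{\dot H^2(\mathbb{T})}^3<\infty.
\]
The main obstacle is the continuity argument at $\Gamma_t$, specifically the interplay between the well-stretched hypothesis (controlling the first-order term) and the $H^2$-regularity (controlling the $|s'-s_x|^{3/2}$ Taylor remainder) needed to produce the crucial lower bound on $|x-X(s')|$; everything else is routine.
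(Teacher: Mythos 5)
Your continuity argument is in the same spirit as the paper's — pass to the limit $x\to X(s_0)$ in the subtracted representation \eqref{eqn: 2D velocity field} by dominated convergence, with the key input being a lower bound $|x-X(s')|\gtrsim |s'-s_x|$. However, the paper derives this lower bound much more cheaply: split into the cases $|s'-s_x|\leq 2\lambda^{-1}\mathrm{dist}(x,\Gamma_t)$ (where $|x-X(s')|\geq |x-X(s_x)| = \mathrm{dist}(x,\Gamma_t)$ by definition of $s_x$) and $|s'-s_x|\geq 2\lambda^{-1}\mathrm{dist}(x,\Gamma_t)$ (triangle inequality plus \eqref{eqn: well_stretched assumption}), giving $|x-X(s')|\geq\frac{\lambda}{2}|s'-s_x|$ with no Taylor expansion and no use of the first-order optimality condition. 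Your route via $(x-X(s_x))\cdot X'(s_x)=0$ and a Taylor remainder is correct but needlessly hard. More importantly, your proposed dominating function is $O(|s'-s_x|^{-1/2})$, which \emph{depends on} $x$ through $s_x$; dominated convergence requires a majorant independent of $x$. The paper handles this by bounding $\frac{1}{|s'-s_x|}\int_{s'}^{s_x}|X''(\tau)|\,d\tau\leq 2\mathcal{M}X''(s')$ with the centered Hardy--Littlewood maximal function, which is uniform in $x$; your estimate as written would need to be upgraded to Vitali (uniform-integrability) convergence or rewritten with a maximal-function majorant to close.

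The more serious gap is in the $\nabla u_X\in L^2$ part. You propose to test the Stokes equation against $u_X$ and integrate by parts on $B_R$ to produce $\int_{B_R}|\nabla u_X|^2$, but that integration by parts is exactly what is not a priori available: $f$ is a measure supported on $\Gamma_t$, $u_X$ is not known to lie in $H^1_{loc}$, let alone $H^2_{loc}$, and the identity $\int_{B_R}(-\Delta u_X)\cdot u_X = \int_{B_R}|\nabla u_X|^2 - \text{(bdry)}$ presupposes the very $L^2$ control on $\nabla u_X$ you are trying to establish. This is the reason the paper mollifies: it sets $f_\varepsilon=\varphi_\varepsilon*f$, solves the regularized Stokes problem whose solution $u_\varepsilon=\varphi_\varepsilon*u_X$ is smooth, performs the integration by parts there, obtains a bound on $\int\phi_r|\nabla u_\varepsilon|^2$ uniform in $\varepsilon$, extracts a weak $L^2_{loc}$ limit to conclude $u_X\in H^1_{loc}$, and only then passes to the limit. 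Your far-field decay estimates for $u_X,\nabla u_X,p_X$ and the identification of the bulk term $\int_{\mathbb{T}}u_X(X(s))\cdot X_{ss}(s)\,ds$ match the paper, but without the regularization step the argument is circular. You should either insert the mollification, or supply an independent proof that $u_X\in H^1_{loc}$ (e.g.\ by showing $f\in H^{-1}(\mathbb{R}^2)$ via the trace theorem on the $H^2$, bi-Lipschitz curve $\Gamma_t$) before doing the energy computation.
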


\begin{remark}
That $u_X(x,t)$ is continuous throughout $\mathbb{R}^2$ agrees with the intuition that the string moves with the ambient flow, and there is no jump in velocity across the string.
\qed
\end{remark}

As a dissipative system, the Stokes immersed boundary problem enjoys a natural law of energy dissipation, which is useful in proving existence and asymptotic behavior of global solution near equilibrium in Section \ref{section: global existence} and Section \ref{section: exp convergence}.

\begin{lemma}\label{lemma: energy estimate}
Assume $X(s,t)\in C_{T}H^2(\mathbb{T})$ with $X_t(s,t)\in L^2_{T}H^1(\mathbb{T})$ is a solution of \eqref{eqn: contour dynamic formulation of the immersed boundary problem} with some $T>0$ satisfying \eqref{eqn: well_stretched assumption} with constant $\lambda >0$, and $u_X(x,t)$ is the corresponding velocity field defined by the Stokes equation \eqref{eqn: stokes equation}, with $\nabla u_X(x,t)\in L^\infty_{T}L^2(\mathbb{R}^2)$ (showed in \eqref{eqn: a trivial bound for the energy dissipation rate or H1 semi norm of velocity field} in the proof of Lemma \ref{lemma: the velocity field is continuous}).
Then
\begin{equation}
\frac{1}{2}\frac{d}{dt}\int_{\mathbb{T}}|X'(s,t)|^2\,ds = -\int_{\mathbb{R}^2}|\nabla u_X(x,t)|^2\,dx
\label{eqn: energy estimate on each time slice simplified version}
\end{equation}
holds in the scalar distribution sense, and
\begin{equation}
\frac{1}{2}\int_{\mathbb{T}}|X'(s,T)|^2\,ds - \frac{1}{2}\int_{\mathbb{T}}|X'(s,0)|^2\,ds =-\int_{0}^T\int_{\mathbb{R}^2}|\nabla u_X(x,t)|^2\,dxdt.
\label{eqn: energy estimate of Stokes immersed boundary problem}
\end{equation}
In particular, the total elastic energy of the string $\mathcal{E}_X \triangleq \frac{1}{2}\|X(\cdot,t)\|_{\dot{H}^1(\mathbb{T})}^2$ always decreases in $t$.
\end{lemma}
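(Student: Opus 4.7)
The plan is to test the Stokes system \eqref{eqn: stokes equation} against the velocity field $u_X$ itself and rewrite the pairing of the singular force with $u_X$ as an $s$-integral on the curve, then integrate by parts in $s$ and recognize the time derivative of $\tfrac{1}{2}\|X'\|_{L^2}^2$.

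First I would multiply the equation $-\Delta u_X+\nabla p_X=f$ by $u_X$ and integrate over $\mathbb{R}^2$. Since $\nabla u_X\in L^\infty_TL^2(\mathbb{R}^2)$ by Lemma \ref{lemma: the velocity field is continuous} and $u_X,p_X\to 0$ at infinity with $\mathrm{div}\,u_X=0$, a standard integration by parts (justified by truncating with a smooth cutoff $\chi_R$ supported in $B_{2R}$ and sending $R\to\infty$, using the decay rates of the Stokeslet \eqref{eqn: 2D stokeslet} and its pressure kernel) gives
\begin{equation*}
\int_{\mathbb{R}^2}|\nabla u_X|^2\,dx=\langle f,u_X\rangle,
\end{equation*}
with the pressure term vanishing by incompressibility. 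The pairing on the right is well-defined because $f$ is the pushforward of the $L^2_s$-function $F=X_{ss}$ by $s\mapsto X(s,t)$, while $u_X$ is \emph{continuous} on $\mathbb{R}^2$ (again by Lemma \ref{lemma: the velocity field is continuous}), so
\begin{equation*}
\langle f,u_X\rangle=\int_{\mathbb{T}}F(s,t)\cdot u_X(X(s,t),t)\,ds.
\end{equation*}

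Next I would substitute the kinematic relation $u_X(X(s,t),t)=X_t(s,t)$ (which is exactly the contour formulation \eqref{eqn: contour dynamic formulation of the immersed boundary problem}) and use $F=X_{ss}$ together with the periodicity of $X(\cdot,t)$ on $\mathbb{T}$ to integrate by parts in $s$:
\begin{equation*}
\int_{\mathbb{T}}X_{ss}\cdot X_t\,ds=-\int_{\mathbb{T}}X_s\cdot X_{ts}\,ds.
\end{equation*}
Since $X\in C_TH^2$ and $X_t\in L^2_TH^1$, the product $X_s\cdot X_{ts}$ lies in $L^2_TL^1(\mathbb{T})$ and it is the distributional $t$-derivative of $\tfrac{1}{2}|X_s|^2$; pairing with a test function $\varphi\in C_c^\infty((0,T))$ and applying Fubini delivers \eqref{eqn: energy estimate on each time slice simplified version} in the scalar distribution sense. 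The integrated identity \eqref{eqn: energy estimate of Stokes immersed boundary problem} then follows because $t\mapsto\tfrac{1}{2}\|X'(\cdot,t)\|_{L^2}^2$ is absolutely continuous (its weak derivative lies in $L^1_T$), and the monotone decay of $\mathcal{E}_X$ is immediate from the nonnegativity of the right-hand side of \eqref{eqn: energy estimate on each time slice simplified version}.

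The main obstacle I anticipate is making the two endpoint manipulations fully rigorous at the low regularity stated: (i) moving $f$, which is a measure concentrated on $\Gamma_t$, onto $u_X$ — here continuity of $u_X$ across the interface is essential and is exactly what Lemma \ref{lemma: the velocity field is continuous} provides, so the argument reduces cleanly to Fubini once I write $f$ via its defining formula \eqref{eqn: force in the immersed boundary problem general form}; and (ii) justifying the testing of the Stokes equation by $u_X$ itself despite the slow logarithmic decay of the 2-D Stokeslet. For the latter the safest route is to first write the identity in weak form against a smooth divergence-free test field $v\in C_c^\infty(\mathbb{R}^2)$, then approximate $u_X$ by $\chi_R u_X$ composed with a Leray projector (or, equivalently, by convolving the Lagrangian input $X$ with a standard mollifier in $s$, obtaining $u_X^\varepsilon$ that are smooth and decaying, passing the identity to the limit using the $L^2$ bound on $\nabla u_X$ and the continuity of the map $X\mapsto u_X(X(\cdot))$). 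Once this approximation is set up the remaining computations are routine.
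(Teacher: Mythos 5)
Your proposal is correct and takes essentially the same route as the paper's proof: test the Stokes equation against $u_X$ with a spatial cutoff, use the decay of the Stokeslet to kill the boundary contributions, evaluate the singular force pairing via the continuity of $u_X$ on $\Gamma_t$, substitute the kinematic relation, integrate by parts in $s$, and recognize the weak $t$-derivative of $\tfrac12\|X'\|_{L^2}^2$. The only difference is cosmetic: the paper's regularization is accomplished by mollifying the Eulerian force $f$ (so that $u_\varepsilon=\varphi_\varepsilon*u_X$ is smooth and the cutoff energy identity can be written classically before passing $\varepsilon\to0$ and $r\to\infty$), whereas you sketch two alternative approximations (cutoff plus Leray projector, or mollifying $X$ in $s$); any of these would serve.
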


The proofs of these lemmas are technical.
We leave them to Appendix \ref{appendix section: study of the flow field}.

\section{A Priori Estimates}\label{section: a priori estimates}
In this section, we shall prove a priori estimates that are needed in proving well-posedness of \eqref{eqn: contour dynamic formulation of the immersed boundary problem}.

\subsection{Preliminaries}\label{section: preliminary a priori estimates}

First we introduce some notations that will be heavily used in the rest of the paper.
Suppose $X\in H^3(\mathbb{T})$. For $s,s'\in\mathbb{T}$, let $\tau = s'-s\in[-\pi,\pi)$. For $s'\not = s$, define
\begin{equation}
L(s,s') = \frac{X(s')-X(s)}{\tau},\quad M(s,s') = \frac{X'(s')-X'(s)}{\tau},\quad N(s,s') = \frac{L(s,s')-X'(s)}{\tau}.
\label{eqn: definition of L M N}
\end{equation}
and
\begin{equation}
L(s,s) = X'(s),\quad M(s,s) = X''(s),\quad N(s,s) =\frac{1}{2}X''(s).
\label{eqn: definition of L M N at s}
\end{equation}
It is straightforward to calculate that for $s'\not = s$,
\begin{equation}
\partial_s L(s,s') = N(s,s'),\quad \partial_s M(s,s') = \frac{M(s,s')-X''(s)}{\tau},\quad \partial_s N(s,s') = \frac{2N(s,s')-X''(s)}{\tau}.
\label{eqn: derivatives of L M N wrt s}
\end{equation}
In the sequel, we shall omit the arguments in $L(s,s')$, $M(s,s')$ and $N(s,s')$ whenever it is convenient.
Without assuming the well-stretched assumption \eqref{eqn: well_stretched assumption}, we have the following estimates for $L$, $M$ and $N$, which will be building blocks of more complicated estimates in Section \ref{section: a priori estimates of the immersed boundary problem}.

\begin{lemma}\label{lemma: estimates for L M N}
\begin{enumerate}
\item For $\forall\, 1\leq p\leq q \leq \infty$, $q>1$ and any interval $I\subset\mathbb{T}$ satisfying $0\in I$
\begin{align}
\|L(s,\cdot)\|_{L^p(s+I)} \leq &\;C|I|^{\frac{1}{p}-\frac{1}{q}}\|X'\|_{L^q(s+I)},\label{eqn: Lp estimate for L}\\
\|M(s,\cdot)\|_{L^p(s+I)} \leq &\;C|I|^{\frac{1}{p}-\frac{1}{q}}\|X''\|_{L^q(s+I)},\label{eqn: Lp estimate for M}\\
\|N(s,\cdot)\|_{L^p(s+I)} \leq &\;C|I|^{\frac{1}{p}-\frac{1}{q}}\|X''\|_{L^q(s+I)},\label{eqn: Lp estimate for N}\\
\|\partial_s M(s,\cdot)\|_{L^p(s+I)} \leq &\;C|I|^{\frac{1}{p}-\frac{1}{q}}\|X'''\|_{L^q(s+I)},\label{eqn: Lp estimate for M'}\\
\|\partial_s N(s,\cdot)\|_{L^p(s+I)} \leq &\;C|I|^{\frac{1}{p}-\frac{1}{q}}\|X'''\|_{L^q(s+I)},\label{eqn: Lp estimate for N'}
\end{align}
where the constants $C>0$ only depend on $p$ and $q$.
\item For $\forall\, 1< p\leq q \leq \infty$ and any interval $I\subset\mathbb{T}$ satisfying $0\in I$
\begin{align}
\|L(s,s')\|_{L^q_{s}(\mathbb{T})L^p_{s'}(s+I)} \leq &\;C|I|^{1/q}\|X'\|_{L^p(\mathbb{T})},\label{eqn: double Lp estimate for L}\\
\|M(s,s')\|_{L^q_{s}(\mathbb{T})L^p_{s'}(s+I)} \leq &\;C|I|^{1/q}\|X''\|_{L^p(\mathbb{T})},\label{eqn: double Lp estimate for M}\\
\|N(s,s')\|_{L^q_{s}(\mathbb{T})L^p_{s'}(s+I)} \leq &\;C|I|^{1/q}\|X''\|_{L^p(\mathbb{T})},\label{eqn: double Lp estimate for N}\\
\|\partial_s M(s,s')\|_{L^q_{s}(\mathbb{T})L^p_{s'}(s+I)} \leq &\;C|I|^{1/q}\|X'''\|_{L^p(\mathbb{T})},\label{eqn: double Lp estimate for M'}\\
\|\partial_s N(s,s')\|_{L^q_{s}(\mathbb{T})L^p_{s'}(s+I)} \leq &\;C|I|^{1/q}\|X'''\|_{L^p(\mathbb{T})},\label{eqn: double Lp estimate for N'}
\end{align}
where the constants $C>0$ only depend on $p$ and $q$.
\item Let $\mathcal{M}$ be the centered Hardy-Littlewood maximal operator on $\mathbb{T}$. Then for $\forall\, s,s'\in\mathbb{T}$,
\begin{equation}
|L(s,s')|\leq 2\mathcal{M} X'(s),\quad |M(s,s')|\leq 2\mathcal{M} X''(s),\quad |N(s,s')|\leq 2\mathcal{M} X''(s).\label{eqn: bound for L M N by maximal function}
\end{equation}
\item If $X\in C^2(\mathbb{T})$,
\begin{equation}
L(s,\cdot),M(s,\cdot),N(s,\cdot)\in C(\mathbb{T}).
\label{eqn: continuity of L M N}
\end{equation}
\item Moreover, if \eqref{eqn: well_stretched assumption} is satisfied with constant $\lambda>0$,
\begin{equation}
\lambda\leq |L(s,s')|\leq \|X'\|_{L^\infty},
\label{eqn: lower bound for L}
\end{equation}
and
\begin{equation}
\lambda \leq \min_{s\in\mathbb{T}}|X'(s)|.
\label{eqn: upper bound for lambda}
\end{equation}

\end{enumerate}

\begin{proof}
\eqref{eqn: lower bound for L} and \eqref{eqn: upper bound for lambda} are obvious.
To prove the $L^p$-estimates and the continuity of $L$, $M$ and $N$, we rewrite
\begin{align*}
&\;L(s,s') =\frac{1}{\tau} \int_0^{\tau} X'(s+\theta)\,d\theta = \int_0^1  X'(s+\tau\theta)\,d\theta,\\
&\;M(s,s') =\frac{1}{\tau} \int_0^{\tau} X''(s+\theta)\,d\theta = \int_0^1  X''(s+\tau\theta)\,d\theta,
\end{align*}
\begin{equation*}
\begin{split}
N(s,s') =&\;\frac{1}{\tau^2} \int_0^{\tau} (X'(s+\theta)-X'(s))\,d\theta = \frac{1}{\tau^2} \int_0^{\tau} \int_0^{\theta}  X''(s+\omega)\,d\omega d\theta\\
=&\;\frac{1}{\tau^2} \int_0^\tau \theta\int_0^1  X''(s+\theta\omega)\,d\omega d\theta=\int_0^1 \theta\int_0^1  X''(s+\tau\theta\omega)\,d\omega d\theta,
\end{split}
\end{equation*}
\begin{equation*}
\begin{split}
\partial_s M(s,s') =&\;\frac{1}{\tau^2} (X'(s')-X'(s)-\tau X''(s)) = \frac{1}{\tau^2} \int_{0}^{\tau} X''(s+\theta)-X''(s)\,d\theta\\
=&\;\frac{1}{\tau^2} \int_{0}^{\tau} \int_{0}^\theta X'''(s+\omega)\,d\omega d\theta = \int_0^1 \theta\int_0^1  X'''(s+\tau\theta\omega)\,d\omega d\theta,
\end{split}
\end{equation*}
and
\begin{equation*}
\begin{split}
\partial_s N(s,s') =&\;\frac{2}{\tau^3} \left(X(s')-X(s)-\tau X'(s)-\frac{1}{2}\tau^2 X''(s)\right)\\
=&\;\frac{2}{\tau^3}\left(\int_0^{\tau} X'(s+\theta)\,d\theta-\tau X'(s)-\frac{1}{2}\tau^2 X''(s)\right)\\
=&\;\frac{2}{\tau^3}\left(\int_0^{\tau} \int_0^{\theta} X''(s+\omega)\,d\omega d\theta-\frac{1}{2}\tau^2 X''(s)\right)\\
=&\;\frac{2}{\tau^3}\int_0^{\tau} \int_0^{\theta} \int_0^{\omega} X'''(s+\xi)\,d\xi d\omega d\theta\\
=&\;2\int_0^{1} \theta^2 \int_0^{1} \omega \int_0^{1} X'''(s+\tau\theta\omega\xi)\,d\xi d\omega d\theta.
\end{split}
\end{equation*}
\eqref{eqn: continuity of L M N} is immediate by the continuity of $X'$ and $X''$ at $s$.
To prove \eqref{eqn: bound for L M N by maximal function}, we use the above representation to derive that
\begin{align*}
|L(s,s')| \leq &\;\frac{1}{\tau} \int_0^{\tau} |X'(s+\theta)|\,d\theta \leq \frac{1}{\tau} \int_{-\tau}^{\tau} |X'(s+\theta)|\,d\theta \leq 2\mathcal{M}X'(s),\\
|M(s,s')| \leq &\;\frac{1}{\tau} \int_0^{\tau} |X''(s+\theta)|\,d\theta \leq \frac{1}{\tau} \int_{-\tau}^{\tau} |X''(s+\theta)|\,d\theta \leq 2\mathcal{M}X''(s),\\
|N(s,s')| \leq &\;\frac{1}{\tau^2} \int_0^{\tau} \int_0^{\theta}  |X''(s+\omega)|\,d\omega d\theta \leq \frac{1}{\tau} \int_0^{\tau}  |X''(s+\omega)|\,d\omega \leq 2\mathcal{M}X''(s).
\end{align*}

Now we turn to \eqref{eqn: Lp estimate for L}-
\eqref{eqn: double Lp estimate for N'}.
When $p = q =\infty$, \eqref{eqn: Lp estimate for L}-
\eqref{eqn: double Lp estimate for N'} 
immediately follow from the above representations.
When $1\leq p\leq q \leq \infty$, $p<\infty$ and $q>1$, we find that
\begin{equation*}
\begin{split}
\|L(s,\cdot)\|_{L^p(s+I)} = &\;\left(\int_{I} d\tau\left|\int_0^1  X'(s+\tau\theta)\,d\theta\right|^{p}\right)^{\frac{1}{p}}\\
\leq &\; C\int_0^1\left(\int_{I} d\tau\left|  X'(s+\tau\theta)\right|^{p}\right)^{\frac{1}{p}}\,d\theta\\
= &\; C\int_0^1\theta^{-\frac{1}{p}}\left(\int_{s+\theta I} ds'\left|  X'(s')\right|^{p}\right)^{\frac{1}{p}}\,d\theta\\
\leq &\; C\int_0^1\theta^{-\frac{1}{p}}|\theta I|^{\frac{1}{p}-\frac{1}{q}}\|X'\|_{L^q(s+I)}\,d\theta \leq C|I|^{\frac{1}{p}-\frac{1}{q}}\|X'\|_{L^q(s+I)}.
\end{split}
\end{equation*}
We applied Minkowski inequality in the second line and H$\mathrm{\ddot{o}}$lder's inequality in the fourth line; we also used the fact that $s+\theta I \subset s+I$.
This proves \eqref{eqn: Lp estimate for L}; \eqref{eqn: Lp estimate for M} could be proved in exactly the same way simply by replacing $X''$ by $X'''$. For \eqref{eqn: Lp estimate for N},
\begin{equation*}
\begin{split}
\|N(s,\cdot)\|_{L^p(s+I)} =&\; \left(\int_I d\tau\left|\int_0^1 \theta\int_0^1  X''(s+\tau\theta\omega)\,d\omega d\theta\right|^p\right)^{\frac{1}{p}}\\
\leq &\; C\int_0^1 \theta\int_0^1  \left(\int_I d\tau\left|X''(s+\tau\theta\omega)\right|^p\right)^\frac{1}{p} \,d\omega d\theta\\
= &\; C\int_0^1 \theta\int_0^1  \left(\frac{1}{\theta \omega}\int_{s+\theta \omega I} ds'\left|X''(s')\right|^p\right)^{\frac{1}{p}}\,d\omega d\theta\\
\leq &\; C\int_0^1 \int_0^1  \frac{\theta^{1-\frac{1}{p}}}{ \omega^{\frac{1}{p}}}|\theta\omega I|^{\frac{1}{p}-\frac{1}{q}}\|X''\|_{L^q(s+I)}\,d\omega d\theta\leq C|I|^{\frac{1}{p}-\frac{1}{q}}\|X''\|_{L^q(s+I)}.
\end{split}
\end{equation*}
\eqref{eqn: Lp estimate for M'} could be proved in exactly the same way simply by replacing $X''$ by $X'''$.
For \eqref{eqn: Lp estimate for N'},
\begin{equation*}
\begin{split}
\|\partial_s N(s,\cdot)\|_{L^p(s+I)} =&\; \left(\int_I d\tau\left|2\int_0^{1} \theta^2 \int_0^{1} \omega \int_0^{1} X'''(s+\tau\theta\omega\xi)\,d\xi d\omega d\theta\right|^p\right)^{\frac{1}{p}}\\
\leq &\; C\int_0^{1} \theta^2 \int_0^{1} \omega \int_0^{1} \left(\int_I d\tau|X'''(s+\tau\theta\omega\xi)|^p\right)^{\frac{1}{p}}\,d\xi d\omega d\theta\\
= &\; C\int_0^{1} \theta^2 \int_0^{1} \omega \int_0^{1} (\theta\omega\xi)^{-\frac{1}{p}}\left(\int_{s+\theta\omega\xi I} ds'|X'''(s')|^p\right)^{\frac{1}{p}}\,d\xi d\omega d\theta\\
\leq &\; C\int_0^{1} \theta^2 \int_0^{1} \omega \int_0^{1} (\theta\omega\xi)^{-\frac{1}{p}}|\theta\omega\xi I|^{\frac{1}{p}-\frac{1}{q}}\|X'''\|_{L^q(s+\theta\omega\xi I)}\,d\xi d\omega d\theta\\
\leq &\; C|I|^{\frac{1}{p}-\frac{1}{q}}\int_0^{1} \theta^2 \int_0^{1} \omega \int_0^{1} (\theta\omega\xi)^{-\frac{1}{q}}\|X'''\|_{L^q(s+I)}\,d\xi d\omega d\theta\\
\leq &\; C|I|^{\frac{1}{p}-\frac{1}{q}}\|X'''\|_{L^q(s+I)}.
\end{split}
\end{equation*}

For \eqref{eqn: double Lp estimate for L}, we first consider the case $p=q\in(1,\infty)$. \eqref{eqn: Lp estimate for L} implies that, $\|L(s,s')\|_{L^p_{s'}(s+I)}\leq C\|X'\|_{L^p(s+I)}$. Hence, by Fubini's Theorem,
\begin{equation*}
\|L(s,s')\|_{L^{p}_{s}(\mathbb{T})L^p_{s'}(s+I)}\leq C\left(\int_{\mathbb{T}}\|X'\|^p_{L^p(s+I)}\,ds\right)^{1/p}\leq C|I|^{1/p}\|X'\|_{L^p(\mathbb{T})}
\end{equation*}
On the other hand, by \eqref{eqn: Lp estimate for L}, $\|L(s,s')\|_{L^{\infty}_{s}(\mathbb{T})L^p_{s'}(s+I)}\leq C\|X'\|_{L^p(\mathbb{T})}$. Hence, by interpolation between $L^p$-spaces, we proved \eqref{eqn: double Lp estimate for L}. In a similar manner, we can prove \eqref{eqn: double Lp estimate for M}-\eqref{eqn: double Lp estimate for N'}.
\end{proof}
\end{lemma}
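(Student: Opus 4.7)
The plan is to represent $L$, $M$, $N$, and their $s$-derivatives as iterated integrals of $X'$, $X''$, $X'''$ via the fundamental theorem of calculus, and then derive every claim by combining Minkowski's integral inequality, a rescaling change of variables, H\"older's inequality, and (for the mixed-norm estimates) interpolation.

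First I would write out, with $\tau = s'-s$, the representations
\begin{equation*}
L(s,s') = \int_0^1 X'(s+\theta\tau)\,d\theta,\qquad M(s,s') = \int_0^1 X''(s+\theta\tau)\,d\theta,
\end{equation*}
and, using $L-X'(s) = \int_0^1 (X'(s+\theta\tau)-X'(s))\,d\theta$ combined with another pass of the fundamental theorem,
\begin{equation*}
N(s,s') = \int_0^1 \theta \int_0^1 X''(s+\theta\omega\tau)\,d\omega\,d\theta,
\end{equation*}
with analogous triple/quadruple integrals for $\partial_s M$ and $\partial_s N$ involving $X'''$ (obtained by differentiating $M = (X'(s')-X'(s))/\tau$ and $N$ directly and applying the fundamental theorem to the resulting remainders). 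These formulas make the values at $s'=s$ consistent with \eqref{eqn: definition of L M N at s}, and they immediately give both the continuity statement \eqref{eqn: continuity of L M N} (by dominated convergence when $X\in C^2$) and the pointwise maximal-function bounds \eqref{eqn: bound for L M N by maximal function} (each integral is an average of $|X'|$ or $|X''|$ over an interval centered near $s$, hence $\leq 2\mathcal{M}X'(s)$ or $2\mathcal{M}X''(s)$). Part 5 is immediate from \eqref{eqn: well_stretched assumption} and letting $s'\to s$.

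For the single-variable $L^p$ estimates \eqref{eqn: Lp estimate for L}--\eqref{eqn: Lp estimate for N'}, I would apply Minkowski's inequality in $\tau$ to pull the $L^p_{s'}$-norm inside the $\theta$ (and $\omega$, $\xi$) integrals, then perform the change of variables $s+\theta\tau \mapsto s'$, which produces a Jacobian $\theta^{-1/p}$ but restricts the integration to $s + \theta I \subset s + I$. H\"older's inequality then converts the $L^p$-norm of $X^{(k)}$ over this shrunken interval into $|\theta I|^{1/p - 1/q}$ times $\|X^{(k)}\|_{L^q(s+I)}$, and the leftover $\theta^{-1/q}$ (together with the extra weight factors $\theta$, $\omega$, $\theta^2$ present in $N, \partial_s M, \partial_s N$) is integrable on $[0,1]$ precisely when $q > 1$, producing the claimed bound.

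The mixed-norm estimates \eqref{eqn: double Lp estimate for L}--\eqref{eqn: double Lp estimate for N'} require the most care, and this is where I expect the main obstacle. My plan is to obtain them by interpolating between two endpoints in the outer variable $s$: at $p = q$, Fubini's theorem combined with the $L^p_{s'}$-bound from part 1 (integrated in $s$) gives the result; at $q = \infty$, part 1 applied uniformly in $s$ already gives $\|L(s,\cdot)\|_{L^p_{s'}(s+I)} \leq C\|X'\|_{L^p(\mathbb{T})}$, hence an $L^\infty_s L^p_{s'}$ bound. Riesz--Thorin (or a direct real-interpolation argument) in the outer $s$-variable then fills in all intermediate $q\in[p,\infty]$. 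The delicate bookkeeping is that the $\theta^{-1/p}$ factors from the rescaling compound under the double and triple integrals defining $N$, $\partial_s M$, $\partial_s N$, forcing $p > 1$ throughout the mixed estimates; one must carefully verify that the product of Jacobian weights and the extra polynomial weights in $\theta,\omega$ remains integrable on $[0,1]^2$ or $[0,1]^3$ to close each bound.
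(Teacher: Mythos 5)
Your proposal is correct and follows essentially the same route as the paper: iterated Fubini/FTC representations for $L$, $M$, $N$, $\partial_s M$, $\partial_s N$, followed by Minkowski's integral inequality, the rescaling $s+\theta\tau\mapsto s'$ (producing the $\theta^{-1/p}$ Jacobian and confining to $s+\theta I\subset s+I$), H\"older to bring in $|\theta I|^{1/p-1/q}$, and integrability of the leftover power when $q>1$; the maximal-function and continuity statements fall out of the same representations. For the mixed-norm bounds the paper also interpolates in the outer variable between $L^p_s$ (via Fubini at $p=q$, giving the $|I|^{1/p}$ factor) and $L^\infty_s$ (the uniform bound, giving $|I|^0$); note you only need the elementary log-convexity $\|g\|_{L^q}\le\|g\|_{L^p}^{p/q}\|g\|_{L^\infty}^{1-p/q}$ of a fixed function's $L^p$ norms rather than Riesz--Thorin, and the reason $p>1$ is required throughout part 2 is simply that the $p=q$ endpoint of part 1 already needs $q>1$ (equivalently, integrability of $\omega^{-1/p}$ on $[0,1]$ after the change of variables), which matches your ``compounding Jacobians'' intuition.
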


\subsection{$H^2$-estimate of $g_X$}\label{section: a priori estimates of the immersed boundary problem}

%
%

In Section \ref{section: local existence and uniqueness}, we will prove well-posedness of \eqref{eqn: contour dynamic formulation of the immersed boundary problem} via a fixed-point-type argument by making use of dissipation structure of the operator $\mathcal{L}$ (see Lemma \ref{lemma: improved Hs estimate and Hs continuity of semigroup solution} and Lemma \ref{lemma: a priori estimate of nonlocal eqn} in the Appendix \ref{appendix section: estimates involving L}).
In order to do that, in this section, we focus on the term $g_X$ in \eqref{eqn: contour dynamic formulation of the immersed boundary problem} and establish its $H^2$-estimate; recall that $g_X$ is defined in \eqref{eqn: definition of g_X}.
We are also going to prove an $H^2$-estimate of $g_{X_1}-g_{X_2}$, which will be used in proving the uniqueness of the local solution.

We start from a pointwise estimate of $g_X$.
\begin{lemma}\label{lemma: L infty estimate for g_X}
Suppose $X\in H^2(\mathbb{T})$ satisfies \eqref{eqn: well_stretched assumption} with some $\lambda>0$. Then
\begin{equation}
|g_X(s)|\leq \frac{C}{\lambda}\|X'\|_{L^2}\|X''\|_{L^2},
\label{eqn: L infty estimate for g_X}
\end{equation}
where $C>0$ is a universal constant.
\begin{proof}
Recall that $\Gamma_0(s,s')$ is defined in \eqref{eqn: introduce the notation Gamma_0}. By \eqref{eqn: velocity of membrane}, and the definitions of $L$ and $M$, we have
\begin{equation}
\begin{split}
\Gamma_0(s,s')=&\; \frac{1}{4\pi}\left(\frac{L\cdot X'(s')}{|L|^2}Id-\frac{X'(s')\otimes L + L\otimes X'(s')}{|L|^2}+\frac{2L\cdot X'(s')L\otimes L}{|L|^4}\right)M\\
=&\;\frac{1}{4\pi}\left(\frac{L\cdot X'(s')}{|L|^2}M-\frac{L\cdot M}{|L|^2}X'(s') -\frac{X'(s')\cdot M}{|L|^2}L+\frac{2L\cdot X'(s')L\cdot M}{|L|^4}L\right).
\end{split}
\label{eqn: simplification of integrand of g_X part 1}
\end{equation}
Hence, by \eqref{eqn: lower bound for L},
\begin{equation}
|\Gamma_0(s,s')|\leq C \frac{|M(s,s')||X'(s')|}{|L(s,s')|} \leq \frac{C}{\lambda} |M(s,s')||X'(s')|.
\label{eqn: pointwise estimate of integrand of g_X part 1}
\end{equation}
This implies by H$\mathrm{\ddot{o}}$lder's inequality and Lemma \ref{lemma: estimates for L M N} that
\begin{equation}
\left|\int_{\mathbb{T}} \Gamma_0(s,s')\,ds'\right| \leq \frac{C}{\lambda} \|X'\|_{L^2(\mathbb{T})}\|X''\|_{L^2(\mathbb{T})}.
\label{eqn: L infty estimate for g_X part 1}
\end{equation}
The other term in $g_X(s)$, $(-\Delta)^{1/2}X$, has mean zero on $\mathbb{T}$. By Gagliardo-Nirenberg interpolation inequality,
\begin{equation*}
\left|(-\Delta)^{1/2}X(s)\right|\leq C \left\|(-\Delta)^{1/2}X(s)\right\|_{\dot{H}^1}^{1/2}\left\|(-\Delta)^{1/2}X(s)\right\|_{L^2}^{1/2}\leq C \|X''\|_{L^2}^{1/2}\|X'\|_{L^2}^{1/2}.
\end{equation*}
Using \eqref{eqn: lower bound for L}, we find that
\begin{equation}
\left|(-\Delta)^{1/2}X(s)\right| \leq C\frac{\|X'\|_{L^\infty}}{\lambda}\|X''\|^{1/2}_{L^2}\|X'\|_{L^2}^{1/2}\leq \frac{C}{\lambda}\|X''\|_{L^2}\|X'\|_{L^2}
\label{eqn: L infty estimate for g_X part 2}
\end{equation}
\eqref{eqn: L infty estimate for g_X} is then proved by \eqref{eqn: L infty estimate for g_X part 1} and \eqref{eqn: L infty estimate for g_X part 2}.
\end{proof}
\begin{remark}
If we further assume $X\in H^3(\mathbb{T})\subset C^2(\mathbb{T})$, using the continuity of $L(s,\cdot)$ and $M(s,\cdot)$, it is not difficult to show in \eqref{eqn: simplification of integrand of g_X part 1} that
\begin{equation}
\lim_{s'\rightarrow s} \Gamma_0(s,s') = \frac{1}{4\pi} X''(s).
\label{eqn: limit of integrand of g_X part 1 at s}
\end{equation}
This will be useful below in proving Lemma \ref{lemma: derivative of g_X}.
\qed
\end{remark}
\end{lemma}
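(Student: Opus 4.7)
The plan is to bound the two contributions to $g_X(s)$ from \eqref{eqn: definition of g_X} separately: the non-local integral $\int_\mathbb{T}\Gamma_0(s,s')\,ds'$ and the regularizing term $\tfrac14 (-\Delta)^{1/2}X(s)$.

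First I would rewrite $\Gamma_0(s,s')$ in a form in which its near-diagonal behavior is transparent. Carrying out $\partial_{s'}$ on the Stokeslet $G$ and then pulling out the quotient variables $L(s,s')=(X(s')-X(s))/\tau$ and $M(s,s')=(X'(s')-X'(s))/\tau$ from Lemma \ref{lemma: estimates for L M N}, $\Gamma_0$ reorganizes into a sum of rational expressions of the form $P(L,M,X'(s'))/|L|^{2k}$ in which every numerator carries a factor of $M$. The singular factor $1/\tau$ generated by $\partial_{s'} G$ is exactly cancelled by the $\tau$ hidden in $X'(s')-X'(s)=\tau M$. Under the well-stretched hypothesis, $|L(s,s')|\ge \lambda$ for all $s,s'$, so no singularity survives and one obtains the pointwise bound
\begin{equation*}
|\Gamma_0(s,s')|\le \frac{C}{\lambda}\,|M(s,s')|\,|X'(s')|.
\end{equation*}
Cauchy--Schwarz in $s'$, together with the $L^2$ estimate $\|M(s,\cdot)\|_{L^2(\mathbb{T})}\le C\|X''\|_{L^2(\mathbb{T})}$ from Lemma \ref{lemma: estimates for L M N}, then gives the desired bound $|\int_\mathbb{T}\Gamma_0(s,s')\,ds'|\le (C/\lambda)\|X'\|_{L^2}\|X''\|_{L^2}$.

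For the second piece, $(-\Delta)^{1/2}X$ has zero mean on $\mathbb{T}$, so a Gagliardo--Nirenberg-type embedding on the torus yields $\|(-\Delta)^{1/2}X\|_{L^\infty(\mathbb{T})}\le C\|X''\|_{L^2}^{1/2}\|X'\|_{L^2}^{1/2}$. To match the product structure on the right-hand side of the claim, I would multiply by the trivial factor $\|X'\|_{L^\infty}/\lambda\ge 1$, which is legitimate because the well-stretched assumption forces $|X'(s)|\ge \lambda$ pointwise, and then bound $\|X'\|_{L^\infty}\le C\|X''\|_{L^2}^{1/2}\|X'\|_{L^2}^{1/2}$ by another Gagliardo--Nirenberg inequality. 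The two half-powers combine to produce exactly $(C/\lambda)\|X'\|_{L^2}\|X''\|_{L^2}$, which when added to the bound for $\int_\mathbb{T}\Gamma_0\,ds'$ gives \eqref{eqn: L infty estimate for g_X}.

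The only genuine obstacle is the bookkeeping involved in decomposing $\partial_{s'}[G(X(s)-X(s'))]$ into rational terms in $L$, $M$, $X'(s')$ so that each denominator $|L|^{2k}$ is matched with a factor of $M$ in the numerator; once that algebraic step is carried out, no principal-value cancellation is needed because the well-stretched condition has already removed the singularity, and the rest of the argument reduces to H\"older and Gagliardo--Nirenberg.
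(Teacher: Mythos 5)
Your proposal is correct and follows essentially the same route as the paper: the same $L,M$ decomposition of $\Gamma_0$, the same pointwise bound $|\Gamma_0|\le (C/\lambda)|M||X'(s')|$ followed by Cauchy--Schwarz and Lemma~\ref{lemma: estimates for L M N}, and the same Gagliardo--Nirenberg treatment of $(-\Delta)^{1/2}X$ combined with the observation $\|X'\|_{L^\infty}\ge\lambda$ (via \eqref{eqn: upper bound for lambda}) to insert the factor of $1/\lambda$. No gaps.
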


\begin{corollary}\label{coro: L2 estimate for g_X1-g_X2}
Let $X_1(s),X_2(s)\in H^2(\mathbb{T})$ both satisfy \eqref{eqn: well_stretched assumption} with some $\lambda>0$. Then
\begin{equation}
\|g_{X_1}(s)-g_{X_2}(s)\|_{L^2}\leq C\lambda^{-2} (\|X_1\|_{\dot{H}^2}+\|X_2\|_{\dot{H}^2})^2\|X_1-X_2\|_{\dot{H}^2},
\label{eqn: L2 estimate for g_X1-g_X2}
\end{equation}
where $C>0$ is a universal constant.
\begin{proof}
By the definition of $g_X$ in \eqref{eqn: definition of g_X} and \eqref{eqn: simplification of integrand of g_X part 1},
\begin{equation}
\begin{split}
&\;g_{X_1}(s)-g_{X_2}(s) \\
= &\;\int_\mathbb{T}ds'\,\frac{1}{4\pi}\left(\frac{L_1\cdot X_1'(s')}{|L_1|^2}M_1-\frac{L_1\cdot M_1}{|L_1|^2}X_1'(s') -\frac{X_1'(s')\cdot M_1}{|L_1|^2}L_1+\frac{2L_1\cdot X_1'(s')L_1\cdot M_1}{|L_1|^4}L_1\right)\\
&\;-\int_\mathbb{T}ds'\,\frac{1}{4\pi}\left(\frac{L_2\cdot X_2'(s')}{|L_2|^2}M_2-\frac{L_2\cdot M_2}{|L_2|^2}X_2'(s') -\frac{X_2'(s')\cdot M_2}{|L_2|^2}L_2+\frac{2L_2\cdot X_2'(s')L_2\cdot M_2}{|L_2|^4}L_2\right)\\
&\;-\mathcal{L}X_1(s)+\mathcal{L}X_2(s).
\end{split}
\label{eqn: difference of X_t at two moments}
\end{equation}
where $L_i$, $M_i$ and $X_i'$ denote the corresponding quantities associated with $X_i(\cdot)$; see definitions in \eqref{eqn: definition of L M N} and \eqref{eqn: definition of L M N at s}.
To make an $L^2$-estimate, for conciseness, we only consider a part of the difference above.
By \eqref{eqn: well_stretched assumption} and \eqref{eqn: lower bound for L},
\begin{equation*}
\begin{split}
&\;\left\|\int_\mathbb{T}ds'\,\frac{L_1\cdot X_1'(s')}{|L_1|^2}M_1 - \frac{L_2\cdot X_2'(s')}{|L_2|^2}M_2\right\|_{L^2}\\
\leq &\;\left\|\frac{L_1\cdot (X_1'-X_2')(s')}{|L_1|^2}M_1\right\|_{L^2_sL_{s'}^1}+\left\|\frac{L_1\cdot X_2'(s')}{|L_1|^2}(M_1-M_2)\right\|_{L^2_sL_{s'}^1}\\
&\;+\left\|\frac{(L_1-L_2)\cdot X_2'(s')}{|L_1|^2}M_2\right\|_{L^2_sL_{s'}^1}+\left\|L_2\cdot X_2'(s')M_2\frac{|L_2|^2-|L_1|^2}{|L_1|^2|L_2|^2}\right\|_{L^2_sL_{s'}^1}\\
\leq &\;C\lambda^{-2}\left(\|L_1\|_{L^\infty_s L^\infty_{s'}}\|X_1'-X_2'\|_{L^2}\|M_1\|_{L^2_s L^2_{s'}}+\|L_1\|_{L^4_s L^2_{s'}}\|X_2'\|_{L^\infty}\|M_1-M_2\|_{L^4_s L^2_{s'}}\right.\\
&\;\left.+\|L_1-L_2\|_{L^4_s L^2_{s'}}\|X_2'\|_{L^\infty}\|M_2\|_{L^4_s L^2_{s'}}\right).
\end{split}
\end{equation*}
By Lemma \ref{lemma: estimates for L M N} and Sobolev inequality,
\begin{equation*}
\begin{split}
&\;\left\|\int_\mathbb{T}ds'\,\frac{L_1\cdot X_1'(s')}{|L_1|^2}M_1 - \frac{L_2\cdot X_2'(s')}{|L_2|^2}M_2\right\|_{L^2}\\
\leq &\;C\lambda^{-2}\left(\|X_1'\|_{L^\infty}\|X_1'-X_2'\|_{L^2}\|X_1''\|_{L^2}+\|X_1'\|_{L^2}\|X_2'\|_{L^\infty}\|X_1''-X_2''\|_{L^2}\right.\\
&\;\left.+\|X_1'-X_2'\|_{L^2}\|X_2'\|_{L^\infty}\|X_2''\|_{L^2}\right)\\
\leq &\; C\lambda^{-2} (\|X_1\|_{\dot{H}^2}+\|X_2\|_{\dot{H}^2})^2\|X_1-X_2\|_{\dot{H}^2}.
\end{split}
\end{equation*}
Similarly,
\begin{equation*}
\begin{split}
&\;\left\|\int_\mathbb{T}ds'\,\frac{L_1\cdot X_1'(s')L_1\cdot M_1}{|L_1|^4}L_1 -\frac{L_2\cdot X_2'(s')L_2\cdot M_2}{|L_2|^4}L_2\right\|_{L^2}\\
\leq &\;\left\|\frac{L_1\cdot (X_1'(s')-X_2'(s'))L_1\cdot M_1}{|L_1|^4}L_1\right\|_{L^2_sL^1_{s'}}+\left\|\frac{L_1\cdot X_2'(s')L_1\cdot (M_1-M_2)}{|L_1|^4}L_1\right\|_{L^2_sL^1_{s'}}\\
&\;+\left\|\frac{(L_1-L_2)\cdot X_2'(s')L_1\cdot M_2}{|L_1|^4}L_1\right\|_{L^2_sL^1_{s'}}+\left\|\frac{L_2\cdot X_2'(s')L_1\cdot M_2}{|L_1|^2}L_1\frac{|L_2|^2-|L_1|^2}{|L_1|^2|L_2|^2}\right\|_{L^2_sL^1_{s'}}\\
&\;+\left\|\frac{L_2\cdot X_2'(s')(L_1-L_2)\cdot M_2}{|L_1|^2|L_2|^2}L_1\right\|_{L^2_sL^1_{s'}}+\left\|\frac{L_2\cdot X_2'(s')L_2\cdot M_2}{|L_1|^2|L_2|^2}(L_1-L_2)\right\|_{L^2_sL^1_{s'}}\\
&\;+\left\|\frac{L_2\cdot X_2'(s')L_2\cdot M_2}{|L_2|^2}L_2\frac{|L_2|^2-|L_1|^2}{|L_1|^2|L_2|^2}\right\|_{L^2_sL^1_{s'}}\\
\leq &\;C\lambda^{-2}\left(\|X_1'-X_2'\|_{L^2}\|L_1\|_{L^\infty_s L^\infty_{s'}}\| M_1\|_{L^2_sL^2_{s'}}+\|X_2'\|_{L^\infty}\|L_1\|_{L^4_s L^2_{s'}}\| M_1-M_2\|_{L^4_sL^2_{s'}}\right.\\
&\;+\left.\|X_2'\|_{L^\infty}\|M_2\|_{L^4_sL^2_{s'}} \|L_2-L_1\|_{L^4_sL^2_{s'}}\right)\\
\leq &\;C\lambda^{-2}\left(\|X_1'-X_2'\|_{L^2}\|X_1'\|_{L^\infty}\|X_1''\|_{L^2}+\|X_2'\|_{L^\infty}\|X_1'\|_{L^2}\| X_1''-X_2''\|_{L^2}\right.\\
&\;+\left.\|X_2'\|_{L^\infty}\|X_2''\|_{L^2} \|X_2'-X_1'\|_{L^2}\right)\\ \leq&\;C\lambda^{-2}(\|X_1\|_{\dot{H}^2}+\|X_2\|_{\dot{H}^2})^2\|X_1-X_2\|_{\dot{H}^2}.
\end{split}
\end{equation*}
We can estimate the other terms in \eqref{eqn: difference of X_t at two moments} in a similar fashion and obtain \eqref{eqn: L2 estimate for g_X1-g_X2}.
\end{proof}
\end{corollary}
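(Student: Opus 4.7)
The plan is to start from the explicit formula \eqref{eqn: simplification of integrand of g_X part 1}, which expresses $g_X = \int_{\mathbb{T}} \Gamma_0\,ds' + \frac{1}{4}(-\Delta)^{1/2}X$ as a sum of four multilinear expressions in $L, M, X'$ with denominators $|L|^2$ or $|L|^4$, plus the explicit fractional-Laplacian piece. Writing $g_{X_1} - g_{X_2}$ in this form reduces the claim to estimating each term's contribution separately in $L^2$.

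The main step is a telescoping argument: for each of the four multilinear expressions, I would swap one factor at a time to express the difference as a sum of pieces, each of which contains exactly one ``delta'' factor drawn from $\{L_1 - L_2,\ M_1 - M_2,\ X_1'(s') - X_2'(s')\}$. For the denominator differences I use $\frac{1}{|L_1|^2} - \frac{1}{|L_2|^2} = \frac{(L_2-L_1)\cdot(L_1+L_2)}{|L_1|^2 |L_2|^2}$ (and its analog for $|L_i|^{-4}$), which pulls out an $L_1 - L_2$ factor while contributing at most $\lambda^{-4}$ via \eqref{eqn: lower bound for L}. Each ``delta'' object is bounded by Lemma \ref{lemma: estimates for L M N} applied to $X_1 - X_2$ itself. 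To take the $L^2_s$-norm of each resulting integral of a product of three factors, I would split $L^1_{s'} \subset L^2_{s'}\cdot L^2_{s'}$ by H\"older, apply the double-$L^p$ estimates \eqref{eqn: double Lp estimate for L}--\eqref{eqn: double Lp estimate for N} to the $L, M, N$-type factors, and put any pointwise $X_i'$ factor in $L^\infty$ via Sobolev embedding $H^1(\mathbb{T}) \hookrightarrow L^\infty(\mathbb{T})$. Each piece is then bounded by $C\lambda^{-2}(\|X_1\|_{\dot{H}^2} + \|X_2\|_{\dot{H}^2})^2 \|X_1 - X_2\|_{\dot{H}^2}$: the two surviving factors from $\{L,M,X'\}$ of the two $X_i$ contribute $(\|X_1\|_{\dot{H}^2} + \|X_2\|_{\dot{H}^2})^2$, and the delta factor contributes $\|X_1 - X_2\|_{\dot{H}^2}$.

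The residual $\frac{1}{4}(-\Delta)^{1/2}(X_1 - X_2)$ only admits the linear bound $\|(-\Delta)^{1/2}(X_1-X_2)\|_{L^2} \leq C\|X_1 - X_2\|_{\dot{H}^1} \leq C\|X_1 - X_2\|_{\dot{H}^2}$, with no quadratic factor in $\|X_i\|_{\dot{H}^2}$. To reconcile this with the claimed cubic bound I would observe that $\lambda^{-2}(\|X_1\|_{\dot{H}^2}+\|X_2\|_{\dot{H}^2})^2$ is bounded below by a universal positive constant: a closed Jordan curve whose tangent satisfies $|X'| \geq \lambda$ everywhere (see \eqref{eqn: upper bound for lambda}) has its unit tangent $X'/|X'|$ winding once around $S^1$, which forces $\|X''\|_{L^1} \gtrsim \lambda$ and hence $\|X''\|_{L^2} \gtrsim \lambda$ by Cauchy--Schwarz on $\mathbb{T}$.

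I expect the main obstacle to be the combinatorial bookkeeping: the fourth term in \eqref{eqn: simplification of integrand of g_X part 1} has denominator $|L|^{-4}$ and a numerator quartic in $L, X'$, which generates many telescoping sub-pieces once a factor $L_1-L_2$ is pulled out of the denominator twice. One must carefully ensure that no single piece ever accumulates more than two powers of $|L|^{-2}$ (to keep the $\lambda^{-2}$ factor tight) and no more than two factors from $\{\|X_1\|_{\dot{H}^2}, \|X_2\|_{\dot{H}^2}\}$ after applying Lemma \ref{lemma: estimates for L M N}. Otherwise the argument is simply a ``product-rule'' differentiation of the proof of Lemma \ref{lemma: L infty estimate for g_X}, carried out piece by piece.
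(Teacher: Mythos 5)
Your proposal follows the paper's argument almost verbatim: you start from the explicit multilinear formula \eqref{eqn: simplification of integrand of g_X part 1}, telescope each product to isolate a single ``delta'' factor, handle the denominator differences via $|L_1|^{-2}-|L_2|^{-2}=\frac{(L_2-L_1)\cdot(L_1+L_2)}{|L_1|^2|L_2|^2}$, and then combine Lemma \ref{lemma: estimates for L M N} with Sobolev embedding. The one genuine addition you make --- explicitly explaining how the linear term $\frac14(-\Delta)^{1/2}(X_1-X_2)$ is absorbed into the cubic right-hand side --- addresses a point the paper leaves as ``similar fashion''; your winding-number argument is valid, though the shorter route is to invoke \eqref{eqn: lower bound for L} directly: $\lambda\leq\|X_i'\|_{L^\infty}\leq C\|X_i\|_{\dot H^2}$ (Sobolev, since $X_i'$ has zero mean on $\mathbb{T}$), so $\lambda^{-2}(\|X_1\|_{\dot H^2}+\|X_2\|_{\dot H^2})^2\geq c>0$.

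There is one imprecision you should repair before the argument is airtight. You write that the denominator difference ``contribut[es] at most $\lambda^{-4}$'', but a $\lambda^{-4}$ bound cannot be traded down to the claimed $\lambda^{-2}$ --- that would require $(\|X_1\|_{\dot H^2}+\|X_2\|_{\dot H^2})^2\lesssim\lambda^2$, which is the wrong direction. The correct bookkeeping is to \emph{not} estimate $|L_1|^{-2}|L_2|^{-2}$ crudely by $\lambda^{-4}$; instead the extra numerator factor $|L_1+L_2|$ must be cancelled against the denominator together with one of the surviving $L$-type factors, using $\frac{|L_1+L_2|}{|L_1||L_2|}\leq\frac{1}{|L_1|}+\frac{1}{|L_2|}\leq\frac{2}{\lambda}$. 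For example, in the piece $L_2\cdot X_2'(s')\,M_2\,\frac{(L_2-L_1)\cdot(L_2+L_1)}{|L_1|^2|L_2|^2}$, one has $\frac{|L_2||L_2+L_1|}{|L_1|^2|L_2|^2}=\frac{1}{|L_1|}\cdot\frac{|L_2+L_1|}{|L_1||L_2|}\leq 2\lambda^{-2}$, leaving exactly $|X_2'|$, $|M_2|$ and the delta factor $|L_2-L_1|$ --- degree two in $\dot H^2$ plus one difference factor, as required. The same cancellation (applied twice, since the numerator is quartic in $L$) handles the $|L|^{-4}$ term without ever producing more than $\lambda^{-2}$. Your final paragraph hints at this but phrases it as ``no more than two powers of $|L|^{-2}$'', which is $\lambda^{-4}$ and would still be too lossy; the constraint should be $\lambda^{-2}$ overall, achieved by the cancellation above.
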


In order to estimate $H^2$-norm of $g_X$, we find out its weak derivatives $g_X'$ and $g_X''$ in the following two lemmas.
\begin{lemma}\label{lemma: derivative of g_X}
Suppose $X\in H^3(\mathbb{T})$ and satisfies \eqref{eqn: well_stretched assumption} with some $\lambda>0$. Then
\begin{equation}
g'_X(s) = \mathrm{p.v.}\int_\mathbb{T}\left(-\partial_{ss'}[G(X(s)-X(s'))]-\frac{Id}{16\pi\sin^2\left(\frac{s'-s}{2}\right)}\right)(X'(s')-X'(s))\,ds'.\label{eqn: derivative of g_X}
\end{equation}

\begin{proof}
We define a cut-off function $\varphi(y)\in C^\infty (\mathbb{T})$ such that
\begin{enumerate}
\item $\varphi(y)= \varphi(-y)$, $\forall\,y\in\mathbb{T}$.
\item $\varphi(y)= 1$ for $|y|\leq 1$; $\varphi(y)= 0$ for $|y|\geq 2$; and $|\varphi'(y)|\leq C$.
\item $\varphi(y)$ is decreasing on $[0,\pi]$ and increasing on $[-\pi,0]$.
\end{enumerate}
Define $\psi_\varepsilon(y) = 1-\varphi\left(\frac{y}{\varepsilon}\right)$.
Let
\begin{equation*}
g_{X,1}(s) = \int_{\mathbb{T}} \Gamma_0(s,s')\,ds', \quad g_{X,1}^\varepsilon(s) = \int_{\mathbb{T}} \Gamma_0(s,s')\psi_\varepsilon(s'-s)\,ds'.
\end{equation*}
By \eqref{eqn: pointwise estimate of integrand of g_X part 1} and Lemma \ref{lemma: estimates for L M N},
\begin{equation}
|\Gamma_0(s,s')|\leq \frac{C}{\lambda} \|X''\|_{L^\infty}\|X'\|_{L^\infty},
\label{eqn: L infty estimate of integrand of g_X part 1}
\end{equation}
which implies that $g_{X,1},g_{X,1}^\varepsilon\in L^\infty(\mathbb{T})$, and $g_{X,1}^\varepsilon \rightarrow g_{X,1}$ in $L^\infty(\mathbb{T})$. In particular, for any test function $\eta\in C^\infty(\mathbb{T})$,
\begin{equation}
\lim_{\varepsilon\rightarrow 0}(\eta',g_{X,1}^\varepsilon) = (\eta',g_{X,1}),
\label{eqn: derivative of g_X1 test function convergence}
\end{equation}
where $(\cdot,\cdot)$ is the $L^2$-inner product on $\mathbb{T}$. Since there is no singularity in the integral in $g_{X,1}^\varepsilon$, we apply integration by parts on the left hand side above and exchange the derivative and the integral. We will obtain
\begin{equation}
\begin{split}
(\eta',g_{X,1}^\varepsilon) = &\;-(\eta, \partial_s g_{X,1}^\varepsilon)\\
=&\;-\left(\eta, \int_{\mathbb{T}} \partial_s\Gamma_0(s,s')\psi_\varepsilon(s'-s)\,ds'\right)+\left(\eta, \int_{\mathbb{T}} \Gamma_0(s,s')\psi'_\varepsilon(s'-s)\,ds'\right)\\
\triangleq &\; I_\varepsilon+II_\varepsilon
\end{split}
\label{eqn: derivative of g_X1 integration by parts}
\end{equation}
It is not difficult to show that
\begin{equation}
\lim_{\varepsilon\rightarrow 0}I_\varepsilon = -\left(\eta, \mathrm{p.v.}\int_{\mathbb{T}} \partial_s\Gamma_0(s,s')\,ds'\right).
\label{eqn: derivative of g_X1 term 1}
\end{equation}
On the other hand, since $\psi_\varepsilon'(\cdot-s)$ is of mean zero on $\mathbb{T}$ and $\|\psi_\varepsilon'(\cdot-s)\|_{L^1(\mathbb{T})} = 2$ due to the monotonicity assumption on $\varphi$, we have that
\begin{equation}
|II_{\varepsilon}|\leq 2\|\eta\|_{L^1} \mathrm{osc}_{s'\in [s-2\varepsilon, s+2\varepsilon]} \Gamma_0(s,s')\rightarrow 0,\quad \mbox{as }\varepsilon\rightarrow 0,
\label{eqn: derivative of g_X1 term 2}
\end{equation}
where the convergence comes from \eqref{eqn: limit of integrand of g_X part 1 at s}. Combining \eqref{eqn: derivative of g_X1 test function convergence}, \eqref{eqn: derivative of g_X1 integration by parts}, \eqref{eqn: derivative of g_X1 term 1} and \eqref{eqn: derivative of g_X1 term 2}, we find
\begin{equation}
\begin{split}
g'_{X,1}(s) = &\;\mathrm{p.v.}\int_{\mathbb{T}} \partial_s\Gamma_0(s,s')\,ds'\\
= &\;\mathrm{p.v.}\int_{\mathbb{T}} -\partial_{ss'}[G(X(s)-X(s'))](X'(s')-X'(s))\,ds'\\
&\;+ \mathrm{p.v.}\int_{\mathbb{T}} \partial_{s'}[G(X(s)-X(s'))]X''(s)\,ds'\\
= &\;\mathrm{p.v.}\int_{\mathbb{T}} -\partial_{ss'}[G(X(s)-X(s'))](X'(s')-X'(s))\,ds'.
\label{eqn: derivative of g_X part 1}
\end{split}
\end{equation}
We used \eqref{eqn: pv integral vanishes} in the last line.

For the other term in $g_X(s)$, namely $\frac{1}{4}(-\Delta)^{1/2}X$, we note that $(-\Delta)^{1/2}$ and the derivative commute since they are both Fourier multipliers. This gives
\begin{equation}
\partial_s\left(\frac{1}{4}(-\Delta)^{1/2}X\right) = \frac{1}{4}(-\Delta)^{1/2}X' = -\frac{1}{4\pi}\mathrm{p.v.}\int_\mathbb{T} \frac{X'(s')-X'(s)}{4\sin^2\left(\frac{s'-s}{2}\right)}\,ds'.
\label{eqn: derivative of g_X part 2}
\end{equation}
Combining \eqref{eqn: derivative of g_X part 1} and \eqref{eqn: derivative of g_X part 2}, we proved \eqref{eqn: derivative of g_X}.

\end{proof}
\end{lemma}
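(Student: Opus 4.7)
The plan is to split $g_X(s)$ into the singular integral piece $g_{X,1}(s) := \int_\mathbb{T}\Gamma_0(s,s')\,ds'$ and the fractional Laplacian piece $\tfrac{1}{4}(-\Delta)^{1/2}X(s)$, and to compute the weak derivative of each separately. The second piece is easy: since $(-\Delta)^{1/2}$ is a Fourier multiplier, it commutes with $\partial_s$, so $\partial_s[\tfrac{1}{4}(-\Delta)^{1/2}X] = \tfrac{1}{4}(-\Delta)^{1/2}X'$, and expanding as a principal value singular integral produces exactly the $-Id/[16\pi\sin^2(\tfrac{s'-s}{2})]\cdot(X'(s')-X'(s))$ term appearing in the claim.

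For $g_{X,1}$, the plan is a mollification/cutoff argument to justify differentiation under the integral despite the singularity at $s'=s$. Introduce a smooth even cutoff $\varphi$ supported in $[-2,2]$ with $\varphi\equiv 1$ near $0$, and set $\psi_\varepsilon(y) = 1-\varphi(y/\varepsilon)$; then $g_{X,1}^\varepsilon(s) := \int_\mathbb{T}\Gamma_0(s,s')\psi_\varepsilon(s'-s)\,ds'$ has no singularity, so differentiation passes under the integral and splits into two pieces: one carrying $\partial_s\Gamma_0\cdot\psi_\varepsilon$ and one carrying $\Gamma_0\cdot\psi_\varepsilon'(s'-s)$. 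Tested against $\eta\in C^\infty(\mathbb{T})$, the pointwise bound \eqref{eqn: pointwise estimate of integrand of g_X part 1} together with Lemma \ref{lemma: estimates for L M N} give $g_{X,1}^\varepsilon\to g_{X,1}$ in $L^\infty$, so $(\eta',g_{X,1}^\varepsilon)\to(\eta',g_{X,1})$.

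Next I pass $\varepsilon\to 0$ in both pieces of $(\eta,\partial_s g_{X,1}^\varepsilon)$. The $\partial_s\Gamma_0$ piece, after a direct chain/product rule giving $\partial_s\Gamma_0 = -\partial_{ss'}[G(X(s)-X(s'))](X'(s')-X'(s)) + \partial_{s'}[G(X(s)-X(s'))]\cdot X''(s)$, converges to the principal value integral of $\partial_s\Gamma_0$; but the $X''(s)$-term vanishes after integrating in $s'$ by the principal-value identity \eqref{eqn: pv integral vanishes} of the preceding remark, leaving exactly the $-\partial_{ss'}[G(X(s)-X(s'))](X'(s')-X'(s))$ contribution in the claim. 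The $\psi_\varepsilon'$ piece tends to zero: since $\psi_\varepsilon'(\cdot-s)$ is odd and has $L^1$-norm equal to $2$, the integral $\int\Gamma_0(s,s')\psi_\varepsilon'(s'-s)\,ds'$ is bounded by $2\,\mathrm{osc}_{|s'-s|\le 2\varepsilon}\Gamma_0(s,s')$, which tends to $0$ by continuity of $\Gamma_0(s,\cdot)$ at $s'=s$ (using $X\in H^3\subset C^2$ together with the limit $\Gamma_0(s,s)=X''(s)/(4\pi)$ established in the remark after Lemma \ref{lemma: L infty estimate for g_X}).

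The main obstacle is the principal-value contribution: one must verify that the $X''(s)$-term produced by differentiating $-X'(s)$ inside $\Gamma_0$ really does drop out, rather than combining destructively with the tangential part of $-\partial_{ss'}G$, and that the symmetric cutoff $\psi_\varepsilon$ neither creates a spurious boundary term nor destroys the principal-value structure. Both are handled by the oddness of $\psi_\varepsilon'$ and the remark equation \eqref{eqn: pv integral vanishes}; once these are in place, adding the fractional Laplacian contribution gives precisely the formula \eqref{eqn: derivative of g_X}.
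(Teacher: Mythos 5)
Your proposal is correct and follows essentially the same route as the paper's proof: the same split of $g_X$ into the singular-integral piece and the fractional Laplacian piece, the same symmetric cutoff $\psi_\varepsilon$ with differentiation under the integral, the same mean-zero/oscillation argument for the $\psi_\varepsilon'$ term, and the same use of the p.v.\ identity \eqref{eqn: pv integral vanishes} to drop the $X''(s)$ contribution. No substantive differences.
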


\begin{lemma}\label{lemma: second derivative of g_X}
Suppose $X\in H^3(\mathbb{T})$ and satisfies \eqref{eqn: well_stretched assumption} with some $\lambda>0$. Then
\begin{equation}
g''_X(s) = \mathrm{p.v.}\int_\mathbb{T}\partial_s\left[\left(-\partial_{ss'}[G(X(s)-X(s'))]-\frac{Id}{16\pi\sin^2\left(\frac{s'-s}{2}\right)}\right)(X'(s')-X'(s))\right]\,ds'.\label{eqn: second derivative of g_X}
\end{equation}
\begin{proof}
Denote the integrand of \eqref{eqn: derivative of g_X} by $\Gamma_1(s,s')$, i.e.
\begin{equation}
g'_X(s) = \mathrm{p.v.}\int_\mathbb{T} \Gamma_1(s,s')\,ds'.
\label{eqn: introduce the notation Gamma_1}
\end{equation}
What we are going to show in \eqref{eqn: second derivative of g_X} is exactly
\begin{equation*}
g''_X(s) = \mathrm{p.v.}\int_\mathbb{T} \partial_s\Gamma_1(s,s')\,ds'.
\end{equation*}
We claim that for $s\not = s'$,
\begin{equation}
\begin{split}
4\pi\Gamma_1(s,s') = &\;\frac{(X'(s)-L)\cdot N}{|L|^2}M - \frac{2(N\cdot L)(X'(s)\cdot L)}{|L|^4}M - \left(\frac{\tau^2 - 4\sin^2(\frac{\tau}{2})}{4\tau\sin^2(\frac{\tau}{2})}\right)M\\
&\;+\frac{(M-2N)\cdot M}{|L|^2}X'(s)+\frac{2(N\cdot L)( L\cdot M)}{|L|^4}X'(s)\\
&\; +\frac{2 (L\cdot M) (L\cdot (M-N)) (L\cdot X'(s))}{|L|^6}L+\frac{2 ((N-M)\cdot M)(L\cdot X'(s))}{|L|^4}L\\
&\;-\frac{6 (L\cdot M) (L\cdot X'(s')) (L\cdot N)}{|L|^6}L+\frac{2 (L\cdot M) (L\cdot X'(s'))}{|L|^4} N\\
&\;+\frac{2 (N\cdot M) (L\cdot X'(s'))}{|L|^4}L+\frac{2 (L\cdot M) (N\cdot X'(s'))}{|L|^4}L.
\end{split}
\label{eqn: simplified Gamma order 1}
\end{equation}
For conciseness, we leave its proof in Lemma \ref{lemma: simplification of Gamma_1(s,s')} in Appendix \ref{appendix section: auxiliary calculations}. With \eqref{eqn: simplified Gamma order 1} in hand, we use \eqref{eqn: lower bound for L} and \eqref{eqn: upper bound for lambda} to derive that,
\begin{equation}
\begin{split}
|\Gamma_1(s,s')| \leq &\;C \left(\frac{|X'(s)|+|X'(s')|}{\lambda^2}+\frac{1}{\lambda}\right)|M|(|M|+|N|)+C|\tau| |M|\\
\leq &\;C \frac{|X'(s)|+|X'(s')|}{\lambda^2}|M|(|M|+|N|)+C|\tau| |M|\\
\leq&\; \frac{C}{\lambda^2} \|X'\|_{L^\infty}\|X''\|^2_{L^\infty}.
\label{eqn: rough pointwise estimate of Gamma}
\end{split}
\end{equation}
By the continuity of $L$, $M$ and $N$, i.e.\;\eqref{eqn: continuity of L M N}, we also know by \eqref{eqn: simplified Gamma order 1} that
\begin{equation*}
\lim_{s'\rightarrow s}\Gamma_1(s,s') = 0.
\end{equation*}
To this end, we can prove \eqref{eqn: second derivative of g_X} by arguing as in the proof of Lemma \ref{lemma: derivative of g_X}. We omit the details.
\end{proof}
\end{lemma}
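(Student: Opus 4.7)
The plan is to mimic the cut-off/test-function argument used in the proof of Lemma \ref{lemma: derivative of g_X}, now applied to the kernel $\Gamma_1(s,s')$ introduced in \eqref{eqn: introduce the notation Gamma_1}. The goal is to justify that the weak derivative of $g'_X$ can be obtained by differentiating $\Gamma_1$ under a principal-value integral. Two ingredients will be crucial and are both already established in the statement we are allowed to use: the uniform pointwise bound $|\Gamma_1(s,s')|\leq C\lambda^{-2}\|X'\|_{L^\infty}\|X''\|_{L^\infty}^2$ from \eqref{eqn: rough pointwise estimate of Gamma}, and the diagonal vanishing $\lim_{s'\to s}\Gamma_1(s,s')=0$, both of which come from the simplified expression \eqref{eqn: simplified Gamma order 1} combined with the continuity statement \eqref{eqn: continuity of L M N}.

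Let $\psi_\varepsilon$ be the same cut-off as in the proof of Lemma \ref{lemma: derivative of g_X}, and set
\begin{equation*}
g_X^{'\varepsilon}(s) \triangleq \int_\mathbb{T}\Gamma_1(s,s')\,\psi_\varepsilon(s'-s)\,ds'.
\end{equation*}
By the uniform bound above together with dominated convergence, $g_X^{'\varepsilon}\to g_X'$ in $L^\infty(\mathbb{T})$, so for any test function $\eta\in C^\infty(\mathbb{T})$ we have $(\eta',g_X^{'\varepsilon})\to(\eta',g_X')$. Since $\psi_\varepsilon$ removes the singularity, $g_X^{'\varepsilon}$ is classically differentiable in $s$ and integration by parts gives
\begin{equation*}
(\eta',g_X^{'\varepsilon}) = -\left(\eta,\int_\mathbb{T}\partial_s\Gamma_1(s,s')\psi_\varepsilon(s'-s)\,ds'\right) +\left(\eta,\int_\mathbb{T}\Gamma_1(s,s')\psi'_\varepsilon(s'-s)\,ds'\right) \triangleq I_\varepsilon+II_\varepsilon.
\end{equation*}

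The term $II_\varepsilon$ is disposed of exactly as in Lemma \ref{lemma: derivative of g_X}: $\psi_\varepsilon'(\,\cdot-s)$ has mean zero, is supported in $\{\varepsilon\leq|s'-s|\leq 2\varepsilon\}$, and has $L^1$-norm bounded independent of $\varepsilon$; hence $|II_\varepsilon|\lesssim \|\eta\|_{L^1}\,\mathrm{osc}_{[s-2\varepsilon,s+2\varepsilon]}\Gamma_1(s,\cdot)\to 0$ as $\varepsilon\to 0^+$ thanks to the diagonal vanishing. It remains to identify the limit of $I_\varepsilon$ with $-(\eta,\mathrm{p.v.}\int_\mathbb{T}\partial_s\Gamma_1(s,s')\,ds')$, after which \eqref{eqn: second derivative of g_X} follows by the fundamental lemma of the calculus of variations.

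The main obstacle is therefore showing that $\mathrm{p.v.}\int_\mathbb{T}\partial_s\Gamma_1(s,s')\,ds'$ is well-defined and that the bulk limit of $I_\varepsilon$ equals it. Differentiating the formula \eqref{eqn: simplified Gamma order 1} in $s$, the derivatives of the regular factors $L$, $X'(s)$, $X'(s')$ and the trigonometric correction $(\tau^2-4\sin^2(\tau/2))/(4\tau\sin^2(\tau/2))$ produce factors that are bounded or $O(|\tau|)$ by \eqref{eqn: lower bound for L} and Lemma \ref{lemma: estimates for L M N}, hence harmless. The only potentially singular contributions come from $\partial_sM=(M-X''(s))/\tau$ and $\partial_sN=(2N-X''(s))/\tau$, which introduce a formal $|\tau|^{-1}$. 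However, inspecting \eqref{eqn: simplified Gamma order 1} one sees that the leading-order-in-$\tau$ part of each dangerous term is odd in $\tau$ after substituting the limits $L\to X'(s)$, $M\to X''(s)$, $N\to X''(s)/2$; the cancellation is of Hilbert-transform type and guarantees convergence in the principal value sense. The remainder is bounded, uniformly in $\varepsilon$, by a fixed $L^1_{s'}$-majorant built from maximal functions of $X''$ and $X'''$ via \eqref{eqn: bound for L M N by maximal function} and \eqref{eqn: Lp estimate for M'}--\eqref{eqn: Lp estimate for N'}, so dominated convergence applies to the non-singular piece while the p.v. part passes to the limit by the standard argument for singular integrals. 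This completes the identification of the limit of $I_\varepsilon$ and yields \eqref{eqn: second derivative of g_X}.
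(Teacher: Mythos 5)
Your proposal is correct and follows essentially the same route as the paper: repeat the cut-off/test-function argument of Lemma~\ref{lemma: derivative of g_X}, using the uniform bound \eqref{eqn: rough pointwise estimate of Gamma} and the diagonal vanishing of $\Gamma_1$ to dispose of $II_\varepsilon$, and then pass to the limit in $I_\varepsilon$. One small but worth-noting simplification: under the hypothesis $X\in H^3(\mathbb{T})$ there is in fact no genuine singularity left in $\partial_s\Gamma_1$, so the odd-symmetry/Hilbert-transform cancellation you invoke is unnecessary. The quantities $\partial_s M=(M-X''(s))/\tau$ and $\partial_s N=(2N-X''(s))/\tau$, despite the formal $|\tau|^{-1}$, are controlled pointwise by $\mathcal{M}X'''$ (this is exactly what the integral representations used in the proof of Lemma~\ref{lemma: estimates for L M N}, and the estimates \eqref{eqn: Lp estimate for M'}--\eqref{eqn: Lp estimate for N'}, encode), so $\partial_s\Gamma_1(s,\cdot)\in L^1_{s'}(\mathbb{T})$ and the integral defining $g_X''$ converges absolutely; the ``$\mathrm{p.v.}$'' is a convenience of notation, not a reflection of conditional convergence. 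This also means $I_\varepsilon$ converges simply by dominated convergence, with no need to separate a singular piece from a remainder.
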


Similar to Corollary \ref{coro: L2 estimate for g_X1-g_X2}, one can prove that
\begin{corollary}\label{coro: H1 estimate for g_X1-g_X2}
Let $X_1(s),X_2(s)\in H^2(\mathbb{T})$ both satisfy \eqref{eqn: well_stretched assumption} with some $\lambda>0$. Then
\begin{equation}
\|g_{X_1}(s)-g_{X_2}(s)\|_{\dot{H}^1}\leq C\lambda^{-3} (\|X_1\|_{\dot{H}^2}+\|X_2\|_{\dot{H}^2})^3\|X_1-X_2\|_{\dot{H}^2},
\label{eqn: H1 estimate for g_X1-g_X2}
\end{equation}
where $C>0$ is a universal constant.
\begin{proof}
With \eqref{eqn: introduce the notation Gamma_1} and \eqref{eqn: simplified Gamma order 1} in hand, we simply argue as in the proof of Corollary \ref{coro: L2 estimate for g_X1-g_X2} to obtain the desired estimate.
We omit the details.
\end{proof}
\end{corollary}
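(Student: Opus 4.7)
\bigskip

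\noindent\textbf{Proposal for the proof of Corollary \ref{coro: H1 estimate for g_X1-g_X2}.} The plan is to imitate the proof of Corollary \ref{coro: L2 estimate for g_X1-g_X2} one derivative higher, using the closed form of the integrand of $g'_X$ provided by Lemma \ref{lemma: second derivative of g_X} (more precisely, by the auxiliary identity \eqref{eqn: simplified Gamma order 1} which is reused to encode $\Gamma_1(s,s')$). Writing $g'_{X_i}(s)=\int_{\mathbb{T}} \Gamma_1^{(i)}(s,s')\,ds'$ with $\Gamma_1^{(i)}$ the kernel \eqref{eqn: simplified Gamma order 1} associated to $X_i$, the difference $g'_{X_1}-g'_{X_2}$ is $\int_{\mathbb{T}} (\Gamma_1^{(1)}-\Gamma_1^{(2)})\,ds'$, and the task is to estimate this in $L^2(\mathbb{T})$.

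First I would treat each of the eleven terms of \eqref{eqn: simplified Gamma order 1} separately. Every such term is a product whose factors are drawn from the list $L_i,\,M_i,\,N_i,\,X_i'(s),\,X_i'(s'),\,|L_i|^{-2k}$ (with $k\in\{1,2,3\}$), together with the purely scalar factor $(\tau^2-4\sin^2(\tau/2))/(4\tau\sin^2(\tau/2))$, which is bounded and independent of $X$. For each such term I would expand $\Gamma_1^{(1)}-\Gamma_1^{(2)}$ as a telescoping sum in which exactly one factor at a time is replaced. For the denominator factor I use the elementary identity
\begin{equation*}
\frac{1}{|L_1|^{2k}}-\frac{1}{|L_2|^{2k}}=\frac{(|L_2|^2-|L_1|^2)\,P_k(|L_1|,|L_2|)}{|L_1|^{2k}|L_2|^{2k}},
\end{equation*}
with $P_k$ a polynomial of degree $2k-2$; combined with \eqref{eqn: lower bound for L} this produces at worst a $\lambda^{-2k-2}$ with $|L_2|^2-|L_1|^2=(L_1+L_2)\cdot(L_2-L_1)$ as the difference factor.

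Each piece of the telescoped sum is then a product of: (i) two or three bounded (in $s'\in\mathbb{T}$, for fixed $s$) factors of type $L_i/|L_i|$ or $X'_i(s)$ with pointwise bound $\|X'_i\|_{L^\infty}\le C\|X_i\|_{\dot H^2}$ (Sobolev embedding plus Poincar\'e, since $X'_i$ has zero mean on $\mathbb{T}$); (ii) either one difference factor in $L^2_s$ coming from $s$-dependent data (namely $X_1'(s)-X_2'(s)$, $X_1''(s)-X_2''(s)$ after recognizing $L_i,M_i,N_i\big|_{s'=s}$), or a difference factor inside the integrand such as $L_1-L_2$, $M_1-M_2$, $N_1-N_2$, or $X'_1(s')-X'_2(s')$; and (iii) one or two remaining integrand factors of type $M_i,N_i$, plus possibly additional $L_i$'s. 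After pulling the pointwise-bounded factors out and using H\"older in $s'$ (typically $L^1_{s'}$ obtained as $L^2_{s'}\cdot L^2_{s'}$ by Cauchy--Schwarz) and then $L^2_s$ globally, I would invoke Lemma \ref{lemma: estimates for L M N}, in particular \eqref{eqn: double Lp estimate for L}--\eqref{eqn: double Lp estimate for N} applied to both $X_i$ and to $X_1-X_2$, to convert every norm of $L,M,N$ into the corresponding $L^2$ norm of $X',X''$. The denominator contributes at most $\lambda^{-6}$ from a single $|L|^{-6}$ term, but three of those $\lambda$'s are spent bounding three of the numerator $L$ factors by $\|X'\|_{L^\infty}\le C\|X\|_{\dot H^2}$, which yields the net $\lambda^{-3}$ in the final estimate. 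Counting degrees, each telescoped piece contains exactly three ``derivative factors'' (in the sense of $X',X'',$ or a difference thereof), producing the cubic nonlinearity $(\|X_1\|_{\dot H^2}+\|X_2\|_{\dot H^2})^2\|X_1-X_2\|_{\dot H^2}$ times an extra $\|X_i\|_{\dot H^2}$ generated by the additional $s$-derivative.

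Two minor technicalities I would handle at the end. First, Lemma \ref{lemma: derivative of g_X} and Lemma \ref{lemma: second derivative of g_X} were stated for $X\in H^3$, but the present corollary assumes only $H^2$; I would first prove the estimate for $X_1,X_2\in H^3$ via the steps above, then pass to general $X_i\in H^2$ by mollification and the $L^2$-continuity of $g_X$ (granted by Corollary \ref{coro: L2 estimate for g_X1-g_X2}), together with lower semicontinuity of $\dot H^1$ norms. Second, the terms of \eqref{eqn: simplified Gamma order 1} involving the scalar $(\tau^2-4\sin^2(\tau/2))/(4\tau\sin^2(\tau/2))$ yield only $C\tau\cdot(M_1-M_2)$ type contributions after telescoping, and these are trivially controlled in $L^2_sL^1_{s'}$. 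The main obstacle is not conceptual but purely combinatorial: keeping track of the eleven terms and their telescoped pieces, and verifying in each case that the $L^p$ pairing chosen respects the summability constraints of Lemma \ref{lemma: estimates for L M N}; however, this is exactly the same bookkeeping already carried out (in a simpler form) in the proof of Corollary \ref{coro: L2 estimate for g_X1-g_X2}, so I would omit the repetitive details, as the authors do.
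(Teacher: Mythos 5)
Your proposal takes essentially the same route as the paper: represent $g'_{X_i}$ via the kernel $\Gamma_1^{(i)}$ through \eqref{eqn: introduce the notation Gamma_1}, use the explicit simplified form \eqref{eqn: simplified Gamma order 1}, telescope the difference factor by factor, and close with Lemma \ref{lemma: estimates for L M N} and H\"older exactly as in Corollary \ref{coro: L2 estimate for g_X1-g_X2}. Your additional observation that Lemmas \ref{lemma: derivative of g_X}--\ref{lemma: second derivative of g_X} are stated for $H^3$ data while the corollary assumes only $H^2$, and that one should therefore first prove the bound for smooth data and then pass to the limit via mollification, $L^2$-continuity of $g_X$ (Corollary \ref{coro: L2 estimate for g_X1-g_X2}), and lower semicontinuity of the $\dot H^1$ seminorm, is a legitimate and worthwhile technical point that the paper's proof leaves implicit.
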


The following lemma is devoted to $H^2$-estimate of $g_X$.
\begin{lemma}\label{lemma: H2 estimate of g_X}
Suppose $X\in H^3(\mathbb{T})$ and satisfies \eqref{eqn: well_stretched assumption} with some $\lambda>0$. Then for $\forall\,\delta\in(0,\pi)$, 
\begin{equation}
\begin{split}
\|g_X''\|_{L^2(\mathbb{T})} \leq 
&\;C\left(\delta^{1/2}\lambda^{-2}\|X\|_{\dot{H}^{3}}\|X\|_{\dot{H}^{5/2}}^2+(|\ln \delta|+1)\lambda^{-3}\|X\|_{\dot{H}^{5/2}}^4\right),
\end{split}
\label{eqn: H2 estimate of g_X}
\end{equation}
where $C>0$ is a universal constant.
\begin{proof}
By Lemma \ref{lemma: second derivative of g_X}, we look into $\partial_s \Gamma_1(s,s')$.
We take $s$-derivative of \eqref{eqn: simplified Gamma order 1} and use $\partial_s L = N$ in \eqref{eqn: derivatives of L M N wrt s} to find that
\begin{equation}
\begin{split}
|\partial_s \Gamma_1(s,s')| \leq &\;C\frac{|X'(s)|+|X'(s')|}{\lambda^3}|M||N|(|M|+|N|)+C \frac{|X'(s)|+|X'(s')|}{\lambda^2}|\partial_s M|(|M|+|N|)\\
&\;+C \frac{|X'(s)|+|X'(s')|}{\lambda^2}|M|(|\partial_s M|+|\partial_s N|)+C \frac{|X''(s)|}{\lambda^2}|M|(|M|+|N|)\\
&\;+C|M|+C|M-X''(s)|.
\end{split}
\label{eqn: pointwise estimate for s-derivative of Gamma}
\end{equation}
The following estimate is also useful by substituting \eqref{eqn: derivatives of L M N wrt s} into the above formula
\begin{equation}
\begin{split}
|\partial_s \Gamma_1(s,s')|\leq &\;C\frac{|X'(s)|+|X'(s')|}{\lambda^3}|M||N|(|M|+|N|)\\
&\;+C \frac{|X'(s)|+|X'(s')|}{\lambda^2}\frac{|M|+|X''(s)|}{|\tau|}(|M|+|N|)\\
&\;+C \frac{|X'(s)|+|X'(s')|}{\lambda^2}|M|\frac{|M|+|N|+|X''(s)|}{|\tau|}\\
&\;+C \frac{|X''(s)|}{\lambda^2}|M|(|M|+|N|)+C|M|+C|M-X''(s)|\\
\leq &\;C \lambda^{-3} (|X'(s)|+|X'(s')|)|M||N|(|M|+|N|)\\
&\;+C \lambda^{-2}|X''(s)||M|(|M|+|N|)+C|M|+C|X''(s)|\\
&\;+C \lambda^{-2}|\tau|^{-1}(|X'(s)|+|X'(s')|)(|M|+|X''(s)|)(|M|+|N|).
\end{split}
\label{eqn: pointwise estimate for s-derivative of Gamma far field}
\end{equation}
In order to prove \eqref{eqn: H2 estimate of g_X}, we split $g_X''$, an integral of $\partial_s\Gamma_1$ with respect to $s'$, into two terms --- the integral in a neighborhood of the singularity at $s'=s$, and the rest. To be more precise, for $\forall\,\delta\in(0,\pi)$, we have
\begin{equation}
\|g_X''\|_{L^2(\mathbb{T})} \leq \left\|\int_{B_\delta(s)}|\partial_s \Gamma_1(s,s')|\,ds'\right\|_{L^2(\mathbb{T})}+\left\|\int_{B^c_\delta(s)}|\partial_s \Gamma_1(s,s')|\,ds'\right\|_{L^2(\mathbb{T})} \triangleq I_\delta + II_\delta.
\label{eqn: splitting of g_X''}
\end{equation}
For $I_\delta$, we use \eqref{eqn: pointwise estimate for s-derivative of Gamma}. Applying Lemma \ref{lemma: estimates for L M N} with $I = B_\delta(0)$, we obtain that

\begin{equation*}
\begin{split}
I_\delta \leq &\;C\lambda^{-3}\left\|\int_{B_{\delta}(s)}(|X'(s)|+|X'(s')|)|M||N|(|M|+|N|)\,ds'\right\|_{L^2(\mathbb{T})}\\
&\;+C\lambda^{-2} \left\|\int_{B_{\delta}(s)}(|X'(s)|+|X'(s')|)|\partial_s M|(|M|+|N|)\,ds'\right\|_{L^2(\mathbb{T})}\\
&\;+C \lambda^{-2}\left\|\int_{B_{\delta}(s)}(|X'(s)|+|X'(s')|)|M|(|\partial_s M|+|\partial_s N|)\,ds'\right\|_{L^2(\mathbb{T})}\\
&\;+C \lambda^{-2}\left\|\int_{B_{\delta}(s)}|X''(s)||M|(|M|+|N|)\,ds'\right\|_{L^2(\mathbb{T})}+C\left\|\int_{B_{\delta}(s)}|M|+|X''(s)|\,ds'\right\|_{L^2(\mathbb{T})}\\
\leq &\;C\lambda^{-3}\||X'(s)|+|X'(s')|\|_{L^\infty_s(\mathbb{T})L^\infty_{s'}(B_\delta(s))}\|M\|_{L^6_s(\mathbb{T})L^3_{s'}(B_\delta(s))}\|N\|_{L^6_s(\mathbb{T})L^3_{s'}(B_\delta(s))}\\
&\;\quad\cdot\||M|+|N|\|_{L^6_s(\mathbb{T})L^3_{s'}(B_\delta(s))}\\
&\;+C\lambda^{-2} \||X'(s)|+|X'(s')|\|_{L^\infty_s(\mathbb{T})L^\infty_{s'}(B_\delta(s))}\|\partial_s M\|_{L^2_s(\mathbb{T})L^2_{s'}(B_\delta(s))}\||M|+|N|\|_{L^\infty_s(\mathbb{T})L^2_{s'}(B_\delta(s))}\\
&\;+C \lambda^{-2}\||X'(s)|+|X'(s')|\|_{L^\infty_s(\mathbb{T})L^\infty_{s'}(B_\delta(s))}\|M\|_{L^\infty_s(\mathbb{T})L^2_{s'}(B_\delta(s))}\||\partial_s M|+|\partial_s N|\|_{L^2_s(\mathbb{T})L^2_{s'}(B_\delta(s))}\\
&\;+C \lambda^{-2}\|X''(s)\|_{L^3_s(\mathbb{T})L^3_{s'}(B_\delta(s))}\|M\|_{L^{12}_s(\mathbb{T})L^3_{s'}(B_\delta(s))}\||M|+|N|\|_{L^{12}_s(\mathbb{T})L^3_{s'}(B_\delta(s))}\\
&\;+C\||M|+|X''(s)|\|_{L^2_s(\mathbb{T})L^2_{s'}(B_\delta(s))}\\
\leq &\;C\lambda^{-3}\|X'\|_{L^\infty(\mathbb{T})}\left(\delta^{1/6}\|X''\|_{L^3(\mathbb{T})}\right)^3+C\lambda^{-2} \|X'\|_{L^\infty(\mathbb{T})}\delta^{1/2}\|X'''\|_{L^2(\mathbb{T})}\|X''\|_{L^2(\mathbb{T})}\\
&\;+C \lambda^{-2}\|X'\|_{L^\infty(\mathbb{T})}\|X''\|_{L^2(\mathbb{T})}\delta^{1/2}\|X'''\|_{L^2(\mathbb{T})}\\
&\;+C \lambda^{-2}\delta^{1/3}\|X''\|_{L^3(\mathbb{T})}\delta^{1/12}\|X''\|_{L^3(\mathbb{T})}\delta^{1/12}\|X''\|_{L^3(\mathbb{T})}+C\delta^{1/2}\|X''\|_{L^2(\mathbb{T})}\\
\leq &\; C\delta^{1/2}(\lambda^{-3}\|X'\|_{L^\infty}\|X''\|_{L^3}^3+ \lambda^{-2}\|X'\|_{L^\infty}\|X'''\|_{L^2}\|X''\|_{L^2}+ \lambda^{-2}\|X''\|_{L^3}^3+ \|X''\|_{L^2})\\
\leq &\; C\delta^{1/2}(\lambda^{-3}\|X\|_{\dot{H}^{5/2}}^4+ \lambda^{-2}\|X\|_{\dot{H}^{5/2}}^2\|X\|_{\dot{H}^3}).
\end{split}
\end{equation*}
We used \eqref{eqn: lower bound for L} and Sobolev inequality in the last line. For $II_\delta$, we used \eqref{eqn: pointwise estimate for s-derivative of Gamma far field}. 
Applying Lemma \ref{lemma: estimates for L M N} with $I = \mathbb{T}$, we obtain that
\begin{equation*}
\begin{split}
II_\delta\leq &\;C\lambda^{-3}\left\|\int_{B^c_{\delta}(s)}  (|X'(s)|+|X'(s')|)|M||N|(|M|+|N|)\,ds'\right\|_{L^2(\mathbb{T})}\\
&\;+C  \lambda^{-2}\left\|\int_{B^c_{\delta}(s)}|X''(s)||M|(|M|+|N|)\,ds'\right\|_{L^2(\mathbb{T})}+C\left\|\int_{B^c_{\delta}(s)} |M|+|X''(s)|\,ds'\right\|_{L^2(\mathbb{T})}\\
&\;+C \lambda^{-2} \left\|\int_{B^c_{\delta}(s)} |\tau|^{-1}(|X'(s)|+|X'(s')|)(|M|+|X''(s)|)(|M|+|N|)\,ds'\right\|_{L^2(\mathbb{T})}\\
\leq &\;C\lambda^{-3}\||X'(s)|+|X'(s')|\|_{L^\infty_s(\mathbb{T})L^\infty_{s'}(\mathbb{T})}\|M\|_{L^6_s(\mathbb{T})L^3_{s'}(\mathbb{T})}\|N\|_{L^6_s(\mathbb{T})L^3_{s'}(\mathbb{T})}\||M|+|N|\|_{L^6_s(\mathbb{T})L^3_{s'}(\mathbb{T})}\\
&\;+C  \lambda^{-2}\|X''(s)\|_{L^3_s(\mathbb{T})L^3_{s'}(\mathbb{T})}\|M\|_{L^{12}_s(\mathbb{T})L^3_{s'}(\mathbb{T})}\||M|+|N|\|_{L^{12}_s(\mathbb{T})L^3_{s'}(\mathbb{T})}\\
&\;+C\|M\|_{L^2_s(\mathbb{T})L^2_{s'}(\mathbb{T})}+C\|X''(s)\|_{L^2_s(\mathbb{T})L^2_{s'}(\mathbb{T})}\\
&\;+C \lambda^{-2} \|(s'-s)^{-1}\|_{L^\infty_s(\mathbb{T}) L^1_{s'}(B^c_\delta(s))}\||X'(s)|+|X'(s')|\|_{L^\infty_s(\mathbb{T})L^\infty_{s'}(\mathbb{T})}\\
&\;\quad\cdot\||M|+|X''(s)|\|_{L^4_s(\mathbb{T})L^\infty_{s'}(\mathbb{T})}
\||M|+|N|\|_{L^4_s(\mathbb{T})L^\infty_{s'}(\mathbb{T})}\\
\leq &\;C\lambda^{-3}\|X'\|_{L^\infty(\mathbb{T})}\|X''\|_{L^3(\mathbb{T})}^3+C  \lambda^{-2}\|X''\|_{L^3(\mathbb{T})}\|X''\|_{L^3(\mathbb{T})}^2+C\|X''\|_{L^2(\mathbb{T})}\\
&\;+C \lambda^{-2} (|\ln \delta|+1)\|X'\|_{L^\infty(\mathbb{T})}\|\mathcal{M}X''\|_{L^4(\mathbb{T})}^2\\
\leq &\; C(\lambda^{-3}\|X'\|_{L^\infty}\|X''\|_{L^3}^3+ \lambda^{-2}\|X''\|_{L^3}^3+ \|X''\|_{L^2}+(|\ln \delta|+1)\lambda^{-2}\|X'\|_{L^\infty}\|X''\|_{L^4}^2)\\
\leq &\; C(|\ln \delta|+1)\lambda^{-3}\|X\|_{\dot{H}^{5/2}}^4.
\end{split}
\end{equation*}
Here we used Lemma \ref{lemma: estimates for L M N} and Sobolev inequality.
Combining the above two estimates of $I_\delta$ and $II_\delta$, we proved \eqref{eqn: H2 estimate of g_X}.
\end{proof}
\end{lemma}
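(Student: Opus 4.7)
The plan is to start from Lemma \ref{lemma: second derivative of g_X}, which expresses $g_X''(s)$ as the principal value integral of $\partial_s \Gamma_1(s,s')$, and to bound this quantity directly in $L^2(\mathbb{T})$. Using the simplified expression \eqref{eqn: simplified Gamma order 1} together with the identities $\partial_s L = N$, $\partial_s M = (M - X''(s))/\tau$, $\partial_s N = (2N - X''(s))/\tau$ from \eqref{eqn: derivatives of L M N wrt s}, I will differentiate term by term. The objective is two pointwise bounds on $|\partial_s \Gamma_1(s,s')|$: the first keeping $\partial_s M$ and $\partial_s N$ as such, which is tailored to the neighbourhood of the diagonal; the second substituting the explicit $\tau^{-1}$ expressions for $\partial_s M, \partial_s N$, which is tailored to the far field where $\tau^{-1}$ is integrable.

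Next I would split $\|g_X''\|_{L^2(\mathbb{T})}$ into $I_\delta + II_\delta$, where $I_\delta$ corresponds to the integral of $|\partial_s \Gamma_1|$ over $B_\delta(s)$ and $II_\delta$ to the integral over $B_\delta^c(s)$. For $I_\delta$ I use the first pointwise bound and apply the mixed $L^q_s L^p_{s'}$ estimates from Lemma \ref{lemma: estimates for L M N} with $I = B_\delta(0)$, so that every $L^p$ norm of $L, M, N, \partial_s M, \partial_s N$ on $B_\delta(s)$ picks up a factor $\delta^{1/p - 1/q}$. Summing exponents across the product yields the global $\delta^{1/2}$. For $II_\delta$ I use the second pointwise bound; the factor $\tau^{-1}$ integrated over $B_\delta^c(s)$ contributes $|\ln \delta| + 1$, while the remaining factors are estimated by Lemma \ref{lemma: estimates for L M N} on $I = \mathbb{T}$.

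Within each multi-factor product I would apply H\"older's inequality first in $s'$ and then in $s$, choosing exponents so that each $M$ or $N$ lands in $L^3_{s'}$ (controlled by $\|X''\|_{L^3} \lesssim \|X\|_{\dot{H}^{5/2}}$ via Sobolev embedding), each $\partial_s M$ or $\partial_s N$ lands in $L^2_{s'}$ (controlled by $\|X'''\|_{L^2} = \|X\|_{\dot{H}^3}$), and the factors $|X'(s)|, |X'(s')|$ sit in $L^\infty$ (again controlled by $\|X\|_{\dot{H}^{5/2}}$). Terms of the form $|X''(s)|\cdot(\mathrm{stuff})(s,s')$ with only an $L^\infty_{s'}$-type tail in the far field will be bounded via the maximal-function estimate \eqref{eqn: bound for L M N by maximal function} together with the $L^4$-boundedness of $\mathcal{M}$, which absorbs the factor $|\ln\delta|+1$ into the $\dot{H}^{5/2}$ norm.

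The main obstacle is the bookkeeping of H\"older exponents. Many ways to split each product are available, but only specific choices produce the homogeneous norms $\|X\|_{\dot{H}^{5/2}}$ and $\|X\|_{\dot{H}^3}$ with the correct prefactors $\lambda^{-2}$ and $\lambda^{-3}$ coming from the number of $|L|^{-1}$ factors (via \eqref{eqn: lower bound for L}) present in each term of $\partial_s \Gamma_1$. One must ensure that each near-field summand carrying a $\partial_s M$ or $\partial_s N$ is precisely the one that absorbs the $\delta^{1/2}\|X\|_{\dot H^3}$ factor, while far-field summands, which lack such derivative factors, produce only the logarithmic term on the scale of $\|X\|_{\dot{H}^{5/2}}^4$. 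Once the exponents are locked in, the estimate \eqref{eqn: H2 estimate of g_X} follows by combining the bounds on $I_\delta$ and $II_\delta$.
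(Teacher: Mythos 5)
Your proposal follows the paper's argument essentially step for step: it starts from Lemma \ref{lemma: second derivative of g_X}, differentiates the simplified $\Gamma_1$ using \eqref{eqn: derivatives of L M N wrt s}, obtains the two pointwise bounds \eqref{eqn: pointwise estimate for s-derivative of Gamma} and \eqref{eqn: pointwise estimate for s-derivative of Gamma far field}, splits into $I_\delta$ over $B_\delta(s)$ and $II_\delta$ over $B^c_\delta(s)$, and closes with Lemma \ref{lemma: estimates for L M N}, H\"older, Sobolev embedding, and maximal-function boundedness in exactly the places the paper uses them. The roadmap, exponent bookkeeping strategy, and role of the $\delta^{1/2}$ versus $(|\ln\delta|+1)$ factors all match; no genuinely different route is taken.
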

\begin{remark}
It is clear from the proof that, the goal of splitting $g_X''$ into two parts in \eqref{eqn: splitting of g_X''} is to introduce a small parameter $\delta$ in front of $\|X\|_{\dot{H}^3}$ in \eqref{eqn: H2 estimate of g_X}.
This will be useful in the proof of local well-posedness.
See Section \ref{section: local existence and uniqueness}.
\qed
\end{remark}

We can also show that
\begin{corollary}\label{coro: H2 estimate for g_X0-g_X1}
Let $X_1(s),X_2(s)\in H^3(\mathbb{T})$ both satisfy \eqref{eqn: well_stretched assumption} with some $\lambda>0$. Then for $\forall\, \delta\in(0,\pi)$, and $\forall\,\mu >0$, 
\begin{equation}
\begin{split}
&\;\left\|g''_{X_1}(s)-g''_{X_2}(s)\right\|_{L^2}\\
\leq &\; C_\mu\left[\delta^{1/2}\lambda^{-2}(\|X_1\|_{\dot{H}^{5/2}}+\|X_2\|_{\dot{H}^{5/2}})^2\|X_1-X_2\|_{\dot{H}^3}\right.\\
&\;+\delta^{1/2}\lambda^{-3}(\|X_1\|_{\dot{H}^{5/2}}+\|X_2\|_{\dot{H}^{5/2}})^2(\|X_1\|_{\dot{H}^3}+\|X_2\|_{\dot{H}^3})\|X_1-X_2\|_{\dot{H}^2}\\
&\;\left.+(|\ln \delta|+1)\lambda^{-4}(\|X_1\|_{\dot{H}^{5/2}}+\|X_2\|_{\dot{H}^{5/2}})^4\|X_1-X_2\|_{\dot{W}^{2,2+\mu}}\right],
\end{split}
\label{eqn: H2 estimate for g_X0-g_X1}
\end{equation}
where $C_\mu>0$ is a universal constant depending on $\mu$.
\begin{proof}
We take $s$-derivative in \eqref{eqn: simplified Gamma order 1} first, and argue as in the proofs of Corollary \ref{coro: L2 estimate for g_X1-g_X2} and Lemma \ref{lemma: H2 estimate of g_X}. The calculation is unnecessarily long but tedious.
We omit the details here.
\end{proof}
\end{corollary}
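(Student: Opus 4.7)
The plan is to mirror the proof of Lemma~\ref{lemma: H2 estimate of g_X} while inserting the ``telescoping'' differencing trick used in Corollary~\ref{coro: L2 estimate for g_X1-g_X2}. By Lemma~\ref{lemma: second derivative of g_X} applied to each $X_i$, I have $g''_{X_i}(s)=\mathrm{p.v.}\int_\mathbb{T}\partial_s\Gamma_1^{(i)}(s,s')\,ds'$ with $\Gamma_1^{(i)}$ given by the explicit formula \eqref{eqn: simplified Gamma order 1} in terms of $L_i,M_i,N_i,X_i'$. Differentiating this formula in $s$ using \eqref{eqn: derivatives of L M N wrt s} produces a finite sum of terms, each a product of $X_i',X_i''$, $L_i,M_i,N_i$, $\partial_s M_i,\partial_s N_i$, and a power of $|L_i|^{-2}$. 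To form the difference $\partial_s\Gamma_1^{(1)}-\partial_s\Gamma_1^{(2)}$ I telescope each such product so that exactly one factor is replaced by the corresponding difference ($L_1-L_2$, $M_1-M_2$, $N_1-N_2$, $X_1'-X_2'$, $X_1''-X_2''$, $\partial_s M_1-\partial_s M_2$, or $\partial_s N_1-\partial_s N_2$). Denominator differences are controlled by the identity $|L_2|^{-2k}-|L_1|^{-2k}=\bigl(|L_1|^{2k}-|L_2|^{2k}\bigr)|L_1|^{-2k}|L_2|^{-2k}$ together with the uniform lower bound $|L_i|\ge\lambda$ from \eqref{eqn: lower bound for L}.

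Next, following \eqref{eqn: splitting of g_X''}, I split the $s'$-integral into $B_\delta(s)$ and $B_\delta^c(s)$. On $B_\delta(s)$ I use the near-field pointwise bound \eqref{eqn: pointwise estimate for s-derivative of Gamma} applied term by term to each telescoped summand, then Hölder's inequality and the $L^p$/double-$L^p$ estimates of Lemma~\ref{lemma: estimates for L M N} (applied both to $X_i$ and to $X_1-X_2$). When the difference lands on the highest-order factor (the $\partial_sM$ or $\partial_sN$ terms) one picks up a $\|X_1-X_2\|_{\dot H^3}$ factor, yielding the first summand of \eqref{eqn: H2 estimate for g_X0-g_X1}; when the difference lands on the lower-order factors ($M_i,N_i$ or $X_i'$) one picks up $\|X_1-X_2\|_{\dot H^2}$ at the cost of an extra $\|X_i\|_{\dot H^3}$, yielding the second summand. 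The extra $\lambda^{-1}$ in the second term comes from the denominator-difference step.

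On $B_\delta^c(s)$ I use the far-field bound \eqref{eqn: pointwise estimate for s-derivative of Gamma far field} and the logarithmic integral $\|(s'-s)^{-1}\|_{L^1(B_\delta^c)}\le C(|\ln\delta|+1)$ exactly as in the proof of Lemma~\ref{lemma: H2 estimate of g_X}. The remaining terms are controlled via $|M_i|+|N_i|\le C\mathcal M X_i''$ (and similarly for differences). The main obstacle and the source of the $\mu$-dependence lies here: in Lemma~\ref{lemma: H2 estimate of g_X} the pairing was $L^4_sL^\infty_{s'}\times L^4_sL^\infty_{s'}$ giving $\|\mathcal MX''\|_{L^4}^2\lesssim\|X\|_{\dot H^{5/2}}^2$, but when one $X_i''$ is replaced by $(X_1-X_2)''$, $\dot H^2$ control does not embed into $L^4(\mathbb T)$. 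To compensate, I pair $\|\mathcal M(X_1-X_2)''\|_{L^{2+\mu}}\le C_\mu\|X_1-X_2\|_{\dot W^{2,2+\mu}}$ against $\|\mathcal M X_i''\|_{L^{r}}\le C_r\|X_i''\|_{L^r}$ with $\tfrac{1}{2+\mu}+\tfrac{1}{r}=\tfrac12$, so $r=r(\mu)<\infty$. Since $\dot H^{5/2}(\mathbb T)\hookrightarrow\dot W^{2,r}(\mathbb T)$ for every finite $r$, this controls the $X_i$-factor by $\|X_i\|_{\dot H^{5/2}}$ at the cost of a constant $C_\mu$ that blows up as $\mu\downarrow 0$. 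Summing the two regions gives the bound \eqref{eqn: H2 estimate for g_X0-g_X1}; all the calculations are routine telescoping arguments analogous to those already displayed in the proof of Corollary~\ref{coro: L2 estimate for g_X1-g_X2} and Lemma~\ref{lemma: H2 estimate of g_X}, which justifies omitting the bookkeeping.
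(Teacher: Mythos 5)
Your proposal is correct and follows the exact strategy the paper gestures at (telescoping the difference as in Corollary~\ref{coro: L2 estimate for g_X1-g_X2}, then splitting the $s'$-integral into $B_\delta(s)$ and $B_\delta^c(s)$ as in Lemma~\ref{lemma: H2 estimate of g_X}), so there is no substantive gap. You also correctly identify the one non-obvious point that the paper leaves unexplained --- namely why the difference estimate requires $\dot W^{2,2+\mu}$ rather than $\dot H^2$: in the far-field term one of the two $\|\mathcal M X''\|_{L^4}$ factors from Lemma~\ref{lemma: H2 estimate of g_X} becomes $\|\mathcal M (X_1-X_2)''\|$, which cannot be controlled in $L^4$ by $\dot H^2$, so you trade $L^4\times L^4$ Hölder for $L^{2+\mu}\times L^{2(2+\mu)/\mu}$ and use that $\dot H^{1/2}(\mathbb{T})\hookrightarrow L^r(\mathbb{T})$ for every finite $r$ to absorb the dual exponent on the $X_i$-side into $\|X_i\|_{\dot H^{5/2}}$; this is exactly where the $\mu$-dependent constant enters.
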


\section{Existence and Uniqueness of the Local-in-time Solution}\label{section: local existence and uniqueness}
To this end, we are able to prove the local well-posedness of \eqref{eqn: contour dynamic formulation of the immersed boundary problem}.
\subsection{Existence}\label{section: local existence}

\begin{proof}[Proof of Theorem \ref{thm: local in time existence} (existence of the local-in-time solution)]
For $\forall\,Y\in L^1(\mathbb{T})$, we split it into its mean $\bar{Y}$ and its oscillation $\tilde{Y}$, i.e.
\begin{equation*}
\bar{Y} \triangleq \frac{1}{2\pi}\int_\mathbb{T} Y(s)\,ds,\quad \tilde{Y}(s)\triangleq Y(s)-\bar{Y}.
\end{equation*}
Then \eqref{eqn: contour dynamic formulation of the immersed boundary problem} could be split into two equations as well, one for $\tilde{X}$ and the other for $\bar{X}$. Namely,
\begin{equation}
\begin{split}
&\;\partial_t \tilde{X}(s,t)= \mathcal{L}\tilde{X}(s,t) + \widetilde{g_{\tilde{X}}}(s,t),\quad s\in \mathbb{T}, t> 0,\\
&\;\tilde{X}(s,0) = \widetilde{X_0}(s),
\end{split}
\label{eqn: equation for oscillation of X in the main thm}
\end{equation}
and
\begin{equation}
\frac{d}{dt}\bar{X}(t) = \overline{g_{\tilde{X}}} = \frac{1}{2\pi}\int_\mathbb{T}g_{\tilde{X}}(s,t)\,ds,\quad \bar{X}(0) = \overline{X_0}.
\label{eqn: equation for mean of X in the main thm}
\end{equation}

We first consider the existence of solutions of the $\tilde{X}$-equation \eqref{eqn: equation for oscillation of X in the main thm}.
Given $X_0$, with $T>0$ to be determined, we define
\begin{equation}
\begin{split}
\Omega_{0,T}(X_0) = &\;\left\{Y(s,t)\in\Omega_{T}:\;\int_\mathbb{T}Y(s,t)\,ds \equiv 0,\;\|Y_t(s,t)\|_{L^2_T \dot{H}^2(\mathbb{T})}\leq \|X_0\|_{\dot{H}^{5/2}(\mathbb{T})},\right.\\
&\;\qquad\left.\left\|Y(s,t)-\mathrm{e}^{t\mathcal{L}}\widetilde{X_0}\right\|_{L^{\infty}_T \dot{H}^{5/2}\cap L^2_T \dot{H}^{3}(\mathbb{T})} \leq \|X_0\|_{\dot{H}^{5/2}(\mathbb{T})},\;Y(s,0)=\widetilde{X_0}(s)\right\}.
\end{split}
\label{eqn: definition of the neighbourhood of X0 used in the proof of local existence}
\end{equation}
The subscript $0$ stresses that functions in $\Omega_{0,T}(X_0)$ has mean zero on $\mathbb{T}$.
We remark that only the seminorms are used, since the mean of $X_0$ is irrelevant in the equation for $\tilde{X}$, which is always this case in the sequel.
$\Omega_{0,T}(X_0)$ is non-empty.
Indeed, by Lemma \ref{lemma: improved Hs estimate and Hs continuity of semigroup solution} and Lemma \ref{lemma: a priori estimate of nonlocal eqn}, $\mathrm{e}^{t\mathcal{L}}\widetilde{X_0}\in \Omega_{0,T}(X_0)$.
It is also convex and closed in $\Omega_T$.
By Aubin-Lions lemma, $\Omega_{0,T}(X_0)$ is compact in $C_T H^2(\mathbb{T})$.

By Lemma \ref{lemma: a priori estimate of nonlocal eqn}, for $\forall\, Y\in \Omega_{0,T}(X_0)$, $\|Y\|_{L^{\infty}_T \dot{H}^{5/2}\cap L^2_T \dot{H}^3(\mathbb{T})} \leq 4 \|X_0\|_{\dot{H}^{5/2}(\mathbb{T})}$.
Moreover, by taking $T$ sufficiently small, we will have
\begin{equation}
\left|Y(s_1,t) - Y(s_2,t)\right| \geq \frac{\lambda}{2}|s_1 - s_2|,\quad \forall\,s_1, s_2\in\mathbb{T},\;t\in[0,T].
\label{eqn: uniform bi lipschitz constant in the neighborhood}
\end{equation}
In fact, if we assume $C_1 \|X_0\|_{\dot{H}^{5/2}(\mathbb{T})}T^{1/2}\leq \lambda/2$, with $C_1$ being a universal constant coming from Sobolev inequality that will be clear below,
\begin{equation*}
\begin{split}
||Y(s_1,t) - Y(s_2,t)| - |X_0(s_1) - X_0(s_2)||\leq &\;|(Y-X_0)(s_1,t) - (Y-X_0)(s_2,t)|\\
\leq &\;C_1 \|Y-X_0\|_{C_T \dot{H}^2(\mathbb{T})} |s_1 - s_2|\\
\leq &\;C_1 \|X_0\|_{\dot{H}^{5/2}(\mathbb{T})}T^{1/2} |s_1 - s_2|\leq \frac{\lambda}{2}|s_1-s_2|.
\end{split}
\end{equation*}
Here we used the assumptions that $\|Y_t(s,t)\|_{L^2_T \dot{H}^2(\mathbb{T})}\leq \|X_0\|_{\dot{H}^{5/2}(\mathbb{T})}$ and $Y(s,0)=\widetilde{X_0}(s)$.
Then \eqref{eqn: uniform bi lipschitz constant in the neighborhood} follows from \eqref{eqn: bi Lipschitz assumption in main thm} and the triangle inequality.

Under the above assumption, we define a map $V: \Omega_{0,T}(X_0) \rightarrow \Omega_{0,T}(X_0)$ as follows, with $T$ to be determined. For given $Y(s,t) \in \Omega_{0,T}(X_0)$, let $Z \triangleq VY$ solve
\begin{equation}
\partial_t Z(s,t)= \mathcal{L}Z(s,t) + \widetilde{g_Y}(s,t),\quad s\in \mathbb{T}, t\in[0,T],\quad Z(s,0) = \widetilde{X_0}(s).
\label{eqn: equation to define the map V}
\end{equation}
To show $V$ is well-defined, we first claim that $Z\in \Omega_T$.
In fact, for $Y\in\Omega_{0,T}(X_0)$, by Lemma \ref{lemma: H2 estimate of g_X},
\begin{equation}
\begin{split}
\|\widetilde{g_Y}\|_{L^2_T\dot{H}^2(\mathbb{T})} \leq&\;C\left(\delta^{1/2}\lambda^{-2}\|Y\|_{L^2_T\dot{H}^{3}(\mathbb{T})}\|Y\|_{L^\infty_T\dot{H}^{5/2}(\mathbb{T})}^2+T^{1/2}(|\ln \delta|+1)\lambda^{-3}\|Y\|_{L^\infty_T\dot{H}^{5/2}(\mathbb{T})}^4\right)\\
\leq &\;C_2\lambda^{-3}\|X_0\|_{\dot{H}^{5/2}(\mathbb{T})}^4(\delta^{1/2}+T^{1/2}(|\ln \delta|+1)).
\end{split}
\label{eqn: estimate for the source term in local existence}
\end{equation}
In the last line, we used \eqref{eqn: lower bound for L} and Sobolev inequality to have that $\lambda\leq C\|X_0\|_{\dot{H}^{5/2}(\mathbb{T})}$.
Then for $\forall\, T>0$, Lemma \ref{lemma: a priori estimate of nonlocal eqn} gives the existence and uniqueness of the solution $Z\in \Omega_T$, which satisfies
\begin{equation}
\|\partial_t Z\|_{L^2_{T} \dot{H}^2(\mathbb{T})} \leq \frac{1}{2}\|X_0\|_{\dot{H}^{5/2}(\mathbb{T})}+\|\widetilde{g_Y}\|_{L^2_{T}\dot{H}^2(\mathbb{T})}.
\label{eqn: bound of Z_t}
\end{equation}
$Z$ obviously has mean zero for all time.

Now consider $W =Z-\mathrm{e}^{t\mathcal{L}}\widetilde{X_0}$, which solves
\begin{equation*}
\partial_t W(s,t)= \mathcal{L}W(s,t) + \widetilde{g_Y}(s,t),\quad W(s,0) = 0.
\end{equation*}
By Lemma \ref{lemma: a priori estimate of nonlocal eqn} and \eqref{eqn: estimate for the source term in local existence}, we find that
\begin{equation}
\|W\|_{L^\infty_{T}\dot{H}^{5/2}\cap L^2_{T} \dot{H}^{3}(\mathbb{T})} \leq 6\|\widetilde{g_Y}\|_{L^2_T\dot{H}^{2}(\mathbb{T})}\leq 6C_2\lambda^{-3}\|X_0\|_{\dot{H}^{5/2}(\mathbb{T})}^4(\delta^{1/2}+T^{1/2}(|\ln \delta|+1)).
\label{eqn: bound on W}
\end{equation}
To this end, we first take $\delta \leq \delta_0(\lambda,\|X_0\|_{\dot{H}^{5/2}})$ sufficiently small, s.t.
\begin{equation*}
C_2\lambda^{-3}\|X_0\|_{\dot{H}^{5/2}(\mathbb{T})}^4\delta^{1/2} \leq \frac{1}{12}\|X_0\|_{\dot{H}^{5/2}(\mathbb{T})},
\end{equation*}
and then assume $T \leq T_0(\lambda,\|X_0\|_{\dot{H}^{5/2}},\delta)$ sufficiently small as well, s.t.
\begin{equation}
\begin{split}
&\;C_1 \|X_0\|_{\dot{H}^{5/2}(\mathbb{T})}T^{1/2}\leq \frac{1}{2}\lambda,\\
&\;C_2\lambda^{-3}\|X_0\|_{\dot{H}^{5/2}(\mathbb{T})}^4 T^{1/2}(|\ln \delta|+1) \leq \frac{1}{12}\|X_0\|_{\dot{H}^{5/2}(\mathbb{T})}.
\end{split}
\label{eqn: constraints on existence time T}
\end{equation}
This implies $\|Z-\mathrm{e}^{t\mathcal{L}}\widetilde{X_0}\|_{L^\infty_{T}\dot{H}^{5/2}\cap L^2_{T} \dot{H}^{3}(\mathbb{T})} \leq \|X_0\|_{\dot{H}^{5/2}(\mathbb{T})}$ by \eqref{eqn: bound on W}.
Also by \eqref{eqn: estimate for the source term in local existence} and \eqref{eqn: bound of Z_t}
\begin{equation*}
\|\partial_t Z\|_{L^2_{T} \dot{H}^2(\mathbb{T})} \leq \frac{1}{2}\|X_0\|_{\dot{H}^{5/2}(\mathbb{T})}+\frac{1}{6}\|X_0\|_{\dot{H}^{5/2}(\mathbb{T})} \leq \|X_0\|_{\dot{H}^{5/2}(\mathbb{T})}.
\end{equation*}
Hence, $V$ is well-defined from $\Omega_{0,T}(X_0)$ to itself.
We note that the upper bound of valid $T$, which is $T_0$, essentially only depends on $\lambda$ and $\|X_0\|_{\dot{H}^{5/2}(\mathbb{T})}$.

By Aubin-Lions lemma, $V(\Omega_{0,T}(X_0))$ is compact in $C_{T} H^2(\mathbb{T})$. By Schauder fixed point theorem, there is a fixed point of the map $V$ in $V(\Omega_{0,T}(X_0))\subset \Omega_{0,T}(X_0)$, denoted by $\tilde{X}\in \Omega_{T}$, which is a solution of \eqref{eqn: equation for oscillation of X in the main thm}.
It satisfies
\begin{equation}
\|\tilde{X}\|_{L^\infty_{T} \dot{H}^{5/2}\cap L^2_{T} \dot{H}^{3}(\mathbb{T})}\leq 4\|X_0\|_{\dot{H}^{5/2}(\mathbb{T})},\quad \|\partial_t \tilde{X}\|_{L^2_{T} \dot{H}^2(\mathbb{T})} \leq \|X_0\|_{\dot{H}^{5/2}(\mathbb{T})},
\label{eqn: a priori estimate for the local solution}
\end{equation}
and
\begin{equation}
\left|\tilde{X}(s_1,t) - \tilde{X}(s_2,t)\right| \geq \frac{\lambda}{2}|s_1 - s_2|,\quad \forall\,s_1,s_2\in\mathbb{T},\;t\in[0,T].
\label{eqn: uniform bi lipschitz constant of the local solution}
\end{equation}

To this end, we turn to the ODE \eqref{eqn: equation for mean of X in the main thm} for $\bar{X}$.
By Lemma \ref{lemma: L infty estimate for g_X}, for $\forall\, s\in\mathbb{T}$ and $t\in[0,T]$,
\begin{equation*}
|\overline{g_{\tilde{X}}}|\leq \|g_{\tilde{X}}(s,t)\|_{L^\infty(\mathbb{T})}\leq \frac{C}{\lambda}\|\tilde{X}\|_{\dot{H}^1}\|\tilde{X}\|_{\dot{H}^2}\leq C\lambda^{-1}\|X_0\|_{\dot{H}^{5/2}(\mathbb{T})}^2.
\end{equation*}
It is then easy to show that \eqref{eqn: equation for mean of X in the main thm} admits a unique solution $\bar{X}(t)\in C^{0,1}([0,T])$ once $\tilde{X}$ is given.
The solution for \eqref{eqn: contour dynamic formulation of the immersed boundary problem} is thus given by $X(s,t) = \bar{X}(t)+\tilde{X}(s,t)$. This proves the existence of the local-in-time solutions in $\Omega_{T}$. \eqref{eqn: a priori estimate for the local solution in the main theorem} and \eqref{eqn: uniform bi lipschitz constant of the local solution in the main theorem} follow from \eqref{eqn: a priori estimate for the local solution} and \eqref{eqn: uniform bi lipschitz constant of the local solution} respectively.

That $X\in L^2_{T}H^3(\mathbb{T})$ together with $X_t\in L^2_{T}H^2(\mathbb{T})$ implies that $X$ is almost everywhere equal to a continuous function valued in $H^{5/2}(\mathbb{T})$, i.e.\;$X$ could be realized as an element in $C([0,T];H^{5/2}(\mathbb{T}))$. This can be proved by classic arguments (see Temam \cite{temam1984navier}, \S\,1.4 of Chapter III).
\end{proof}

\subsection{Uniqueness}\label{section: local uniqueness}
\begin{proof}[Proof of Theorem \ref{thm: local in time uniqueness} (uniqueness of the local-in-time solution)]
Suppose $X_1,X_2\in \Omega_T$ are two solutions of \eqref{eqn: contour dynamic formulation of the immersed boundary problem}, both satisfying the assumption \eqref{eqn: bi lipschitz assumption in uniqueness thm}.
Let
\begin{equation}
R = \|X_1\|_{L_{T}^\infty\dot{H}^{5/2}\cap L_{T}^2 \dot{H}^3(\mathbb{T})} + \|X_2\|_{L_{T}^\infty\dot{H}^{5/2}\cap L_{T}^2 \dot{H}^3(\mathbb{T})}\geq C(c)\lambda
\label{eqn: uniform bound in uniqueness thm}
\end{equation}
and $Q(s,t) \triangleq X_1-X_2$. Then $\tilde{Q} = \widetilde{X_1}-\widetilde{X_2}$ solves
\begin{equation*}
\partial_t \tilde{Q}(s,t)= \mathcal{L}\tilde{Q}(s,t) + \widetilde{g_{\widetilde{X_1}}}(s,t)-\widetilde{g_{\widetilde{X_2}}}(s,t),\quad \tilde{Q}(s,0) = 0,\quad (s,t)\in\mathbb{T}\times [0,T].
\end{equation*}
By Corollary \ref{coro: H2 estimate for g_X0-g_X1} with $\mu = 2$ and Sobolev inequality, with $t\in(0,T]$ to be determined
\begin{equation}
\begin{split}
&\;\left\|g''_{X_1}(s)-g''_{X_2}(s)\right\|_{L_{t}^2L^2}\\
\leq &\;C\left[
\delta^{1/2}\lambda^{-2}(\|X_1\|_{L^\infty_t\dot{H}^{5/2}}+\|X_2\|_{L^\infty_t\dot{H}^{5/2}})^2\|X_1-X_2\|_{L^2_t\dot{H}^3}\right.\\
&\;+\delta^{1/2}\lambda^{-3}(\|X_1\|_{L^\infty_t\dot{H}^{5/2}}+\|X_2\|_{L^\infty_t\dot{H}^{5/2}})^2(\|X_1\|_{L^2_t\dot{H}^3}+\|X_2\|_{L^2_t\dot{H}^3})\|X_1-X_2\|_{L^\infty_t\dot{H}^2}\\
&\;\left.+(|\ln \delta|+1)\lambda^{-4}t^{1/2}(\|X_1\|_{L^\infty_t\dot{H}^{5/2}}+\|X_2\|_{L^\infty_t\dot{H}^{5/2}})^4\|X_1-X_2\|_{L^\infty_t\dot{W}^{2,4}}\right],\\
\leq &\; C(c)\left[\delta^{1/2}\lambda^{-2}R^2\|Q\|_{L^2_t\dot{H}^3}+\delta^{1/2}\lambda^{-3}R^3\|Q\|_{L^\infty_t\dot{H}^2}+(|\ln \delta|+1)\lambda^{-4}R^4t^{1/2}\|Q\|_{L^\infty_t\dot{H}^{5/2}}\right]\\
\leq &\; C(c)[\delta^{1/2}+(|\ln \delta|+1)t^{1/2}]\lambda^{-4}R^4\|\tilde{Q}\|_{L^{\infty}_{t}\dot{H}^{5/2}\cap L^2_{t}\dot{H}^3(\mathbb{T})}.
\end{split}
\label{eqn: space time estimate for the difference of solutions in proving uniqueness}
\end{equation}
Here we repeatedly used \eqref{eqn: uniform bound in uniqueness thm}. By Lemma \ref{lemma: a priori estimate of nonlocal eqn},
\begin{equation*}
\|\tilde{Q}\|_{L^{\infty}_{t}\dot{H}^{5/2}\cap L^2_{t}\dot{H}^3(\mathbb{T})}\leq C(c)[\delta^{1/2}+(|\ln \delta|+1)t^{1/2}]\lambda^{-4}R^4\|\tilde{Q}\|_{L^{\infty}_{t}\dot{H}^{5/2}\cap L^2_{t}\dot{H}^3(\mathbb{T})}.
\end{equation*}
Hence, we first take $\delta = \delta_*(\lambda, R, c)$ sufficiently small and then take  $t=t_*(\lambda, R, c)$ sufficiently small,
such that $C(c)[\delta^{1/2}+(|\ln \delta|+1)t^{1/2}]\lambda^{-4}R^4\in(0,1)$.
This implies that
\begin{equation*}
\|\tilde{Q}\|_{L^{\infty}_{t_*}\dot{H}^{5/2}\cap L^2_{t_*}\dot{H}^3(\mathbb{T})}=0,
\end{equation*}
i.e.\;$\tilde{Q}(s,t) = 0$ for $t\in[0,t_*]$. Since \eqref{eqn: bi lipschitz assumption in uniqueness thm} and \eqref{eqn: uniform bound in uniqueness thm} are uniform throughout $[0,T]$, the above argument is also true for arbitrary initial time, i.e.\;provided that $\tilde{Q}(s,t_0) = 0$ for some $t_0\in [0,T]$, then $\tilde{Q}(s,t) = 0$ for $t\in[t_0,\min\{t_0+t_*,T\}]$. Hence, $\tilde{Q}(s,t) \equiv 0$ for $t\in[0,T]$, i.e.\;$\widetilde{X_1}(s,t) \equiv \widetilde{X_2}(s,t)$.

Recall that in \eqref{eqn: equation for mean of X in the main thm}, the solution $\bar{X}(t)$ is uniquely determined in $C^{0,1}([0,T])$ by $\tilde{X}(s,t)$. This implies that $\overline{X_1}(t)\equiv \overline{X_2}(t)$, and thus $X_1(s,t) \equiv X_2(s,t)$ for $(s,t)\in\mathbb{T}\times [0,T]$.
This proves the uniqueness under the assumption \eqref{eqn: bi lipschitz assumption in uniqueness thm}.

The uniqueness of the local-in-time solution obtained in Theorem \ref{thm: local in time existence} follows immediately.
\end{proof}

\section{Existence and Uniqueness of Global-in-time Solutions near Equilibrium Configurations}\label{section: global existence}
In this section, we will prove that existence of global solution provided that the initial string configuration is sufficiently close to equilibrium.
The closeness is measured using the difference between a string configuration $Y$ and its closet equilibrium configuration $Y_*$ (see Definition \ref{def: closest equilbrium state}).
We start with several remarks on the definition of the closest equilibrium configuration.


\begin{remark}\label{remark: mass center of the equilibrium agrees with initial data}
In the definition of $Y_*$, we have $x_* = \frac{1}{2\pi}\int_{\mathbb{T}} Y(s)\,ds$.
This can be seen from the Fourier point of view.
Assume $Y(s) = \sum_{k\in\mathbb{Z}} \hat{Y}_k \mathrm{e}^{iks}$, where $\hat{Y}_k$'s are complex-valued 2-vectors. By Parseval's identity,
\begin{equation}
\begin{split}
\frac{1}{2\pi}\int_{\mathbb{T}}|Y(s)-Y_{\theta,x}(s)|^2\,ds = &\;|\hat{Y}_0-x|^2 +\sum_{k\in\mathbb{Z},|k|\geq 2} |\hat{Y}_k|^2\\
&\;+ \left|\hat{Y}_{1}-
R_Y \mathrm{e}^{i\theta}\left(
\begin{array}{c}
\frac{1}{2}\\\frac{1}{2i}
\end{array}
\right)
\right|^2
+\left|\hat{Y}_{-1}-
R_Y \mathrm{e}^{-i\theta}\left(
\begin{array}{c}
\frac{1}{2}\\-\frac{1}{2i}
\end{array}
\right)
\right|^2.
\end{split}
\label{eqn: L2 difference of Y and its closest equilibrium using Parseval}
\end{equation}
In order to achieve its minimum, we should take $x_* = \hat{Y}_0 = \frac{1}{2\pi}\int_{\mathbb{T}} Y(s)\,ds$. In the sequel, we shall denote $Y_\theta(s) \triangleq Y_{\theta,x_*}(s)$ and only minimize $\|Y-Y_\theta\|_{L^2(\mathbb{T})}$ with respect to $\theta$.
\qed
\end{remark}
\begin{remark}\label{remark: L2 closest is also Hs closest}
Although $Y_*$ is defined to be the closest to $Y$ in the $L^2$-distance among all $Y_\theta $, it is also the closest in the $H^s$-sense for all $s\geq 0$. Indeed, by Parseval's identity,
\begin{equation*}
\begin{split}
\frac{1}{2\pi}\|Y-Y_{\theta}\|^2_{\dot{H}^s(\mathbb{T})} = &\;\sum_{k\in\mathbb{Z},|k|\geq 2} |k|^{2s}|\hat{Y}_k|^2+ \left|\hat{Y}_{1}-
R_Y \mathrm{e}^{i\theta}\left(
\begin{array}{c}
\frac{1}{2}\\\frac{1}{2i}
\end{array}
\right)
\right|^2
+\left|\hat{Y}_{-1}-
R_Y \mathrm{e}^{-i\theta}\left(
\begin{array}{c}
\frac{1}{2}\\-\frac{1}{2i}
\end{array}
\right)
\right|^2\\
=&\;\frac{1}{2\pi}\|Y-Y_{\theta}\|^2_{L^2(\mathbb{T})}+\sum_{k\in\mathbb{Z},|k|\geq 2} (|k|^{2s}-1)|\hat{Y}_k|^2.
\end{split}
\end{equation*}
The last term in the last line is constant with respect to $\theta$, which implies that $\theta_*$ also optimizes $\|Y-Y_{\theta}\|_{\dot{H}^s(\mathbb{T})}$.
\qed
\end{remark}

The following lemma establishes the equivalence of the $H^1$-distance and the energy difference between a string configuration $Y$ and its closest equilibrium configuration $Y_*$.
Recall that the elastic energy of $Y$ is $\|Y\|_{\dot{H}(\mathbb{T})}^2/2$ (see Lemma \ref{lemma: energy estimate}).
The motivation is that we wish to transform the global coercive bound on the energy difference, which comes from \eqref{eqn: energy estimate of Stokes immersed boundary problem}, into a bound for more convenient quantity $\|Y-Y_*\|_{\dot{H}^1}$.
\begin{lemma}\label{lemma: estimates concerning closest equilbrium}
We have the following estimates for $Y$ and its closest equilibrium configuration $Y_*$:
\begin{equation}
\frac{1}{2}\left(\|Y'(s)\|_{L^2(\mathbb{T})}^2-\|Y_*'(s)\|_{L^2(\mathbb{T})}^2\right)\leq \|Y'(s)-Y_*'(s)\|_{L^2(\mathbb{T})}^2\leq 4\left(\|Y'(s)\|_{L^2(\mathbb{T})}^2-\|Y_*'(s)\|_{L^2(\mathbb{T})}^2\right).
\label{eqn: difference in H1 bounded by difference in energy}
\end{equation}

\begin{proof}
Without loss of generality, we assume that $(\theta_*,x_*) = (0,0)$. Otherwise we simply make a translation and rotation of $Y$.
Define $D(s) = Y(s)-Y_*(s)$. By Remark \ref{remark: mass center of the equilibrium agrees with initial data} and the above assumption, $D(s)$ is of mean zero on $\mathbb{T}$.

We first prove the upper bound. By the definition of $\theta_*$ and $Y_*$, we know that
\begin{equation}
0 = \left.\frac{d}{d\theta}\right|_{\theta = \theta_*}\int_{\mathbb{T}}\left|Y(s)-Y_\theta(s)\right|^2\,ds = -2\int_\mathbb{T} (Y-Y_*)\cdot Y'_*\,ds = -2\int_\mathbb{T} D\cdot Y'_*\,ds,
\label{eqn: equation for the optimal approximated equilibrium}
\end{equation}
and
\begin{equation}
\begin{split}
0 \leq &\; \left.\frac{d^2}{d\theta^2}\right|_{\theta = \theta_*}\int_{\mathbb{T}}\left|Y(s)-Y_\theta(s)\right|^2\,ds = -2\int_\mathbb{T} -Y'_*\cdot Y'_*+(Y-Y_*)\cdot Y_*''\,ds\\
= &\;2\int_\mathbb{T} |Y'_*|^2+(Y-Y_*)\cdot Y_*\,ds = 2\int_\mathbb{T} |Y'_*|^2+D\cdot Y_*\,ds\\
= &\;4\pi R_Y^2+2\int_\mathbb{T} D\cdot Y_*\,ds\\
\label{eqn: second order equation for the optimal approximated equilibrium}
\end{split}
\end{equation}
Here we used $Y_{*}'' = -Y_{*}$.
Moreover, since $Y$ and $Y_*$ have the same effective radius, by \eqref{eqn: enclosed area is pi},
\begin{equation*}
0 =\int_\mathbb{T}Y\times Y'\,ds - \int_\mathbb{T}Y_*\times Y_*'\,ds =\int_\mathbb{T}D\times Y_*'+ Y_*\times D' + D\times D'\,ds.
\end{equation*}
Since $Y_*'(s) = (-R_Y\sin s, R_Y\cos s) = Y_*^\perp(s)$ and $Y_*'' = - Y_*$, it is further simplified to be
\begin{equation}
0 =\int_\mathbb{T}D\cdot Y_*+ Y_*'\cdot D' + D\times D'\,ds = \int_\mathbb{T}D\cdot Y_*- Y_*''\cdot D + D\times D'\,ds = \int_\mathbb{T}2D\cdot Y_* + D\times D'\,ds.
\label{eqn: constraint on deviation from volume conservation}
\end{equation}

In the sequel, we shall write $Y_*$ and $D$ in terms of their Fourier coefficients. With the assumption that $(\theta_*,x_*) = (0,0)$, we have
\begin{align*}
&\;Y_*(s) = R_Y\left(
\begin{array}{c}
\frac{1}{2}\\-\frac{i}{2}
\end{array}
\right)\mathrm{e}^{is} + R_Y\left(
\begin{array}{c}
\frac{1}{2}\\\frac{i}{2}
\end{array}
\right)\mathrm{e}^{-is},\\
&\;Y'_*(s) = R_Y\left(
\begin{array}{c}
\frac{i}{2}\\\frac{1}{2}
\end{array}
\right)\mathrm{e}^{is} + R_Y\left(
\begin{array}{c}
-\frac{i}{2}\\\frac{1}{2}
\end{array}
\right)\mathrm{e}^{-is}.
\end{align*}
Assume $D(s) = \sum_{k\in \mathbb{Z}}\hat{D}_k \mathrm{e}^{iks}$, where $\hat{D}_k$'s are complex-valued $2$-vectors satisfying $\hat{D}_{-k} = \overline{\hat{D}_{k}}$. Hence, \eqref{eqn: equation for the optimal approximated equilibrium} could be rewritten as
\begin{equation*}
0= \hat{D}_1\cdot \overline{\left(
\begin{array}{c}
\frac{i}{2}\\\frac{1}{2}
\end{array}
\right)} + \hat{D}_{-1}\cdot \overline{\left(
\begin{array}{c}
-\frac{i}{2}\\\frac{1}{2}
\end{array}
\right)} = \left(-\frac{i}{2}\hat{D}_{1,1}+ \frac{1}{2}\hat{D}_{1,2}\right)+\overline{\left(-\frac{i}{2}\hat{D}_{1,1}+ \frac{1}{2}\hat{D}_{1,2}\right)},
\end{equation*}
where $\hat{D}_{1,1}$ and $\hat{D}_{1,2}$ represent the first and the second component of $\hat{D}_1$ respectively. This implies that
\begin{equation}
0 = \Re(-i\hat{D}_{1,1}+\hat{D}_{1,2}) = \Im \hat{D}_{1,1}+\Re \hat{D}_{1,2}.
\label{eqn: simplified equation for the optimal approximated equilibrium}
\end{equation}
Similarly, the terms in \eqref{eqn: constraint on deviation from volume conservation} could be rewritten as follows:
\begin{equation}
\begin{split}
\int_\mathbb{T} 2D\cdot Y_*\,ds = &\;4\pi R_Y\hat{D}_1\cdot \overline{\left(
\begin{array}{c}
\frac{1}{2}\\-\frac{i}{2}
\end{array}
\right)} + 4\pi R_Y\hat{D}_{-1}\cdot\overline{\left(
\begin{array}{c}
\frac{1}{2}\\\frac{i}{2}
\end{array}
\right)}\\
= &\;4\pi R_Y(\Re \hat{D}_{1,1} - \Im \hat{D}_{1,2}),
\end{split}
\label{eqn: inner product of D and Y_star}
\end{equation}
and
\begin{equation}
\begin{split}
\int_\mathbb{T}D\times D'\,ds = &\;2\pi \sum_{k\in\mathbb{Z}}\hat{D}_k\times \overline{\left(ik \hat{D}_k\right)}\\
=&\;2\pi \sum_{k\in\mathbb{Z}} -ik \left(\hat{D}_{k,1}\overline{\hat{D}_{k,2}} - \hat{D}_{k,2}\overline{\hat{D}_{k,1}}\right)\\
=&\;2\pi \sum_{k\in\mathbb{Z}} 2k \Im\left(\hat{D}_{k,1}\overline{\hat{D}_{k,2}}\right)\\
=&\;4\pi \sum_{k\in\mathbb{Z}} k (\Im \hat{D}_{k,1}\Re \hat{D}_{k,2}-\Re \hat{D}_{k,1}\Im \hat{D}_{k,2})\\
\leq &\;2\pi \sum_{\genfrac{}{}{0pt}{}{k\in\mathbb{Z}}{|k|\geq 2}} |k| |\hat{D}_{k}|^2+ 4\pi (\Im \hat{D}_{1,1}\Re \hat{D}_{1,2}-\Re \hat{D}_{1,1}\Im \hat{D}_{1,2})\\
&\;-4\pi (\Im \hat{D}_{-1,1}\Re \hat{D}_{-1,2}-\Re \hat{D}_{-1,1}\Im \hat{D}_{-1,2})\\
= &\;2\pi \sum_{\genfrac{}{}{0pt}{}{k\in\mathbb{Z}}{|k|\geq 2}} |k| |\hat{D}_{k}|^2+ 8\pi (\Im \hat{D}_{1,1}\Re \hat{D}_{1,2}-\Re \hat{D}_{1,1}\Im \hat{D}_{1,2}).
\end{split}
\label{eqn: cross product of D and D'}
\end{equation}
Here we used the fact that $\hat{D}_{-1} = \overline{\hat{D}_1}$.
By \eqref{eqn: second order equation for the optimal approximated equilibrium} and \eqref{eqn: inner product of D and Y_star}, we know that
\begin{equation}
-\Re \hat{D}_{1,1} + \Im \hat{D}_{1,2}\leq R_Y.
\label{eqn: constraints on the coefficients from the second order condition of optimal approximation}
\end{equation}
We calculate that
\begin{equation}
\|Y'(s) - Y_*'(s)\|_{L^2}^2 = \|D'(s)\|_{L^2}^2 = 2\pi\sum_{k\in\mathbb{Z}} k^2|\hat{D}_k|^2,
\label{eqn: H1 norm of deviation in terms of Fourier coefficients}
\end{equation}
and
\begin{equation}
\begin{split}
\|Y'(s)\|_{L^2}^2 - \|Y_*'(s)\|_{L^2}^2 = &\;\int_\mathbb{T}(Y_*'+D')\cdot(Y_*'+D') - Y_*'\cdot Y_*'\,ds = \int_\mathbb{T}2Y_*'\cdot D'+D'\cdot D'\,ds\\
= &\;\int_\mathbb{T}-2Y_*''\cdot D+D'\cdot D'\,ds= \int_\mathbb{T}2Y_*\cdot D+D'\cdot D'\,ds\\
= &\;\int_\mathbb{T}2D\cdot Y_*\,ds+2\pi \sum_{k\in\mathbb{Z}} k^2 |\hat{D}_k|^2.
\end{split}
\label{eqn: expression for excess energy}
\end{equation}
\begin{case}
If $\int_\mathbb{T}2D\cdot Y_*\,ds\geq 0$, we readily proved the upper bound in \eqref{eqn: difference in H1 bounded by difference in energy} by comparing \eqref{eqn: H1 norm of deviation in terms of Fourier coefficients} and \eqref{eqn: expression for excess energy}.
\end{case}
\begin{case}
If $\int_\mathbb{T}2D\cdot Y_*\,ds < 0$, by \eqref{eqn: inner product of D and Y_star}, $\Re \hat{D}_{1,1} - \Im \hat{D}_{1,2} <0$. Then by \eqref{eqn: constraint on deviation from volume conservation}, \eqref{eqn: inner product of D and Y_star}, \eqref{eqn: cross product of D and D'} and \eqref{eqn: expression for excess energy},
\begin{equation*}
\begin{split}
\|Y'(s)\|_{L^2}^2 - \|Y_*'(s)\|_{L^2}^2 = &\;2\pi \sum_{k\in\mathbb{Z}} k^2 |\hat{D}_k|^2 +\frac{3}{2}\int_\mathbb{T}2D\cdot Y_*\,ds-\int_\mathbb{T}D\cdot Y_*\,ds\\
=&\;2\pi \sum_{k\in\mathbb{Z}} k^2 |\hat{D}_k|^2 -\frac{3}{2}\int_\mathbb{T}D\times D'\,ds-\int_\mathbb{T}D\cdot Y_*\,ds\\
\geq &\; 2\pi \sum_{\genfrac{}{}{0pt}{}{k\in\mathbb{Z}}{|k|\geq 2}} k^2 |\hat{D}_k|^2 -3\pi \sum_{\genfrac{}{}{0pt}{}{k\in\mathbb{Z}}{|k|\geq 2}} |k| |\hat{D}_k|^2+2\pi(|\hat{D}_1|^2+|\hat{D}_{-1}|^2)\\
&\;-12\pi(\Im \hat{D}_{1,1}\Re \hat{D}_{1,2}-\Re \hat{D}_{1,1}\Im \hat{D}_{1,2})\\
&\;-2\pi R_Y(\Re \hat{D}_{1,1}- \Im \hat{D}_{1,2})\\
\geq &\; \pi \sum_{\genfrac{}{}{0pt}{}{k\in\mathbb{Z}}{|k|\geq 2}} (2k^2-3|k|) |\hat{D}_k|^2+4\pi|\hat{D}_1|^2 \\
&\;+12\pi\Re \hat{D}_{1,1}\Im \hat{D}_{1,2}-2\pi R_Y(\Re \hat{D}_{1,1}- \Im \hat{D}_{1,2}).
\end{split}
\end{equation*}
In the last line, we used the fact that $\hat{D}_{-1} = \overline{\hat{D}_1}$ and $\Im \hat{D}_{1,1}\Re \hat{D}_{1,2}\leq 0$ due to \eqref{eqn: simplified equation for the optimal approximated equilibrium}.

If $\Re \hat{D}_{1,1}$ and $\Im \hat{D}_{1,2}$ have the same sign, then $12\pi\Re \hat{D}_{1,1}\Im \hat{D}_{1,2}-2\pi R_Y(\Re \hat{D}_{1,1}- \Im \hat{D}_{1,2})\geq 0$. Hence,
\begin{equation*}
\|Y'(s)\|_{L^2}^2 - \|Y_*'(s)\|_{L^2}^2 \geq \pi \sum_{k\in\mathbb{Z}} \frac{1}{2}k^2 |\hat{D}_k|^2+3\pi|\hat{D}_1|^2 \geq \frac{1}{4}\|D'(s)\|_{L^2}^2.
\end{equation*}

Otherwise, if $\Re \hat{D}_{1,1}$ and $\Im \hat{D}_{1,2}$ have different signs, i.e., $\Re \hat{D}_{1,1}\leq 0$ and $-\Im \hat{D}_{1,2}\leq 0$ since $\Re \hat{D}_{1,1} - \Im \hat{D}_{1,2} <0$, we know that
\begin{equation*}
\begin{split}
\|Y'(s)\|_{L^2}^2 - \|Y_*'(s)\|_{L^2}^2 \geq &\; \pi \sum_{k\in\mathbb{Z}} \frac{1}{2}k^2 |\hat{D}_k|^2+3\pi|\hat{D}_1|^2\\
&\;-12\pi|\Re \hat{D}_{1,1}||\Im \hat{D}_{1,2}|+4\pi R_Y\sqrt{|\Re \hat{D}_{1,1}||\Im \hat{D}_{1,2}|}.
\end{split}
\end{equation*}
Also, by \eqref{eqn: constraints on the coefficients from the second order condition of optimal approximation},
\begin{equation}
|\Re \hat{D}_{1,1}||\Im \hat{D}_{1,2}|\leq \frac{1}{4}(-\Re \hat{D}_{1,1}+\Im \hat{D}_{1,2})^2\leq \frac{1}{4}R_Y^2.
\end{equation}
This implies
\begin{equation*}
\begin{split}
&\;3\pi|\hat{D}_1|^2-12\pi|\Re \hat{D}_{1,1}||\Im \hat{D}_{1,2}|+4\pi R_Y\sqrt{|\Re \hat{D}_{1,1}||\Im \hat{D}_{1,2}|}\\
\geq &\; 3\pi|\hat{D}_1|^2 - 3\pi\left(|\Re \hat{D}_{1,1}|^2+|\Im \hat{D}_{1,2}|^2\right)-6\pi|\Re \hat{D}_{1,1}||\Im \hat{D}_{1,2}|+4\pi R_Y\sqrt{|\Re \hat{D}_{1,1}||\Im \hat{D}_{1,2}|}\\
\geq &\;\pi\sqrt{|\Re \hat{D}_{1,1}||\Im \hat{D}_{1,2}|}\left(4R_Y-6\sqrt{|\Re \hat{D}_{1,1}||\Im \hat{D}_{1,2}|}\right)\\
\geq &\;0.
\end{split}
\end{equation*}
Therefore,
\begin{equation}
\|Y'(s)\|_{L^2}^2 - \|Y_*'(s)\|_{L^2}^2 \geq \frac{\pi}{2}\sum_{k\in\mathbb{Z}}k^2 |\hat{D}_k|^2 = \frac{1}{4}\|D'(s)\|_{L^2}^2.
\label{eqn: the excess energy can bound the H1 difference}
\end{equation}
This proves the upper bound in \eqref{eqn: difference in H1 bounded by difference in energy}.
\end{case}

Now we turn to the lower bound.
By \eqref{eqn: constraint on deviation from volume conservation} and \eqref{eqn: expression for excess energy},
\begin{equation*}
\begin{split}
\|Y'(s)\|_{L^2}^2 - \|Y_*'(s)\|_{L^2}^2 = &\;\int_\mathbb{T}2D\cdot Y_*\,ds+\|D'\|_{L^2}^2 = -\int_\mathbb{T}D\times D'\,ds+\|D'\|_{L^2}^2\\
\leq &\;\|D\|_{L^2}\|D'\|_{L^2}+\|D'\|_{L^2}^2 \leq 2\|D'\|_{L^2}^2.
\end{split}
\end{equation*}
Here we used the fact that $D$ has mean zero on $\mathbb{T}$.

This completes the proof.
\end{proof}
\begin{remark} 
As a byproduct, we know that for any Jordan curve $Y(s)\in H^1(\mathbb{T})$, $\|Y_*\|_{\dot{H}^1(\mathbb{T})}\leq \|Y\|_{\dot{H}^1(\mathbb{T})}$. The equality holds if and only if $Y = Y_*$.
Hence, Lemma \ref{lemma: estimates concerning closest equilbrium} implies that the string configuration having a circular shape and uniform parameterization has the lowest elastic energy among all the $H^1$-configurations that enclose the same area.
This can also be showed by isoperimetric inequality and Cauchy-Schwarz inequality.
\qed
\end{remark}
\end{lemma}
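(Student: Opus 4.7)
The plan is to reduce everything to an analysis of the deviation $D(s) \triangleq Y(s) - Y_*(s)$, which (by Remark \ref{remark: mass center of the equilibrium agrees with initial data}) has zero mean on $\mathbb{T}$. After a harmless translation and rotation so that $(\theta_*, x_*) = (0,0)$, I would first extract three identities that together encode all the constraints that $D$ must satisfy: (i) the first-order optimality condition at $\theta_*$, giving $\int_\mathbb{T} D \cdot Y_*' \, ds = 0$; (ii) the second-order optimality condition, giving $4\pi R_Y^2 + 2\int_\mathbb{T} D \cdot Y_* \, ds \geq 0$ after using $Y_*'' = -Y_*$; and (iii) the equal-enclosed-area constraint \eqref{eqn: enclosed area is pi}, which after integration by parts and again using $Y_*'' = -Y_*$ collapses to $\int_\mathbb{T} 2D \cdot Y_* + D \times D' \, ds = 0$. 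The key algebraic identity then is
\begin{equation*}
\|Y'\|_{L^2}^2 - \|Y_*'\|_{L^2}^2 = 2\int_\mathbb{T} D \cdot Y_* \, ds + \|D'\|_{L^2}^2 = -\int_\mathbb{T} D \times D' \, ds + \|D'\|_{L^2}^2,
\end{equation*}
where the second equality uses the area constraint. Both bounds will fall out of estimating the cross-term $\int D \times D' \, ds$.

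For the \emph{upper bound} in \eqref{eqn: difference in H1 bounded by difference in energy} (the easy direction $\|Y'\|^2 - \|Y_*'\|^2 \leq 2\|D'\|^2$), I just apply Cauchy--Schwarz to $|\int D \times D' \, ds| \leq \|D\|_{L^2}\|D'\|_{L^2}$ and then use the Poincaré inequality $\|D\|_{L^2} \leq \|D'\|_{L^2}$ (valid since $D$ has zero mean), yielding immediately the lower inequality in the statement.

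For the \emph{lower bound} (the hard direction $\|D'\|^2 \leq 4(\|Y'\|^2 - \|Y_*'\|^2)$), I expect the main obstacle. If $\int 2 D\cdot Y_*\, ds\geq 0$, the identity above gives $\|D'\|^2\leq \|Y'\|^2-\|Y_*'\|^2$ with nothing to do. When this integral is negative, I would expand everything in Fourier series $D = \sum_k \hat D_k e^{iks}$, noting that the high modes ($|k|\geq 2$) are harmless because $2|k| \leq k^2$, while the problematic modes are exactly $|k|=1$, where the naive bound $|\int D\times D'|\leq |k||\hat D_k|^2$ saturates $\|D'\|^2$. Here the first-order condition forces the real-linear relation $\Im \hat D_{1,1} + \Re \hat D_{1,2} = 0$, and the second-order condition gives $-\Re \hat D_{1,1} + \Im \hat D_{1,2} \leq R_Y$; together with the area identity (which I would rewrite as $4\pi(\Re\hat D_{1,1}-\Im\hat D_{1,2})R_Y + \int D\times D'\,ds = 0$), these pin down enough structure to close the estimate. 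I anticipate needing a case split on whether $\Re \hat D_{1,1}$ and $\Im \hat D_{1,2}$ have the same or opposite signs: in the same-sign case the mixed term $12\pi \Re\hat D_{1,1}\Im\hat D_{1,2} - 2\pi R_Y(\Re\hat D_{1,1}-\Im\hat D_{1,2})$ is automatically non-negative, while in the opposite-sign case I would use the second-order bound $|\Re\hat D_{1,1}||\Im\hat D_{1,2}| \leq R_Y^2/4$ together with AM-GM on $4\pi R_Y\sqrt{|\Re\hat D_{1,1}||\Im\hat D_{1,2}|}$ to absorb the bad term. Both branches should yield $\|Y'\|^2 - \|Y_*'\|^2 \geq \tfrac{1}{4}\|D'\|^2$, which is the required inequality.
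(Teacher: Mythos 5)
Your proposal is correct and follows essentially the same route as the paper's proof: the same reduction to $D = Y - Y_*$ with $(\theta_*, x_*) = (0,0)$, the same three constraints (first- and second-order optimality plus the area identity collapsing to $\int 2D\cdot Y_* + D\times D'\,ds = 0$), the same key identity $\|Y'\|^2 - \|Y_*'\|^2 = 2\int D\cdot Y_*\,ds + \|D'\|^2$, the Cauchy--Schwarz plus Poincar\'e bound for the easy direction, and for the hard direction the same Fourier-mode analysis with the case split on the sign of $\int 2D\cdot Y_*\,ds$ and then on the signs of $\Re\hat D_{1,1}$ and $\Im\hat D_{1,2}$, using $|\Re\hat D_{1,1}||\Im\hat D_{1,2}|\leq R_Y^2/4$ and AM--GM to control the mode-one contribution. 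The only issue is purely nominal: you have the labels ``upper bound'' and ``lower bound'' swapped relative to \eqref{eqn: difference in H1 bounded by difference in energy} --- the inequality $\|D'\|^2 \leq 4(\|Y'\|^2-\|Y_*'\|^2)$ is the \emph{upper} bound on $\|D'\|^2$ (and is the harder one), while $\tfrac{1}{2}(\|Y'\|^2-\|Y_*'\|^2) \leq \|D'\|^2$ is the lower bound. The mathematics you describe for each direction is the right argument for that direction regardless of the label.
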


Let $X$ be a local solution of \eqref{eqn: contour dynamic formulation of the immersed boundary problem} obtained in Theorem \ref{thm: local in time existence}.
By Lemma \ref{lemma: energy estimate} and Lemma \ref{lemma: estimates concerning closest equilbrium}, we readily have global bound on $\|X-X_*\|_{\dot{H}^1(\mathbb{T})}(t)$.
It would be very ideal if we could show that  $\|X-X_*\|_{\dot{H}^{5/2}(\mathbb{T})}(t)$ can not be (always) big when $\|X-X_*\|_{\dot{H}^1(\mathbb{T})}(t)$ is small.
The following lemma is an effort in this direction, which is crucial in proving Theorem \ref{thm: global existence near equilibrium}.

\begin{lemma}\label{lemma: bound and decay for H2.5 difference when energy difference is small}
Suppose $T\in(0,1]$ and $X_0\in H^{5/2}(\mathbb{T})$. Let $X(s,t)\in \Omega_T$ be a (local) solution of \eqref{eqn: contour dynamic formulation of the immersed boundary problem}, s.t.
\begin{equation}
\|X\|_{L^\infty_{T}\dot{H}^{5/2}\cap L^2_{T}\dot{H}^3(\mathbb{T})} \leq R<+\infty,
\label{eqn: uniform bound in the lemma for the small energy regularity}
\end{equation}
and for some $\lambda>0$,
\begin{equation}
|X(s_1,t)-X(s_2,t)|\geq \lambda|s_1-s_2|,\quad \forall\,s_1,s_2\in\mathbb{T},\;t\in[0,T].
\label{eqn: uniform bi lipschitz constant in the lemma for the small energy regularity}
\end{equation}

\begin{enumerate}
\item There exists $T_*=T_*(T,R,\lambda)\in(0,T]$, s.t.
\begin{equation}
\|X-X_{*}\|_{L^{\infty}_{T_*}\dot{H}^{5/2}(\mathbb{T})}^2\leq 2\|X_0-X_{0*}\|_{\dot{H}^{5/2}(\mathbb{T})}^2,
\label{eqn: upper bound for the growth of H2.5 norm in a short period of time in the statement of the lemma}
\end{equation}
where $X_*(\cdot,t)$ and $X_{0*}$ are the closest equilibrium configuration to $X(\cdot,t)$ and $X_0(\cdot)$ respectively.
\item Given $T'\in(0,T_*]$, there exist a constant $c_* = c_*(R,\lambda, T')>0$, s.t.\;if 
\begin{equation}
\|X_0-X_{0*}\|_{\dot{H}^{5/2}(\mathbb{T})}\geq c_* \|X-X_{0*}\|_{L^\infty_{T'}\dot{H}^1(\mathbb{T})},
\label{eqn: condition in the small energy lemma initial H2.5 norm is much larger than H1 norm on the whole interval}
\end{equation}
then there exists $t_*\in[T'/4,T']$, s.t.
\begin{equation}
\|X-X_{0*}\|^2_{\dot{H}^{5/2}(\mathbb{T})}(t_*)\leq \mathrm{e}^{- t_*/4}\|X_0-X_{0*}\|^2_{\dot{H}^{5/2}(\mathbb{T})}.
\label{eqn: a lower H2.5 norm could be found}
\end{equation}
In particular,
\begin{equation}
\|X-X_{*}\|^2_{\dot{H}^{5/2}(\mathbb{T})}(t_*)\leq \mathrm{e}^{-t_*/4}\|X_0-X_{0*}\|^2_{\dot{H}^{5/2}(\mathbb{T})}.
\label{eqn: a lower H2.5 norm with updated approximation could be found}
\end{equation}
\end{enumerate}

\begin{proof}
It is easy to see that $X_{0*}(s,t) \equiv X_{0*}(s)\in H^{5/2}(\mathbb{T})$ is the (unique, by Theorem \ref{thm: local in time uniqueness}) solution for \eqref{eqn: contour dynamic formulation of the immersed boundary problem} starting from $X_{0*}$.
Consider $\tilde{X}-\widetilde{X_{0*}}$, which satisfies
\begin{equation}
\begin{split}
&\;\partial_t (\tilde{X}-\widetilde{X_{0*}})= \mathcal{L}(\tilde{X}-\widetilde{X_{0*}}) + (\widetilde{g_X} - \widetilde{g_{X_{0*}}}),\quad s\in \mathbb{T}, t\in[0,T],\\
&\;(\tilde{X}-\widetilde{X_{0*}})(s,0) = (X_0-X_{0*})(s).
\label{eqn: equation for the difference from the initial steady state}
\end{split}
\end{equation}
Similar to \eqref{eqn: space time estimate for the difference of solutions in proving uniqueness}, we use the assumptions \eqref{eqn: uniform bound in the lemma for the small energy regularity} and \eqref{eqn: uniform bi lipschitz constant in the lemma for the small energy regularity} and Corollary \ref{coro: H2 estimate for g_X0-g_X1} with $\mu = 2$ to find that for $\forall\, t\in[0,T]$ with $T\leq 1$ and $\forall\, \delta\in(0,1]$,
\begin{equation}
\left\|g_{X}-g_{X_{0*}}\right\|_{L_{t}^2\dot{H}^2}
\leq C\left(\delta^{1/2}\|X-X_{0*}\|_{L^2_t\dot{H}^3}+[\delta^{1/2}+(|\ln \delta|+1)t^{1/2}]\|X-X_{0*}\|_{L^\infty_t\dot{W}^{2,4}}\right),
\label{eqn: H2 estimate of the difference between solution and equilibrium solution}
\end{equation}
where $C = C(R,\lambda)$.
By Lemma \ref{lemma: a priori estimate of nonlocal eqn} and the interpolation inequality, for $\forall\, t\in[0,T]$ and $\forall\, \delta \in(0,1]$,
\begin{equation*}
\begin{split}
&\;\|\tilde{X}-\widetilde{X_{0*}}\|_{\dot{H}^{5/2}}^2(t)+\frac{1}{4}\|\tilde{X}-\widetilde{X_{0*}}\|_{L^2_{t} \dot{H}^{3}}^2\\
\leq &\; \|X_0-X_{0*}\|_{\dot{H}^{5/2}}^2+ 4\|\widetilde{g_X} - \widetilde{g_{X_{0*}}}\|_{L_{t}^2 \dot{H}^{2}}^2\\
\leq &\; \|X_0-X_{0*}\|_{\dot{H}^{5/2}}^2\\
&\;+ C_3 \left(\delta\|X-X_{0*}\|_{L^2_t\dot{H}^3}^2+[\delta+(|\ln \delta|+1)^2t]\|X-X_{0*}\|_{L^\infty_t\dot{H}^{1}}^{1/3}\|X-X_{0*}\|_{L^\infty_t\dot{H}^{5/2}}^{5/3}\right),
\end{split}
\end{equation*}
where $C_3 = C_3(R,\lambda)$ is a constant. For simplicity, let us assume $C_3(R,\lambda)\geq 1$.
Take $\delta = t\leq 1$ with $t\in[0,T_*]$ and we find that
\begin{equation}
\begin{split}
&\;\|\tilde{X}-\widetilde{X_{0*}}\|_{\dot{H}^{5/2}}^2(t)+\left(\frac{1}{4}-C_3 t\right)\|\tilde{X}-\widetilde{X_{0*}}\|_{L^2_{t} \dot{H}^{3}}^2\\
\leq &\; \|X_0-X_{0*}\|_{\dot{H}^{5/2}}^2+ 2C_3 t(|\ln t|+1)^2 \|\tilde{X}-\widetilde{X_{0*}}\|_{L^{\infty}_{t}\dot{H}^{1}}^{1/3}\|\tilde{X}-\widetilde{X_{0*}}\|_{L^{\infty}_{t}\dot{H}^{5/2}}^{5/3},
\end{split}
\label{eqn: estimates on the difference between X and its closest equilibrium configuration}
\end{equation}
Now we take $T_*\leq T\leq 1$ sufficiently small, s.t.
\begin{equation}
8C_3(R,\lambda) T_*(|\ln T_*|+1)^2  \leq 1
\label{eqn: expression for T* in the lemma for small energy regularity}
\end{equation}
and $x(|\ln x|+1)^2$ is increasing in $[0,T_*]$.
In this way, $C_3t\leq C_3 T_*(|\ln T_*|+1)^2\leq 1/8$.
By \eqref{eqn: estimates on the difference between X and its closest equilibrium configuration},
\begin{equation}
\begin{split}
&\;\|\tilde{X}-\widetilde{X_{0*}}\|_{\dot{H}^{5/2}}^2(t)+\frac{1}{8}\|\tilde{X}-\widetilde{X_{0*}}\|_{L^2_{t} \dot{H}^{3}}^2\\
\leq &\;\|\tilde{X}-\widetilde{X_{0*}}\|_{\dot{H}^{5/2}}^2(t)+\left(\frac{1}{4}-C_3 t\right)\|\tilde{X}-\widetilde{X_{0*}}\|_{L^2_{t} \dot{H}^{3}}^2\\
\leq &\;  \|X_0-X_{0*}\|_{\dot{H}^{5/2}}^2+ 2C_3 t(|\ln t|+1)^2 \|\tilde{X}-\widetilde{X_{0*}}\|_{L^{\infty}_{t}\dot{H}^{1}}^{1/3}\|\tilde{X}-\widetilde{X_{0*}}\|_{L^{\infty}_{t}\dot{H}^{5/2}}^{5/3}\\
\leq &\; \|X_0-X_{0*}\|_{\dot{H}^{5/2}}^2+ \frac{1}{4}\|\tilde{X}-\widetilde{X_{0*}}\|_{L^{\infty}_{T_*}\dot{H}^{5/2}}^2.
\label{eqn: energy estimate in global existence by applying a priori estimates in Appendix}
\end{split}
\end{equation}
By taking supremum in $t\in[0,T_*]$ on the left hand side, we find that
\begin{equation}
\|\tilde{X}-\widetilde{X_{0*}}\|_{L^{\infty}_{T_*}\dot{H}^{5/2}(\mathbb{T})}^2\leq \frac{4}{3}\|X_0-X_{0*}\|_{\dot{H}^{5/2}(\mathbb{T})}^2.
\label{eqn: upper bound for the growth of H2.5 norm in a short period of time in the proof}
\end{equation}
In view of Remark \ref{remark: L2 closest is also Hs closest}, \eqref{eqn: upper bound for the growth of H2.5 norm in a short period of time in the statement of the lemma} immediately follows with $T_*$ defined in \eqref{eqn: expression for T* in the lemma for small energy regularity}.

Next we shall prove the second part of the Lemma for given $T'\in(0,T_*]$.
Putting \eqref{eqn: upper bound for the growth of H2.5 norm in a short period of time in the proof} back into the third line of \eqref{eqn: energy estimate in global existence by applying a priori estimates in Appendix} and take $t= T'$, we find that
\begin{equation}
\begin{split}
&\;\|\tilde{X}-\widetilde{X_{0*}}\|_{\dot{H}^{5/2}}^2(T')+\frac{1}{8}\|\tilde{X}-\widetilde{X_{0*}}\|_{L^2_{T'} \dot{H}^{3}}^2\\
\leq &\; \|X_0-X_{0*}\|_{\dot{H}^{5/2}}^2+2C_3T'(|\ln T'|+1)^2 \left(\frac{4}{3}\right)^{5/6}\|X_0-X_{0*}\|_{\dot{H}^{5/2}}^{5/3}\|\tilde{X}-\widetilde{X_{0*}}\|_{L^{\infty}_{T'}\dot{H}^1}^{1/3}\\
\leq &\; \|X_0-X_{0*}\|_{\dot{H}^{5/2}}^2+4C_3 T'(|\ln T'|+1)^2  c^{-1/3}\|X_0-X_{0*}\|_{\dot{H}^{5/2}}^{2},
\label{eqn: equation for J before introducing the notation J}
\end{split}
\end{equation}
In the last inequality, we introduce the notation $c = \|X_0-X_{0*}\|_{\dot{H}^{5/2}(\mathbb{T})}/\|X-X_{0*}\|_{L^\infty_{T'} \dot{H}^1(\mathbb{T})}$.
Denote $J(t) = \|\tilde{X}-\widetilde{X_{0*}}\|_{\dot{H}^{5/2}(\mathbb{T})}^2(t)$. By interpolation, for $\forall\,t\in[0,T']$,
\begin{equation*}
\begin{split}
J(t)^{4/3} = \|\tilde{X}-\widetilde{X_{0*}}\|_{\dot{H}^{5/2}(\mathbb{T})}^{8/3}(t) 
\leq &\;\|\tilde{X}-\widetilde{X_{0*}}\|_{L^\infty_{T'}\dot{H}^{1}(\mathbb{T})}^{2/3}\|\tilde{X}-\widetilde{X_{0*}}\|_{\dot{H}^{3}(\mathbb{T})}^2(t)\\
= &\;c^{-2/3}J(0)^{1/3}\|\tilde{X}-\widetilde{X_{0*}}\|_{\dot{H}^{3}(\mathbb{T})}^2(t).
\end{split}
\end{equation*}
We multiply both sides of \eqref{eqn: equation for J before introducing the notation J} by $c^{-2/3}J(0)^{1/3}$ and find that 
\begin{equation}
c^{-2/3}J(0)^{1/3} J(T')+\frac{1}{8}\int_0^{T'} J(\omega)^{4/3}\,d\omega\leq (c^{-2/3}+4C_3 T'(|\ln T'|+1)^2  c^{-1})J(0)^{4/3}.
\label{eqn: simplified evolution equation for J the H2.5 difference}
\end{equation}

Now suppose the statement of the Lemma is false. Namely, for $\forall\,c>0$, there exists a solution $X^{(c)}(s,t)$ with $t\in[0,T']$, starting from some $X_0^{(c)}(s)\in H^{5/2}(\mathbb{T})$, satisfying \eqref{eqn: uniform bound in the lemma for the small energy regularity}, \eqref{eqn: uniform bi lipschitz constant in the lemma for the small energy regularity} and that
\begin{equation*}
\|X^{(c)}_0-X^{(c)}_{0*}\|_{\dot{H}^{5/2}(\mathbb{T})}\geq c \|X^{(c)}-X^{(c)}_{0*}\|_{L^\infty_{T'}\dot{H}^1(\mathbb{T})},
\end{equation*}
while for $\forall\,t\in [T'/4,T']$,
\begin{equation*}
J^{(c)}(t) = \|X^{(c)}-X^{(c)}_{0*}\|_{\dot{H}^{5/2}}^2(t)> \mathrm{e}^{-t/4}\|X^{(c)}_0-X^{(c)}_{0*}\|_{\dot{H}^{5/2}}^2 = \mathrm{e}^{- t/4}J^{(c)}(0).
\end{equation*}
Since \eqref{eqn: simplified evolution equation for J the H2.5 difference} holds with $J$ replaced by $J^{(c)}$, we find that
\begin{equation*}
c^{-2/3}\mathrm{e}^{- T'/4}J^{(c)}(0)^{4/3} +\frac{1}{8}\int_{T'/4}^{T'} \mathrm{e}^{-\omega/3}J^{(c)}(0)^{4/3}\,d\omega < (c^{-2/3}+4C_3 T'(|\ln T'|+1)^2  c^{-1})J^{(c)}(0)^{4/3},
\end{equation*}
which implies that
\begin{equation}
\frac{3}{8}\left(\mathrm{e}^{-T'/12} - \mathrm{e}^{-T'/3}\right) < c^{-2/3}\left(1-\mathrm{e}^{-T'/4}\right)+4C_3 T'(|\ln T'|+1)^2  c^{-1}.
\label{eqn: constraint for c}
\end{equation}
Since $T'\leq T_*\leq 1$,
\begin{equation*}
\mathrm{e}^{-T'/12} - \mathrm{e}^{-T'/3} > \frac{1}{4}T' \mathrm{e}^{-T'/3}> \frac{1}{6}T',\quad 1-\mathrm{e}^{-T'/4} < \frac{1}{4}T'.
\end{equation*}
Then \eqref{eqn: constraint for c} implies that
\begin{equation}
\frac{c}{4} - c^{1/3}<16C_3(|\ln T'|+1)^2.
\label{eqn: equation for c before introducing definitions of constants}
\end{equation}
Let $c_+$ be the unique positive real number such that the equality is achieved in \eqref{eqn: equation for c before introducing definitions of constants}.
Then we have
\begin{equation*}
\frac{c_+}{4} = 16C_3(|\ln T'|+1)^2 +c_+^{1/3}\leq 16C_3(|\ln T'|+1)^2 + \frac{c_+}{27}+2,
\end{equation*}
which implies that
\begin{equation}
c_+\leq C_4(R,\lambda)(|\ln T'|+1)^2.
\label{eqn: introducing C_4}
\end{equation}
Here $C_4\geq 1$ is some constant depending only on $R$ and $\lambda$; it will be used in the proof of Theorem \ref{thm: global existence near equilibrium} and Theorem \ref{thm: exponential convergence}.
Therefore, if $c\geq C_4(R,\lambda)(|\ln T'|+1)^2$, \eqref{eqn: equation for c before introducing definitions of constants} does not hold, which is a contradiction.
Hence, we proved \eqref{eqn: a lower H2.5 norm could be found} with
\begin{equation}
c_*(R,\lambda,T') = C_4(R,\lambda)(|\ln T'|+1)^2.
\label{eqn: defintion of c_*}
\end{equation}
 \eqref{eqn: a lower H2.5 norm with updated approximation could be found} immediately follows from \eqref{eqn: a lower H2.5 norm could be found} by virtue of Remark \ref{remark: L2 closest is also Hs closest}.
This completes the proof.
\end{proof}
\begin{remark}
Taking smaller $\mu$ in \eqref{eqn: H2 estimate of the difference between solution and equilibrium solution} can give sharper bound for $c_*$ in \eqref{eqn: defintion of c_*}, but that is not necessary for the remaining results. 
\qed
\end{remark}
\end{lemma}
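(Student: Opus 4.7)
The plan rests on the observation that every evenly parameterized circle is a stationary solution of \eqref{eqn: contour dynamic formulation of the immersed boundary problem}, so $X_{0*}(s,t)\equiv X_{0*}(s)$ solves the equation. Setting $W = X - X_{0*}$, I obtain
\begin{equation*}
\partial_t W = \mathcal{L} W + (g_X - g_{X_{0*}}), \qquad W(s,0) = X_0(s) - X_{0*}(s).
\end{equation*}
For both parts, I would apply the $\dot H^{5/2}$ a priori estimate for the nonlocal heat equation (Lemma in the Appendix on $\mathcal{L}$) to get $\|W\|_{L^\infty_t\dot H^{5/2}}^2 + c\|W\|_{L^2_t\dot H^3}^2 \leq \|W(0)\|_{\dot H^{5/2}}^2 + C\|g_X - g_{X_{0*}}\|_{L^2_t\dot H^2}^2$, and then control the source via Corollary \ref{coro: H2 estimate for g_X0-g_X1} with $\mu=2$, folding $R$ and $\lambda$ from \eqref{eqn: uniform bound in the lemma for the small energy regularity}--\eqref{eqn: uniform bi lipschitz constant in the lemma for the small energy regularity} into constants $C(R,\lambda)$.

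For Part 1, the key trick is to choose the free parameter $\delta$ in Corollary \ref{coro: H2 estimate for g_X0-g_X1} equal to $t$, making the prefactor $\delta^{1/2} + (|\ln\delta|+1)t^{1/2}$ tend to $0$ as $t\to 0$. Combined with the Gagliardo--Nirenberg interpolation $\|W\|_{\dot W^{2,4}} \lesssim \|W\|_{\dot H^1}^{1/6}\|W\|_{\dot H^{5/2}}^{5/6}$, this should give
\begin{equation*}
\|W\|_{\dot H^{5/2}}^2(t) + \tfrac18\|W\|_{L^2_t\dot H^3}^2 \leq \|W(0)\|_{\dot H^{5/2}}^2 + C t(|\ln t|+1)^2 \|W\|_{L^\infty_t\dot H^1}^{1/3}\|W\|_{L^\infty_t\dot H^{5/2}}^{5/3}.
\end{equation*}
For $T_*$ so small that $CT_*(|\ln T_*|+1)^2$ is below a universal threshold, Young's inequality absorbs the last term into $\tfrac14\|W\|_{L^\infty_{T_*}\dot H^{5/2}}^2$, giving $\|W\|_{L^\infty_{T_*}\dot H^{5/2}}^2 \leq \tfrac43\|X_0-X_{0*}\|_{\dot H^{5/2}}^2$. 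Since by Remark \ref{remark: L2 closest is also Hs closest} the configuration $X_*(\cdot,t)$ is also $\dot H^{5/2}$-closest to $X(\cdot,t)$, $\|X-X_*\|_{\dot H^{5/2}}\leq \|W\|_{\dot H^{5/2}}$, yielding \eqref{eqn: upper bound for the growth of H2.5 norm in a short period of time in the statement of the lemma}.

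For Part 2, I would argue by contradiction. With $J(t)=\|W\|_{\dot H^{5/2}}^2(t)$, suppose $J(t) > e^{-t/4}J(0)$ on $[T'/4,T']$. The interpolation $J^{4/3} = \|W\|_{\dot H^{5/2}}^{8/3} \leq \|W\|_{\dot H^1}^{2/3}\|W\|_{\dot H^3}^2$ together with the hypothesis $J(0)^{1/2}\geq c_*\|W\|_{L^\infty_{T'}\dot H^1}$ converts dissipation to $\|W\|_{\dot H^3}^2 \geq c_*^{2/3}J(0)^{-1/3} J^{4/3}$. Inserting this into the Part 1 estimate at $t=T'$ (with the same source treatment) and multiplying by $c_*^{-2/3}J(0)^{1/3}$ produces
\begin{equation*}
c_*^{-2/3}J(0)^{1/3}J(T') + \tfrac18\int_0^{T'}J(\omega)^{4/3}\,d\omega \leq \bigl(c_*^{-2/3} + C(R,\lambda)T'(|\ln T'|+1)^2 c_*^{-1}\bigr)J(0)^{4/3}.
\end{equation*}
The contradictory lower bound on $J$ forces $J(T') > e^{-T'/4}J(0)$ and $J(\omega)^{4/3} > e^{-\omega/3}J(0)^{4/3}$ on $[T'/4,T']$; dividing by $J(0)^{4/3}$ reduces to an inequality in $c_*$ and $T'$ alone whose leading behaviour for small $T'$ is $T' \lesssim c_*^{-2/3}T' + c_*^{-1}T'(|\ln T'|+1)^2$. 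This fails once $c_* \geq C(R,\lambda)(|\ln T'|+1)^2$, which is exactly the required threshold.

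The main subtlety is the bookkeeping in Part 2: I must retain enough $\dot H^3$ dissipation in the energy estimate to outweigh the integral $\int J^{4/3}$, while simultaneously using the source bound from Corollary \ref{coro: H2 estimate for g_X0-g_X1} to produce a prefactor that decays like $(|\ln T'|+1)^2 / c_*$. The logarithmic factor is inherent: it traces back to the far-field $|\tau|^{-1}$ tail in the $s$-derivative of $\Gamma_1$ used in Lemma \ref{lemma: H2 estimate of g_X}, and any hope of sharpening it would require a finer decomposition of that term. The bi-Lipschitz assumption \eqref{eqn: uniform bi lipschitz constant in the lemma for the small energy regularity} is used uniformly in $t$ to allow $X_{0*}$ itself to be treated as a legitimate comparison solution in $\Omega_T$.
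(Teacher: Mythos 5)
Your proposal is correct and follows essentially the same route as the paper: treat $X_{0*}$ as a stationary comparison solution, apply the $\dot H^{5/2}$ energy estimate for $\partial_t W=\mathcal LW + (g_X-g_{X_{0*}})$ with the source controlled via Corollary~\ref{coro: H2 estimate for g_X0-g_X1} at $\mu=2$ and $\delta=t$, use the interpolation $\|W\|_{\dot W^{2,4}}\lesssim\|W\|_{\dot H^1}^{1/6}\|W\|_{\dot H^{5/2}}^{5/6}$ to absorb terms and prove Part 1, and then, for Part 2, multiply by $c^{-2/3}J(0)^{1/3}$, convert the $\dot H^3$ dissipation to $\int J^{4/3}$ via interpolation, and run the contradiction argument to extract the threshold $c_*\sim(|\ln T'|+1)^2$. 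The only cosmetic difference is that you work with $W=X-X_{0*}$ directly rather than with the mean-zero oscillations $\tilde X-\widetilde{X_{0*}}$, which is immaterial here since only homogeneous seminorms appear.
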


Using Lemma \ref{lemma: bound and decay for H2.5 difference when energy difference is small}, we are able to prove Theorem \ref{thm: global existence near equilibrium}.
\begin{proof}[Proof of Theorem \ref{thm: global existence near equilibrium} (existence and uniqueness of global solution near equilibrium)]
With no loss of generality, we assume $R_{X_0} = 1$; otherwise, simply rescale $X_0$ by a factor of $R_{X_0}^{-1}$. Note that the contour dynamic formulation \eqref{eqn: contour dynamic formulation of the immersed boundary problem} is translation and scaling-invariant. Moreover, we note that the effective radius of $X(\cdot,t)$ is invariant in time, since the flow is volume-preserving. 

Define
\begin{equation}
S_\varepsilon = \left\{Z(s)\in H^{5/2}(\mathbb{T}):\,
R_Z = 1,\,\|Z-Z_* \|_{\dot{H}^{5/2}(\mathbb{T})}\leq \varepsilon\right\}
\label{eqn: def of data close to equilibrium}
\end{equation}
We claim that there exists a universal constant $\varepsilon_0$ which will be clear below, for $\forall\, Z(s)\in S_{\varepsilon_0}$, $\|Z\|_{\dot{H}^{5/2}}\leq C$ for some universal constant $C$, and
\begin{equation}
|Z(s_1)-Z(s_2)|\geq \frac{1}{\pi}|s_1 -s_2|,\quad \forall\, s_1,s_2\in \mathbb{T}.
\label{eqn: uniform lower bound for lambda in the proof of global existence}
\end{equation}
In fact,
\begin{equation*}
\begin{split}
|Z(s_1)-Z(s_2)|\geq &\;|Z_*(s_1)-Z_*(s_2)|-|(Z_*-Z)(s_1)-(Z_*-Z)(s_2)|\\
\geq &\;\frac{2}{\pi}|s_1 -s_2| - \|Z-Z_*\|_{\dot{C}^1(\mathbb{T})}|s_1-s_2|\\
\geq &\;\left(\frac{2}{\pi}-C_5\varepsilon_0\right)|s_1 -s_2|,
\end{split}
\end{equation*}
where $C_5>0$ is a universal constant coming from Sobolev inequality.
Hence, it suffices to take $\varepsilon_0=\min\{(C_5 \pi)^{-1},1\}$;
that $\|Z\|_{\dot{H}^{5/2}}\leq C$ is obvious.

The above uniform estimates, together with Theorem \ref{thm: local in time existence} and Theorem \ref{thm: local in time uniqueness}, imply that there is a universal constant $T_0\in(0,1)$, s.t.\;for $\forall\, X_0\in S_{\varepsilon_0}$, there is a unique solution $X(s,t)$ for \eqref{eqn: contour dynamic formulation of the immersed boundary problem} in $C_{[0,T_0]}H^{5/2}\cap L^2_{T_0}H^3(\mathbb{T})$ starting from $X_0$, s.t.
\begin{equation}
\|X\|_{L^\infty_{T_0} \dot{H}^{5/2}\cap L^2_{T_0} \dot{H}^{3}(\mathbb{T})}\leq 4\|X_0\|_{\dot{H}^{5/2}(\mathbb{T})}\leq 4(\|X_{0*}\|_{\dot{H}^{5/2}(\mathbb{T})}+\varepsilon_0) \triangleq C_6,
\label{eqn: uniform bound of the family of solution}
\end{equation}
where $C_6$ is a universal constant. Moreover, for $\forall\, s_1,s_2\in\mathbb{T}$ and $t\in[0,T_0]$,
\begin{equation}
\left|X(s_1,t) - X(s_2,t)\right| \geq \frac{1}{2\pi}|s_1 - s_2|.
\label{eqn: uniform bi lipschitz constant of the family of solution}
\end{equation}
That is, $X(s,t)$ satisfies the assumption of Lemma \ref{lemma: bound and decay for H2.5 difference when energy difference is small} with $T = T_0$, $R = C_6$, and $\lambda =(2\pi)^{-1}$, which are all universal constants. Hence, by Lemma \ref{lemma: bound and decay for H2.5 difference when energy difference is small}, there exists a universal constant $T_* = T_*(T_0, C_6, 1/(2\pi))\in(0,T_0]$ such that
\begin{equation*}
\|X-X_*\|_{L^\infty_{T_*}\dot{H}^{5/2}(\mathbb{T})}\leq \sqrt{2}\|X_0-X_{0*}\|_{\dot{H}^{5/2}(\mathbb{T})}.
\end{equation*}

To this end, we shall first investigate $\|\tilde{X}-\widetilde{X_{0*}}\|_{L^{\infty}_{[0,t]}\dot{H}^1(\mathbb{T})}$. Using the equation for $\tilde{X}$ (see \eqref{eqn: equation for oscillation of X in the main thm}), we find that for $\forall\, t\in[0,T_0]$,
\begin{equation}
\begin{split}
\|\tilde{X}-\widetilde{X_{0*}}\|_{L^{\infty}_{t}\dot{H}^1(\mathbb{T})} \leq &\; \|\tilde{X}-\widetilde{X_0}\|_{L^{\infty}_{t}\dot{H}^1(\mathbb{T})}+\|\widetilde{X_0}-\widetilde{X_{0*}}\|_{\dot{H}^1(\mathbb{T})}\\
\leq &\; \int_0^t\|\partial_t \tilde{X}\|_{\dot{H}^1(\mathbb{T})}(\tau)\,d\tau+\|\widetilde{X_0}-\widetilde{X_{0*}}\|_{\dot{H}^1(\mathbb{T})}\\
\leq &\; \int_0^t\|\mathcal{L}\tilde{X}\|_{\dot{H}^1(\mathbb{T})}(\tau)+\|\widetilde{g_{\tilde{X}}}\|_{\dot{H}^1(\mathbb{T})}(\tau)\,d\tau+\|\widetilde{X_0}-\widetilde{X_{0*}}\|_{\dot{H}^1(\mathbb{T})}.
\label{eqn: estimate for H1 difference of X and X0*}
\end{split}
\end{equation}
In order to give an estimate for $\|\widetilde{g_{\tilde{X}}}\|_{\dot{H}^1}$, we should go back to \eqref{eqn: introduce the notation Gamma_1} and \eqref{eqn: rough pointwise estimate of Gamma} and apply Lemma \ref{lemma: estimates for L M N}. Indeed, with \eqref{eqn: uniform bound of the family of solution} and \eqref{eqn: uniform bi lipschitz constant of the family of solution}, we have
\begin{equation*}
\|g_{\tilde{X}}'\|_{L^2(\mathbb{T})}(t)\leq C \|\tilde{X}\|_{\dot{H}^2(\mathbb{T})}^2(t)\|\tilde{X}'\|_{L^\infty(\mathbb{T})}(t)\leq C,
\end{equation*}
where $C$ is a universal constant.
Hence, by \eqref{eqn: uniform bound of the family of solution}, \eqref{eqn: estimate for H1 difference of X and X0*} and Lemma \ref{lemma: estimates concerning closest equilbrium},
\begin{equation}
\begin{split}
\|\tilde{X}-\widetilde{X_{0*}}\|_{L^{\infty}_{t}\dot{H}^1} \leq &\;\int_0^t C\left(\|\tilde{X}\|_{\dot{H}^2}(\tau)+1\right)\,d\tau+\|\widetilde{X_0}-\widetilde{X_{0*}}\|_{\dot{H}^{1}}\\
\leq &\;C_7 t+2\left(\|\widetilde{X_0}\|_{\dot{H}^1}^2-\|\widetilde{X_{0*}}\|_{\dot{H}^1}^2\right)^{1/2}\\
\triangleq &\;C_7t +2\zeta_{X_0},
\end{split}
\label{eqn: bound for H1 norm of the difference to the equilibrium}
\end{equation}
where $C_7$ is a universal constant. 
Here we applied Lemma \ref{lemma: estimates concerning closest equilbrium}
and defined $\zeta_{X_0}^2 = \|X_0\|_{\dot{H}^1}^2 - \|X_{0*}\|_{\dot{H}^1}^2$.
The above estimate is true as long as $X_0\in S_{\varepsilon_0}$ and $t\in[0,T_0]$.

In what follows, we shall prove the Theorem with
\begin{equation}
\varepsilon_* = \varepsilon_0 = \min\{(C_5 \pi)^{-1},1\}
\label{eqn: definition of epsilon* in the global existence}
\end{equation}
We also take $\xi_* \leq T_*/2$ such that
\begin{equation}
2 C_4(C_7+2) (|\ln (2\xi_*)|+1)^2(2\xi_*)\leq \varepsilon_*,
\label{eqn: definition of xi* in the global existence}
\end{equation}
where $T_* = T_*(T_0, C_6, 1/(2\pi))\in(0,T_0]$ given by Lemma \ref{lemma: bound and decay for H2.5 difference when energy difference is small}, $C_4 = C_4(C_6,1/(2\pi))$ defined in \eqref{eqn: introducing C_4} and $C_7$ defined in \eqref{eqn: bound for H1 norm of the difference to the equilibrium} are all universal constants.
Hence, both $\varepsilon_*$ and $\xi_*$ are universal.
We fix $T' = 2\xi_*$, which is also a universal constant.
By Lemma \ref{lemma: estimates concerning closest equilbrium} and the assumption that $\|X_0-X_{0*}\|_{\dot{H}^1} \leq \xi_*$,
\begin{equation}
\zeta_{X_0}^2 \leq 2\|X_0-X_{0*}\|^2_{\dot{H}^1}\leq 2\xi_*^2 \leq T'^2 \leq T_*^2.
\label{eqn: T' is smaller than T_*}
\end{equation}

We are going to use mathematical induction to show existence of the global solution.
First we focus on the local solution $X(s,t)$ for $t\in[0,T']$.
By \eqref{eqn: bound for H1 norm of the difference to the equilibrium} and \eqref{eqn: T' is smaller than T_*},
\begin{equation}
\|X-X_{0*}\|_{L^\infty_{T'}\dot{H}^1}\leq (C_7+2)T'.
\label{eqn: a final bound in the proof of global existence for H1 norm of X-X0* in 0 to T'}
\end{equation}
We apply Lemma \ref{lemma: bound and decay for H2.5 difference when energy difference is small} to obtain the constant $c_* = c_*(C_6,1/(2\pi),T')$, and claim that the assumption \eqref{eqn: condition in the small energy lemma initial H2.5 norm is much larger than H1 norm on the whole interval} holds if $\|X_0-X_{0*}\|_{\dot{H}^{5/2}(\mathbb{T})}\geq \varepsilon_*/2$.
In fact, by \eqref{eqn: defintion of c_*}, \eqref{eqn: definition of xi* in the global existence} and \eqref{eqn: a final bound in the proof of global existence for H1 norm of X-X0* in 0 to T'},
\begin{equation}
\begin{split}
c_*\|X-X_{0*}\|_{L^\infty_{T'}\dot{H}^1}\leq &\; C_4(|\ln T'|+1)^2\cdot (C_7+2)T'= C_4(C_7+2) (|\ln (2\xi_*)|+1)^2(2\xi_*)\leq \varepsilon_*/2.
\end{split}
\label{eqn: proof of the threshold of H2.5 norm in the proof of global existence}
\end{equation}
Therefore, if $\|X_0-X_{0*}\|_{\dot{H}^{5/2}(\mathbb{T})} \in[ \varepsilon_*/2, \varepsilon_*]$, by \eqref{eqn: proof of the threshold of H2.5 norm in the proof of global existence} and Lemma \ref{lemma: bound and decay for H2.5 difference when energy difference is small}, there exists $t_1\in [T'/4,T']$, s.t.
\begin{equation*}
\|X-X_*\|_{\dot{H}^{5/2}}(t_1)\leq e^{-t_1/8}\|X_0-X_{0*}\|_{\dot{H}^{5/2}}\leq \varepsilon_*.
\end{equation*}
Otherwise, if $\|X_0-X_{0*}\|_{\dot{H}^{5/2}(\mathbb{T})} \leq \varepsilon_*/2$, by the fact that $T'\leq T_*$ and Lemma \ref{lemma: bound and decay for H2.5 difference when energy difference is small}, there exists $t_1\in [T'/4,T']$, s.t.
\begin{equation*}
\|X-X_{*}\|_{\dot{H}^{5/2}(\mathbb{T})}(t_1)\leq \|X-X_{*}\|_{L^{\infty}_{T'}\dot{H}^{5/2}(\mathbb{T})}\leq \sqrt{2}\|X_0-X_{0*}\|_{\dot{H}^{5/2}(\mathbb{T})} \leq \varepsilon_*.
\end{equation*}
This implies that for all $X_0\in S_{\varepsilon_*}$, we can always find $t_1\in [T'/4,T']$, such that the unique local solution of \eqref{eqn: contour dynamic formulation of the immersed boundary problem} in $C_{[0,t_1]}H^{5/2}\cap L^2_{t_1}H^{3}(\mathbb{T})$ satisfies that
\begin{align*}
&\;\|X-X_{*}\|_{L^{\infty}_{t_1}\dot{H}^{5/2}(\mathbb{T})}\leq  \sqrt{2}\varepsilon_*,\\
&\;|X(s_1,t) - X(s_2,t)| \geq \frac{1}{2\pi}|s_1 - s_2|,\quad \forall \,t\in[0,t_1],\;s_1,s_2\in\mathbb{T},\\
&\;X(t_1)\in S_{\varepsilon_*}.
\end{align*}
We note that $T'$ is a universal constant.

Suppose we have found $t_k$'s for $k\leq n$, satisfying that for $\forall\, k =1,\cdots,n$,
\begin{enumerate}
\item $t_k\in[T'/4,T']$.
\item There exists a unique solution $X$ of \eqref{eqn: contour dynamic formulation of the immersed boundary problem} in $C_{[0,T_{n}]}H^{5/2}\cap L^2_{T_{n}}H^{3}(\mathbb{T})$, where $T_k = \sum_{i=1}^k t_i$ for $i = 1,\cdots,n$, such that
\begin{align}
&\;\|X-X_{*}\|_{L^{\infty}_{[0,T_k]}\dot{H}^{5/2}(\mathbb{T})}\leq  \sqrt{2}\varepsilon_*, \label{eqn: estimates on the distance to the equilibrium for the global solution in first k-th time intervals}\\
&\;|X(s_1,t) - X(s_2,t)| \geq \frac{1}{2\pi}|s_1 - s_2|,\quad \forall \,t\in[0,T_k],\;s_1,s_2\in\mathbb{T},\label{eqn: well-stretched constant estimates for the global solution in first k-th time intervals}\\
&\;X(\cdot,T_k) \in S_{\varepsilon_*}.
\end{align}
\end{enumerate}
Now let us restart the equation at $t = T_n$. To be more precise, we consider
\begin{equation*}
\partial_t X(s,t)= \mathcal{L}X(s,t) + g_X(s,t),\quad s\in \mathbb{T}, t\geq T_n,
\end{equation*}
with $X(\cdot,T_n)\in S_{\varepsilon_*}= S_{\varepsilon_0}$ given.
As before, there exists a unique local solution $X(s,t)$ for $t\in [T_n,T_n+T_0]$ satisfying the uniform estimates \eqref{eqn: uniform bound of the family of solution} and \eqref{eqn: uniform bi lipschitz constant of the family of solution} for solutions starting in $S_{\varepsilon_*}$.
Moreover, with $T_*$ and $T'$ defined as before,
\begin{equation*}
\|X-X_{*}\|_{L^{\infty}_{[T_n,T_n+T']}\dot{H}^{5/2}(\mathbb{T})}\leq \sqrt{2}\|X_{T_n}-(X_{T_n})_*\|_{\dot{H}^{5/2}(\mathbb{T})}.
\end{equation*}
By \eqref{eqn: bound for H1 norm of the difference to the equilibrium},
\begin{equation}
\|X-(X_{T_n})_*\|_{L^{\infty}_{[T_n,T_n+T']}\dot{H}^1} \leq C_7 T'+2\left(\|X_{T_n}\|_{\dot{H}^1}^2-\|(X_{T_n})_*\|_{\dot{H}^1}^2\right)^{1/2} = C_7 T'+2\zeta_{X_{T_n}},
\label{eqn: crude form of the bound for H1 norm of the difference to the equilibrium for later time}
\end{equation}
where $X_{T_n}(s) \triangleq X(s,T_n)$.
Since the solution obtained in $[0,T_n]$ satisfies the assumption of Lemma \ref{lemma: energy estimate}, by \eqref{eqn: energy estimate of Stokes immersed boundary problem}, 
\begin{equation*}
\zeta_{X_{T_n}}^2=\|X_{T_n}\|_{\dot{H}^1}^2-\|(X_{T_n})_*\|_{\dot{H}^1}^2 \leq \|X_0\|_{\dot{H}^1}^2-\|X_{0*}\|_{\dot{H}^1}^2 = \zeta_{X_{0}}^2=T'^2.
\end{equation*}
Note that $\|(X_{T_n})_*\|_{\dot{H}^1} = \|X_{0*}\|_{\dot{H}^1}$.
Hence, 
$\|X-(X_{T_n})_*\|_{L^{\infty}_{[T_n,T_n+T']}\dot{H}^1} \leq (C_7+2)T'$.
To this end, we simply argue to show  $\exists \,t_{n+1}\in [T'/4,T']$, s.t.\;there exists a unique local solution in $C_{[0,T_{n+1}]}H^{5/2}\cap L^2_{T_{n+1}}H^{3}(\mathbb{T})$ with $T_{n+1} = T_n+t_{n+1}$ and $X(T_{n+1})\in S_{\varepsilon_*}$.
Estimates \eqref{eqn: estimates on the distance to the equilibrium for the global solution in first k-th time intervals} and \eqref{eqn: well-stretched constant estimates for the global solution in first k-th time intervals} in the new time interval $[0, T_{n+1}]$ follow as before.
Since $T_n \geq nT'/4$ with $T'>0$ being a universal constant, $T_n\rightarrow +\infty$ as $n\rightarrow \infty$.
The existence of global solution is thus established.
The uniqueness follows from Theorem \ref{thm: local in time uniqueness}.
That $X_t\in L^2_{[0,+\infty),loc}H^2(\mathbb{T})$ follows from Theorem \ref{thm: local in time existence}.
Estimates \eqref{eqn: estimates on the distance to the equilibrium for the global solution in all time intervals}, \eqref{eqn: well-stretched constant estimates for the global solution in all time intervals} and \eqref{eqn: uniform bound of H 2.5 norm for the global solution} are established in the induction.
\end{proof}
\begin{remark}
Instead of \eqref{eqn: definition of epsilon* in the global existence}, we may take arbitrary $\varepsilon_*\in(0,\varepsilon_0]$, and the same proof still works.
\qed
\end{remark}

The main idea in the proof of Theorem \ref{thm: global existence near equilibrium} is that when the string configuration is close to an equilibrium, $\|X_0-X_{0*}\|_{\dot{H}^1}$ sets a bound for $\|X-X_*\|_{\dot{H}^{5/2}}$ in an indirect way (at least within a short time).
In the same spirit, we prove the following corollary with refined estimates.
It will be useful in the proof of Theorem \ref{thm: exponential convergence}.

\begin{corollary}\label{coro: refined decay estimate of global solution}
Let $X_0\in H^{5/2}(\mathbb{T})$ satisfy all the assumptions of Theorem \ref{thm: global existence near equilibrium} and let $X$ be the unique global solution of \eqref{eqn: contour dynamic formulation of the immersed boundary problem} starting from $X_0$ obtained in Theorem \ref{thm: global existence near equilibrium}. Then for any given $\xi\in(0,\xi_*]$, if in addition
\begin{equation}
\|X_0(s) - X_{0*}(s)\|_{\dot{H}^{1}(\mathbb{T})}\leq \xi R_{X_0},
\label{eqn: closeness condition of H 1 norm in corollary}
\end{equation}
then the solution $X$ satisfies that for $\forall \,t\geq 0$, 
\begin{equation}
\|X-X_{*}\|_{\dot{H}^{5/2}(\mathbb{T})}(t)\leq \max \{2e^{-t/8}\|X_0-X_{0*}\|_{\dot{H}^{5/2}(\mathbb{T})},\varepsilon_\xi R_{X_0}\},
\label{eqn: refined H2.5 bound of global solution}
\end{equation}
with
\begin{equation}
\varepsilon_\xi \triangleq  2C_4(C_7+2) (|\ln (2\xi)|+1)^2(2\xi),\quad \xi>0,
\label{eqn: defintion of varepsilon_* in the corollary}
\end{equation}
where $C_4 = C_4(C_6,1/(2\pi))$ and $C_7$ are universal constants defined in \eqref{eqn: introducing C_4} and \eqref{eqn: bound for H1 norm of the difference to the equilibrium} respectively.
\begin{remark}
We only define $\varepsilon_\xi$ for $\xi>0$ in order to avoid abusing the notation $\varepsilon_0$ defined in the proof of Theorem \ref{thm: global existence near equilibrium}. 
\end{remark}
\begin{proof}
We follow exactly the proof of Theorem \ref{thm: global existence near equilibrium} until the definition of $T'$.
Now we define $T' = 2\xi$ instead.
%
It is worthwhile to note that
\begin{equation*}
\zeta_{X_0}^2\leq 2\|X_0-X_{0*}\|^2_{\dot{H}^1}\leq 2\xi^2 \leq T'^2\leq T_*^2.
\end{equation*}
For the solution $X(s,t)$ in $t\in[0,T']$, \eqref{eqn: a final bound in the proof of global existence for H1 norm of X-X0* in 0 to T'} still holds.
With $c_* = C_4(C_6,1/(2\pi))(|\ln T'|+1)^2$ as before, we have a similar estimate as \eqref{eqn: proof of the threshold of H2.5 norm in the proof of global existence}
%
\begin{equation}
c_*\|X-X_{0*}\|_{L^\infty_{T'}\dot{H}^1}\leq C_4(|\ln T'|+1)^2\cdot (C_7+2)T' = C_4(C_7+2) (|\ln (2\xi)|+1)^2(2\xi)= \varepsilon_\xi/2.
\label{eqn: new threshold in the corollary such that the assumption holds}
\end{equation}
Therefore, if $\|X_0-X_{0*}\|_{\dot{H}^{5/2}(\mathbb{T})} \geq \varepsilon_\xi/2$, the assumption \eqref{eqn: condition in the small energy lemma initial H2.5 norm is much larger than H1 norm on the whole interval} holds. 
By Lemma \ref{lemma: bound and decay for H2.5 difference when energy difference is small}, there exists $t_1\in [T'/4,T']$, s.t.
\begin{equation*}
\|X-X_*\|_{\dot{H}^{5/2}}(t_1)\leq e^{-t_1/8}\|X_0-X_{0*}\|_{\dot{H}^{5/2}}. 
\end{equation*}
Otherwise, if $\|X_0-X_{0*}\|_{\dot{H}^{5/2}(\mathbb{T})} \leq \varepsilon_\xi/2$, by Lemma \ref{lemma: bound and decay for H2.5 difference when energy difference is small}, there exists $t_1\in [T'/4,T']$, s.t.
\begin{equation*}
\|X-X_{*}\|_{\dot{H}^{5/2}(\mathbb{T})}(t_1)\leq \|X-X_{*}\|_{L^{\infty}_{T'}\dot{H}^{5/2}(\mathbb{T})}\leq \sqrt{2}\|X_0-X_{0*}\|_{\dot{H}^{5/2}(\mathbb{T})} \leq \varepsilon_\xi/\sqrt{2}.
\end{equation*}
This implies that there always exists $t_1\in [T'/4,T']$, s.t.
\begin{equation*}
\|X-X_{*}\|_{\dot{H}^{5/2}(\mathbb{T})}(t_1)\leq \max\{e^{-t_1/8}\|X_0-X_{0*}\|_{\dot{H}^{5/2}}, \varepsilon_\xi/\sqrt{2}\}.
\end{equation*}
Now suppose that we have found $t_k$'s for $k\leq n$, satisfying that for $\forall\, k =1,\cdots,n$, $t_k\in[T'/4,T']$, and
\begin{equation*}
\|X-X_{*}\|_{\dot{H}^{5/2}(\mathbb{T})}(T_k)\leq \max\{e^{-T_k/8}\|X_0-X_{0*}\|_{\dot{H}^{5/2}}, \varepsilon_\xi/\sqrt{2}\}.
\end{equation*}
where $T_k = \sum_{i=1}^k t_i$ for $i = 1,\cdots,n$.
Now we consider the equation in $t \in [T_n, T_n+T']$.
As in \eqref{eqn: crude form of the bound for H1 norm of the difference to the equilibrium for later time},
\begin{equation*}
\|X-(X_{T_n})_*\|_{L^{\infty}_{[T_n,T_n+T']}\dot{H}^1} \leq  C_7 T'+2\zeta_{X_{T_n}}\leq C_7 T'+2\zeta_{X_0} \leq (C_7+2)T'.
\end{equation*}
Here we used the energy estimate \eqref{eqn: energy estimate of Stokes immersed boundary problem} again.
Hence, as in \eqref{eqn: new threshold in the corollary such that the assumption holds}, we have $c_*\|X-X_{0*}\|_{L^\infty_{[T_n,T_n+T']}\dot{H}^1}\leq \varepsilon_\xi/2$.
We argue as before to find that there always exists $t_{n+1}\in[T'/4,T']$, s.t.
\begin{equation*}
\begin{split}
\|X-X_{*}\|_{\dot{H}^{5/2}(\mathbb{T})}(T_n+t_{n+1})\leq &\;\max\{e^{-t_{n+1}/8}\|X-X_*\|_{\dot{H}^{5/2}}(T_n), \varepsilon_\xi/\sqrt{2}\}\\
\leq &\;\max\{e^{-(T_n+t_{n+1})/8}\|X_0-X_{0*}\|_{\dot{H}^{5/2}}, \varepsilon_\xi/\sqrt{2}\}.
\end{split}
\end{equation*}
By induction, there exists a sequence $\{t_k\}_{k\in\mathbb{Z}_+}$, $t_k\in [T'/4, T']$, such that for $\forall\, k\in\mathbb{Z}_+$,
\begin{equation*}
\|X-X_{*}\|_{\dot{H}^{5/2}(\mathbb{T})}(T_k)\leq \max\{e^{-T_k/8}\|X_0-X_{0*}\|_{\dot{H}^{5/2}}, \varepsilon_\xi/\sqrt{2}\}.
\end{equation*}
where $T_k = \sum_{i=1}^k t_i\rightarrow +\infty$.
Since $t_k\leq T'\leq T_*\leq 1$, by Lemma \ref{lemma: bound and decay for H2.5 difference when energy difference is small},
\begin{equation*}
\begin{split}
\|X-X_{*}\|_{L^\infty_{[T_{k-1}, T_k]}\dot{H}^{5/2}(\mathbb{T})}\leq &\;\sqrt{2}\|X-X_{*}\|_{\dot{H}^{5/2}(\mathbb{T})}(T_{k-1})\\
\leq &\;\max\{\sqrt{2}e^{-T_{k-1}/8}\|X_0-X_{0*}\|_{\dot{H}^{5/2}(\mathbb{T})}, \varepsilon_\xi\}\\
\leq &\;\max\{\sqrt{2}e^{T'/8}e^{-T_{k}/8}\|X_0-X_{0*}\|_{\dot{H}^{5/2}(\mathbb{T})}, \varepsilon_\xi\}\\
\leq &\;\max\{2e^{-T_{k}/8}\|X_0-X_{0*}\|_{\dot{H}^{5/2}(\mathbb{T})}, \varepsilon_\xi\}.
\end{split}
\end{equation*}
Note that here we are abusing the notation $T_0$ by defining $T_0 = 0$; it does not refer to the $T_0$ in Theorem \ref{thm: local in time existence}. 
Using the fact that $X\in C_{[0,+\infty)}H^{5/2}(\mathbb{T})$, 
for $\forall\,t\in [T_{k-1}, T_k]$,
\begin{equation*}
\|X-X_{*}\|_{\dot{H}^{5/2}(\mathbb{T})}(t)\leq \max\{2e^{-t/8}\|X_0-X_{0*}\|_{\dot{H}^{5/2}(\mathbb{T})}, \varepsilon_\xi\}.
\end{equation*}
This completes the proof. 
\end{proof}
\end{corollary}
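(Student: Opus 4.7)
The plan is to mirror the inductive scheme from the proof of Theorem \ref{thm: global existence near equilibrium}, but with the time step rescaled from $2\xi_*$ down to $T' = 2\xi$, so that the iteration sees the true smallness of the initial $\dot{H}^1$-distance. After reducing to $R_{X_0} = 1$ by scaling invariance of \eqref{eqn: contour dynamic formulation of the immersed boundary problem}, I would check that $T' = 2\xi \leq 2\xi_* \leq T_*$, where $T_*$ is the time furnished by Lemma \ref{lemma: bound and decay for H2.5 difference when energy difference is small}, and then partition $[0,\infty)$ into intervals $[T_{k-1}, T_k]$ of length $t_k \in [T'/4, T']$ produced inductively by that lemma.

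Two global ingredients drive the iteration. First, the energy identity \eqref{eqn: energy estimate of Stokes immersed boundary problem}, together with Lemma \ref{lemma: estimates concerning closest equilbrium}, gives the a priori bound $\zeta_{X_{T_n}}^2 \leq \zeta_{X_0}^2 \leq 2\xi^2$ uniformly in $n$; combined with \eqref{eqn: bound for H1 norm of the difference to the equilibrium}, this yields
\begin{equation*}
\|X - (X_{T_n})_*\|_{L^\infty_{[T_n,T_n+T']}\dot{H}^1} \leq (C_7+2)T'
\end{equation*}
on every window. Second, Lemma \ref{lemma: bound and decay for H2.5 difference when energy difference is small} with the constant $c_* = C_4(|\ln T'|+1)^2$ supplies a dichotomy. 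The product $c_* \cdot (C_7+2)T' = C_4(C_7+2)(|\ln(2\xi)|+1)^2(2\xi)/2 = \varepsilon_\xi/2$, which is precisely why the threshold $\varepsilon_\xi$ in \eqref{eqn: defintion of varepsilon_* in the corollary} takes that exact form.

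Inductively on each step I would apply a two-case argument. If $\|X(T_n) - X(T_n)_*\|_{\dot{H}^{5/2}} \geq \varepsilon_\xi/2$, then the smallness assumption \eqref{eqn: condition in the small energy lemma initial H2.5 norm is much larger than H1 norm on the whole interval} holds at time $T_n$, and part (2) of the lemma yields some $t_{n+1} \in [T'/4,T']$ with $\|X-X_*\|_{\dot{H}^{5/2}}(T_{n+1}) \leq e^{-t_{n+1}/8}\|X-X_*\|_{\dot{H}^{5/2}}(T_n)$, which telescopes to $e^{-T_{n+1}/8}\|X_0-X_{0*}\|_{\dot{H}^{5/2}}$. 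Otherwise the norm at $T_n$ is already below $\varepsilon_\xi/2$, and part (1) of the lemma gives $\|X-X_*\|_{L^\infty_{[T_n,T_n+T']}\dot{H}^{5/2}} \leq \sqrt{2}(\varepsilon_\xi/2) = \varepsilon_\xi/\sqrt{2}$, keeping us in the small regime. To bridge from the discrete times $T_k$ to arbitrary $t \in [T_{k-1},T_k]$, I would invoke part (1) of the same lemma once more, paying a factor $\sqrt{2}$ and an additional $e^{T'/8}$ to convert $e^{-T_{k-1}/8}$ into $e^{-T_k/8}$; this is what produces the prefactor $2$ in \eqref{eqn: refined H2.5 bound of global solution}.

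The main obstacle is keeping the dichotomy self-consistent across infinitely many iterations. Once the $\dot{H}^{5/2}$-norm falls below $\varepsilon_\xi/\sqrt{2}$ at some $T_n$, it is free to drift up by a factor of $\sqrt{2}$ during the next window, so the decay trigger has to be set at $\varepsilon_\xi/2$ rather than $\varepsilon_\xi$ itself, and the small-regime bound has to be stated as $\varepsilon_\xi$ rather than $\varepsilon_\xi/\sqrt{2}$ — this is exactly the $\max$ in the conclusion. Verifying that the constant $c_*$ produced by Lemma \ref{lemma: bound and decay for H2.5 difference when energy difference is small} depends on $T'$ only through $(|\ln T'|+1)^2$ (not through any power of $T'^{-1}$) is essential, because otherwise the threshold $\varepsilon_\xi$ would not tend to $0$ as $\xi \to 0^+$, and the refined estimate would be useless for proving exponential convergence in Theorem \ref{thm: exponential convergence}.
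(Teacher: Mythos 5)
Your proposal is correct and follows essentially the same route as the paper's proof: rescale $T' = 2\xi$, use the energy law together with Lemma \ref{lemma: estimates concerning closest equilbrium} to propagate the bound $\zeta_{X_{T_n}} \leq \zeta_{X_0}$ uniformly, invoke Lemma \ref{lemma: bound and decay for H2.5 difference when energy difference is small} with its dichotomy at threshold $\varepsilon_\xi/2$ on each window, and bridge from discrete to continuous times by paying a factor $\sqrt{2}e^{T'/8}\leq 2$. (There is a minor arithmetic slip in the displayed chain $c_*(C_7+2)T' = C_4(C_7+2)(|\ln(2\xi)|+1)^2(2\xi)/2 = \varepsilon_\xi/2$: the middle expression should not carry the extra $/2$, but the two ends are both right, so this is just a typo.)
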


\section{Exponential Convergence to Equilibrium Configurations}\label{section: exp convergence}
In this section, we shall prove that the global-in-time solution near equilibriums obtained in Theorem \ref{thm: global existence near equilibrium} converges exponentially in the $H^s$-sense to an equilibrium configuration as $t\rightarrow +\infty$.
See the statement of Theorem \ref{thm: exponential convergence}.
In the sequel, we shall always consider the contour dynamic formulation \eqref{eqn: contour dynamic formulation of the immersed boundary problem}, with $X_0\in H^{5/2}(\mathbb{T})$ satisfying \eqref{eqn: closeness condition of H 2.5 norm} and \eqref{eqn: closeness condition of H 1 norm} with $\varepsilon_*, \xi_*>0$ found in Theorem \ref{thm: global existence near equilibrium}.
Without loss of generality, we assume $R_{X_0} = 1$.

\subsection{A lower bound of the rate of energy dissipation}\label{section: lower bound for energy dissipation rate}
A key step to prove the exponential convergence of the global solution near equilibrium is to establish a lower bound of the rate of energy dissipation $\int_{\mathbb{R}^2}|\nabla u_X|^2\,dx$ (see Lemma \ref{lemma: energy estimate}) in terms of $\|X-X_*\|_{\dot{H}^1}$ provided that the latter is sufficiently small. 

Let $S_\varepsilon$ be defined as in \eqref{eqn: def of data close to equilibrium}. Let $\varepsilon_*' \in(0,\varepsilon_*)$ to be determined.
Let $\Omega_X\subset \mathbb{R}^2$ denote the bounded open domain enclosed by $X(\mathbb{T})$ where $X\in  S_{\sqrt{2}\varepsilon'_*} $.
Here the constant $\sqrt{2}$ comes from the estimate \eqref{eqn: estimates on the distance to the equilibrium for the global solution in all time intervals} of the global solution.
Define the collection of all such domains to be
\begin{equation*}
\mathcal{M}_{\varepsilon_*'} = \{\Omega_X \Subset\mathbb{R}^2:\; \partial \Omega_X = X(\mathbb{T}), \;X\in  S_{\sqrt{2}\varepsilon'_*} \}.
\end{equation*}
We assume that $\varepsilon_*'$ is sufficiently small, such that domains in $\mathcal{M}_{\varepsilon_*'}$ satisfy uniform $C^{1}$-regularity condition with uniform constants (see \cite{adams2003sobolev} in \S\,4.10 for the rigorous definition).
Indeed, this is achievable due to the implicit function theorem and the Sobolev embedding $H^{5/2}(\mathbb{T})\hookrightarrow C^{1,\alpha}(\mathbb{T})$ for $\forall\, \alpha\in(0,1)$.

Let $u_X$ be the velocity field determined by a configuration $X\in  S_{\sqrt{2}\varepsilon'_*} $. Let
\begin{equation*}
(u_X)_{\Omega_X}= |\Omega_X|^{-1}\int_{\Omega_X} u_X\,dx,\quad (u_X)_{\partial\Omega_X} = |\partial \Omega_X|^{-1}\int_{\partial\Omega_X} u_X\,dl,
\end{equation*}
where $l$ is the arc-length parameter of $\partial\Omega_X$.
Then by the boundary trace theorem \cite{adams2003sobolev},
\begin{equation*}
\begin{split}
\int_{\mathbb{R}^2} |\nabla u_X|^2 \,dx \geq &\;\int_{\Omega_X} |\nabla u_X|^2 \,dx \geq C \int_{\partial\Omega_X} |u_X-(u_X)_{\Omega_X}|^2\,dl \\
\geq &\;C \int_{\partial\Omega_X} |u_X-(u_X)_{\partial\Omega_X}|^2\,dl = C\int_{\mathbb{T}} |u_X-(u_X)_{\partial\Omega_X}|^2 |X'(s)|\,ds.
\end{split}
\end{equation*}
Here we used the fact that $dl = |X'(s)|\,ds$, since $s$ is a monotone parameterization of $\partial\Omega_X$.
Thanks to the uniform $C^1$-regularity of $\Omega_X\in \mathcal{M}_{\varepsilon_*'}$, the constant $C$ is uniform for $\forall\, X\in  S_{\sqrt{2}\varepsilon'_*} $.
We derive that
\begin{equation*}
\begin{split}
\int_{\mathbb{T}} |u_X(X(s))-(u_X)_{\partial\Omega_X}|^2 |X'(s)|\,ds \geq &\;\int_{\mathbb{T}} |u_X(X(s))-(u_X)_{\partial\Omega_X}|^2 (|X_*'(s)|-|X_*'(s)-X'(s)|)\,ds\\
\geq &\;(1- C_5\varepsilon_*')\int_{\mathbb{T}} |u_X(X(s))-(u_X)_{\partial\Omega_X}|^2\,ds\\
\geq &\;(1- C_5\varepsilon_*')\int_{\mathbb{T}} |u_X(X(s))-\bar{u}_X|^2\,ds.
\end{split}
\end{equation*}
Here $\bar{u}_X = |\mathbb{T}|^{-1}\int_{\mathbb{T}} u_X(X(s))\,ds$.
Again, the constant $C_5$, which first showed up in the proof of Theorem \ref{thm: global existence near equilibrium}, comes from the Sobolev embedding $H^{5/2}(\mathbb{T})\hookrightarrow C^1(\mathbb{T})$ and is independent of $X\in S_{\sqrt{2}\varepsilon'_*}$.
Taking $\varepsilon_*'\leq (2C_5)^{-1}$, we obtain that
\begin{equation}
\int_{\mathbb{R}^2} |\nabla u_X|^2 \,dx \geq C\int_{\mathbb{T}} |u_X(X(s))-\bar{u}_X|^2\,ds
\label{eqn: energy dissipation rate can be bounded from below by the L2 norm of velocity oscillation}
\end{equation}
for some universal constant $C$ independent of $X\in  S_{\sqrt{2}\varepsilon'_*} $.

Now we turn to derive a lower bound for $\int_{\mathbb{T}} |u_X(X(s))-\bar{u}_X|^2\,ds$.
We are going to perform linearization of $u_X(X(s))$ around the equilibrium configuration $X_*$.
Fix $X\in S_{\sqrt{2}\varepsilon'_*} $, with $\varepsilon_*'\leq \min\{1,\varepsilon_*\}$ satisfying all the assumptions above and to be determined.
Let $D(s) = X(s)-X_*(s)$ and
\begin{equation}
X_{\eta}(\cdot) \triangleq  X_*(\cdot) +\eta D(\cdot),\quad\eta \in[0,1].
\label{eqn: defintion of X_eta}
\end{equation}
By definition, $\|D\|_{\dot{H}^{5/2}(\mathbb{T})}\leq \sqrt{2}\varepsilon_*'$.
It is easy to show that with $\varepsilon_*'$ sufficiently small,
\begin{align}
&\;\|X_\eta\|_{\dot{H}^{5/2}(\mathbb{T})}\leq C,\quad \forall\, \eta\in[0,1],\label{eqn: uniform H2.5 upper bound for the family of configurations near equilibrium}\\
&\;|X_\eta(s_1) - X_\eta(s_2)|\geq \frac{1}{\pi}|s_1-s_2|,\quad \forall\,s_1,s_2\in\mathbb{T},\;\forall\,\eta\in[0,1],\label{eqn: uniform stretching constant for the family of configurations near equilibrium}
\end{align}
where $C$ is a universal constant.
Note that with $\varepsilon_*'$ being sufficiently small, $X_\eta$ is also a non-self-intersecting string configuration, but it may not be in $ S_{\sqrt{2}\varepsilon'_*} $, as $R_{X_{\eta}} = 1$ is not necessarily true.

The following lemma shows that $u_X(X(s))$, as a function of $X$, can be well approximated by linearization around $X_*$.
\begin{lemma}\label{lemma: linearization of velocity field around equilibrium}
Assume $\varepsilon_*'\leq \min\{1,\varepsilon_*, (2C_5)^{-1}\}$ is sufficiently small such that domains in $\mathcal{M}_{\varepsilon_*'}$ satisfy uniform $C^{1}$-regularity condition with uniform constants, and \eqref{eqn: uniform H2.5 upper bound for the family of configurations near equilibrium} and \eqref{eqn: uniform stretching constant for the family of configurations near equilibrium} hold. Then
\begin{equation}
u_X(X(s)) = \left.\frac{\partial}{\partial\eta}\right|_{\eta = 0}u_{X_\eta}(X_\eta(s))+\mathcal{R}_X(s),
\label{eqn: first approximation by linearization of velocity around equilibrium}
\end{equation}
where
\begin{equation}
\|\mathcal{R}_X(s)\|_{L^\infty(\mathbb{T})}\leq C\varepsilon_*' \|D\|_{\dot{H}^1(\mathbb{T})},
\label{eqn: estimate of the higher order error term in estimating the velocity around equilibrium}
\end{equation}
with $C$ being a universal constant.
\begin{proof}
Recall that $u_X$ is given by \eqref{eqn: velocity of membrane}.
By \eqref{eqn: simplification of integrand of g_X part 1},
\begin{equation*}
\begin{split}
u_X(X(s)) = &\;\frac{1}{4\pi}\int_{\mathbb{T}}\frac{L_{X}\cdot X'(s')}{|L_{X}|^2}M_{X}-\frac{L_{X}\cdot M_{X}}{|L_{X}|^2}X'(s') -\frac{X'(s')\cdot M_{X}}{|L_{X}|^2}L_{X}\,ds'\\
&\;+\frac{1}{4\pi}\int_{\mathbb{T}}\frac{2L_{X}\cdot X'(s')L_{X}\cdot M_{X}}{|L_{X}|^4}L_{X}\,ds',
\end{split}
\end{equation*}
where $L_X = L_X(s,s')$ and $M_X = M_X(s,s')$ are defined in \eqref{eqn: definition of L M N} and \eqref{eqn: definition of L M N at s}. The subscripts stress that they are determined by $X$.
Since $u_{X_*} \equiv 0$, by mean value theorem with respect to $\eta$, there exists an $\eta_*\in[0,1]$ such that,
\begin{equation*}
u_X(X(s)) = u_X(X(s))  -u_{X_*}(X_*(s))  = \left.\frac{\partial}{\partial\eta}\right|_{\eta = \eta_*}u_{X_\eta}(X_\eta(s)).
\end{equation*}
In Lemma \ref{lemma: eta derivative and the integral in u_X commute} in the Appendix \ref{appendix section: auxiliary calculations}, we will show that the $\eta$-derivative and the integral in $u_{X_\eta}$ commute. Hence,
\begin{equation}
\begin{split}
u_X(X(s)) = &\;\frac{1}{4\pi}\int_{\mathbb{T}}\frac{L_{D}\cdot X'_{\eta_*}(s')}{|L_{X_{\eta_*}}|^2}M_{X_{\eta_*}} +\frac{L_{X_{\eta_*}}\cdot D'(s')}{|L_{X_{\eta_*}}|^2}M_{X_{\eta_*}} +\frac{L_{X_{\eta_*}}\cdot X'_{\eta_*}(s')}{|L_{X_{\eta_*}}|^2}M_{D}\,ds'\\
&\;+\frac{1}{4\pi}\int_{\mathbb{T}}-\frac{(L_{X_{\eta_*}}\cdot L_{D})(L_{X_{\eta_*}}\cdot X'_{\eta_*}(s'))}{|L_{X_{\eta_*}}|^4}M_{X_{\eta_*}}\,ds'\\
&\;-\frac{1}{4\pi}\int_{\mathbb{T}}\frac{L_{D}\cdot M_{X_{\eta_*}}}{|L_{X_{\eta_*}}|^2}X'_{\eta_*}(s')+\frac{L_{X_{\eta_*}}\cdot M_{D}}{|L_{X_{\eta_*}}|^2}X'_{\eta_*}(s')+\frac{L_{X_{\eta_*}}\cdot M_{X_{\eta_*}}}{|L_{X_{\eta_*}}|^2}D'(s')\,ds'\\
&\;+\frac{1}{4\pi}\int_{\mathbb{T}}\frac{2(L_{D}\cdot L_{X_{\eta_*}})(L_{X_{\eta_*}}\cdot M_{X_{\eta_*}})}{|L_{X_{\eta_*}}|^2}X'_{\eta_*}(s')\,ds'\\
&\;-\frac{1}{4\pi}\int_{\mathbb{T}}\frac{D'(s')\cdot M_{X_{\eta_*}}}{|L_{X_{\eta_*}}|^2}L_{X_{\eta_*}}+\frac{X'_{\eta_*}(s')\cdot M_{D}}{|L_{X_{\eta_*}}|^2}L_{X_{\eta_*}}+\frac{X'_{\eta_*}(s')\cdot M_{X_{\eta_*}}}{|L_{X_{\eta_*}}|^2}L_{D}\,ds'\\
&\;+\frac{1}{4\pi}\int_{\mathbb{T}}\frac{2(L_D\cdot L_{X_{\eta_*}})(X'_{\eta_*}(s')\cdot M_{X_{\eta_*}})}{|L_{X_{\eta_*}}|^2}L_{X_{\eta_*}}\,ds'\\
&\;+\frac{1}{4\pi}\int_{\mathbb{T}}\frac{2(L_{D}\cdot X'_{\eta_*}(s'))(L_{X_{\eta_*}}\cdot M_{X_{\eta_*}})}{|L_{X_{\eta_*}}|^4}L_{X_{\eta_*}} + \frac{2(L_{X_{\eta_*}}\cdot D'(s'))(L_{X_{\eta_*}}\cdot M_{X_{\eta_*}})}{|L_{X_{\eta_*}}|^4}L_{X_{\eta_*}}\,ds'\\
&\;+\frac{1}{4\pi}\int_{\mathbb{T}}\frac{2(L_{X_{\eta_*}}\cdot X'_{\eta_*}(s'))(L_{D}\cdot M_{X_{\eta_*}})}{|L_{X_{\eta_*}}|^4}L_{X_{\eta_*}} + \frac{2(L_{X_{\eta_*}}\cdot X'_{\eta_*}(s'))(L_{X_{\eta_*}}\cdot M_{D})}{|L_{X_{\eta_*}}|^4}L_{X_{\eta_*}}\,ds'\\
&\;+\frac{1}{4\pi}\int_{\mathbb{T}}\frac{2(L_{X_{\eta_*}}\cdot X'_{\eta_*}(s'))(L_{X_{\eta_*}}\cdot M_{X_{\eta_*}})}{|L_{X_{\eta_*}}|^4}L_{D}\,ds'\\
&\;-\frac{1}{4\pi}\int_{\mathbb{T}}\frac{8(L_{D}\cdot L_{X_{\eta_*}})(L_{X_{\eta_*}}\cdot X'_{\eta_*}(s'))(L_{D}\cdot M_{X_{\eta_*}})}{|L_{X_{\eta_*}}|^4}L_{X_{\eta_*}}\,ds'.
\end{split}
\label{eqn: representation of velocity close to equilibrium}
\end{equation}

We then replace all the $X_{\eta_*}$ in \eqref{eqn: representation of velocity close to equilibrium} by $X_*$, i.e.\;$\eta = 0$, and introduce some error denoted by $\mathcal{R}_X(s)$. In this way, we obtain the representation \eqref{eqn: first approximation by linearization of velocity around equilibrium}.
To show \eqref{eqn: estimate of the higher order error term in estimating the velocity around equilibrium}, for conciseness, we only look at one part of $\mathcal{R}_X(s)$, which is the error in approximating the first term on the right hand side of \eqref{eqn: representation of velocity close to equilibrium},
\begin{equation*}
\begin{split}
&\;\left\|\frac{1}{4\pi}\int_{\mathbb{T}}\frac{L_{D}\cdot X'_{\eta_*}(s')}{|L_{X_{\eta_*}}|^2}M_{X_{\eta_*}}-\frac{L_{D}\cdot X'_{*}(s')}{|L_{X_{*}}|^2}M_{X_*}\,ds'\right\|_{L^\infty(\mathbb{T})}\\
\leq &\;\left\|\frac{1}{4\pi}\int_{\mathbb{T}}\frac{L_{D}\cdot (X'_{\eta_*}(s') - X'_{*}(s'))}{|L_{X_{\eta_*}}|^2}M_{X_{\eta_*}}\,ds'\right\|_{L^\infty(\mathbb{T})}+\left\|\frac{1}{4\pi}\int_{\mathbb{T}}\frac{L_{D}\cdot X'_{*}(s')}{|L_{X_{\eta_*}}|^2}(M_{X_{\eta_*}}-M_{X_*})\,ds'\right\|_{L^\infty(\mathbb{T})}\\
&\;+\left\|\frac{1}{4\pi}\int_{\mathbb{T}}\frac{L_{D}\cdot X'_{*}(s')}{|L_{X_{*}}|^2}M_{X_*}\frac{|L_{X_{*}}|^2-|L_{X_{\eta_*}}|^2}{|L_{X_{\eta_*}}|^2}\,ds'\right\|_{L^\infty(\mathbb{T})}\\
\leq &\;\left\|\frac{1}{4\pi}\int_{\mathbb{T}}\frac{L_{D}\cdot \eta_* D'(s')}{|L_{X_{\eta_*}}|^2}M_{X_{\eta_*}}\,ds'\right\|_{L^\infty(\mathbb{T})}+\left\|\frac{1}{4\pi}\int_{\mathbb{T}}\frac{L_{D}\cdot X'_{*}(s')}{|L_{X_{\eta_*}}|^2}\eta_* M_{D}\,ds'\right\|_{L^\infty(\mathbb{T})}\\
&\;+\left\|\frac{1}{4\pi}\int_{\mathbb{T}}\frac{L_{D}\cdot X'_{*}(s')}{|L_{X_{*}}|^2}M_{X_*}\frac{(L_{X_{*}}+L_{X_{\eta_*}})\cdot \eta_* L_D}{|L_{X_{\eta_*}}|^2}\,ds'\right\|_{L^\infty(\mathbb{T})}.
\end{split}
\end{equation*}
Note that \eqref{eqn: lower bound for L} and \eqref{eqn: uniform stretching constant for the family of configurations near equilibrium} imply that $|L_{X_*}|, |L_{X_{\eta_*}}|\geq C$ for some universal constant $C$.
By Lemma \ref{lemma: estimates for L M N}, \eqref{eqn: uniform H2.5 upper bound for the family of configurations near equilibrium}, and \eqref{eqn: uniform stretching constant for the family of configurations near equilibrium},
\begin{equation*}
\begin{split}
&\;\left\|\frac{1}{4\pi}\int_{\mathbb{T}}\frac{L_{D}\cdot X'_{\eta_*}(s')}{|L_{X_{\eta_*}}|^2}M_{X_{\eta_*}}-\frac{L_{D}\cdot X'_{*}(s')}{|L_{X_{*}}|^2}M_{X_*}\,ds'\right\|_{L^\infty(\mathbb{T})}\\
\leq &\;C \|L_D\|_{L^\infty_s(\mathbb{T})L^2_{s'}(\mathbb{T})}\|D'\|_{L^\infty(\mathbb{T})}\|M_{X_{\eta_*}}\|_{L^\infty_s(\mathbb{T})L^2_{s'}(\mathbb{T})}\\
&\;+C \|L_D\|_{L^\infty_s(\mathbb{T})L^2_{s'}(\mathbb{T})}\|X_*'\|_{L^\infty(\mathbb{T})}\|M_{D}\|_{L^\infty_s(\mathbb{T})L^2_{s'}(\mathbb{T})}\\
&\;+C \|L_D\|_{L^\infty_s(\mathbb{T})L^2_{s'}(\mathbb{T})}\|X_*'\|_{L^\infty(\mathbb{T})}\|M_{X_*}\|_{L^\infty_s(\mathbb{T})L^2_{s'}(\mathbb{T})}
\|L_{X_{*}}+L_{X_{\eta_*}}\|_{L^\infty_s(\mathbb{T})L^\infty_{s'}(\mathbb{T})}\|L_{D}\|_{L^\infty_s(\mathbb{T})L^\infty_{s'}(\mathbb{T})}\\
\leq &\;C \|D'\|_{L^2(\mathbb{T})}\|D'\|_{L^\infty(\mathbb{T})}\|X_{\eta_*}''\|_{L^2(\mathbb{T})}+C \|D'\|_{L^2(\mathbb{T})}\|X_*'\|_{L^\infty(\mathbb{T})}\|D''\|_{L^2(\mathbb{T})}\\
&\;+C \|D'\|_{L^2(\mathbb{T})}\|X_*'\|_{L^\infty(\mathbb{T})}\|X_*''\|_{L^2(\mathbb{T})}
(\|X_{*}'\|_{L^\infty(\mathbb{T})}+\|X_{\eta_*}'\|_{L^\infty(\mathbb{T})})\|D'\|_{L^\infty(\mathbb{T})}\\
\leq &\;C \|D'\|_{L^2(\mathbb{T})}\|D\|_{\dot{H}^{5/2}(\mathbb{T})}\\
\leq &\;C\varepsilon_*' \|D'\|_{L^2(\mathbb{T})}.
\end{split}
\end{equation*}
In a similar manner, we can prove the same bound for the other terms in $\mathcal{R}_{X}$. Thus we proved \eqref{eqn: estimate of the higher order error term in estimating the velocity around equilibrium}.
\end{proof}
\end{lemma}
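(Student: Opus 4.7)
The idea is to exploit that $X_*$ is a stationary configuration, so $u_{X_*}\equiv 0$, and then reduce the claim to controlling how much the $\eta$-derivative of $u_{X_\eta}(X_\eta(s))$ moves as $\eta$ ranges in $[0,1]$. My plan is as follows.

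First I would show that $\eta\mapsto u_{X_\eta}(X_\eta(s))$ is $C^1$ on $[0,1]$ and compute its derivative by differentiating under the integral sign in the representation \eqref{eqn: velocity of membrane}; the justification uses the uniform bounds \eqref{eqn: uniform H2.5 upper bound for the family of configurations near equilibrium}, \eqref{eqn: uniform stretching constant for the family of configurations near equilibrium}, and Lemma \ref{lemma: estimates for L M N}, together with the identities $\partial_\eta L_{X_\eta}=L_D$, $\partial_\eta M_{X_\eta}=M_D$, and $\partial_\eta X_\eta'(s')=D'(s')$. (This differentiation under the integral is the one technical point needing a separate appendix lemma.) Since $u_{X_*}\equiv 0$, the fundamental theorem of calculus gives
\begin{equation*}
u_X(X(s)) = u_X(X(s))-u_{X_*}(X_*(s)) = \int_0^1 \left.\frac{d}{d\eta}\right|_{\eta=\tau}u_{X_\eta}(X_\eta(s))\,d\tau.
\end{equation*}
Equivalently, by the mean value theorem there is $\eta_*(s)\in[0,1]$ with $u_X(X(s))=\left.\partial_\eta\right|_{\eta=\eta_*}u_{X_\eta}(X_\eta(s))$; either form is workable, and I'll use the MVT version since the final estimate depends only on $\eta_*\in[0,1]$.

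The main step is to expand $\left.\partial_\eta\right|_{\eta=\eta_*}u_{X_\eta}(X_\eta(s))$ as a sum of integrals linear in the triple $(L_D,M_D,D')$ with coefficients built out of $L_{X_{\eta_*}}, M_{X_{\eta_*}}, X_{\eta_*}'(s')$, and then replace every occurrence of $X_{\eta_*}$ by $X_*$. The difference defines $\mathcal{R}_X(s)$. To estimate $\mathcal{R}_X$, I would treat each replacement term by term: the difference $X_{\eta_*}'-X_*'=\eta_* D'$, $M_{X_{\eta_*}}-M_{X_*}=\eta_* M_D$, $L_{X_{\eta_*}}-L_{X_*}=\eta_* L_D$, and the difference of the denominators $|L_{X_{\eta_*}}|^2-|L_{X_*}|^2 = \eta_*(L_{X_{\eta_*}}+L_{X_*})\cdot L_D$ (using that $|L_{X_{*}}|$ and $|L_{X_{\eta_*}}|$ are bounded below by a universal constant in view of \eqref{eqn: lower bound for L} and \eqref{eqn: uniform stretching constant for the family of configurations near equilibrium}). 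Each of these introduces an extra factor which is, in essence, $D'$ or $M_D$ or $L_D$, so the resulting terms carry \emph{two} $D$-factors (one from the original $\partial_\eta$-derivative and one from the replacement).

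For the $L^\infty$-in-$s$ bound on each piece, I would apply Cauchy–Schwarz in $s'$ together with the $L^\infty_s L^2_{s'}$ and $L^\infty_s L^\infty_{s'}$ estimates from Lemma \ref{lemma: estimates for L M N}, which absorb the $L, M, N$ factors into $\|D'\|_{L^2}$, $\|D''\|_{L^2}$, and their counterparts for $X_*$ and $X_{\eta_*}$. Using Sobolev embedding $H^{5/2}\hookrightarrow \dot{W}^{1,\infty}$, \eqref{eqn: uniform H2.5 upper bound for the family of configurations near equilibrium}, and $\|D\|_{\dot{H}^{5/2}}\leq\sqrt{2}\varepsilon_*'$, each piece is bounded by $C\,\|D\|_{\dot H^{5/2}}\|D\|_{\dot H^1}\leq C\varepsilon_*'\|D\|_{\dot H^1}$, as desired. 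The main obstacle is purely bookkeeping: the expansion has many terms, and for each one must pair the $D$-factor introduced by the replacement with an $L^2$-slot in a way that leaves $\|D\|_{\dot H^1}$ rather than a higher norm as the final factor. Beyond that, the argument is a systematic application of Lemma \ref{lemma: estimates for L M N}.
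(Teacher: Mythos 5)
Your proposal is correct and follows essentially the same route as the paper's proof: both reduce to the mean value theorem in $\eta$ using $u_{X_*}\equiv 0$, justify differentiation under the integral sign via an appendix lemma, replace $X_{\eta_*}$ by $X_*$ term by term, and bound each replacement error using the identities $X_{\eta_*}'-X_*'=\eta_* D'$, $M_{X_{\eta_*}}-M_{X_*}=\eta_* M_D$, $L_{X_{\eta_*}}-L_{X_*}=\eta_* L_D$, the lower bound on $|L_{X_{\eta_*}}|$, and Lemma \ref{lemma: estimates for L M N} together with the Sobolev embedding. The bookkeeping plan you describe — pairing the extra $D$-factor with an $L^2$-slot so the final bound carries $\|D\|_{\dot{H}^1}$ and the $\varepsilon_*'$ is absorbed from $\|D\|_{\dot{H}^{5/2}}$ — matches the paper's estimate exactly.
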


The following lemma calculates the leading term of $u_X(X(s))$ in \eqref{eqn: first approximation by linearization of velocity around equilibrium}.
\begin{lemma}\label{lemma: final representation of the linearization of velocity near equilibrium}
Assume $X_*(s) = (\cos s, \sin s)^T$. Then
\begin{equation}
\left.\frac{\partial}{\partial\eta}\right|_{\eta = 0}u_{X_\eta}(X_\eta(s))=-\frac{1}{4}\left(\begin{array}{cc}0&1\\-1&0\end{array}\right)\mathcal{H}D(s)-\frac{1}{4}\mathcal{H}D'(s).
\label{eqn: final representation of the linearization of velocity near equilibrium}
\end{equation}
Here $\mathcal{H}$ denotes the Hilbert transform on $\mathbb{T}$ \cite{grafakos2008classical}. 
\end{lemma}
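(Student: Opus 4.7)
The plan is to base the computation on the explicit representation \eqref{eqn: representation of velocity close to equilibrium} from the proof of Lemma~\ref{lemma: linearization of velocity field around equilibrium}, specialized to $\eta_*=0$. That representation directly exhibits $\partial_\eta|_{\eta=0}u_{X_\eta}(X_\eta(s))$ as a sum of about a dozen singular integrals whose integrands are built from the equilibrium quantities $L_{X_*}$, $M_{X_*}$, $X_*'$ together with exactly one $D$-factor (one of $L_D$, $M_D$, $D'(s')$). The whole task is then to reduce each such integral to a Hilbert transform of $D$ or $D'$.

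The first step is to compute the equilibrium geometry explicitly. With $\tau=s'-s$,
\begin{equation*}
L_{X_*}=\tfrac{2\sin(\tau/2)}{\tau}\bigl(-\sin(s+\tau/2),\cos(s+\tau/2)\bigr),\quad M_{X_*}=\tfrac{2\sin(\tau/2)}{\tau}\bigl(-\cos(s+\tau/2),-\sin(s+\tau/2)\bigr),
\end{equation*}
and $X_*'(s')=(-\sin(s+\tau),\cos(s+\tau))$, which yields $|L_{X_*}|^2=4\sin^2(\tau/2)/\tau^2$, the orthogonality $L_{X_*}\cdot M_{X_*}=0$, the scalar identities $L_{X_*}\cdot X_*'(s')=\sin\tau/\tau$ and $M_{X_*}\cdot X_*'(s')=(1-\cos\tau)/\tau$, and the key vector identity $M_{X_*}=-JL_{X_*}$ with $J=\left(\begin{array}{cc}0&1\\-1&0\end{array}\right)$. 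Orthogonality immediately eliminates every term in \eqref{eqn: representation of velocity close to equilibrium} that carries the factor $L_{X_*}\cdot M_{X_*}$, which collapses the sum substantially.

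For the surviving terms, I would substitute the explicit equilibrium kernels and expand each $D$-factor using $L_D(s,s+\tau)=\tau^{-1}(D(s+\tau)-D(s))$ and $M_D(s,s+\tau)=\tau^{-1}(D'(s+\tau)-D'(s))$, so each term takes the form $\mathrm{p.v.}\int_\mathbb{T} K(\tau)F(s+\tau)\,d\tau$ with $F\in\{D,D'\}$ and $K$ a matrix-valued kernel whose only singularity at $\tau=0$ is a simple pole. Using the standard identity $1/\tau=\tfrac{1}{2}\cot(\tau/2)+\mathrm{O}(\tau)$ together with the cotangent representation of $\mathcal{H}$ given after \eqref{eqn: derivative of g_X part 2}, each singular piece contributes a multiple of $\mathcal{H}D$ or $\mathcal{H}D'$, while the bounded remainder pieces combine into a smooth convolution. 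One then verifies that the smooth convolution vanishes by evaluating on the three symmetries of the equilibrium: $D\equiv\text{const}$ (translation), $D=X_*$ (radial dilation), and $D=X_*'$ (reparametrization), all of which must yield $\mathcal{T}D=0$ because they leave the curve invariant as an equilibrium. In the final assembly, terms whose outer vector is $X_*'(s')$ or $L_{X_*}$ contribute the pure $-\tfrac14\mathcal{H}D'$ piece, matching the $\mathcal{L}=-\tfrac14(-\Delta)^{1/2}$ identification made at the contour-dynamic level in Proposition~\ref{prop: tranform into contour dynamic formulation}, while terms whose outer vector is $M_{X_*}$ pick up the matrix $J$ through $M_{X_*}=-JL_{X_*}$ and deliver the $-\tfrac14J\mathcal{H}D$ piece.

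The main obstacle is the algebraic bookkeeping: after the orthogonality reduction roughly eight integrals remain, each with its own trigonometric prefactor and its own $2\times 2$ matrix outer factor, and conspiring the signs and normalisations into exactly $-\tfrac14J\mathcal{H}D-\tfrac14\mathcal{H}D'$ demands several careful manipulations. As an independent verification I would use Fourier diagonalisation: by rotational symmetry of $X_*$, $\mathcal{T}$ acts on each mode $e^{iks}v$ as a $2\times 2$ matrix commuting with all plane rotations, so it has the form $\alpha_k I+\beta_k J$; the claimed formula predicts $\alpha_k=-|k|/4$ and $\beta_k=\tfrac{i}{4}\mathrm{sgn}(k)$, readily checked against a single simple test perturbation such as $D(s)=(\cos ks,0)$.
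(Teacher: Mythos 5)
Your route genuinely differs from the paper's: the paper evaluates $\partial_\eta|_{\eta=0}u_{X_\eta}(X_\eta(s))$ starting from the \emph{logarithmic} kernel representation \eqref{eqn: expression for velocity field}, then replaces $X_*''=-X_*$ and splits $-X_*(s')$ into the orthogonal pieces $-\tfrac12(X_*(s')-X_*(s))$ and $-\tfrac12(X_*(s')+X_*(s))$, at which point the chord–midpoint orthogonality $(X_*(s')-X_*(s))\cdot(X_*(s')+X_*(s))=0$ collapses most terms, and what remains is matched against $\mathcal{H}D$ and $\mathcal{H}D'$ by direct trigonometric evaluation. You instead start from the derivative-kernel (i.e.\ $L,M$) representation \eqref{eqn: representation of velocity close to equilibrium} at $\eta_*=0$ and use the analogous orthogonality $L_{X_*}\cdot M_{X_*}=0$ plus the rotation identity $M_{X_*}=-JL_{X_*}$ to produce the same collapse. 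Your explicit equilibrium formulas for $L_{X_*}$, $M_{X_*}$, $|L_{X_*}|^2$, $L_{X_*}\cdot X_*'(s')$, $M_{X_*}\cdot X_*'(s')$ are all correct, and the observation that the trigonometric prefactors simplify to cotangents (so the Hilbert transform appears directly without needing $1/\tau=\tfrac12\cot(\tau/2)+\mathrm{O}(\tau)$) is essentially the right mechanism; the two proofs are dual ways of organizing the same orthogonality of the circle, and comparable in length.

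One caveat, which you should be careful about if you write this up: testing the smooth-convolution remainder against $D\equiv\mathrm{const}$, $D=X_*$, and $D=X_*'$ is \emph{not} enough to conclude it vanishes. Those test functions have Fourier content only at $k=0$ and $k=\pm 1$, whereas a smooth translation-covariant kernel on $\mathbb{T}$ is determined by its action on all modes; a nonzero smooth kernel can easily annihilate those three functions. (The fact that the \emph{full} operator $\mathcal{T}$ vanishes on them is correct and a good sanity check, but it constrains only $\alpha_0,\beta_0,\alpha_{\pm1},\beta_{\pm1}$.) The Fourier-diagonalization cross-check you describe at the end is the right way to certify the answer: since $\mathcal{T}$ commutes with the circle action it is $\alpha_k I+\beta_k J$ on mode $k$, and you should verify $\alpha_k=-|k|/4$, $\beta_k=\tfrac{i}{4}\mathrm{sgn}(k)$ for \emph{general} $k$ (a single explicit mode $e^{iks}v$ suffices because of the $I,J$ structure). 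With that caveat addressed, the argument is sound.
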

The proof is simply a long calculation. We leave it to the Appendix \ref{appendix section: auxiliary calculations}.

\begin{lemma}\label{lemma: lower bound for the energy dissipation rate in terms of excess energy}
There is a universal $\varepsilon_{*}'>0$ and a universal constant $C>0$, such that
\begin{equation}
\|u_X(X(s)) - \bar{u}_X\|_{L^2(\mathbb{T})}\geq C\|X-X_*\|_{\dot{H}^1(\mathbb{T})},\quad\forall\, X\in  S_{\sqrt{2}\varepsilon_{*}'},
\label{eqn: lower bound for the L2 norm of velocity oscillation}
\end{equation}
where $S_\varepsilon$ is defined in \eqref{eqn: def of data close to equilibrium}.
In particular, this implies that
\begin{equation}
\int_{\mathbb{R}^2} |\nabla u_X|^2 \,dx\geq C\left(\|X\|_{\dot{H}^1(\mathbb{T})}^2-\|X_*\|_{\dot{H}^1(\mathbb{T})}^2\right),\quad\forall\, X\in  S_{\sqrt{2}\varepsilon_{*}'},
\label{eqn: lower bound for the energy dissipation rate in terms of excess energy}
\end{equation}
with some universal constant $C>0$.
\begin{proof}
We always assume that $\varepsilon_*'$ satisfies the assumptions in Lemma \ref{lemma: linearization of velocity field around equilibrium}.
By Lemma \ref{lemma: final representation of the linearization of velocity near equilibrium},
\begin{equation*}
\begin{split}
\left.\frac{\partial}{\partial\eta}\right|_{\eta = 0}u_{X_\eta}(X_\eta(s))=&\;-\frac{1}{4}\left(\begin{array}{cc}0&1\\-1&0\end{array}\right)\mathcal{H}D(s)-\frac{1}{4}\mathcal{H}D'(s)\\
=&\;-\frac{1}{4}\sum_{k\in\mathbb{Z}} \left(\begin{array}{c}-i\cdot\mathrm{sgn}(k)\hat{D}_{k,2}+|k| \hat{D}_{k,1} \\i\cdot\mathrm{sgn}(k) \hat{D}_{k,1}+|k| \hat{D}_{k,2}\end{array}\right)  \mathrm{e}^{iks}.
\end{split}
\end{equation*}
Obviously,
\begin{equation}
\int_{\mathbb{T}}\left.\frac{\partial}{\partial\eta}\right|_{\eta = 0}u_{X_\eta}(X_\eta(s))\,ds = 0.
\label{eqn: linearization of velocity field has mean zero}
\end{equation}
By Parseval's identity and the fact that $\hat{D}_{-k} = \overline{\hat{D}_{k}}$,
\begin{equation}
\begin{split}
\left\|\left.\frac{\partial}{\partial\eta}\right|_{\eta = 0}u_{X_\eta}(X_\eta(s))\right\|_{L^2(\mathbb{T})}^2
=&\;\frac{\pi}{8}\sum_{k\in\mathbb{Z}} \left|\left(\begin{array}{c}-i\cdot\mathrm{sgn}(k)\hat{D}_{k,2}+|k| \hat{D}_{k,1} \\i\cdot\mathrm{sgn}(k) \hat{D}_{k,1}+|k| \hat{D}_{k,2}\end{array}\right)\right|^2\\
\geq &\;\frac{\pi}{8}\left|\left(\begin{array}{c}-i\hat{D}_{1,2}+ \hat{D}_{1,1} \\i \hat{D}_{1,1}+\hat{D}_{1,2}\end{array}\right)\right|^2+\frac{\pi}{8}\left|\left(\begin{array}{c}i\hat{D}_{-1,2}+ \hat{D}_{-1,1} \\-i \hat{D}_{-1,1}+\hat{D}_{-1,2}\end{array}\right)\right|^2\\
&\;+\frac{\pi}{8}\sum_{k\in\mathbb{Z}\atop |k|\geq 2} \left|(|k|-1)|\hat{D}_k|\right|^2\\
\geq  &\;\frac{\pi}{2}\left[(\Re \hat{D}_{1,1}+\Im \hat{D}_{1,2})^2+(\Im \hat{D}_{1,1}-\Re \hat{D}_{1,2})^2\right]+\frac{\pi}{32}\sum_{k\in\mathbb{Z}\atop |k|\geq 2} |k|^2|\hat{D}_k|^2.
\end{split}
\label{eqn: a crude lower bound for the linearized velocity L2 norm with mode 1 -1 unhandled}
\end{equation}
Recall that $D(s)$ satisfies the constraints \eqref{eqn: constraint on deviation from volume conservation} and \eqref{eqn: simplified equation for the optimal approximated equilibrium}, with $Y_*$ replaced by $X_*$.
\eqref{eqn: simplified equation for the optimal approximated equilibrium} imples that $(\Im \hat{D}_{1,1}-\Re \hat{D}_{1,2})^2 = 2(\Im \hat{D}_{1,1})^2+2(\Re \hat{D}_{1,2})^2$;
\eqref{eqn: constraint on deviation from volume conservation} together with \eqref{eqn: inner product of D and Y_star} implies that
\begin{equation*}
|\Re \hat{D}_{1,1}-\Im \hat{D}_{1,2}| \leq C\left|\int_\mathbb{T}D\cdot Y_* \right|\leq C\left|\int_\mathbb{T}D\times D'\,ds\right|\leq C\|D\|_{L^2(\mathbb{T})}\|D'\|_{L^2(\mathbb{T})}\leq C\varepsilon_*'\|D'\|_{L^2(\mathbb{T})}.
\end{equation*}
Here we used the fact that $D$ has mean zero on $\mathbb{T}$, and thus $\|D\|_{L^2(\mathbb{T})}\leq C\|D\|_{\dot{H}^{5/2}(\mathbb{T})}\leq C\varepsilon_*'$.
Hence, we use \eqref{eqn: a crude lower bound for the linearized velocity L2 norm with mode 1 -1 unhandled} to derive that
\begin{equation}
\begin{split}
&\;\left\|\left.\frac{\partial}{\partial\eta}\right|_{\eta = 0}u_{X_\eta}(X_\eta(s))\right\|_{L^2(\mathbb{T})}^2\\
\geq &\;\frac{\pi}{2}\left[(\Re \hat{D}_{1,1}+\Im \hat{D}_{1,2})^2+(\Re \hat{D}_{1,1}-\Im \hat{D}_{1,2})^2+2(\Im \hat{D}_{1,1})^2+2(\Re \hat{D}_{1,2})^2\right]\\
&\;+\frac{\pi}{32}\sum_{k\in\mathbb{Z}\atop |k|\geq 2} |k|^2|\hat{D}_k|^2 - C\varepsilon_*'^2\|D'\|^2_{L^2(\mathbb{T})}\\
\geq &\;\frac{\pi}{32}\sum_{k\in\mathbb{Z}} |k|^2|\hat{D}_k|^2 - C\varepsilon_*'^2\|D'\|^2_{L^2(\mathbb{T})}= \left(\frac{1}{64} - C\varepsilon_*'^2\right)\|D'\|^2_{L^2(\mathbb{T})}.
\end{split}
\label{eqn: a lower bound for the linearized velocity L2 norm with mode 1 -1 unhandled}
\end{equation}
Here $C$ is a universal constant.
To this end, we use Lemma \ref{lemma: linearization of velocity field around equilibrium} and \eqref{eqn: linearization of velocity field has mean zero} to derive that
\begin{equation*}
\begin{split}
\|u_X(X(s)) - \bar{u}_{X}\|_{L^2(\mathbb{T})} \geq &\; \left\|\left.\frac{\partial}{\partial\eta}\right|_{\eta = 0}u_{X_\eta}(X_\eta(s))\right\|_{L^2(\mathbb{T})} - \left\|\mathcal{R}_X(s) - \overline{\mathcal{R}_X}\right\|_{L^2(\mathbb{T})}\\
\geq &\;\left(\frac{1}{64} - C\varepsilon_*'^2\right)^{1/2}\|D'\|_{L^2(\mathbb{T})} - \|\mathcal{R}_X(s)\|_{L^2(\mathbb{T})}\\
\geq &\;\left[\left(\frac{1}{64} - C\varepsilon_*'^2\right)^{1/2}-C\varepsilon_*'\right]\|D'\|_{L^2(\mathbb{T})}.
\end{split}
\end{equation*}
Again, $C$ is a universal constant. Taking $\varepsilon_*'$ sufficiently small, but still universal, we proved the desired lower bound \eqref{eqn: lower bound for the L2 norm of velocity oscillation} with some universal constant $C$.
\eqref{eqn: lower bound for the energy dissipation rate in terms of excess energy} follows immediately from \eqref{eqn: energy dissipation rate can be bounded from below by the L2 norm of velocity oscillation}, \eqref{eqn: lower bound for the L2 norm of velocity oscillation} and Lemma \ref{lemma: estimates concerning closest equilbrium}.
\end{proof}
\end{lemma}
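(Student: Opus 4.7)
The two inequalities in the statement are linked: once the first estimate \eqref{eqn: lower bound for the L2 norm of velocity oscillation} is proved, the second follows by chaining it with the boundary-trace/Poincaré inequality \eqref{eqn: energy dissipation rate can be bounded from below by the L2 norm of velocity oscillation} that was derived at the start of this section together with the equivalence of excess energy and $\|X-X_*\|_{\dot{H}^1}^2$ established in Lemma \ref{lemma: estimates concerning closest equilbrium}. So the whole task reduces to proving the pointwise lower bound on the velocity oscillation, and the strategy is linearization around $X_*$ controlled in Fourier.

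The plan is to write $D=X-X_*$ and use Lemma \ref{lemma: linearization of velocity field around equilibrium} to decompose $u_X(X(s))=\mathcal{T}D(s)+\mathcal{R}_X(s)$, where $\mathcal{T}D(s)\triangleq\partial_\eta|_{\eta=0}u_{X_\eta}(X_\eta(s))$ and $\|\mathcal{R}_X\|_{L^\infty}\le C\varepsilon_*'\|D\|_{\dot H^1}$. By Lemma \ref{lemma: final representation of the linearization of velocity near equilibrium}, $\mathcal{T}D=-\tfrac14 J\mathcal{H}D-\tfrac14\mathcal{H}D'$ where $J$ is the $90^\circ$-rotation, which expands on the Fourier side as $\mathcal{T}D(s)=-\tfrac14\sum_k\bigl(i\,\mathrm{sgn}(k)J+|k|I\bigr)\hat{D}_k\, e^{iks}$. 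In particular $\mathcal{T}D$ has mean zero, so $\bar u_X=\overline{\mathcal{R}_X}$ and by Parseval
\[
\|\mathcal{T}D\|_{L^2(\mathbb{T})}^2=\tfrac{\pi}{8}\sum_{k\in\mathbb{Z}}\Bigl|\bigl(i\,\mathrm{sgn}(k)J+|k|I\bigr)\hat{D}_k\Bigr|^2.
\]
For $|k|\ge 2$ the symbol matrix has eigenvalues $(|k|\pm 1)/4\ge 1/4$, producing a clean contribution $\gtrsim\sum_{|k|\ge 2}k^2|\hat{D}_k|^2$.

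The main obstacle is the modes $k=\pm 1$: at these frequencies the symbol has a one-dimensional kernel, so the quadratic form does not directly control $|\hat{D}_{\pm 1}|$ and we must bring in the geometric constraints satisfied by $D$. Two constraints are available. First, the optimality of $\theta_*$ in the definition of $X_*$ gives the linear identity $\mathrm{Im}\,\hat{D}_{1,1}+\mathrm{Re}\,\hat{D}_{1,2}=0$ (cf.\ \eqref{eqn: simplified equation for the optimal approximated equilibrium}), which eliminates exactly the first ``bad direction''. Second, area preservation (cf.\ \eqref{eqn: constraint on deviation from volume conservation}) gives $\int_{\mathbb{T}}2D\cdot X_*\,ds=-\int_{\mathbb{T}}D\times D'\,ds$, which by \eqref{eqn: inner product of D and Y_star} reads $4\pi(\mathrm{Re}\,\hat{D}_{1,1}-\mathrm{Im}\,\hat{D}_{1,2})=-\int D\times D'$; since $D$ has mean zero and $\|D\|_{\dot H^{5/2}}\le\sqrt{2}\varepsilon_*'$ implies $\|D\|_{L^2}\le C\varepsilon_*'$, the right side is $O(\varepsilon_*'\|D'\|_{L^2})$, so the remaining ``bad direction'' is itself $O(\varepsilon_*')\|D\|_{\dot H^1}$ and can be absorbed.

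Combining these yields $\|\mathcal{T}D\|_{L^2}^2\ge(c_0-C\varepsilon_*'^{\,2})\|D\|_{\dot H^1}^2$ for universal $c_0,C>0$. Then the triangle inequality together with $\|\mathcal{R}_X-\overline{\mathcal{R}_X}\|_{L^2}\le\|\mathcal{R}_X\|_{L^2}\le C\varepsilon_*'\|D\|_{\dot H^1}$ gives $\|u_X(X)-\bar u_X\|_{L^2}\ge\bigl((c_0-C\varepsilon_*'^{\,2})^{1/2}-C\varepsilon_*'\bigr)\|D\|_{\dot H^1}$, which is $\gtrsim\|D\|_{\dot H^1}=\|X-X_*\|_{\dot H^1}$ once $\varepsilon_*'$ is fixed sufficiently small (and universal). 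Inserting into the preparatory trace inequality and Lemma \ref{lemma: estimates concerning closest equilbrium} gives \eqref{eqn: lower bound for the energy dissipation rate in terms of excess energy}. The key delicate step to execute carefully is the $k=\pm 1$ analysis, where both the first- and second-order optimality identities for $X_*$ must be used simultaneously with the nonlinear area constraint to control the single direction that the linearized operator fails to see.
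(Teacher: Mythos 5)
Your proposal matches the paper's proof essentially step for step: the same linearization around $X_*$ via Lemma \ref{lemma: linearization of velocity field around equilibrium} and Lemma \ref{lemma: final representation of the linearization of velocity near equilibrium}, the same Fourier-side coercivity (eigenvalues of size $|k|-1$ for $|k|\ge 2$), the same use of the first-order optimality identity \eqref{eqn: simplified equation for the optimal approximated equilibrium} and the area constraint \eqref{eqn: constraint on deviation from volume conservation} to control the two real directions at $k=\pm 1$ that the linearized operator annihilates, and the same absorption of $\mathcal{R}_X$ and chaining with \eqref{eqn: energy dissipation rate can be bounded from below by the L2 norm of velocity oscillation} and Lemma \ref{lemma: estimates concerning closest equilbrium}. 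One small wording slip in your closing sentence: you invoke "both the first- and second-order optimality identities," but (consistently with your own body text and with the paper) only the first-order identity together with the area constraint is needed here — the second-order condition \eqref{eqn: second order equation for the optimal approximated equilibrium} plays no role in this lemma.
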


With Lemma \ref{lemma: lower bound for the energy dissipation rate in terms of excess energy}, we conclude by \eqref{eqn: energy estimate on each time slice simplified version} and Lemma \ref{lemma: estimates concerning closest equilbrium} that
\begin{corollary}\label{coro: exponential decay of H1 distance from equilibrium when it is in a small H2.5 neighborhood}
Let $X_0$ satisfy all the assumptions in Theorem \ref{thm: global existence near equilibrium} so that $X$ is the unique global-in-time solution of \eqref{eqn: contour dynamic formulation of the immersed boundary problem} starting from $X_0$. Then there exist universal constants $\varepsilon_{*}',\alpha>0$, such that if in addition $X(\cdot, t)\in  S_{\sqrt{2}\varepsilon_{*}'}$,
\begin{align*}
\|X\|^2_{\dot{H}^1(\mathbb{T})}(t)-\|X_*\|^2_{\dot{H}^1(\mathbb{T})}(t)\leq &\;e^{-2\alpha t}\left(\|X_0\|^2_{\dot{H}^1(\mathbb{T})}-\|X_{0*}\|^2_{\dot{H}^1(\mathbb{T})}\right),\\
\|X-X_*\|_{\dot{H}^1(\mathbb{T})}(t)\leq &\;2\sqrt{2}e^{-\alpha t}\|X_0-X_{0*}\|_{\dot{H}^1(\mathbb{T})},
\end{align*}
where $S_\varepsilon$ is defined in \eqref{eqn: def of data close to equilibrium}.
\end{corollary}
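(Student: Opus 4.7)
The plan is to combine the energy identity of Lemma \ref{lemma: energy estimate} with the coercive lower bound for the dissipation rate established in Lemma \ref{lemma: lower bound for the energy dissipation rate in terms of excess energy}, and then translate the resulting exponential decay of the ``excess energy'' into an exponential decay of the $\dot H^1$-distance via the two-sided estimate of Lemma \ref{lemma: estimates concerning closest equilbrium}.

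First I would observe a key conservation fact: because the Stokes flow is divergence-free, the area enclosed by $X(\cdot,t)$ is preserved, so the effective radius $R_{X(\cdot,t)} = R_{X_0}$ is constant in time. Since $Y_\theta(s) = (R_Y\cos(s+\theta),R_Y\sin(s+\theta))^T+x$ always satisfies $\|Y_\theta\|_{\dot H^1(\mathbb{T})}^2 = 2\pi R_Y^2$, this gives the time-independent identity
\begin{equation*}
\|X_*\|_{\dot H^1(\mathbb{T})}^2(t) \;=\; 2\pi R_{X_0}^2 \;=\; \|X_{0*}\|_{\dot H^1(\mathbb{T})}^2.
\end{equation*}
Define the non-negative quantity $E(t)\triangleq\|X\|_{\dot H^1(\mathbb{T})}^2(t)-\|X_{0*}\|_{\dot H^1(\mathbb{T})}^2$. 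Because $X(\cdot,t)\in S_{\sqrt{2}\varepsilon_*'}$ for every $t\ge 0$, Lemma \ref{lemma: lower bound for the energy dissipation rate in terms of excess energy} provides a universal constant $C_0>0$ with
\begin{equation*}
\int_{\mathbb{R}^2}|\nabla u_X(x,t)|^2\,dx \;\ge\; C_0\,E(t),\qquad \forall\, t\ge 0.
\end{equation*}

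Next I would invoke the energy identity \eqref{eqn: energy estimate on each time slice simplified version} from Lemma \ref{lemma: energy estimate}, which holds in the distribution sense. Since $X\in C_{[0,\infty)}H^{5/2}(\mathbb{T})$ and $X_t\in L^2_{[0,\infty),loc}H^2(\mathbb{T})$ by Theorem \ref{thm: global existence near equilibrium}, the function $t\mapsto\|X\|_{\dot H^1}^2(t)$ is absolutely continuous, and combining the two displays above yields the differential inequality
\begin{equation*}
\tfrac{d}{dt}E(t) \;=\; -2\int_{\mathbb{R}^2}|\nabla u_X|^2\,dx \;\le\; -2C_0\,E(t) \qquad \text{for a.e. } t\ge 0.
\end{equation*}
Setting $\alpha\triangleq C_0$, Gr\"onwall's lemma gives $E(t)\le e^{-2\alpha t}E(0)$, which is exactly the first displayed inequality of the corollary.

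Finally I would extract the second estimate by applying Lemma \ref{lemma: estimates concerning closest equilbrium} at both endpoints: at time $t$ the upper bound in \eqref{eqn: difference in H1 bounded by difference in energy} yields $\|X-X_*\|_{\dot H^1}^2(t)\le 4E(t)$, and at $t=0$ the lower bound gives $E(0)\le 2\|X_0-X_{0*}\|_{\dot H^1}^2$, so chaining these with the decay of $E$ produces
\begin{equation*}
\|X-X_*\|_{\dot H^1(\mathbb{T})}^2(t) \;\le\; 4E(t) \;\le\; 4e^{-2\alpha t}E(0) \;\le\; 8e^{-2\alpha t}\|X_0-X_{0*}\|_{\dot H^1(\mathbb{T})}^2,
\end{equation*}
and taking square roots delivers the claimed constant $2\sqrt{2}$. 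The main substantive ingredient is already carried by Lemma \ref{lemma: lower bound for the energy dissipation rate in terms of excess energy}; the only conceptual care is the observation that $\|X_*\|_{\dot H^1}$ is truly constant in $t$ (volume preservation), which is what allows the excess energy $E(t)$ computed against the \emph{moving} reference $X_*(\cdot,t)$ to be interpreted directly through the energy identity for $\|X\|_{\dot H^1}$.
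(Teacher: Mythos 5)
Your proof is correct and follows exactly the route the paper intends: it combines Lemma \ref{lemma: lower bound for the energy dissipation rate in terms of excess energy}, the energy identity \eqref{eqn: energy estimate on each time slice simplified version}, and the two-sided equivalence in Lemma \ref{lemma: estimates concerning closest equilbrium}, which is precisely the chain of ingredients the paper cites when stating the corollary without proof. Your explicit remark that $\|X_*\|_{\dot H^1}$ is constant in time (because the effective radius is preserved by the volume-preserving flow) is exactly the detail that closes the Gr\"onwall argument, and the paper makes this same observation in the proof of Theorem \ref{thm: global existence near equilibrium}.
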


\subsection{Proof of exponential convergence to equilibrium configurations}\label{section: proof of exponential convergence to equilibrium configurations}
Before we prove Theorem \ref{thm: exponential convergence}, we first state the following simple lemma.
\begin{lemma}\label{lemma: property of epsilon_xi}
Let $\varepsilon_\xi$ be defined as in \eqref{eqn: defintion of varepsilon_* in the corollary}, i.e.\;$\varepsilon_\xi = 2C_4(C_6,1/(2\pi))(C_7+2) (|\ln (2\xi)|+1)^2(2\xi)$, where $C_4$, $C_6$ and $C_7$ are universal constant. Then $\varepsilon_\xi$ is increasing on $\xi \in(0,1/(2e)]$. Moreover, for $\forall\,\xi \in(0,1/(2e)]$ and $\forall \,c\geq e$,
\begin{equation*}
\frac{1}{c}\varepsilon_{\xi}\leq \varepsilon_{(\xi/c)}\leq \frac{1}{c}\left(\frac{2+\ln c}{2}\right)^2\varepsilon_{\xi}\triangleq \beta_c\varepsilon_\xi.
\end{equation*}
The first inequality is  true even for $\forall\, c\geq 1$. $\beta_c$ is decreasing in $c\geq e$ and $\beta_c\leq \frac{9}{4e}<1$ for $\forall\, c\geq e$.
\end{lemma}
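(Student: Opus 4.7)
The key observation is to introduce the change of variable $u = 2\xi \in (0, 1/e]$ (corresponding to $\xi \in (0, 1/(2e)]$) and write $\varepsilon_\xi = K \cdot g(u)$, where $K = 2C_4(C_6,1/(2\pi))(C_7+2)$ is a fixed positive constant and
\begin{equation*}
g(u) = u\,(|\ln u|+1)^2 = u\,(-\ln u + 1)^2,
\end{equation*}
since $u \leq 1/e$ forces $\ln u \leq -1$, so $|\ln u| = -\ln u \geq 1$. All the assertions reduce to elementary one-variable calculus on $g$ and on the auxiliary function $\beta_c$.

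For the monotonicity of $\varepsilon_\xi$, I will compute $g'(u) = (-\ln u + 1)(-\ln u - 1)$, and observe that on $(0, 1/e]$ both factors are non-negative (the first is $\geq 2$, the second is $\geq 0$), hence $g' \geq 0$. For the two-sided comparison of $\varepsilon_{\xi/c}$ and $\varepsilon_\xi$, I will write
\begin{equation*}
\frac{\varepsilon_{\xi/c}}{\varepsilon_\xi} = \frac{1}{c}\cdot\frac{(|\ln(2\xi/c)|+1)^2}{(|\ln(2\xi)|+1)^2} = \frac{1}{c}\left(1 + \frac{\ln c}{-\ln(2\xi)+1}\right)^{\!2},
\end{equation*}
where the second equality uses that $2\xi/c \leq 2\xi \leq 1/e$ (valid whenever $c \geq 1$), so the logarithms inside the absolute values are negative and $|\ln(2\xi/c)| = -\ln(2\xi) + \ln c$. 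The lower bound $\varepsilon_{\xi/c} \geq c^{-1}\varepsilon_\xi$ for $c \geq 1$ follows immediately since $\ln c \geq 0$. For the upper bound, the assumption $\xi \leq 1/(2e)$ gives $-\ln(2\xi) + 1 \geq 2$, so the parenthesized factor is bounded by $1 + (\ln c)/2 = (2+\ln c)/2$, yielding the stated $\beta_c$.

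Finally, for the behaviour of $\beta_c = (2+\ln c)^2/(4c)$ on $[e, \infty)$, I will differentiate:
\begin{equation*}
\beta_c' = \frac{(2+\ln c)\bigl(2 - (2+\ln c)\bigr)}{4c^2} = -\frac{(2+\ln c)\ln c}{4c^2},
\end{equation*}
which is strictly negative for $c \geq e$. Hence $\beta_c$ is decreasing on $[e,\infty)$ and attains its maximum at $c = e$, giving $\beta_c \leq \beta_e = 9/(4e)$; the inequality $9/(4e) < 1$ is immediate from $4e > 9$.

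The proof involves no obstacle of substance — it is a routine verification. The only point worth mild care is keeping the domain restriction $\xi \leq 1/(2e)$ visible, since it is precisely what allows one to drop the absolute values inside the logarithms and to bound $-\ln(2\xi)+1$ below by $2$ in the upper-bound step.
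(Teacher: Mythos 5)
Your proof is correct and complete. The paper explicitly omits the proof of this lemma as a "simple calculation," and your argument — the change of variable $u=2\xi$, direct differentiation of $g(u)=u(-\ln u+1)^2$ and of $\beta_c=(2+\ln c)^2/(4c)$, and the algebraic manipulation of the ratio $\varepsilon_{\xi/c}/\varepsilon_\xi$ using the restriction $\xi\le 1/(2e)$ to drop the absolute values and to obtain the denominator bound $-\ln(2\xi)+1\ge 2$ — is precisely the routine verification the authors had in mind.
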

Its proof is a simple calculation, which we shall omit.
Now we are able to prove Theorem \ref{thm: exponential convergence}.

\begin{proof}[Proof of Theorem \ref{thm: exponential convergence}]
As before, we assume $R_{X_0} = 1$.
For convenience, we denote
\begin{equation*}
\mathcal{F}(t) = \|X-X_*\|_{\dot{H}^1(\mathbb{T})}(t),\quad \mathcal{G}(t) = \|X-X_*\|_{\dot{H}^{5/2}(\mathbb{T})}(t).
\end{equation*}
Note that the case $\mathcal{F}(0) = 0$ is trivial; we shall only discuss the case $\mathcal{F}(0)>0$ in the sequel.

Let $\varepsilon_*'$ be defined as in Corollary \ref{coro: exponential decay of H1 distance from equilibrium when it is in a small H2.5 neighborhood}. We may assume $\varepsilon_*'\leq 1$. Take $\xi_{**}\leq 1/(16e)$ such that
\begin{equation}
\varepsilon_{8\xi_{**}} = 2C_4(C_6,1/(2\pi))(C_7+2) (|\ln (16\xi_{**})|+1)^2(16\xi_{**}) = \sqrt{2}\varepsilon_*',
\label{eqn: definition of xi_**}
\end{equation}
where $C_4$, $C_6$ and $C_7$ are universal constants defined in \eqref{eqn: introducing C_4}, \eqref{eqn: uniform bound of the family of solution} and \eqref{eqn: bound for H1 norm of the difference to the equilibrium} respectively.
Such $\xi_{**}$ is uniquely achievable as a universal constant by the assumptions $C_4\geq 1$ and $\varepsilon_*'\leq 1$, since it is required that
\begin{equation*}
(|\ln (16\xi_{**})|+1)^2(16\xi_{**}) =\frac{\sqrt{2}\varepsilon_*'}{2C_4(C_7+2) }\leq\frac{\sqrt{2}}{4},
\end{equation*}
while $x(|\ln x|+1)^2$ monotonically maps $(0,1/e]$ onto $(0,4/e]$.
Hence, by Corollary \ref{coro: refined decay estimate of global solution}, the solution $X$ satisfies that for $\forall\, t\geq 0$,
\begin{equation}
\mathcal{G}(t)\leq \max\{2\mathcal{G}(0)e^{-t/8},\varepsilon_{\mathcal{F}(0)}\}\leq \max \{2\mathcal{G}(0)e^{-t/8},\sqrt{2}\varepsilon_*'\}.
\label{eqn: decay of G in the first time interval crude form}
\end{equation}
Here we used Lemma \ref{lemma: property of epsilon_xi} to find that $\varepsilon_{\mathcal{F}(0)} \leq \varepsilon_{8\xi_{**}}= \sqrt{2}\varepsilon_*'$.
If $2\mathcal{G}(0)\leq \varepsilon_{\mathcal{F}(0)}$, we take $t_* = 0$; otherwise, take $t_*$ such that
\begin{equation}
2\mathcal{G}(0)e^{-t_*/8}=\varepsilon_{\mathcal{F}(0)}.
\label{eqn: first constraints on t_* in exp convergence}
\end{equation}
Hence, we have $X(t)\in S_{\varepsilon_{\mathcal{F}(0)}}\subset S_{\sqrt{2}\varepsilon_*'}$ if $t\geq t_*$, which allows us to apply Corollary \ref{coro: exponential decay of H1 distance from equilibrium when it is in a small H2.5 neighborhood} for $t\geq t_*$.
By Lemma \ref{lemma: energy estimate} and Lemma \ref{lemma: estimates concerning closest equilbrium}, we derive that
\begin{equation*}
\mathcal{F}(t_*)\leq 2 \left(\|X\|^2_{\dot{H}^1(\mathbb{T})} - \|X_*\|^2_{\dot{H}^1(\mathbb{T})}\right)^{1/2}(t_*)\leq 2\left(\|X_0\|^2_{\dot{H}^1(\mathbb{T})} - \|X_{0*}\|^2_{\dot{H}^1(\mathbb{T})}\right)^{1/2}\leq 2\sqrt{2}\mathcal{F}(0).
\end{equation*}
By Corollary \ref{coro: exponential decay of H1 distance from equilibrium when it is in a small H2.5 neighborhood}, for some universal $\alpha>0$, and $\forall\, t>0$,
\begin{equation*}
\mathcal{F}(t_*+t)\leq 2\sqrt{2}\mathcal{F}(t_*)e^{-\alpha t}\leq 8\mathcal{F}(0)e^{-\alpha t}.
\end{equation*}
Note that $8\mathcal{F}(0)\in (0,1/(2e)]$ by the assumption on $\xi_{**}$, in which interval $\varepsilon_\xi$ is increasing in $\xi$. 
Without loss of generality, we may assume $\alpha <\frac{1}{8}$.
We additionally take $t_{**}>0$ to be a universal constant such that
\begin{equation}
e^{-\alpha t_{**}}\leq \frac{1}{8},\quad e^{\left(\frac{1}{8}-\alpha\right)t_{**}}\geq 2.
\label{eqn: constraint on t_**}
\end{equation}

To this end, we shall use mathematical induction to show \eqref{eqn: exp convergence in H2.5 norm}.
Let us summarize what has been proved so far: 
\begin{enumerate}
\item For $\forall\, t\geq 0$, $\mathcal{G}(t)\leq \max \{2e^{-t/8}\mathcal{G}(0),\varepsilon_{\mathcal{F}(0)}\}$.
\item Wit the choice of $t_*$ in \eqref{eqn: first constraints on t_* in exp convergence}, for $\forall\, t\in[t_*, t_*+2t_{**}]$, $\mathcal{G}(t)\leq \varepsilon_{\mathcal{F}(0)}\leq \sqrt{2}\varepsilon_*'$.
\item For $\forall\, t>0$, $\mathcal{F}(t_*+t)\leq 8\mathcal{F}(0)e^{-\alpha t}$. In particular, with the choice of $t_{**}$ in \eqref{eqn: constraint on t_**}, for $\forall\,k\in\mathbb{Z}_+$, $k\geq 2$, $\mathcal{F}(t_*+kt_{**})\leq e^{-\alpha (k-1)t_{**}}\mathcal{F}(0)$.
\end{enumerate}

Suppose that
\begin{equation}
\mathcal{G}(t_*+kt_{**})\leq \varepsilon_{e^{-(k-2)\alpha t_{**}}\mathcal{F}(0)}
\label{eqn: decay of G in the exp convergence}
\end{equation}
has been proved for some $k\geq 2$, $k\in\mathbb{Z}_+$ (indeed, the case $k =2$ has been established above).
By Corollary \ref{coro: refined decay estimate of global solution}, $\forall\, t\in[0,t_{**}]$,
\begin{equation}
\begin{split}
\mathcal{G}(t_*+kt_{**}+t)\leq &\;\max\{2e^{-t/8}\mathcal{G}(t_*+kt_{**}),\varepsilon_{\mathcal{F}(t_*+kt_{**})}\}\\
\leq &\;\max\{2e^{-t/8}\varepsilon_{e^{-(k-2)\alpha t_{**}}\mathcal{F}(0)},\varepsilon_{e^{-(k-1)\alpha t_{**}}\mathcal{F}(0)}\}.
\end{split}
\label{eqn: estimates of G inside the time interval}
\end{equation}
In particular,
\begin{equation*}
\mathcal{G}(t_*+(k+1)t_{**})\leq \max\{2e^{-t_{**}/8}\varepsilon_{e^{-(k-2)\alpha t_{**}}\mathcal{F}(0)},\varepsilon_{e^{-(k-1)\alpha t_{**}}\mathcal{F}(0)}\}.
\end{equation*}
We claim that with the choice of $t_{**}$ in \eqref{eqn: constraint on t_**}, the first term on the right hand side is always smaller.
Indeed, by \eqref{eqn: constraint on t_**}, and the lower bound in Lemma \ref{lemma: property of epsilon_xi},
\begin{equation*}
\frac{2e^{-t_{**}/8}\varepsilon_{e^{-(k-2)\alpha t_{**}}\mathcal{F}(0)}}{\varepsilon_{e^{-(k-1)\alpha t_{**}}\mathcal{F}(0)}} \leq \frac{2e^{-t_{**}/8}\varepsilon_{e^{-(k-2)\alpha t_{**}}\mathcal{F}(0)}}{e^{-\alpha t_{**}}\varepsilon_{e^{-(k-2)\alpha t_{**}}\mathcal{F}(0)}} = 2e^{-\left(\frac{1}{8}-\alpha\right)t_{**}}\leq 1.
\end{equation*}
Hence, we proved that
\begin{equation*}
\mathcal{G}(t_*+(k+1)t_{**})\leq \varepsilon_{e^{-(k-1)\alpha t_{**}}\mathcal{F}(0)}.
\end{equation*}
Therefore, by induction, \eqref{eqn: decay of G in the exp convergence} is true for all $k\in\mathbb{Z}_+$, $k\geq 2$; so is \eqref{eqn: estimates of G inside the time interval}.
With the choice of $t_{**}$, we use \eqref{eqn: estimates of G inside the time interval} and the upper bound in Lemma \ref{lemma: property of epsilon_xi} to derive that for $\forall\, t\in[0,t_{**}]$,
\begin{equation*}
\begin{split}
\mathcal{G}(t_*+kt_{**}+t)\leq &\;\max\{2e^{-t/8}\varepsilon_{e^{-(k-2)\alpha t_{**}}\mathcal{F}(0)},\varepsilon_{e^{-(k-1)\alpha t_{**}}\mathcal{F}(0)}\}\\
\leq &\;\max\{2e^{-t/8}\beta_{e^{\alpha t_{**}}}^{k-2}\varepsilon_{\mathcal{F}(0)},\beta_{e^{\alpha t_{**}}}^{k-1}\varepsilon_{\mathcal{F}(0)}\}\\
\leq &\;\beta_{8}^{k-2}\varepsilon_{\mathcal{F}(0)} \max\{2e^{-t/8},\beta_{8}\} \leq 2\beta_{8}^{k-2}\varepsilon_{\mathcal{F}(0)}.
\end{split}
\end{equation*}
Note that $t_{**}$ and $\beta_8<1$ are both universal constants.
Hence, combining this with the fact that $\mathcal{G}(t_*+t) \leq \varepsilon_{\mathcal{F}(0)}$ for $\forall\, t\geq 0$, we find that there exist universal constants $\alpha_*\leq 1/8$ and $C>1$, such that for $\forall\, t\geq 0$,
\begin{equation}
\mathcal{G}(t_*+t)\leq C e^{-\alpha_* t}\varepsilon_{\mathcal{F}(0)}.
\label{eqn: exp decay when t is larger than t_*}
\end{equation}

If $t_* = 0$, we readily proved that for $\forall\, t\geq 0$,
\begin{equation}
\mathcal{G}(t)\leq C e^{-\alpha_* t}\varepsilon_{\mathcal{F}(0)},
\label{eqn: exp decay when G is small}
\end{equation}
where $C$ and $\alpha_*$ are universal constants.
If $t_*>0$, by \eqref{eqn: first constraints on t_* in exp convergence} and the fact that $\alpha_*\leq 1/8$, $\varepsilon_{\mathcal{F}(0)} = 2e^{-t_*/8}\mathcal{G}(0)\leq 2e^{-\alpha_{*}t_*}\mathcal{G}(0)$.
Hence, by \eqref{eqn: exp decay when t is larger than t_*}, for $\forall\, t\geq 0$,
\begin{equation}
\mathcal{G}(t_*+t)\leq C e^{-\alpha_* (t_*+t)}\mathcal{G}(0).
\label{eqn: decay of G in the latter time interval}
\end{equation}
On the other hand, since $\alpha_*\leq 1/8$, we also know that for $t\in[0,t_*]$,
\begin{equation}
\mathcal{G}(t)\leq 2e^{-t/8}\mathcal{G}(0)\leq 2e^{-\alpha_{*}t}\mathcal{G}(0).
\label{eqn: decay of G in the first time interval}
\end{equation}
Combining \eqref{eqn: decay of G in the latter time interval} and \eqref{eqn: decay of G in the first time interval}, we proved that
\begin{equation}
\mathcal{G}(t)\leq Ce^{-\alpha_{*}t}\mathcal{G}(0).
\label{eqn: exp decay when G is large}
\end{equation}
with some universal constants $\alpha_*$ and $C$.
Combining \eqref{eqn: exp decay when G is small} and \eqref{eqn: exp decay when G is large}, we complete the proof of \eqref{eqn: exp convergence in H2.5 norm}.

In order to prove \eqref{eqn: exp convergence to a fixed configuration}, we use the fact $u_{X_*}(x) \equiv 0$ to derive that
\begin{equation*}
\|u_X(X(s))\|_{H^1(\mathbb{T})} = \|u_X(X(s))- u_{X_*}(X_*(s))\|_{H^1(\mathbb{T})} \leq \|\mathcal{L}X-\mathcal{L}X_*\|_{H^1(\mathbb{T})}+\|g_X-g_{X_*}\|_{H^1(\mathbb{T})}.
\end{equation*}
By Corollary \ref{coro: L2 estimate for g_X1-g_X2} and Corollary \ref{coro: H1 estimate for g_X1-g_X2},
\begin{equation*}
\|X_t(s,t)\|_{H^1(\mathbb{T})}  = \|u_X(X(s),t)\|_{H^1(\mathbb{T})} \leq C \|X-X_*\|_{\dot{H}^2(\mathbb{T})}(t) \leq C \|X-X_*\|_{\dot{H}^{5/2}(\mathbb{T})}(t).
\end{equation*}
Here $C$ is a universal constant thanks to the uniform estimates of solutions obtained in Theorem \ref{thm: global existence near equilibrium}.
Hence, by \eqref{eqn: exp convergence in H2.5 norm},
\begin{equation}
\|X(s,t)-X(s,t')\|_{H^1(\mathbb{T})} \leq \int_{t}^{t'} \|X_t(s,\tau)\|_{H^1(\mathbb{T})}\,d\tau \leq CB(X_0)\int_{t}^{t'} e^{-\alpha_*\tau}\,d\tau.
\label{eqn: X(t) is a Cauchy sequence in H^1 given the exp decay}
\end{equation}
Here $B(X_0)$ is defined in \eqref{eqn: exp convergence in H2.5 norm} and $C$ is a universal constant.
This implies that $X(s,t)$ is a Cauchy sequence in $H^1(\mathbb{T})$, which converges to some $X_\infty(s)\in H^1(\mathbb{T})$.
Take $t'\rightarrow +\infty$ in \eqref{eqn: X(t) is a Cauchy sequence in H^1 given the exp decay} and we find
\begin{equation}
\|X(s,t)-X_\infty(s)\|_{H^1(\mathbb{T})} \leq CB(X_0)e^{-\alpha_*t}.
\label{eqn: exp convergence to a fixed configuration in H^1 norm}
\end{equation}
On the other hand, by virtue of the bound \eqref{eqn: uniform bound of H 2.5 norm for the global solution} of $\|X\|_{\dot{H}^{5/2}(\mathbb{T})}(t)$, we may take $\tilde{X}_{w,\infty}\in H^{5/2}(\mathbb{T})$ as an arbitrary weak limit (up to a subsequence) of $\{\tilde{X}(t)\}_{t\geq 0}$ in $H^{5/2}(\mathbb{T})$.
Note that we only have the bound on the $H^{5/2}$-seminorm of $X(t)$, so we can only extract weak limit for $\{\tilde{X}(t)\}_{t\geq 0}$ instead of $\{X(t)\}_{t\geq 0}$ at this moment.
By compact Sobolev embedding, $\tilde{X}_{w,\infty}$ is a strong $H^1(\mathbb{T})$-limit of a subsequence of $\{\tilde{X}(t)\}_{t\geq 0}$.
Since $\tilde{X}(t)\rightarrow \tilde{X}_{\infty}$ in $H^1(\mathbb{T})$, one must have $\tilde{X}_{w,\infty} = \tilde{X}_\infty$.
And this is true for all weak limits of $\{\tilde{X}(t)\}_{t\geq 0}$.
Hence, $X_{\infty} \in H^{5/2}(\mathbb{T})$ and satisfies $\|X_\infty\|_{\dot{H}^{5/2}(\mathbb{T})}\leq C$, with the same universal constant $C$ as in \eqref{eqn: uniform bound of H 2.5 norm for the global solution}.
By \eqref{eqn: exp convergence in H2.5 norm} and the convergence $X(t)\rightarrow X_\infty$ in $H^1(\mathbb{T})$, we know that $\|X_\infty - X_{\infty,*}\|_{\dot{H}^1(\mathbb{T})} = 0$. Hence $X_\infty = X_{\infty,*}$ is an equilibrium configuration.

To this end, we derive  \eqref{eqn: exp convergence to a fixed configuration} as follows
\begin{equation*}
\begin{split}
\|X(t)-X_\infty\|_{H^{5/2}} \leq &\; C\|X(t)-X_\infty\|_{H^1}+C\|X(t)-X_\infty\|_{\dot{H}^{5/2}}\\
\leq &\; C\|X(t)-X_\infty\|_{H^1}+C\|X(t)-X_*(t)\|_{\dot{H}^{5/2}}+C\|X_\infty(t)-X_*(t)\|_{\dot{H}^{5/2}}.
\end{split}
\end{equation*}
Note that both $X_*$ and $X_\infty$ are equilibrium configurations. Since $X_\infty(s,t)-X_*(s,t)$ as a function of $s\in\mathbb{T}$ only contains Fourier modes with wave numbers $0$ and $\pm 1$, we can replace the $H^{5/2}$-seminorm in the last term by $H^1$-seminorm without changing its value, i.e.
\begin{equation*}
\begin{split}
\|X(t)-X_\infty\|_{H^{5/2}} \leq &\; C\|X(t)-X_\infty\|_{H^1}+C\|X(t)-X_*(t)\|_{\dot{H}^{5/2}}+C\|X_\infty(t)-X_*(t)\|_{\dot{H}^{1}}\\
\leq &\; C\|X(t)-X_\infty\|_{H^1}+C\|X(t)-X_*(t)\|_{\dot{H}^{5/2}}+C\|X(t)-X_\infty(t)\|_{\dot{H}^{1}}\\
&\;+C\|X(t)-X_*(t)\|_{\dot{H}^{1}}\\
\leq &\; C\|X(t)-X_\infty\|_{H^1}+C\|X(t)-X_*(t)\|_{\dot{H}^{5/2}}\\
\leq &\; CB(X_0)e^{-\alpha_*t}.
\end{split}
\end{equation*}
In the last inequality, we used \eqref{eqn: exp convergence in H2.5 norm} and \eqref{eqn: exp convergence to a fixed configuration in H^1 norm}.
This completes the proof of \eqref{eqn: exp convergence to a fixed configuration}.
\end{proof}

\section{Conclusion and Discussion}
In this paper, we transform the Stokes immersed boundary problem \eqref{eqn: stokes equation}-\eqref{eqn: kinematic equation of membrane} in two dimensions into a contour dynamic formulation \eqref{eqn: contour dynamic formulation of the immersed boundary problem} via the fundamental solution of the Stokes equation.
We proved that there exists a unique local solution of the contour dynamic formulation (Theorem \ref{thm: local in time existence} and Theorem \ref{thm: local in time uniqueness}), provided that the initial data is an $H^{5/2}$-function in Lagrangian coordinate and satisfies the well-stretched condition \eqref{eqn: well_stretched assumption}.
If in addition the initial configuration is sufficiently close to an equilibrium, the solution should exist globally in time (Theorem \ref{thm: global existence near equilibrium}), and it converges exponentially to an equilibrium as $t\rightarrow +\infty$ (Theorem \ref{thm: exponential convergence}).
Regularity of the ambient flow field can thus be recovered through the fundamental solution of the Stokes equation (Lemma \ref{lemma: the velocity field is continuous} and Lemma \ref{lemma: energy estimate}).

In the contour dynamic formulation \eqref{eqn: contour dynamic formulation of the immersed boundary problem}, the string motion is given by a singular integral, which depends nonlinearly functional on the string configuration.
The starting point of the proofs in this paper is that the principal part of the singular integral in the contour dynamic formulation introduces dissipation, which essentially results from the dissipation in the Stokes flow.
Then it suffices to show that the remainder term is regular in some sense and can be well-controlled by the dissipation.
The same approach may also apply to the higher dimensional case, where a 2-D closed membrane is immersed and moving in a 3-D Stokes flow, although the description of the 2-D membrane needs some extra efforts.
Note that the equilibrium shape of the membrane may not necessarily be a sphere.
We shall address this problem in a forthcoming work.

In this paper, we only consider the simplest case where the 1-D string is modeled by a Hookean material with zero resting length in the force-free state.
See the local elastic energy density \eqref{eqn: elastic energy density}.
In particular, the material always tends to shorten its length in all time.
Other types of elastic constitutive law can be also considered.
In fact, most of the discussion in this paper may also apply to more general elastic energy of other forms.
We do not dig deep into this topic here, but it would be interesting to find out what conditions are needed for the energy density so that the current approach still works.

\appendix
\section{Appendix}
\subsection{Study of the Flow Field}\label{appendix section: study of the flow field}
In this section, we shall prove Lemma \ref{lemma: the velocity field is continuous} and Lemma \ref{lemma: energy estimate} that characterize properties of the flow field $u_X$.
Roughly speaking, Lemma \ref{lemma: the velocity field is continuous} claims that under certain assumptions on $X$, $u_X\in C(\mathbb{R}^2)$ and $\nabla u_X \in L^2(\mathbb{R}^2)$; while Lemma \ref{lemma: energy estimate} proves an energy estimate of the whole system, which says that under certain assumptions on $X$, the decrease in the elastic energy of the string is fully accounted by the energy dissipation in the Stokes flow.
See their precise statements in Section \ref{section: energy estimate}.
\begin{proof}[Proof of Lemma \ref{lemma: the velocity field is continuous}]
If $x\not\in\Gamma_t$, since $G(x-X(s',t))$ in \eqref{eqn: expression for velocity field} is smooth at $x$, $u_X$ is also smooth at $x$.
We then turn to show continuity of $u_X$ at $X(s,t)\in \Gamma_t$.

Take any arbitrary $x\in\mathbb{R}^2$, and let $s_x$ be defined as in \eqref{eqn: definition of s_x}. Note that $s_x$ may not be unique; take an arbitrary one if it is the case.
We first show that
\begin{equation}
|x-X(s')| \geq \frac{\lambda}{2}|s'-s_x|.
\label{eqn: lower bound for distance between x and X(s')}
\end{equation}
Indeed, if $|s'-s_x| \leq 2\lambda^{-1}\mathrm{dist}(x,X(\mathbb{T}))$, then by definition of $s_x$,
\begin{equation*}
|x-X(s')| \geq |x-X(s_x)| = \mathrm{dist}(x,X(\mathbb{T})) \geq \frac{\lambda}{2}|s'-s_x|.
\end{equation*}
If $|s'-s_x| \geq 2\lambda^{-1}\mathrm{dist}(x,X(\mathbb{T}))$, by triangle inequality,
\begin{equation*}
|x-X(s')| \geq |X(s')-X(s_x)| - |x-X(s_x)| \geq \lambda|s'-s_x| - \mathrm{dist}(x,X(\mathbb{T}))\geq \frac{\lambda}{2}|s'-s_x|.
\end{equation*}
This proves \eqref{eqn: lower bound for distance between x and X(s')}.
If $s_x$ and $s_x'$ both satisfy \eqref{eqn: definition of s_x}, \eqref{eqn: lower bound for distance between x and X(s')} implies that $|s_x-s_x'|\leq 2\lambda^{-1}\mathrm{dist}(x,X(\mathbb{T}))$.

Next, we shall take the limit $x\rightarrow X(s,t)$ and apply dominated convergence theorem to \eqref{eqn: 2D velocity field}. Here we do not assume $x\not\in \Gamma_t$.
For $s'\not = s$, using the formula in \eqref{eqn: 2D velocity field} and \eqref{eqn: velocity of membrane}, it is easy to find that
\begin{equation*}
\partial_{s'} [G(x-X(s'))](X'(s')-X'(s_x)) \rightarrow \partial_{s'} [G(X(s)-X(s'))](X'(s')-X'(s)).
\end{equation*}
Here we used the fact that $X'(s_x)\rightarrow X'(s)$ as $x\rightarrow X(s,t)$. This is because \eqref{eqn: lower bound for distance between x and X(s')} implies that $s_x \rightarrow s$ and $X\in H^2(\mathbb{T})$ implies that $X'\in C^{1/2}(\mathbb{T})$.

On the other hand, by \eqref{eqn: lower bound for distance between x and X(s')},
\begin{equation*}
\begin{split}
|\partial_{s'} [G(x-X(s'))](X'(s')-X'(s_x))|\leq &\; C\frac{|X'(s')||X'(s')-X'(s_x)|}{|X(s')-x|}\\
\leq &\;\frac{C|X'(s')|}{\lambda |s_x-s'|}\int_{s'}^{s_x} |X''(\tau)| \,d\tau\\
\leq &\;C\lambda^{-1}|X'(s')||\mathcal{M}X''(s')|.
\end{split}
\end{equation*}
Here $\mathcal{M}$ is the centered Hardy-Littlewood maximal operator on $\mathbb{T}$ \cite{grafakos2008classical}.
Note that the bound is independent of $x$.
Since $\mathcal{M}$ is bounded from $L^2(\mathbb{T})$ to $L^2(\mathbb{T})$, $C\lambda^{-1}|X'(s')||\mathcal{M}X''(s')|\in L^1(\mathbb{T})$.
Therefore, by dominated convergence theorem, $u_X(x)\rightarrow u_X(X(s))$ as $x\rightarrow X(s)$.
This completes the proof of the continuity of $u_X$.

To show $\nabla u_X\in L^2(\mathbb{R}^2)$, we first introduce a mollifier $\varphi(x)\in C^\infty_0(\mathbb{R}^2)$, such that $\varphi\geq 0$, $\mathrm{supp}\, \varphi \subset B(0,1)$ and $\int_{\mathbb{R}^2}\varphi(x)\,dx = 1$. Define $\varphi_\varepsilon(x) = \varepsilon^{-2}\varphi(x/\varepsilon)$.
Let $f_\varepsilon = \varphi_\varepsilon * f$ and let $(u_\varepsilon, p_\varepsilon)$ solves the Stokes equation with $\varepsilon <1$,
\begin{equation}
\begin{split}
&\;-\Delta u_\varepsilon  + \nabla p_\varepsilon = f_\varepsilon,\\
&\;\mathrm{div} u_\varepsilon = 0,\\
&\;|u_\varepsilon|,|p_\varepsilon|\rightarrow 0\mbox{ as }|x|\rightarrow \infty.
\end{split}
\label{eqn: regularized stokes equation}
\end{equation}
It is obvious that $u_\varepsilon = \varphi_\varepsilon*u_X$ and $p_\varepsilon = \varphi_\varepsilon *p_X$.
On the other hand, since $f\in \mathscr{M}(\mathbb{R}^2)$ under the assumption of the lemma, $f_\varepsilon$ is smooth and so are $u_\varepsilon$, and $p_\varepsilon$.

We also introduce a cut-off function $\phi\in C^\infty_0(\mathbb{R}^2)$, such that $\phi\geq 0$; $\phi(y)= 1$ for $|y|\leq 1$; $\phi(y)= 0$ for $|y|\geq 2$; and $|\nabla\phi(y)|\leq C$ for some universal constant $C$. Define $\phi_r(x) \triangleq \phi(x/r)$.
For given $t$, assume $\Gamma_t\subset B_R(0)=\{x\in\mathbb{R}^2:\;|x|< R\}$ with some $R>2$.
We multiply the regularized Stokes equation \eqref{eqn: regularized stokes equation} on both sides by $\phi_r u_\varepsilon$, with $r= 2R+1$, and take integral on $\mathbb{R}^2$.
By integration by parts, we obtain that
\begin{equation}
\int_{\mathbb{R}^2}\phi_r(x)|\nabla u_\varepsilon(x,t)|^2\,dx + \int_{\mathbb{R}^2}(u_{\varepsilon,i}\partial_j \phi_r \partial_j u_{\varepsilon,i}-u_{\varepsilon,i} \partial_i \phi_r p_\varepsilon)\,dx = \int_{\mathbb{R}^2}\phi_r (x) u_\varepsilon(x,t)f_\varepsilon(x,t)\,dx.
\label{eqn: take inner product to get the energy estimate for the regularized equation}
\end{equation}
Note that $\nabla \phi$ is supported on $B_{2r}(0)\backslash B_r(0)$, which is away from $X(\cdot,t)$.
In the region $B_{6R}(0)\backslash B_{2R}(0)$, which contains an $\varepsilon$-neighborhood of $B_{2r}(0)\backslash B_r(0)$ since $R>2>2\varepsilon$, $u_X$, $\nabla u_X$ and $p_X$ have the following $L^\infty$-bound due to \eqref{eqn: 2D velocity field} and \eqref{eqn: 2D pressure field} with $C_x = 0$.
\begin{align*}
|u_X(x)|\leq &\;CR^{-1}\|X\|_{\dot{H}^1(\mathbb{T})}^2,\\
|\nabla u_X(x)|\leq &\;CR^{-2}\|X\|_{\dot{H}^1(\mathbb{T})}^2,\\
|p_X(x)|\leq &\;CR^{-2}\|X\|_{\dot{H}^1(\mathbb{T})}^2.
\end{align*}
Therefore, the regularized solutions also enjoy similar bounds in $B_{2r}(0)\backslash B_r(0)$, namely
\begin{align}
|u_\varepsilon(x)|\leq &\;Cr^{-1}\|X\|_{\dot{H}^1(\mathbb{T})}^2,\label{eqn: far field estimates of the regularized u}\\
|\nabla u_\varepsilon(x)|\leq &\;Cr^{-2}\|X\|_{\dot{H}^1(\mathbb{T})}^2,\label{eqn: far field estimates of the regularized grad u}\\
|p_\varepsilon(x)|\leq &\;Cr^{-2}\|X\|_{\dot{H}^1(\mathbb{T})}^2.\label{eqn: far field estimates of the regularized p}
\end{align}
Note that the constants $C$'s are uniform in $\varepsilon$.
Applying these estimates in \eqref{eqn: take inner product to get the energy estimate for the regularized equation}, we find that
\begin{equation*}
\int_{\mathbb{R}^2}\phi_r(x)|\nabla u_\varepsilon(x,t)|^2\,dx \leq \int_{\mathbb{R}^2}\phi_r (x) u_\varepsilon(x,t)f_\varepsilon(x,t)\,dx + C r^{-2}\|X\|_{\dot{H}^1(\mathbb{T})}^4.
\end{equation*}
It is known that
\begin{equation}
u_\varepsilon \rightarrow u_X \mbox{ in }C_{loc}(\mathbb{R}^2),\quad f_\varepsilon \rightarrow f \mbox{ in }\mathscr{M}(\mathbb{R}^2).
\label{eqn: convergence of the regularized solution to the original solution}
\end{equation}
This gives
\begin{equation*}
\left|\int_{\mathbb{R}^2}\phi_r (x) u_\varepsilon(x,t)f_\varepsilon(x,t)\,dx\right| \rightarrow \left|\int_{\mathbb{R}^2}\phi_r (x) u_X(x,t)f(x,t)\,dx\right|\mbox{ as }\varepsilon \rightarrow 0^+.
\end{equation*}
Therefore, by \eqref{eqn: a trivial L^infty bound for velocity},
\begin{equation}
\begin{split}
\limsup_{\varepsilon\rightarrow 0^+}\int_{\mathbb{R}^2}\phi_r(x)|\nabla u_\varepsilon(x,t)|^2\,dx \leq &\;\left|\int_{\mathbb{R}^2}\phi_r (x) u_X(x,t)f(x,t)\,dx\right| + C r^{-2}\|X\|_{\dot{H}^1(\mathbb{T})}^4\\
\leq &\;\int_{\mathbb{T}}|u(X(s,t),t)X_{ss}(s,t)|\,ds+ C r^{-2}\|X\|_{\dot{H}^1(\mathbb{T})}^4\\
\leq &\;C\lambda^{-1}\|X\|_{\dot{H}^2(\mathbb{T})}^3+ C r^{-2}\|X\|_{\dot{H}^1(\mathbb{T})}^4.
\label{eqn: a local bound for the dissipation rate of the regularized solution}
\end{split}
\end{equation}
Here we used the fact that $\Gamma_t\subset B_r(0)$.
This implies that for fixed $r$ and any sequence $\varepsilon_k\rightarrow 0^+$, there exists a subsequence $\varepsilon_{k_l}\rightarrow 0^+$ and $\tilde{u}\in H^1(B_r(0))$ such that $\nabla u_{\varepsilon_{k_l}}\rightharpoonup \nabla \tilde{u}$ in $L^2(B_r(0))$. Combining this with the fact $u_\varepsilon \rightarrow u_X$ in $C(B_r(0))$, we can even obtain $u_{\varepsilon_{k_l}}\rightarrow \tilde{u}$ in $L^2(B_r(0))$ and thus $\tilde{u} = u_X\in H^1(B_r(0))$.
Since the sequence $\{\varepsilon_k\}$ is arbitrary and $r$ (or equivalently $R$) can be arbitrarily large, we conclude that $u_X\in H^1_{loc}(\mathbb{R}^2)$ and $\nabla u_\varepsilon \rightarrow \nabla u_X$ in $L^2_{loc}(\mathbb{R}^2)$. Here we obtain local strong convergence as a property of the mollification.
This together with \eqref{eqn: a local bound for the dissipation rate of the regularized solution} implies that
\begin{equation*}
\int_{\mathbb{R}^2}\phi_r(x)|\nabla u_X(x,t)|^2\,dx \leq C\lambda^{-1}\|X\|_{\dot{H}^2(\mathbb{T})}^3+ C r^{-2}\|X\|_{\dot{H}^1(\mathbb{T})}^4.
\end{equation*}
Take $r\rightarrow \infty$ and we find
\begin{equation}
\int_{\mathbb{R}^2}|\nabla u_X(x,t)|^2\,dx \leq C\lambda^{-1}\|X\|_{\dot{H}^2(\mathbb{T})}^3.
\label{eqn: a trivial bound for the energy dissipation rate or H1 semi norm of velocity field}
\end{equation}
This completes the proof.
\end{proof}

As is mentioned before, Lemma \ref{lemma: energy estimate} is devoted to an energy estimate of the system, which will be used in the proof of Theorem \ref{thm: global existence near equilibrium} and Theorem \ref{thm: exponential convergence}.
\begin{proof}[Proof of Lemma \ref{lemma: energy estimate}]
Since $X\in C_TH^2(\mathbb{T})$, $\Gamma_t$ stays in a bounded set in $t\in[0,T]$. We may assume $\Gamma_t\subset B_R(0)=\{x\in\mathbb{R}^2:\;|x|< R\}$ for $t\in [0,T]$ with some $R>2$.
Again let $r = 2R+1$.

We start from the local energy estimate of the regularized solution in the proof of Lemma \ref{lemma: the velocity field is continuous}. By \eqref{eqn: take inner product to get the energy estimate for the regularized equation} and the decay estimates \eqref{eqn: far field estimates of the regularized u}-\eqref{eqn: far field estimates of the regularized p}, we find that
\begin{equation*}
\limsup_{\varepsilon\rightarrow 0^+}\left|\int_{\mathbb{R}^2}\phi_r(x)|\nabla u_\varepsilon(x,t)|^2\,dx - \int_{\mathbb{R}^2}\phi_r (x) u_\varepsilon(x,t)f_\varepsilon(x,t)\,dx\right| \leq C r^{-2}\|X\|_{\dot{H}^1(\mathbb{T})}^4.
\end{equation*}
By the convergence \eqref{eqn: convergence of the regularized solution to the original solution} and $\nabla u_\varepsilon \rightarrow \nabla u_X$ in $L^2_{loc}(\mathbb{R}^2)$, it becomes
\begin{equation*}
\left|\int_{\mathbb{R}^2}\phi_r(x)|\nabla u_X(x,t)|^2\,dx - \int_{\mathbb{R}^2}\phi_r (x) u_X(x,t)f(x,t)\,dx\right| \leq C r^{-2}\|X\|_{\dot{H}^1(\mathbb{T})}^4.
\end{equation*}
Take $r\rightarrow \infty$ and we find
\begin{equation}
\begin{split}
\int_{\mathbb{R}^2}|\nabla u_X(x,t)|^2\,dx = &\;\int_{\mathbb{R}^2} u_X(x,t)f(x,t)\,dx\\
=&\;\int_{\mathbb{R}^2}\int_{\mathbb{T}}u_X(x,t)F(s,t)\delta(x-X(s,t))\,dsdx=\int_{\mathbb{T}}u_X(X(s,t),t)F(s,t)\,ds\\
=&\;\int_{\mathbb{T}}X_t(s,t)X''(s,t)\,ds =-\int_{\mathbb{T}}X'_t(s,t)X'(s,t)\,ds\\
=&\;-\frac{1}{2}\frac{d}{dt}\int_{\mathbb{T}}|X'(s,t)|^2\,ds.
\end{split}
\label{eqn: energy estimate on each time slice}
\end{equation}
The last equality, estabilished in \cite{temam1984navier} in Chapter III, \S\,1.4, holds in the scalar distribution sense.
By some limiting argument and the assumption that $X\in C_T H^2(\mathbb{T})$, we may take integral on both sides of \eqref{eqn: energy estimate on each time slice} in $t$ from $0$ to $T$ to obtain \eqref{eqn: energy estimate of Stokes immersed boundary problem}.
\end{proof}

\subsection{A Priori Estimates Involving $\mathcal{L}$}\label{appendix section: estimates involving L}
We state several a priori estimates involving the operator $\mathcal{L}$ without proofs.
\begin{lemma}\label{lemma: improved Hs estimate and Hs continuity of semigroup solution}
For $\forall\, v_0\in H^l(\mathbb{T})$ with arbitrary $l\in\mathbb{R}_+$,
\begin{enumerate}
\item $\|\mathrm{e}^{t\mathcal{L}}v_0\|_{\dot{H}^l(\mathbb{T})} \leq \mathrm{e}^{-t/4}\|v_0\|_{\dot{H}^l(\mathbb{T})}$;
\item $\mathrm{e}^{t\mathcal{L}}v_0\in C([0,+\infty);H^l(\mathbb{T}))$;
\item $\mathrm{e}^{t\mathcal{L}}v_0\rightarrow v_0$ in $H^l(\mathbb{T})$ as $t\rightarrow 0^+$.
\end{enumerate}
\end{lemma}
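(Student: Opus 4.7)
The plan is to diagonalize $\mathcal{L}$ via Fourier series on $\mathbb{T}$. Since $(-\Delta)^{1/2}$ is the Fourier multiplier with symbol $|k|$, the operator $\mathcal{L}=-\tfrac14(-\Delta)^{1/2}$ has symbol $-|k|/4$, and for $v_0(s)=\sum_{k\in\mathbb{Z}}\hat v_{0,k}\mathrm{e}^{iks}$ the semigroup acts as
\begin{equation*}
\mathrm{e}^{t\mathcal{L}}v_0(s)=\sum_{k\in\mathbb{Z}}\mathrm{e}^{-|k|t/4}\,\hat v_{0,k}\,\mathrm{e}^{iks},
\end{equation*}
i.e.\ it is a diagonal Fourier multiplier contracting every nonzero mode and fixing the $k=0$ (mean) mode. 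Everything reduces to elementary estimates on this symbol.

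For item (1), Parseval's identity gives
\begin{equation*}
\|\mathrm{e}^{t\mathcal{L}}v_0\|_{\dot H^l(\mathbb{T})}^2
=2\pi\sum_{k\neq 0}|k|^{2l}\mathrm{e}^{-|k|t/2}|\hat v_{0,k}|^2
\leq \mathrm{e}^{-t/2}\cdot 2\pi\sum_{k\neq 0}|k|^{2l}|\hat v_{0,k}|^2
=\mathrm{e}^{-t/2}\|v_0\|_{\dot H^l(\mathbb{T})}^2,
\end{equation*}
where I used $\mathrm{e}^{-|k|t/2}\leq \mathrm{e}^{-t/2}$ for $|k|\geq 1$. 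Taking square roots yields the claim.

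For items (2) and (3), I split the semigroup into the mean (which is preserved exactly, and hence trivially continuous in $t$) and the oscillatory part, and prove $H^l$-continuity at an arbitrary $t_0\geq 0$. For $t\to t_0$ Parseval again gives
\begin{equation*}
\|\mathrm{e}^{t\mathcal{L}}v_0-\mathrm{e}^{t_0\mathcal{L}}v_0\|_{\dot H^l(\mathbb{T})}^2
=2\pi\sum_{k\neq 0}|k|^{2l}\bigl|\mathrm{e}^{-|k|t/4}-\mathrm{e}^{-|k|t_0/4}\bigr|^2|\hat v_{0,k}|^2.
\end{equation*}
Each term tends to zero, and each is dominated by the summable envelope $4|k|^{2l}|\hat v_{0,k}|^2$ from the hypothesis $v_0\in H^l(\mathbb{T})$. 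Dominated convergence (applied on the counting measure) yields the limit $0$. Specializing to $t_0=0$ gives item (3), and combined with continuity of the (constant) mean this gives item (2).

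There is no real obstacle: the only point to watch is that the symbol $\mathrm{e}^{-|k|t/4}$ only decays like a constant (namely $1$) at $k=0$, which is exactly why the decay in (1) is stated in the \emph{homogeneous} seminorm rather than the full $H^l$ norm, and why the $k=0$ mode must be peeled off separately in the continuity argument of (2)–(3).
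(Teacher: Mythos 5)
The paper states this lemma without proof, so there is nothing to compare against; your argument is correct and is the natural one. Two small points worth being explicit about in a polished write-up: (i) in item (1) the square of the symbol is $\mathrm{e}^{-|k|t/2}$, which for $|k|\geq 1$ is bounded by $\mathrm{e}^{-t/2}$, so after taking square roots one indeed gets the decay rate $\mathrm{e}^{-t/4}$ as claimed (your computation is correct, just make sure the exponent bookkeeping is stated cleanly); and (ii) your dominated-convergence step establishes continuity of the $\dot{H}^l$ seminorm of the oscillatory part, and to upgrade this to $H^l$ continuity you implicitly use that for mean-zero functions and $l>0$ the $L^2$ norm is controlled by the $\dot{H}^l$ seminorm (Poincar\'e), or alternatively you can simply run the same dominated-convergence argument with the envelope $(1+|k|^{2l})|\hat v_{0,k}|^2$ and conclude $H^l$ continuity in one stroke. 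Neither of these is a gap, just places where a referee might want the sentence spelled out.
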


\begin{lemma}\label{lemma: a priori estimate of nonlocal eqn}
Given $T>0$, let $h\in L^2_T H^l(\mathbb{T})$.
The model equation
\begin{equation}
\partial_t v(s,t) = \mathcal{L}v(s,t) +h(s,t),\quad v(s,0) = v_0(s),\quad s\in \mathbb{T},\; t\geq 0
\label{eqn: model nonlocal equation}
\end{equation}
has a unique solution $v\in L_T^\infty H^{l+1/2}\cap L_T^2 H^{l+1}(\mathbb{T})$ with $v_t \in L_T^2 H^l(\mathbb{T})$, satisfying the following a priori estimates: for $\forall\,t\in[0,T]$,
\begin{enumerate}
\item
\begin{equation}
\|v\|_{\dot{H}^{l+1/2}(\mathbb{T})}(t)\leq \mathrm{e}^{-t/4}\|v_0\|_{\dot{H}^{l+1/2}(\mathbb{T})}+ \sqrt{2}\|h\|_{L_{[0,t]}^2 \dot{H}^{l}(\mathbb{T})},
\label{eqn: a priori estimate for nonlocal eqn L_infty_in_time estimate}
\end{equation}
\item
\begin{equation}
\|v\|_{\dot{H}^{l+1/2}(\mathbb{T})}^2(t)+\frac{1}{4}\|v\|_{L^2_{[0,t]} \dot{H}^{l+1}(\mathbb{T})}^2 \leq \|v_0\|_{\dot{H}^{l+1/2}(\mathbb{T})}^2+ 4\|h\|_{L_{[0,t]}^2 \dot{H}^l(\mathbb{T})}^2.
\label{eqn: a priori estimate for nonlocal eqn L_2_in_time estimate}
\end{equation}
Hence,
\begin{equation*}
\|v\|_{L^\infty_{[0,t]}\dot{H}^{l+1/2}\cap L^2_{[0,t]}\dot{H}^{l+1}(\mathbb{T})}\leq 3\|v_0\|_{\dot{H}^{l+1/2}(\mathbb{T})}+ 6\|h\|_{L^2_{[0,t]}\dot{H}^l(\mathbb{T})}.
\end{equation*}
In particular,
\begin{equation*}
\|\mathrm{e}^{t\mathcal{L}}v_0\|_{L^\infty_{[0,t]}\dot{H}^{l+1/2}\cap L^2_{[0,t]}\dot{H}^{l+1}(\mathbb{T})}\leq 3\|v_0\|_{\dot{H}^{l+1/2}(\mathbb{T})}.
\end{equation*}
\item
\begin{equation*}
\|\partial_t v\|_{L^2_{[0,t]} \dot{H}^l(\mathbb{T})} \leq \frac{1}{2}\|v_0\|_{\dot{H}^{l+1/2}(\mathbb{T})}+\|h\|_{L^2_{[0,t]}\dot{H}^l(\mathbb{T})}.
\end{equation*}
\end{enumerate}
\end{lemma}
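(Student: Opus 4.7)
Since $\mathcal{L}=-\frac{1}{4}(-\Delta)^{1/2}$ is the Fourier multiplier with symbol $-|k|/4$, I would construct the solution explicitly by Duhamel in frequency: writing $v(s,t)=\sum_k \hat v_k(t)e^{iks}$ and solving the scalar ODE $\hat v_k' = -\frac{|k|}{4}\hat v_k + \hat h_k$ gives
\begin{equation*}
\hat v_k(t)=e^{-|k|t/4}\hat v_{0,k}+\int_0^t e^{-|k|(t-\tau)/4}\hat h_k(\tau)\,d\tau.
\end{equation*}
Standard arguments show that this formula defines a function $v$ in the claimed spaces once the a priori estimates are proved, and uniqueness is immediate from the linearity of \eqref{eqn: model nonlocal equation} (the difference of two solutions satisfies the equation with $v_0=0$, $h=0$, and estimate \eqref{eqn: a priori estimate for nonlocal eqn L_2_in_time estimate} forces it to vanish). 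So the real work is the three estimates.

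For \eqref{eqn: a priori estimate for nonlocal eqn L_2_in_time estimate} I would run the energy method: take the $\dot H^{l+1/2}$ inner product of the equation with $v$ to obtain
\begin{equation*}
\tfrac{1}{2}\tfrac{d}{dt}\|v\|_{\dot H^{l+1/2}}^2 = -\tfrac{1}{4}\|v\|_{\dot H^{l+1}}^2 + (v,h)_{\dot H^{l+1/2}},
\end{equation*}
and split $(v,h)_{\dot H^{l+1/2}}=((-\Delta)^{1/4}v,(-\Delta)^{1/4}h)_{\dot H^l}$ — wait, more cleanly, use $|(v,h)_{\dot H^{l+1/2}}|\le \|v\|_{\dot H^{l+1}}\|h\|_{\dot H^l}$ (distribute the $(2l+1)/2$ powers of $(-\Delta)^{1/2}$ as $l+1$ on $v$ and $l$ on $h$), and then Young with constant $1/8$ to absorb $\tfrac{1}{8}\|v\|_{\dot H^{l+1}}^2$ into the dissipation, leaving $2\|h\|_{\dot H^l}^2$. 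Integrating in time yields \eqref{eqn: a priori estimate for nonlocal eqn L_2_in_time estimate} with the stated constants $1/4$ and $4$.

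For \eqref{eqn: a priori estimate for nonlocal eqn L_infty_in_time estimate} I would go back to the Fourier representation. Cauchy–Schwarz in $\tau$ gives
\begin{equation*}
\left|\int_0^t e^{-|k|(t-\tau)/4}\hat h_k(\tau)\,d\tau\right|^2 \le \frac{2}{|k|}\bigl(1-e^{-|k|t/2}\bigr)\,\|\hat h_k\|_{L^2_t}^2 \le \frac{2}{|k|}\|\hat h_k\|_{L^2_t}^2,
\end{equation*}
so that $|k|^{l+1/2}$ times this is bounded by $\sqrt{2}\,|k|^l\|\hat h_k\|_{L^2_t}$. Applying Minkowski in $\ell^2_k$ to the Duhamel formula, together with $|k|\ge 1 \Rightarrow e^{-|k|t/2}\le e^{-t/2}$ on the homogeneous piece, gives exactly $\|v\|_{\dot H^{l+1/2}}(t)\le e^{-t/4}\|v_0\|_{\dot H^{l+1/2}}+\sqrt{2}\|h\|_{L^2_{[0,t]}\dot H^l}$. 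This step is essentially the statement and proof of Lemma \ref{lemma: improved Hs estimate and Hs continuity of semigroup solution} plus a sharp convolution estimate.

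For the $v_t$ bound, the naive triangle-inequality bound $\|v_t\|\le \|\mathcal L v\|+\|h\|$ combined with \eqref{eqn: a priori estimate for nonlocal eqn L_2_in_time estimate} loses a factor of $2$. Instead I would test the equation against $v_t$ in $\dot H^l$ and use the key identity
\begin{equation*}
(\mathcal L v, v_t)_{\dot H^l} = -\tfrac{1}{4}((-\Delta)^{1/2}v,v_t)_{\dot H^l} = -\tfrac{1}{8}\tfrac{d}{dt}\|v\|_{\dot H^{l+1/2}}^2,
\end{equation*}
which comes from redistributing powers of $(-\Delta)^{1/2}$. Integrating in time gives
\begin{equation*}
\|v_t\|_{L^2_{[0,t]}\dot H^l}^2 \le \tfrac{1}{4}\|v_0\|_{\dot H^{l+1/2}}^2 + \|h\|_{L^2_{[0,t]}\dot H^l}\|v_t\|_{L^2_{[0,t]}\dot H^l},
\end{equation*}
and solving this quadratic inequality (using $\sqrt{B^2+C^2}\le B+C$) yields the sharp bound $\tfrac{1}{2}\|v_0\|_{\dot H^{l+1/2}}+\|h\|_{L^2_{[0,t]}\dot H^l}$. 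The main small obstacle is precisely this: achieving the stated sharp constants $\sqrt 2$, $1/4$, $4$, $1/2$ requires choosing the Young constant carefully in \eqref{eqn: a priori estimate for nonlocal eqn L_2_in_time estimate}, the explicit Duhamel/Minkowski route for \eqref{eqn: a priori estimate for nonlocal eqn L_infty_in_time estimate}, and the integration-by-parts-in-time trick for \eqref{eqn: a priori estimate for nonlocal eqn L_2_in_time estimate}'s corollary on $v_t$; a generic semigroup argument would give a worse constant. Everything else is routine.
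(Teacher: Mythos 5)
The paper states this lemma in Appendix \ref{appendix section: estimates involving L} explicitly ``without proofs,'' so there is no reference argument to compare against; judged on its own, your proof is correct and is exactly the standard argument one would supply: Duhamel in Fourier plus Cauchy--Schwarz in time for \eqref{eqn: a priori estimate for nonlocal eqn L_infty_in_time estimate}, the $\dot H^{l+1/2}$ energy identity with Young's inequality calibrated to absorb $\tfrac18\|v\|_{\dot H^{l+1}}^2$ for \eqref{eqn: a priori estimate for nonlocal eqn L_2_in_time estimate}, and testing against $v_t$ in $\dot H^l$ with the quadratic-inequality trick for the third bound. Two harmless slips: on the homogeneous piece the exponent is $e^{-|k|t/4}\le e^{-t/4}$ (you wrote $e^{-|k|t/2}$, though your stated conclusion carries the correct $e^{-t/4}$), and the time integral of $(\mathcal L v, v_t)_{\dot H^l}$ actually contributes $\tfrac18\|v_0\|_{\dot H^{l+1/2}}^2$ rather than $\tfrac14$ --- since $\tfrac18\le\tfrac14$ your quadratic inequality still holds and yields the claimed constant $\tfrac12$. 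Neither affects the validity of the argument.
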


\subsection{Auxiliary Calculations}\label{appendix section: auxiliary calculations}
The following lemma is used to derive a simplification of $\Gamma_1(s,s')$ used in Section \ref{section: a priori estimates of the immersed boundary problem}.
\begin{lemma}\label{lemma: simplification of Gamma_1(s,s')}
Let $\Gamma_1(s,s')$ be defined by \eqref{eqn: introduce the notation Gamma_1}, i.e.
\begin{equation*}
\Gamma_1(s,s') = \left(-\partial_{ss'}[G(X(s)-X(s'))]-\frac{Id}{16\pi\sin^2\left(\frac{s'-s}{2}\right)}\right)(X'(s')-X'(s)).
\end{equation*}
Then with the notations introduced in \eqref{eqn: definition of L M N}, for $\forall\,s,s'\in\mathbb{T}$, $s'\not = s$, we have \eqref{eqn: simplified Gamma order 1}.

\begin{proof}
We shall simplify $\Gamma_1(s,s')$ by exploring cancelations between its terms. Using the notations introduced in \eqref{eqn: definition of L M N}, we calculate that
\begin{equation*}
\begin{split}
&\;4\tau\pi\Gamma_1(s,s')\\
=&\;-\frac{X'(s)\cdot X'(s')}{|L|^2}M + \frac{2(X'(s)\cdot L)(X'(s')\cdot L)}{|L|^4}M + \frac{X'(s)\cdot M}{|L|^2}X'(s')-\frac{2(X'(s)\cdot L)( L\cdot M)}{|L|^4}X'(s')\\
&\;+\frac{X'(s')\cdot M}{|L|^2}X'(s)-\frac{2(X'(s')\cdot M)( X'(s)\cdot L)}{|L|^4}L-\frac{2(L\cdot X'(s'))(L\cdot M)}{|L|^4}X'(s)\\
&\;-\frac{2(L\cdot X'(s'))(X'(s)\cdot M)}{|L|^4}L -\frac{2(L\cdot M)(X'(s)\cdot X'(s'))}{|L|^4}L\\
&\;+\frac{8 (L\cdot M) (L\cdot X'(s')) (L\cdot X'(s))}{|L|^6}L -\frac{\tau^2}{4\sin^2(\frac{\tau}{2})}M.
\end{split}
\end{equation*}
We simplify the first four terms by plugging in $X'(s') = X'(s)+\tau M$,
\begin{equation*}
\begin{split}
&\; -\frac{X'(s)\cdot (X'(s)+\tau M)}{|L|^2}M + \frac{2(X'(s)\cdot L)((X'(s)+\tau M)\cdot L)}{|L|^4}M\\
&\; + \frac{X'(s)\cdot M}{|L|^2}(X'(s)+\tau M)-\frac{2(X'(s)\cdot L)( L\cdot M)}{|L|^4}(X'(s)+\tau M)\\
=&\; -\frac{|X'(s)|^2}{|L|^2}M + \frac{2(X'(s)\cdot L)^2}{|L|^4}M + \frac{X'(s)\cdot M}{|L|^2}X'(s)-\frac{2(X'(s)\cdot L)( L\cdot M)}{|L|^4}X'(s).
\end{split}
\end{equation*}
Hence,
\begin{equation*}
\begin{split}
4\pi\Gamma_1(s,s') =&\; \frac{1}{\tau}\left(-\frac{|X'(s)|^2}{|L|^2}M + \frac{2(X'(s)\cdot L)^2}{|L|^4}M-\frac{\tau^2}{4\sin^2(\frac{\tau}{2})}M\right)\\
&\;+ \frac{1}{\tau}\left(\frac{X'(s)\cdot M}{|L|^2}X'(s)-\frac{2(X'(s)\cdot L)( L\cdot M)}{|L|^4}X'(s)+\frac{X'(s')\cdot M}{|L|^2}X'(s)\right)\\
&\;+\frac{1}{\tau}\left(-\frac{2(X'(s')\cdot M)( X'(s)\cdot L)}{|L|^4}L-\frac{2(L\cdot X'(s'))(L\cdot M)}{|L|^4}X'(s)\right.\\
&\;-\frac{2(L\cdot X'(s'))(X'(s)\cdot M)}{|L|^4}L -\frac{2(L\cdot M)(X'(s)\cdot X'(s'))}{|L|^4}L\\
&\;\left.+\frac{8 (L\cdot M) (L\cdot X'(s')) (L\cdot X'(s))}{|L|^6}L \right)\\
\triangleq &\;A_1(s,s')+A_2(s,s')+A_3(s,s').
\end{split}
\end{equation*}
Using $X'(s) = L-\tau N$, we calculate that
\begin{equation}
\begin{split}
A_1 = &\;\frac{1}{\tau}\left(-\frac{|X'(s)|^2}{|L|^2}M + \frac{2(X'(s)\cdot L)}{|L|^2}M - \frac{2\tau(N\cdot L)(X'(s)\cdot L)}{|L|^4}M -M - \left(\frac{\tau^2}{4\sin^2(\frac{\tau}{2})}-1\right)M\right)\\
= &\;\frac{1}{\tau}\left(-\frac{|X'(s)|^2}{|L|^2}M + \frac{2X'(s)\cdot L}{|L|^2}M -M \right) - \frac{2(N\cdot L)(X'(s)\cdot L)}{|L|^4}M - \left(\frac{\tau^2 - 4\sin^2(\frac{\tau}{2})}{4\tau\sin^2(\frac{\tau}{2})}\right)M\\
=&\;-\frac{1}{\tau}\frac{|X'(s)-L|^2}{|L|^2}M - \frac{2(N\cdot L)(X'(s)\cdot L)}{|L|^4}M - \left(\frac{\tau^2 - 4\sin^2(\frac{\tau}{2})}{4\tau\sin^2(\frac{\tau}{2})}\right)M\\
=&\;\frac{(X'(s)-L)\cdot N}{|L|^2}M - \frac{2(N\cdot L)(X'(s)\cdot L)}{|L|^4}M - \left(\frac{\tau^2 - 4\sin^2(\frac{\tau}{2})}{4\tau\sin^2(\frac{\tau}{2})}\right)M.
\end{split}
\label{eqn: C 1 beta estimate integrand part 1}
\end{equation}
Similarly,
\begin{equation}
\begin{split}
A_2 = &\;\frac{1}{\tau}\left(\frac{X'(s)\cdot M}{|L|^2}X'(s)-\frac{2L\cdot M}{|L|^2}X'(s)+\frac{2\tau(N\cdot L)( L\cdot M)}{|L|^4}X'(s)+\frac{X'(s')\cdot M}{|L|^2}X'(s)\right)\\
= &\;\frac{1}{\tau}\frac{(X'(s)+X'(s')-2L)\cdot M}{|L|^2}X'(s)+\frac{2(N\cdot L)( L\cdot M)}{|L|^4}X'(s)\\
= &\;\frac{(M-2N)\cdot M}{|L|^2}X'(s)+\frac{2(N\cdot L)( L\cdot M)}{|L|^4}X'(s).
\end{split}
\label{eqn: C 1 beta estimate integrand part 2}
\end{equation}
For $A_3$, we split the last term into four and look for cancellations with the other four terms. That is,
\begin{equation*}
\begin{split}
\tau A_3 = &\;\frac{2 (L\cdot M) (L\cdot X'(s')) (L\cdot X'(s))}{|L|^6}L -\frac{2(X'(s')\cdot M)( X'(s)\cdot L)}{|L|^4}L\\
&\;+\frac{2 (L\cdot M) (L\cdot X'(s')) (L\cdot X'(s))}{|L|^6}L-\frac{2(L\cdot X'(s'))(L\cdot M)}{|L|^4}X'(s)\\
&\;+\frac{2 (L\cdot M) (L\cdot X'(s')) (L\cdot X'(s))}{|L|^6}L-\frac{2(L\cdot X'(s'))(X'(s)\cdot M)}{|L|^4}L\\
&\;+\frac{2 (L\cdot M) (L\cdot X'(s')) (L\cdot X'(s))}{|L|^6}L -\frac{2(L\cdot M)(X'(s)\cdot X'(s'))}{|L|^4}L\\
=&\;\frac{2\tau (L\cdot M) (L\cdot (M-N)) (L\cdot X'(s))}{|L|^6}L+\frac{2 (L\cdot M)(L\cdot X'(s))}{|L|^4}L -\frac{2(X'(s')\cdot M)(X'(s)\cdot L)}{|L|^4}L\\
&\;-\frac{2\tau (L\cdot M) (L\cdot X'(s')) (L\cdot N)}{|L|^6}L+\frac{2 (L\cdot M) (L\cdot X'(s'))}{|L|^4}L-\frac{2(L\cdot X'(s'))(L\cdot M)}{|L|^4}X'(s)\\
&\;-\frac{2\tau (L\cdot M) (L\cdot X'(s')) (L\cdot N)}{|L|^6}L+\frac{2 (L\cdot M) (L\cdot X'(s'))}{|L|^4}L-\frac{2(L\cdot X'(s'))(X'(s)\cdot M)}{|L|^4}L\\
&\;-\frac{2 \tau(L\cdot M) (L\cdot X'(s')) (L\cdot N)}{|L|^6}L+\frac{2 (L\cdot M) (L\cdot X'(s'))}{|L|^4}L -\frac{2(L\cdot M)(X'(s)\cdot X'(s'))}{|L|^4}L\\
=&\;\frac{2\tau (L\cdot M) (L\cdot (M-N)) (L\cdot X'(s))}{|L|^6}L+\frac{2\tau ((N-M)\cdot M)(L\cdot X'(s))}{|L|^4}L\\
&\;-\frac{2\tau (L\cdot M) (L\cdot X'(s')) (L\cdot N)}{|L|^6}L+\frac{2 (L\cdot M) (L\cdot X'(s'))}{|L|^4}\tau N\\
&\;-\frac{2\tau (L\cdot M) (L\cdot X'(s')) (L\cdot N)}{|L|^6}L+\frac{2\tau (N\cdot M) (L\cdot X'(s'))}{|L|^4}L\\
&\;-\frac{2 \tau(L\cdot M) (L\cdot X'(s')) (L\cdot N)}{|L|^6}L+\frac{2\tau (L\cdot M) (N\cdot X'(s'))}{|L|^4}L.
\end{split}
\end{equation*}
Here we used $X'(s) = L-\tau N$ and $X'(s') = L+\tau(M-N)$. Therefore,
\begin{equation}
\begin{split}
A_3 =&\;\frac{2 (L\cdot M) (L\cdot (M-N)) (L\cdot X'(s))}{|L|^6}L+\frac{2 ((N-M)\cdot M)(L\cdot X'(s))}{|L|^4}L\\
&\;-\frac{6 (L\cdot M) (L\cdot X'(s')) (L\cdot N)}{|L|^6}L+\frac{2 (L\cdot M) (L\cdot X'(s'))}{|L|^4} N\\
&\;+\frac{2 (N\cdot M) (L\cdot X'(s'))}{|L|^4}L+\frac{2 (L\cdot M) (N\cdot X'(s'))}{|L|^4}L.
\end{split}
\label{eqn: C 1 beta estimate integrand part 3}
\end{equation}
Combining \eqref{eqn: C 1 beta estimate integrand part 1}, \eqref{eqn: C 1 beta estimate integrand part 2} and \eqref{eqn: C 1 beta estimate integrand part 3}, we obtain the desired simplification \eqref{eqn: simplified Gamma order 1}.
\end{proof}
\end{lemma}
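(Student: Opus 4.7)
The goal is to rewrite the quantity
\[
\Gamma_1(s,s') = \Bigl(-\partial_{ss'}[G(X(s)-X(s'))]-\tfrac{Id}{16\pi\sin^2((s'-s)/2)}\Bigr)(X'(s')-X'(s))
\]
entirely in terms of $L,M,N$ and $X'(s),X'(s')$, extracting a form in which the apparent $1/\tau$ singularities are absorbed into bounded quantities. The plan is essentially a careful direct calculation, organized so that the cancellations are visible.

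First, I would compute $\partial_{s'}G(X(s)-X(s'))$ and then $\partial_s$ of that, using $G(x)=\tfrac{1}{4\pi}(-\ln|x|\,Id+x\otimes x/|x|^2)$. Each application of a derivative produces a factor $-X'(s')$ (or $+X'(s)$) hitting the argument; the result is a sum of rational expressions in $X(s)-X(s')$ of homogeneity $-2$. Substituting $X(s')-X(s)=\tau L$, $X'(s')-X'(s)=\tau M$ turns every $|X(s)-X(s')|^{-2k}$ into $\tau^{-2k}|L|^{-2k}$, and multiplying by the last factor $X'(s')-X'(s)=\tau M$ cancels one power of $\tau$ overall, yielding an expression of the schematic form $\tau^{-1}\cdot(\text{rational in } L,M,X'(s),X'(s'))$. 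This is exactly what I get from the first block of my computation and matches the layout in the author's expansion of $4\tau\pi\Gamma_1$.

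Next, the whole point is to cancel the apparent $\tau^{-1}$ singularity against the correction term $-\tfrac{\tau}{4\sin^2(\tau/2)}M$. To do this I would group the terms naturally into three packets:
\begin{itemize}
\item[$A_1$:] those multiplying $M$ on the right (i.e.\ of the form $(\cdots)M$);
\item[$A_2$:] those multiplying $X'(s)$;
\item[$A_3$:] those multiplying $L$ (after collecting all remaining pieces).
\end{itemize}
For $A_1$, I substitute $X'(s')=X'(s)+\tau M$ and $X'(s)=L-\tau N$. The $\tau^{-1}$ prefactor then combines with $-|X'(s)-L|^2/|L|^2$, which equals $-\tau^2|N|^2/|L|^2$, killing one power of $\tau$ and producing the clean term $(X'(s)-L)\cdot N/|L|^2\,M$; a parallel substitution converts the singular piece of the correction term into the bounded remainder $(\tau^2-4\sin^2(\tau/2))/(4\tau\sin^2(\tau/2))\,M$. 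For $A_2$, substituting $X'(s)+X'(s')=2L+\tau(M-2N)$ again cancels the $\tau^{-1}$, yielding $(M-2N)\cdot M/|L|^2 X'(s)$. For $A_3$, I would split the last term $8(L\cdot M)(L\cdot X'(s'))(L\cdot X'(s))/|L|^6\,L$ into four equal pieces and pair each with one of the remaining four terms; in every pair I rewrite either $X'(s)$ or $X'(s')$ in terms of $L$ plus a $\tau$-correction, and in each case the leading-order piece cancels, leaving a $\tau$-factor that absorbs the outer $1/\tau$.

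The work is entirely algebraic, so the main obstacle is not analytic but bookkeeping: the expansion of $\partial_{ss'}G$ already contains six or seven rational terms, and after multiplication by $(X'(s')-X'(s))$ and substitution of the $L,M,N,X'(s),X'(s')$ identities the expression spans roughly a dozen pieces. Keeping track of signs and of which term cancels which (particularly in the $A_3$ splitting, where the fourfold splitting is the crucial trick) is the delicate part. Nothing in the proof uses the well-stretched assumption or any regularity beyond being able to differentiate; the identity holds formally for any smooth $X$ with $X(s)\ne X(s')$, which is exactly what the statement asserts.
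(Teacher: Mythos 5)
Your proposal is correct and is essentially the paper's own computation: the same substitutions $X(s')-X(s)=\tau L$, $X'(s')-X'(s)=\tau M$, $X'(s)=L-\tau N$, $X'(s')=L+\tau(M-N)$; the same three packets $A_1,A_2,A_3$ with $A_1$ absorbing the $\tau^2/(4\sin^2(\tau/2))$ correction and the cancellation $-\tfrac{1}{\tau}|X'(s)-L|^2/|L|^2 = (X'(s)-L)\cdot N/|L|^2$; and the same four-fold splitting of the $8(L\cdot M)(L\cdot X'(s'))(L\cdot X'(s))|L|^{-6}L$ term to kill the $\tau^{-1}$ in $A_3$. The only loose spot is your heuristic labeling ("those multiplying $L$" for $A_3$): one of the $A_3$ terms, $-2(L\cdot X'(s'))(L\cdot M)|L|^{-4}X'(s)$, actually multiplies $X'(s)$, so the grouping is really "the first four terms after substitution plus the fifth term" versus "the last five terms" rather than strictly by which vector sits on the right — but that does not affect the argument, and the cancellations go through as you describe.
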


The following lemma states that the $\eta$-derivative of $u_{X_\eta}(X_{\eta}(s))$ can commute with the integral in the representation of $u_{X_\eta}(X_{\eta}(s))$. It will be used in the proofs of Lemma \ref{lemma: linearization of velocity field around equilibrium} and Lemma \ref{lemma: final representation of the linearization of velocity near equilibrium}.

\begin{lemma}\label{lemma: eta derivative and the integral in u_X commute}
Let $\eta\in[0,1]$ and let $X_\eta$ be defined as in \eqref{eqn: defintion of X_eta}. Let
\begin{equation*}
\begin{split}
u_{X_\eta}(X_\eta(s)) = &\;\frac{1}{4\pi}\int_{\mathbb{T}}\frac{L_{X_\eta}\cdot X_\eta'(s')}{|L_{X_\eta}|^2}M_{X_\eta}-\frac{L_{X_\eta}\cdot M_{X_\eta}}{|L_{X_\eta}|^2}X'_\eta(s') -\frac{X'_\eta(s')\cdot M_{X_\eta}}{|L_{X_\eta}|^2}L_{X_\eta}\,ds'\\
&\;+\frac{1}{4\pi}\int_{\mathbb{T}}\frac{2L_{X_\eta}\cdot X'_\eta(s')L_{X_\eta}\cdot M_{X_\eta}}{|L_{X_\eta}|^4}L_{X_\eta}\,ds'\\
\triangleq &\; \int_{\mathbb{T}} h_\eta(s,s')\,ds'.
\end{split}
\end{equation*}
Then under the same assumptions as in Lemma \ref{lemma: linearization of velocity field around equilibrium}, for $\forall\,\eta\in[0,1]$,
\begin{equation*}
\frac{\partial}{\partial \eta}u_{X_\eta}(X_\eta(s)) = \int_{\mathbb{T}} \frac{\partial}{\partial \eta}h_\eta(s,s')\,ds'.
\end{equation*}
\begin{proof}
By definition,
\begin{equation}
\frac{\partial}{\partial \eta}u_{X_\eta}(X_\eta(s)) = \lim_{\eta'\rightarrow \eta}\int_{\mathbb{T}} \frac{h_{\eta'}(s,s')-h_\eta(s,s')}{\eta'-\eta}\,ds'.
\label{eqn: def of eta derivative of u}
\end{equation}
We shall check the conditions of the dominated convergence theorem to show that the limit and the integral commute.
In particular, we need to show that for $\forall\,s\in\mathbb{T}$, there exists an $s'$-integrable function $h(s,s')$, such that for $\forall\,\eta_1,\eta_2\in[0,1]$, $\eta_1\not = \eta_2$,
\begin{equation}
|h_{\eta_1}(s,s')-h_{\eta_2}(s,s')|\leq |\eta_1-\eta_2|h(s,s').
\label{eqn: condition of the DCT}
\end{equation}
By definition, we calculate that
\begin{equation}
\begin{split}
&\;4\pi|h_{\eta_1}(s,s')-h_{\eta_2}(s,s')|\\
\leq &\;\left|\frac{L_{X_{\eta_1}}\cdot X_{\eta_1}'(s')}{|L_{X_{\eta_1}}|^2}M_{X_{\eta_1}}-\frac{L_{X_{\eta_2}}\cdot X_{\eta_2}'(s')}{|L_{X_{\eta_2}}|^2}M_{X_{\eta_2}}\right|+\left|\frac{L_{X_{\eta_1}}\cdot M_{X_{\eta_1}}}{|L_{X_{\eta_1}}|^2}X'_{\eta_1}(s')-\frac{L_{X_{\eta_2}}\cdot M_{X_{\eta_2}}}{|L_{X_{\eta_2}}|^2}X'_{\eta_2}(s')\right|\\
&\;+\left|\frac{X'_{\eta_1}(s')\cdot M_{X_{\eta_1}}}{|L_{X_{\eta_1}}|^2}L_{X_{\eta_1}}-\frac{X'_{\eta_2}(s')\cdot M_{X_{\eta_2}}}{|L_{X_{\eta_2}}|^2}L_{X_{\eta_2}}\right|\\
&\;+\left|\frac{2L_{X_{\eta_1}}\cdot X'_{\eta_1}(s')L_{X_{\eta_1}}\cdot M_{X_{\eta_1}}}{|L_{X_{\eta_1}}|^4}L_{X_{\eta_1}}-\frac{2L_{X_{\eta_2}}\cdot X'_{\eta_2}(s')L_{X_{\eta_2}}\cdot M_{X_{\eta_2}}}{|L_{X_{\eta_2}}|^4}L_{X_{\eta_2}}\right|.
\end{split}
\label{eqn: difference between h_1 and h_2}
\end{equation}
For conciseness, we only show estimate of one of the terms above. Thanks to the uniform estimates \eqref{eqn: uniform H2.5 upper bound for the family of configurations near equilibrium} and \eqref{eqn: uniform stretching constant for the family of configurations near equilibrium},
\begin{equation*}
\begin{split}
&\;\left|\frac{L_{X_{\eta_1}}\cdot X_{\eta_1}'(s')}{|L_{X_{\eta_1}}|^2}M_{X_{\eta_1}}-\frac{L_{X_{\eta_2}}\cdot X_{\eta_2}'(s')}{|L_{X_{\eta_2}}|^2}M_{X_{\eta_2}}\right|\\
\leq &\;\left|\frac{(L_{X_{\eta_1}}-L_{X_{\eta_2}})\cdot X_{\eta_1}'(s')}{|L_{X_{\eta_1}}|^2}M_{X_{\eta_1}}\right|+\left|\frac{L_{X_{\eta_2}}\cdot (X_{\eta_1}'(s')-X_{\eta_2}'(s'))}{|L_{X_{\eta_1}}|^2}M_{X_{\eta_1}}\right|\\
&\;+\left|\frac{L_{X_{\eta_2}}\cdot X_{\eta_2}'(s')}{|L_{X_{\eta_1}}|^2}(M_{X_{\eta_1}}-M_{X_{\eta_2}})\right|+\left|\frac{L_{X_{\eta_2}}\cdot X_{\eta_2}'(s')}{|L_{X_{\eta_1}}|^2}M_{X_{\eta_2}}\frac{|L_{X_{\eta_2}}|^2-|L_{X_{\eta_1}}|^2}{|L_{X_{\eta_2}}|^2}\right|\\
\leq &\;\left|\frac{(\eta_1-\eta_2)L_{D}\cdot X_{\eta_1}'(s')}{|L_{X_{\eta_1}}|^2}M_{X_{\eta_1}}\right|+\left|\frac{L_{X_{\eta_2}}\cdot (\eta_1-\eta_2)D'(s')}{|L_{X_{\eta_1}}|^2}M_{X_{\eta_1}}\right|\\
&\;+\left|\frac{L_{X_{\eta_2}}\cdot X_{\eta_2}'(s')}{|L_{X_{\eta_1}}|^2}(\eta_1-\eta_2)M_{D}\right|+\left|\frac{L_{X_{\eta_2}}\cdot X_{\eta_2}'(s')}{|L_{X_{\eta_1}}|^2}M_{X_{\eta_2}}\frac{(L_{X_{\eta_2}}+L_{X_{\eta_1}})\cdot (\eta_1-\eta_2)L_D}{|L_{X_{\eta_2}}|^2}\right|\\
\leq &\;C|\eta_1-\eta_2|(\|L_D\|_{L^{\infty}_{s'}}\|X_{\eta_1}'\|_{L^\infty}|M_{X_{\eta_1}}|+\|L_{X_{\eta_2}}\|_{L^\infty_{s'}}\|D'\|_{L^\infty}|M_{X_{\eta_1}}|\\
&\;\quad +\|L_{X_{\eta_2}}\|_{L^{\infty}_{s'}}\|X_{\eta_2}'\|_{L^{\infty}}|M_{D}|+\|X_{\eta_2}'\|_{L^{\infty}}|M_{X_{\eta_2}}|\|L_D\|_{L^{\infty}_{s'}})\\
\leq &\;C|\eta_1-\eta_2|[\|D'\|_{L^\infty}(\|X_{\eta_1}'\|_{L^\infty}+\|X_{\eta_2}'\|_{L^\infty})(|M_{X_{\eta_1}}|+|M_{X_{\eta_2}}|) + \|X_{\eta_2}'\|^2_{L^\infty}|M_D|]\\
\leq &\;C|\eta_1-\eta_2|(|M_{X_{\eta_1}}|+|M_{X_{\eta_2}}|+|M_D|)\\
\leq &\;C|\eta_1-\eta_2|(|M_{X_*}|+|M_D|),
\end{split}
\end{equation*}
where $C$ is a universal constant. The other terms in \eqref{eqn: difference between h_1 and h_2} can be estimated in a similar fashion. Hence, we obtain that, for $s'\not = s$,
\begin{equation*}
\begin{split}
|h_{\eta_1}(s,s')-h_{\eta_2}(s,s')|\leq &\;C|\eta_1-\eta_2|(|M_{X_*}|+|M_D|)\\
\leq &\;\frac{C|\eta_1-\eta_2|}{|s'-s|}\int_s^{s'}|X''_*(\omega)|+|D''(\omega)|\,d\omega\\
\leq &\;C|\eta_1-\eta_2|(|\mathcal{M}X''_*(s')|+|\mathcal{M}D''(s')|),
\end{split}
\end{equation*}
where $\mathcal{M}$ is again the centered Hardy-Littlewood maximal operator on $\mathbb{T}$.
Hence \eqref{eqn: condition of the DCT} is proved with $h(s,s') = C(|\mathcal{M}X''_*(s')|+|\mathcal{M}D''(s')|)\in L^1_{s'}(\mathbb{T})$. Note that $h$ is independent of $\eta_1$, $\eta_2$ and $s$.
By dominated convergence theorem, the limit and the integral in \eqref{eqn: def of eta derivative of u} commute, which proves the Lemma.
\end{proof}
\end{lemma}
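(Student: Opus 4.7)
The plan is to apply the dominated convergence theorem (DCT) to the difference-quotient expression
\[
\partial_\eta u_{X_\eta}(X_\eta(s)) \;=\; \lim_{\eta'\to\eta}\int_{\mathbb T}\frac{h_{\eta'}(s,s')-h_\eta(s,s')}{\eta'-\eta}\,ds',
\]
so that the limit may be moved inside the integral and yield $\int_{\mathbb T}\partial_\eta h_\eta(s,s')\,ds'$. What DCT requires is an $L^1_{s'}$-majorant, independent of $\eta,\eta'\in[0,1]$, for the difference quotient above.

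First I would collect the uniform inputs. From the hypotheses of Lemma \ref{lemma: linearization of velocity field around equilibrium}, namely \eqref{eqn: uniform H2.5 upper bound for the family of configurations near equilibrium} and \eqref{eqn: uniform stretching constant for the family of configurations near equilibrium}, one has $\|X_\eta\|_{\dot H^{5/2}(\mathbb T)}\le C$ uniformly in $\eta\in[0,1]$ and, by \eqref{eqn: lower bound for L}, $|L_{X_\eta}(s,s')|\ge\pi^{-1}$. In particular, all denominators $|L_{X_\eta}|^{2}$ and $|L_{X_\eta}|^{4}$ appearing in $h_\eta$ are bounded below uniformly, and the factors $\|X'_\eta\|_{L^\infty}$, $\|D'\|_{L^\infty}$ are bounded above via Sobolev embedding. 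Next, because $X_{\eta_1}-X_{\eta_2}=(\eta_1-\eta_2)D$, one gets the linear-in-$(\eta_1-\eta_2)$ identities $L_{X_{\eta_1}}-L_{X_{\eta_2}}=(\eta_1-\eta_2)L_D$, $M_{X_{\eta_1}}-M_{X_{\eta_2}}=(\eta_1-\eta_2)M_D$, and $|L_{X_{\eta_1}}|^{2}-|L_{X_{\eta_2}}|^{2}=(\eta_1-\eta_2)(L_{X_{\eta_1}}+L_{X_{\eta_2}})\cdot L_D$.

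With these ingredients, I would treat each of the four rational summands defining $h_\eta$ by the standard telescoping trick: add and subtract intermediate terms so that every resulting piece has exactly one factor of the form (something)$_{X_{\eta_1}}-$(something)$_{X_{\eta_2}}$. Combining with the uniform lower bound on $|L_{X_\eta}|$ and uniform upper bounds on the $L^\infty$-factors, each such piece acquires an explicit factor $|\eta_1-\eta_2|$, leaving behind a pointwise bound by a constant multiple of $|M_{X_*}(s,s')|+|M_D(s,s')|$. To pass to an $L^1_{s'}$-majorant, I would use the maximal-function bound in Lemma \ref{lemma: estimates for L M N}, part 3: $|M(s,s')|\le 2\mathcal{M} X''(s')$. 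Since $X''_*\in L^\infty(\mathbb T)$ and $D''\in L^2(\mathbb T)$, both $\mathcal{M}X_*''$ and $\mathcal{M}D''$ are in $L^1(\mathbb T)$ (the second using the $L^2$-boundedness of $\mathcal{M}$), so
\[
H(s,s')\;\triangleq\; C\bigl(\mathcal{M}X_*''(s')+\mathcal{M}D''(s')\bigr)
\]
is an $s$-independent $L^1_{s'}$-majorant of the difference quotient. DCT then produces the desired identity.

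The hard part will be the bookkeeping in the telescoping, especially for the quartic-denominator summand $\frac{2(L\cdot X'(s'))(L\cdot M)}{|L|^4}L$: it is a product of five $\eta$-dependent pieces, and the denominator must be linearized via $|L_{X_{\eta_1}}|^4-|L_{X_{\eta_2}}|^4=(|L_{X_{\eta_1}}|^2+|L_{X_{\eta_2}}|^2)(|L_{X_{\eta_1}}|^2-|L_{X_{\eta_2}}|^2)$. The delicate point is that $M$ carries an implicit $1/(s'-s)$ behaviour, so care must be taken that no additional $1/(s'-s)$ factor survives: every difference must be cashed in as an $(\eta_1-\eta_2)$-factor multiplied by either $L_D$ (bounded) or $M_D$ (maximal-function-bounded), never by $\partial_s M$-type quantities which would not be $L^1_{s'}$. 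Once this bookkeeping is organised term by term, DCT is purely mechanical, and the conclusion follows.
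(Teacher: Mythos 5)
Your proposal is correct and matches the paper's argument in every essential step: DCT via a Lipschitz-in-$\eta$ pointwise bound, telescoping each rational summand so that exactly one difference factor appears, cashing in those differences using the linear identities $L_{X_{\eta_1}}-L_{X_{\eta_2}}=(\eta_1-\eta_2)L_D$, $M_{X_{\eta_1}}-M_{X_{\eta_2}}=(\eta_1-\eta_2)M_D$, and $|L_{X_{\eta_1}}|^2-|L_{X_{\eta_2}}|^2=(\eta_1-\eta_2)(L_{X_{\eta_1}}+L_{X_{\eta_2}})\cdot L_D$, followed by the uniform lower bound on $|L_{X_\eta}|$ and upper bounds on $\|X'_\eta\|_{L^\infty}$ from \eqref{eqn: uniform H2.5 upper bound for the family of configurations near equilibrium}--\eqref{eqn: uniform stretching constant for the family of configurations near equilibrium}, to obtain $C|\eta_1-\eta_2|(|M_{X_*}|+|M_D|)$ as the dominant term, and finally the maximal-function majorant. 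One small attribution slip: Lemma \ref{lemma: estimates for L M N}, part 3, as stated gives $|M(s,s')|\le 2\mathcal{M}X''(s)$, with the maximal function evaluated at $s$, not $s'$. For the $L^1_{s'}$-majorant you need the $s'$-argument version, $|M(s,s')|\le 2\mathcal{M}X''(s')$, which is true by the same averaging argument (the interval $[s,s']$ is contained in $[s'-|s'-s|,s'+|s'-s|]$ just as well as in $[s-|s'-s|,s+|s'-s|]$); the paper derives this directly from $M(s,s') = \frac{1}{\tau}\int_s^{s'}X''$ rather than citing the lemma. Aside from that cosmetic point, your bookkeeping remarks about which differences may be cashed in (only $L_D$, $M_D$, $D'$, never $\partial_s M$-type quantities) reflect exactly why the estimate closes, and the $L^1$ membership of $\mathcal{M}X_*''$ and $\mathcal{M}D''$ is verified correctly.
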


Finally, we come to prove Lemma \ref{lemma: final representation of the linearization of velocity near equilibrium}, which calculates the leading term of $u_X(X(s))$ in \eqref{eqn: first approximation by linearization of velocity around equilibrium}.
\begin{proof}[Proof of Lemma \ref{lemma: final representation of the linearization of velocity near equilibrium}]
This time, we used \eqref{eqn: expression for velocity field} as the representation of $u_{X_{\eta}}(X_\eta(s))$,
\begin{equation*}
u_{X_\eta} = \frac{1}{4\pi}\int_{\mathbb{T}} \left(-\ln |X_\eta(s')-X_{\eta}(s)| Id +\frac{(X_\eta(s')-X_{\eta}(s)) \otimes (X_\eta(s')-X_{\eta}(s))}{|X_\eta(s')-X_{\eta}(s)|^2}\right)X_{\eta}''(s') \,ds'.
\end{equation*}
It has been showed before (see Section \ref{section: proof of contour dynamic formulation}) that, with the well-stretched condition \eqref{eqn: well_stretched assumption}, the integral with logarithmic singularity is well-defined.
Hence, by virtue of Lemma \ref{lemma: eta derivative and the integral in u_X commute},
\begin{equation*}
\begin{split}
&\;\left.\frac{\partial}{\partial\eta}\right|_{\eta = 0}u_{X_\eta}(X_\eta(s))\\
=&\;\frac{1}{4\pi}\int_{\mathbb{T}}  \left[-\frac{(X_*(s')-X_*(s))\cdot (D(s')-D(s))}{|X_*(s')-X_*(s)|^2} Id \right.\\
&\; +\frac{(D(s')-D(s)) \otimes (X_*(s')-X_*(s))}{|X_*(s')-X_*(s)|^2}+\frac{(X_*(s')-X_*(s)) \otimes (D(s')-D(s))}{|X_*(s')-X_*(s)|^2}\\
&\;\left. -\frac{(X_*(s')-X_*(s)) \otimes (X_*(s')-X_*(s))\cdot 2(X_*(s')-X_*(s))\cdot(D(s')-D(s))}{|X_*(s')-X_*(s)|^4}\right]X_{*}''(s')\,ds'\\
&\;+\frac{1}{4\pi}\int_{\mathbb{T}}\left(-\ln |X_*(s')-X_*(s)| Id +\frac{(X_*(s')-X_*(s)) \otimes (X_*(s')-X_*(s))}{|X_*(s')-X_*(s)|^2}\right)D''(s')\,ds'.
\end{split}
\end{equation*}
We split $X_{*}''(s')$ into two terms, namely, $X_{*}''(s') = -X_*(s') = -\frac{1}{2}(X_*(s')-X_*(s)) - \frac{1}{2}(X_*(s')+X_*(s))$.
\begin{equation}
\begin{split}
&\;\left.\frac{\partial}{\partial\eta}\right|_{\eta = 0}u_{X_\eta}(X_\eta(s))\\
=&\;\frac{1}{8\pi}\int_{\mathbb{T}}  \frac{(X_*(s')-X_*(s))\cdot (D(s')-D(s))}{|X_*(s')-X_*(s)|^2} (X_*(s')-X_*(s))\\
&\; -(D(s')-D(s)) -\frac{(X_*(s')-X_*(s)) \cdot (D(s')-D(s))}{|X_*(s')-X_*(s)|^2}(X_*(s')-X_*(s))\\
&\; +\frac{2(X_*(s')-X_*(s))\cdot(D(s')-D(s))}{|X_*(s')-X_*(s)|^2}(X_*(s')-X_*(s))\,ds'\\
&\;+\frac{1}{8\pi}\int_{\mathbb{T}}  \frac{(X_*(s')-X_*(s))\cdot (D(s')-D(s))}{|X_*(s')-X_*(s)|^2} (X_*(s')+X_*(s)) \\
&\;-\frac{(X_*(s')+X_*(s))\cdot (X_*(s')-X_*(s))}{|X_*(s')-X_*(s)|^2}(D(s')-D(s))\\
&\;-\frac{(X_*(s')+X_*(s))\cdot (D(s')-D(s))}{|X_*(s')-X_*(s)|^2}(X_*(s')-X_*(s))\\
&\;+\frac{(X_*(s')-X_*(s))\cdot (X_*(s')+X_*(s)) \cdot 2(X_*(s')-X_*(s))\cdot(D(s')-D(s))}{|X_*(s')-X_*(s)|^4}(X_*(s')-X_*(s))\,ds'\\
&\;+\frac{1}{4\pi}\int_{\mathbb{T}}\left(-\ln |X_*(s')-X_*(s)| Id +\frac{(X_*(s')-X_*(s)) \otimes (X_*(s')-X_*(s))}{|X_*(s')-X_*(s)|^2}\right)D''(s')\,ds'\\
=&\;\frac{1}{8\pi}\int_{\mathbb{T}} -(D(s')-D(s)) +\frac{2(X_*(s')-X_*(s))\cdot(D(s')-D(s))}{|X_*(s')-X_*(s)|^2}(X_*(s')-X_*(s))\,ds'\\
&\;+\frac{1}{8\pi}\int_{\mathbb{T}} \frac{(X_*(s')-X_*(s))\cdot (D(s')-D(s))}{|X_*(s')-X_*(s)|^2} (X_*(s')+X_*(s))\,ds'\\
&\;-\frac{1}{8\pi}\int_{\mathbb{T}} \frac{(X_*(s')+X_*(s))\cdot (D(s')-D(s))}{|X_*(s')-X_*(s)|^2}(X_*(s')-X_*(s))\,ds'\\
&\;+\frac{1}{4\pi}\int_{\mathbb{T}} \left(-\ln |X_*(s')-X_*(s)| Id +\frac{(X_*(s')-X_*(s)) \otimes (X_*(s')-X_*(s))}{|X_*(s')-X_*(s)|^2}\right)D''(s')\,ds'\\
=&\;\frac{1}{4}D(s)+\frac{1}{4\pi}\int_{\mathbb{T}} \frac{(X_*(s')-X_*(s))\cdot (D(s')-D(s))}{|X_*(s')-X_*(s)|^2}X_*(s')\,ds'\\
&\;-\frac{1}{4\pi}\int_{\mathbb{T}} \frac{X_*(s)\cdot(D(s')-D(s))}{|X_*(s')-X_*(s)|^2}(X_*(s')-X_*(s))\,ds'\\
&\;+\frac{1}{4\pi}\int_{\mathbb{T}} -\ln |X_*(s')-X_*(s)| D''(s') +\frac{(X_*(s')-X_*(s)) \otimes (X_*(s')-X_*(s))}{|X_*(s')-X_*(s)|^2}D''(s')\,ds'.
\end{split}
\label{eqn: representation of the first variation of velocity field}
\end{equation}
Here we used the fact that $(X_*(s')-X_*(s))\cdot (X_*(s')+X_*(s))=0$ and $\int_{\mathbb{T}}D(s')\,ds' = 0$.
Since $X_*(s) = (\cos s, \sin s)^T$, we plug this into \eqref{eqn: representation of the first variation of velocity field} and find that
\begin{equation*}
\begin{split}
&\;\frac{1}{4\pi}\int_{\mathbb{T}} \frac{(X_*(s')-X_*(s))\cdot (D(s')-D(s))}{|X_*(s')-X_*(s)|^2}X_*(s')- \frac{X_*(s)\cdot(D(s')-D(s))}{|X_*(s')-X_*(s)|^2}(X_*(s')-X_*(s))\,ds'\\
=&\;\frac{1}{4\pi}\int_{\mathbb{T}} \left[\frac{X_*(s')\otimes(X_*(s')-X_*(s)) - (X_*(s')-X_*(s))\otimes X_*(s')}{|X_*(s')-X_*(s)|^2}\right](D(s')-D(s))\,ds'\\
&\; +\frac{1}{4\pi} \int_{\mathbb{T}} \frac{(X_*(s')-X_*(s))\otimes (X_*(s')-X_*(s))}{|X_*(s')-X_*(s)|^2}(D(s')-D(s))\,ds'\\
=&\;\frac{1}{4\pi}\int_{\mathbb{T}}\frac{1}{2}\cot\left(\frac{s'-s}{2}\right) \left(\begin{array}{cc}0&1\\-1&0\end{array}\right)(D(s')-D(s))\,ds'\\
&\;+\frac{1}{4\pi}\int_{\mathbb{T}}\left(\begin{array}{cc}\frac{1}{2}(1-\cos(s'+s))&-\frac{1}{2}\sin(s'+s)\\ -\frac{1}{2}\sin(s'+s)&\frac{1}{2}(1+\cos(s'+s))\end{array}\right)(D(s')-D(s))\,ds'\\
=&\;-\frac{1}{4}\left(\begin{array}{cc}0&1\\-1&0\end{array}\right)\mathcal{H}D(s)+\frac{1}{4\pi}\int_{\mathbb{T}}-\frac{1}{2}\left(\begin{array}{cc}\cos(s'+s)&\sin(s'+s)\\ \sin(s'+s)&-\cos(s'+s)\end{array}\right)(D(s')-D(s))\,ds'\\
&\; + \frac{1}{8\pi}\int_{\mathbb{T}}(D(s')-D(s))\,ds'\\
=&\;-\frac{1}{4}\left(\begin{array}{cc}0&1\\-1&0\end{array}\right)\mathcal{H}D(s)-\frac{1}{8\pi}\int_{\mathbb{T}}\left(\begin{array}{cc}\cos(s'+s)&\sin(s'+s)\\ \sin(s'+s)&-\cos(s'+s)\end{array}\right)D(s')\,ds'-\frac{1}{4}D(s).
\end{split}
\end{equation*}
Here $\mathcal{H}$ is the Hilbert transform on $\mathbb{T}$ \cite{grafakos2008classical}. 
Moreover,
\begin{equation*}
\begin{split}
\frac{1}{4\pi}\int_{\mathbb{T}} -\ln |X_*(s')-X_*(s)| D''(s') \,ds' =&\; -\frac{1}{8\pi} \int_{\mathbb{T}} \ln \left[4\sin^2\left(\frac{s'-s}{2}\right)\right] D''(s') \,ds'\\
=&\; \frac{1}{8\pi}\mathrm{p.v.} \int_{\mathbb{T}} \cot\left(\frac{s'-s}{2}\right) D'(s') \,ds'\\
=&\; -\frac{1}{4}\mathcal{H}D'(s),
\end{split}
\end{equation*}
and
\begin{equation*}
\begin{split}
&\;\frac{1}{4\pi}\int_{\mathbb{T}} \frac{(X_*(s')-X_*(s)) \otimes (X_*(s')-X_*(s))}{|X_*(s')-X_*(s)|^2}D''(s')\,ds'\\
=&\;\frac{1}{4\pi}\int_{\mathbb{T}} \left(\begin{array}{cc}\frac{1}{2}(1-\cos(s'+s))&-\frac{1}{2}\sin(s'+s)\\ -\frac{1}{2}\sin(s'+s)&\frac{1}{2}(1+\cos(s'+s))\end{array}\right)D''(s')\,ds'\\
=&\;-\frac{1}{8\pi}\int_{\mathbb{T}} \left(\begin{array}{cc}\cos(s'+s)&\sin(s'+s)\\ \sin(s'+s)&-\cos(s'+s)\end{array}\right)D''(s')\,ds'\\
=&\;\frac{1}{8\pi}\int_{\mathbb{T}} \left(\begin{array}{cc}\cos(s'+s)&\sin(s'+s)\\ \sin(s'+s)&-\cos(s'+s)\end{array}\right)D(s')\,ds'.
\end{split}
\end{equation*}
In the last line, we used the fact that only the Fourier modes of $D''(s')$ with wave numbers $\pm 1$ contributes to the integral, and thus replacing $D''(s')$ by $-D(s')$ does not change the integral.
Combining the above calculations with \eqref{eqn: representation of the first variation of velocity field}, we find the desired result \eqref{eqn: final representation of the linearization of velocity near equilibrium}.
\end{proof}

\section*{Acknowledgement}
We want to thank Prof.\;Michael Shelley and Prof.\;Wei Wang for helpful discussions. We also would like to thank Prof.\;Jim Portegies for his early notes that contain some preliminary but inspiring calculations on this problem.
We also thank anonymous referee for helpful comments and suggestions.
This work was partially supported by National Science Foundation under Award Number DMS-1501000.

\noindent Fang-Hua Lin\\
Courant Institute\\
251 Mercer St.\\
New York, NY 10012\\
USA\\
E-mail: linf@cims.nyu.edu\\

\noindent Jiajun Tong\\
Courant Institute\\
251 Mercer St.\\
New York, NY 10012\\
USA\\
E-mail: jiajun@cims.nyu.edu

\end{document}